\newtheorem{theorem}{Theorem}[section]
\newtheorem{lemma}{Lemma}[section]
\newtheorem{Assumption}{Assumption}[section]
\newtheorem{corollary}{Corollary}
\newtheorem{proposition}{Proposition}[section]
\theoremstyle{definition}
\newtheorem{definition}[theorem]{Definition}
\theoremstyle{remark}
\newtheorem{remark}[theorem]{Remark}
\numberwithin{equation}{section}
\def\a{\alpha}
\def\Ab{\underline{A}}
\def\b{\beta}
\def\bb{{\underline{\b}}}
\def\B{\mathcal{B}}
\def\bd{{\mathbf{D}}}
\def\bg{{\mathbf{g}}}
\def\bM{{\mathbf{M}}}
\def\bR{\mathbf{R}}
\def\bT{\mathbf{T}}
\def\c{\cdot}
\def\chib{\underline{\chi}}
\def\chibh{\hat{\chib}}
\def\chih{\hat{\chi}}
\def\ckk{\check}
\def\curl{\mbox{\,curl\,}}
\def\D{\mathcal{D}}
\def\div{\mbox{div}}
\def\Dt{\mathcal{D}_t}
\def\dt{{\frac{d}{dt}}}
\def\E{{\mathcal E}}
\def\e{\eta}
\def\eh{\hat{\eta}}
\def\ep{\epsilon}
\def\er{\mbox{err}}
\def\f12{\frac{1}{2}}
\def\F{{\mathcal{F}}}
\def\ff{{\mathfrak{F}}}
\def\G{\mathcal{G}}
\def\Ga{\Gamma}
\def\ga{\gamma}
\def\gac{\stackrel{\circ}\ga}
\def\gc{{\Delta_0^2+\RR}}
\def\gag{{GaNi}}
\def\ggg{\mathbf{g}}
\def\hk{\hat{k}}
\def\hot{\widehat{\otimes}}
\def\I{{\mathcal I}}
\def\k{\mathcal{K}_0}
\def\K{\underline{K}}
\def\kp{\kappa}
\def\l{\langle}
\def\r{\rangle}
\def\La{\Lambda}
\def\L{\mathcal L}
\def\les{\lesssim}
\def\M{\mathcal{M}}
\def\N{\mathcal{N}}
\def\Lb{{\underline{L}}}
\def\H{\mathcal{H}}
\def\nab{\nabla}
\def\nn{\nonumber}
\def\os#1{{\mathcal Osc}(#1)}
\def\Ot{\otimes}
\def\ovl{\overline}
\def\p{\partial}
\def\P{\mathcal{P}}
\def\poin{{Poin}}
\def\rhob{{\bar\rho}}
\def\rhoc{\check{\rho}}
\def\rhocb{{\bar{\ckk\rho}}}
\def\R{\mathcal{R}}
\def\RR{{\mathcal R}_0}
\def\S{\mathcal{S}}
\def\s{\sigma}
\def\sig{\sigma}
\def\sigc{\check{\sigma}}
\def\sigcb{{\bar {\ckk\sigma}}}
\def\sD{\slashed{\Delta}}
\def\sl{\slashed}
\def\smi{{SobM1}}
\def\smii{{SobM2}}
\def\sn{\slashed{\nabla}}
\def\sob{{Sob}}
\def\t1a{r^{-\frac{1}{b}}}
\def\ti{\tilde}
\def\Tr{\mbox{Tr}}
\def\tr{\mbox{tr}}
\def\tR{{\tilde R}}
\def\tt{{{t'}}}
\def\U{{\mathcal U}}
\def\udb{\underline{\beta}}
\def\und{\underline}
\def\V{{\mathcal V}}
\def\W{{\mathcal W}}
\def\with{\mbox{ with }}
\def\wt{\widetilde}
\def\zb{\underline{\zeta}}
\begin{document}

\title[]
{On Ricci coefficients of null hypersurfaces with time
foliation in Einstein vacuum space-time}

\author{Qian Wang}
\address{Department of Mathematics, Stony Brook University, Stony Brook, NY 11794}
\email{qwang@math.sunysb.edu}
\curraddr{} \email{}

\date{}

\subjclass[2000]{Primary 54C40, 14E20; Secondary 46E25, 20C20}




\begin{abstract}
The main objective of this paper is to control the geometry of
null cones with time foliation in Einstein  vacuum spacetime under
the assumptions of small curvature flux and a weaker condition on
the deformation tensor for $\bT$.
We establish a series of estimates on Ricci coefficients, which
plays a crucial role to prove the improved breakdown criterion in \cite{Qwang1}.
\end{abstract}

\maketitle

\newtheorem{assumption}{Assumption}

\section{\bf Introduction}
\setcounter{equation}{0}

Consider a (3+1)-dimensional Einstein vacuum spacetime $(\bM, \bg)$
foliated by $\Sigma_t$ which are level hypersurfaces of a time
function $t$ monotonically increasing towards the future. Let $\bd$
and $\nab$ denote the covariant differentiations with respect to
$\bg$ and the induced metric $g$ on $\Sigma_t$ respectively.
We define on each $\Sigma_t$ the lapse function $n$ and the second
fundamental form $k$  by
$$
n:=\left(-\bg(\bd t, \bd t)\right)^{1/2} \quad \mbox{and}\quad k(X,
Y):=-\bg (\bd_X \bT, Y),
$$
where $\bT$ denotes the future directed unit normal to $\Sigma_t$
and $X, Y\in T\Sigma_t$. For any coordinate chart ${\mathcal
O}\subset \Sigma_{t_0}$ with coordinates $x=(x^1, x^2, x^3)$, let
$x^0=t, x^1, x^2, x^3$ be the transported coordinates  obtained by
following the integral curves of $\bT$. Under these coordinates the
metric $\bg$ takes the form
\begin{equation}\label{bg}
\bg=-n^2 d t^2+g_{ij}d x^i d x^j, \quad \p_t g_{ij}=-2n k_{ij}.
\end{equation}

\subsection{Main result}

Consider an outgoing null cone contained in $(\bM, \bg)$, whose
vertex is denoted by $p$ and intersections with $\Sigma_t$ are
denoted by $S_t$. The null vector $l_{\omega}$ in $T_p\bM$
parametrized  with $ \omega\in {\Bbb S}^2$, is normalized by
$\bg(l_\omega, \bT_p)=-1$. We denote by $\Gamma_\omega(s)$ the
outgoing null geodesic from $p$ with
$$
\Gamma_\omega(0)=p, \quad \frac{d}{ds}\Gamma_{\omega}(0)=l_\omega
$$
and define the null vector field $L$ by
$$
L(\Ga_\omega(s))=\frac{d}{ds}\Ga_\omega(s).
$$
Then $\bd_L L=0$. The affine parameter $s$ of null geodesic is
chosen such that $s(p)=0$ and $L(s)=1.$ Let $\H=\cup_{0<t\le 1}
S_t$, $t(p)=0$\begin{footnote}{we can always suppose   $t(p)=0$ and
$0\le t\le 1$ on a null cone by a standard rescaling of the
coordinates $(t,x)$ of $(\bM, \bg)$. The normalized time function,
for simplicity, is still denoted by $t$.}
\end{footnote}and suppose the exponential
map $\G_t: \omega\rightarrow \Ga_\omega(s(t))$ is a global
diffeomorphism  from ${\Bbb S}^2$ to $S_t$ for any $t\in(0, 1]$. We
now define a conjugate null vector $\underline{L}$ on $\H$ with
$\bg(L, \underline{L})=-2$ and such that $\underline{L}$ is
orthogonal to the leafs $S_t$. In addition we can choose an
orthonormal frame $(e_A)_{A=1,2}$ tangent to $S_t$ such that
$(e_A)_{A=1,2}$, $e_3=\Lb$, $e_4=L$ form a null frame, i.e.
$$
\bg(L,\Lb)=-2, \,\, \bg(L,L)=\bg(\Lb,\Lb)=\bg(L, e_A)=\bg(\Lb,
e_A)=0, \,\,\bg(e_A, e_B)=\delta_{AB}.
$$
Let $a^{-1}=-\l L, \bT\r$ with $a(p)=1$. It follows that along any
null geodesic $\Ga_\omega$, there holds
\begin{equation}\label{st}
\frac{dt}{ds}=n^{-1}a^{-1},\quad t(p)=0.
\end{equation}

Let  $N$ be the outward unit normal of $S_t$ on $\Sigma_t$. Then
\begin{equation}\label{TN}
\bT=\frac{1}{2}\left(a L +a^{-1}\Lb\right), \quad \quad
N=\frac{1}{2}\left(a L-a^{-1}\Lb\right).
\end{equation}
We define the Ricci coefficients $\chi, \chib, \zeta, \zb,\varpi$
via the frame equations
\begin{eqnarray*}
\bd_A
L=\chi_{AB}e_B-\zeta_A L, &&\quad \bd_A \Lb=\chib_{AB}e_B+\zeta_A \Lb,\\
\bd_L \Lb=2\zb_A e_A, &&\quad  \bd_\Lb L=2\zeta_A e_A -2 \varpi L.
\end{eqnarray*}
Thus we also have
\begin{align*}
\bd_L e_A&=\sn_L e_A +\zb_A L,\quad \bd_B e_A =\sn_B
e_A+\frac{1}{2}\chi_{AB} e_3+\frac{1}{2}\chib_{AB} e_4
\end{align*}
where $\sn$ denotes the covariant derivative restricted on $S_t$.

Let $\lambda=-\frac{1}{3}\Tr k$,  where $\Tr k=g^{ij}k_{ij}$. We
decompose $\hk_{ij}:=k_{ij}+\lambda g_{ij}$, the traceless part of
$k$, relative to the orthonormal frame $\{N, e_A, A=1,2,\}$ along
null cone $\H$ by introducing the following components
\begin{equation}\label{compo}
\e_{AB}=\hk_{AB}\quad \quad \ep_A=\hk_{AN}\quad\quad
\delta=\hk_{NN}.
\end{equation}
Denote by $\eh_{AB}$ the traceless part of $\e$. Since $\delta^{AB}
\e_{AB}=-\delta$, it is easy to see
\begin{equation*}
\eh_{AB}=\e_{AB}+\frac{1}{2}\delta_{AB}\delta.
\end{equation*}
We denote by $\sl{\pi}$  one of the following $S_t$ tangent tensors
$\{\eh, \delta, \ep, \lambda, -\sn \log n, -\nab_N \log n\}$. It is
easy to check by definition that the Ricci coefficients $\zeta,
\zb,\nu$ verify
\begin{align}
&\nu:=-L(a)=-\sn_N\log n+\delta-\lambda \label{c5},\\
&\zeta_A=\sn_A \log a+\ep_A, \quad \zb_A=\sn_A \log
n-\ep_A\label{c4}.
\end{align}
Let us define $\theta_{AB}:=\l \bd_A N, e_B\r$. By definition of
$\chi, \chib$ and (\ref{TN}), it follows that
\begin{align}
&a\chi_{AB}=\theta_{AB}-k_{AB},\quad
a^{-1}\chib_{AB}=-\theta_{AB}-k_{AB}, \label{c2}\\
&a\tr\chi=\tr\theta+\delta+2\lambda,\quad a^{-1}
\tr\chib=-\tr\theta+\delta+2\lambda. \label{m11}
\end{align}

We define the null components of Riemannian curvature tensor
relative to $t$-foliation,
\begin{eqnarray}
\alpha_{AB}=\bR(L, e_A, L, e_B),&& \quad \beta_A=\frac{1}{2}\bR(e_A,
L, \underline{L}, L),\nonumber\\
\rho=\frac{1}{4}\bR(\underline{L},L, \underline{L},L),&&\quad
\sigma=\frac{1}{4}{{}^\star \bR}(\underline{L}, L, \underline{L}, L),\label{f14}\\
\underline{\beta}_A=\frac{1}{2}\bR(e_A, \underline{L},
\underline{L},L),&&\quad  \underline{\a}_{A B}=\bR(\underline{L},
e_A, \underline{L}, e_B).\nonumber
\end{eqnarray}
We define also the mass aspect functions  $\mu$ and
$\underline{\mu}$ as follows
\begin{align}
\mu&=-\frac{1}{2} \bd_3 \tr\chi+\frac{a^2}{4}(\tr\chi)^2-\varpi
\tr\chi, \label{m1}\\
\underline{\mu}&=\bd_4 \tr\chib+\frac{1}{2} \tr\chi\c \tr\chib.
\label{m2}
\end{align}

Denote $\ga_{t}:=\ga(t,\omega)$ the induced metric of $\bg$ on
$S_{t}$, relative to normal coordinates $\omega=(\omega_1,\omega_2)$
in the tangent space at $p$. Define the radius function of $S_t$ to
be $r(t)=\sqrt{(4\pi)^{-1} |S_{t}|}$ and define the metric $\gac$ by
$\gac=r^{-2}\ga$. We denote by $\ga^{(0)}$ the canonical metric on
${\Bbb S}^2$.
 On each
$S_t$ we introduce the ratio of area elements
\begin{equation}\label{vt1}
v_t(\omega):=\frac{\sqrt{|\ga_t|}}{\sqrt{|\ga^{(0)}|}}, \qquad
\omega\in {\Bbb S}^2.
\end{equation}
For smooth scalar functions $f$, the average of $f$ on $S_t$ is
defined by $\bar f:=\frac{1}{|S_t|}\int_{S_t} f d\mu_\ga.$
 For any scalar functions $f$,
\begin{equation*}
\int_{\H} f:=\int_0^1 \int_{S_t} f na d\mu_\ga dt=\int_0^1
\int_{|\omega|=1}na f v_t d\mu_{{\Bbb S}^2} dt.
\end{equation*}
We  define $L_\omega^p$ norm for smooth functions $f$ on $S_t$ by $
\|f\|_{L_\omega^p}^p=\int_{S_t} |f|^p d\mu_{\Bbb S^2}$, and define
$L^2$ norm on null cone $\H$ for any smooth function $f$ by
\begin{equation*}
\|f\|_{L^2(\H)}^2=\int_0^1 \int_{S_t}|f|^2 na d\mu_\ga dt.
\end{equation*}
For simplicity, we will suppress the $\H$ in the definition of norms
on $\H$ whenever there occurs no confusion. Define for any $S_t$
tangent tensor $F$ the norm on $\H$
\begin{equation*}
\N_1(F)= \|\sn_L F\|_{L^2}+\|\sn F\|_{L^2}+\|r^{-1} F\|_{L^2}.
\end{equation*}
 Define $\R(\H)$,  curvature flux on $\H$ relative to
$t$-foliation, by
\begin{equation*}
\R(\H)^2=\int_0^1 \int_{S_t}
an(|\a|^2+|\b|^2+|\rho|^2+|\sigma|^2+|\udb|^2) d \mu_{\ga} dt.
\end{equation*}
For any $S_t$-tangent tensor field $F$ we define the norm
$\|F\|_{L_\omega^\infty L_t^2(\H)}$ by
\begin{equation*}
\|F\|_{L_\omega^\infty L_t^2}^2:=\sup_{\omega\in {\Bbb
S}^2} \int_0^1  |F|^2 na dt:=\sup_{\omega\in {\Bbb
S}^2} \int_{\Gamma_\omega} |F|^2 na dt.
\end{equation*}

The main result of this paper is  the following

\begin{theorem}[{\bf Main Theorem}]\label{T8.1}
Let $({\bold M}, \ggg)$ be a smooth 3+1 Einstein vacuum spacetime
foliated by $\Sigma_t$, the level hypersurfaces of a time function
$t$ with lapse function  $n$. Consider an outgoing null hypersurface
$\H=\cup_{0<t<1}S_t$ in $({\bold M}, \ggg)$ initiating from a point
p, whose leaves are $S_t=\Sigma_t\cap \H$ and $t(p)=0$. Assume $
  C^{-1}<n<
C$ on $\H$ with certain positive constant $C$  and assume that
\begin{equation}\label{cond1}
 \R(\H)+\N_1(\sl{\pi})\le \RR, \mbox{ on } \H
\end{equation}
 with $\RR$  sufficiently
small. Then the following estimates hold true
\begin{align}
\left\|\tr \chi -\frac{2}{s}\right\|_{L^\infty(\H)}&\lesssim
\RR^2,\label{mt0}\\
|a-1|&\le \frac{1}{2},\label{mt7}
\end{align}
\begin{align}
&\left\|\int_0^1 |\chih|^2 na dt \right\|_{L_\omega^\infty}+
\left\|\int_0^1 |\zeta|^2 na
dt \right\|_{L_\omega^\infty}\les \RR^2,\label{mt1}\\
&\left\|\int_0^1 |\zb|^2 na dt \right\|_{L_\omega^\infty}+
\left\|\int_0^1 |\nu|^2 na dt \right\|_{L_\omega^\infty}\les
{\mathcal
R}_0^2,\label{mt2}\\
&\|\sn\tr\chi\|_{\P^0}+\|\mu \|_{\P^0}+\|\sn
\tr\chi\|_{L^2(\H)}+\|\mu\|_{L^2(\H)} \les
{\mathcal R}_0,\label{mt3}\\
 &{\mathcal N}_1(\chih)+{\mathcal
N}_1(\zeta)+{\mathcal N}_1\left(\tr
\chi-\frac{2}{r}\right)+\N_1(\tr\chi-(an)^{-1}\ovl{an\tr\chi})\les {\mathcal R}_0,\label{mt4}\\
&\|\tr\chi-\frac{2}{r}\|_{L_t^2 L_\omega^\infty}+\|\tr\chi-(an)^{-1}\ovl{an\tr\chi}\|_{L_t^2 L_\omega^\infty}\les \RR,\label{mt5}\\
 &\left\|\sup_{t\le 1}|r^{3/2}\sn \tr \chi|\right\|_{L^2_\omega}+
\left\|\sup_{t\le 1}r^{\frac{3}{2}}|\mu|
\right\|_{L^2_\omega}+\left\|r^{1/2}\sn \tr \chi
\right\|_{\B^0}+\|r^{1/2}\mu\|_{\B^0}\les {\mathcal R}_0.\label{mt6}
\end{align}
\end{theorem}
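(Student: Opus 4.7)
The plan is to establish the estimates in a hierarchical bootstrap, starting from the most basic pointwise bounds and progressively upgrading to the sharper $\N_1$, $L_t^2 L_\omega^\infty$, and Besov-type $\B^0$ estimates. I would first introduce bootstrap assumptions for (\ref{mt0}) and (\ref{mt7}), together with weaker versions of (\ref{mt1})--(\ref{mt2}), and show that these can be improved, which closes the argument on a maximal subinterval $[0,t_*]\subset[0,1]$ and forces $t_*=1$.

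The backbone of the proof consists of the null structure equations along $L$. For $\tr\chi$ and $a$, I would use the Raychaudhuri equation schematically of the form $L(\tr\chi)+\frac12(\tr\chi)^2 = -|\chih|^2 - (\text{lower order terms in }\sl{\pi})$ together with (\ref{c5}) for $L(a)=-\nu$, and integrate from the vertex using the initial conditions $r\tr\chi\to 2$ and $a\to 1$ as $s\to 0$. Because the vertex is singular, this step needs delicate expansion: I would write $\tr\chi-\frac{2}{s}$ as the integral of a transported quantity and use $\|\sl\pi\|_{\N_1}\le\RR$ combined with the bootstrap bound on $\|\chih\|_{L_\omega^\infty L^2_t}$ to obtain (\ref{mt0}) and (\ref{mt7}). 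Next, for the $L_\omega^\infty L^2_t$ bounds (\ref{mt1})--(\ref{mt2}), I would combine the transport equation $\sn_L\chih+\tr\chi\,\chih = -\alpha + \sl\pi\cdot(\text{connection})$ with the Codazzi equation $\div\chih = \frac12\sn\tr\chi - \chih\cdot\zeta + \beta + \sl\pi$ to absorb $\int|\chih|^2$ into curvature flux $\R(\H)^2$ plus small errors. The identity (\ref{c4}) directly reduces $\zeta$ and $\zb$ to $\sn\log a$, $\sn\log n$, and $\ep$, which are controlled by $\N_1(\sl\pi)\le\RR$ after also controlling $\sn\log a$ via transport along $L$.

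For (\ref{mt3})--(\ref{mt4}), the main tool is the pair of mass-aspect transport equations (\ref{m1})--(\ref{m2}). Using
\[
L(\tr\chi)+\tfrac12(\tr\chi)^2 = F[\chih,\sl\pi],\qquad \text{and} \qquad \sn_L\mu+\tr\chi\,\mu = \text{curvature}\cdot\text{connection}+\sl\pi\cdot(\cdots),
\]
one derives that $\mu$ and $\sn\tr\chi$ satisfy transport equations whose forcing lies in $L^2(\H)$ by the curvature flux assumption and the bounds on $\chih,\zeta$. Coupling these with the Hodge-type system $\div\chih = \frac12\sn\tr\chi+\beta+\ldots$, $\div\zeta = -\mu+\ldots$ gives elliptic control of $\sn\tr\chi$ and produces the $\N_1$ estimates. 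Here the $\P^0$ norm is the natural space in which transport along $L$ behaves well, and I would exploit the commutator $[\sn_L,\sn]\sim\chi\sn$ together with the bootstrap bounds to close it.

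The main obstacle, and the step I would approach last, is (\ref{mt5})--(\ref{mt6}), which demand uniform-in-$\omega$ trace bounds and Besov-type refinements. These cannot be obtained by pointwise Sobolev embedding on $S_t$ alone, because the curvature flux controls only $L^2$ in $\omega$; instead one has to use sharp trace inequalities along null geodesics, which typically require a geometric Littlewood--Paley decomposition on $S_t$, together with the improved regularity of $\sn\tr\chi$ and $\mu$ from (\ref{mt3})--(\ref{mt4}) and the reduction of $\tr\chi-\frac{2}{r}$ and $\tr\chi-(an)^{-1}\overline{an\tr\chi}$ to quantities satisfying transport equations whose forcing is in $\B^0$. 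The subtraction of the average serves to kill the singular leading behavior, after which sharp Poincaré and trace estimates on $S_t$, combined with Sobolev embedding on $\mathbb{S}^2$ for functions of the form $r^{3/2}\sn\tr\chi$, should yield the remaining estimates. This final step is delicate because the curvature term $\b$ is only in $L^2(\H)$, so one must carefully isolate the good part of $\sn\tr\chi$ that gains an extra half-derivative along null directions.
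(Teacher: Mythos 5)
Your skeleton (bootstrap on $|a-1|$, $\|V\|_{L^\infty}$ and the $L_\omega^\infty L_t^2$ norms, transport along $L$ from the vertex, Hodge systems for $\chih$ and $\zeta$ coupled to $\sn\tr\chi$ and $\mu$, GLP/Besov machinery) matches the paper's architecture, but you have misplaced the central difficulty, and the step you rely on for (\ref{mt1})--(\ref{mt2}) and hence for (\ref{mt7}) fails. You claim that (\ref{c4}) ``directly reduces $\zeta$ and $\zb$ to $\sn\log a$, $\sn\log n$ and $\ep$, which are controlled by $\N_1(\sl{\pi})\le\RR$,'' and similarly that $|a-1|\le\frac12$ follows from the $\sl{\pi}$ bound plus the bootstrap on $\chih$. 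This is exactly the argument the hypothesis (\ref{cond1}) is too weak to support: since $H^1(S_t)\hookrightarrow L^\infty(S_t)$ fails in two dimensions, $\N_1$ control of $\sl{\pi}$ (hence of $\zb$, $\nu$) gives no bound on $\int_{\Gamma_\omega}|\nu|^2\,na\,dt$ along an individual null geodesic, and $|a-1|=|\int_0^t \nu\, na\,dt'|$ cannot be closed without precisely the trace estimate $\|\nu\|_{L_\omega^\infty L_t^2}\les\RR$ of (\ref{mt2}). In other words, (\ref{mt0})--(\ref{mt2}) and (\ref{mt7}) are not the ``easy'' base of the hierarchy; they are coupled to everything else and are where the sharp trace theorem is indispensable. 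Conversely, (\ref{mt5}), which you flag as the hardest step, is actually obtained by comparatively elementary means: a Gagliardo--Nirenberg inequality for mean-zero functions applied to $\os{1/s}$ and $\os{an}$ after an $L^2$ bound on $\sn^2 s$ from the transport equation for $\sn s$.

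The missing idea is the structural decomposition that makes the sharp trace inequality applicable to each of $F=\chih,\zeta,\zb,\nu$: one must exhibit $\sn(anF)=\Dt P+E$ with $\N_1(P)+\|E\|_{\P^0}\les\RR$, and this is where almost all of the work lies. For $\nu$ it comes from the transport equation (\ref{lloga}) for $\sn a$ (so $P=\sn a$); for $\zb$ it requires the refined Hodge system (\ref{ldelta})--(\ref{0.8}) in which $\div\zb$ is written as $L(a\delta+2a\lambda)$ plus controllable terms, followed by commuting $\Dt$ with $\sn\D_1^{-1}$; for $\chih$ and $\zeta$ it requires writing $an\b=\Dt\D_1^{-1}(\ckk\rho,-\ckk\sigma)+E_0$ via the Bianchi equations and then an iteration scheme to decompose $an\b\cdot F$ and $\sn\D^{-1}(an\b\cdot F)$ as $\Dt\bar P+\bar E$, with all the attendant commutator estimates $C_0,C_1,C_2,C_3$ bounded in $\P^0$. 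Your proposal never identifies any of these decompositions, so even granting the sharp trace inequality as a black box there is no way to feed the quantities $\chih,\zeta,\zb,\nu$ into it, and the bootstrap cannot be closed.
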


The Besov norms $\P^0$ and $\B^0$ appearing in the above statement
will be defined by (\ref{g4.5}) and (\ref{g5}).
 Throughout this paper we
will use the notation $A\lesssim B$ to mean $A\le C\cdot B$ for some
appropriate  constant $C$.

It is worthy to remark that (\ref{mt0}) is the most important
estimate in the main theorem and none of the estimates among
 (\ref{mt0})-(\ref{mt5}) can be proved
independent of others. Therefore we have to  prove all of them
simultaneously with a delicate bootstrap argument.

\subsection{Application}

By a standard rescaling argument, see \cite[page 363]{KR2}, the
estimates (\ref{mt0})-(\ref{mt6}) in Theorem \ref{T8.1} can be
rephrased as \cite[Theorem 7, Proposition 14]{Qwang1}, which are the
crucial components in the proof of an improved breakdown criterion
for Einstein vacuum spacetime in CMC gauge, stated as follows
\begin{theorem}\cite[Theorem 1]{Qwang1}\label{thm3}
Let $(\M, \ggg)$ be a globally hyperbolic development of
$\Sigma_{t_0}$ foliated by the CMC level hypersurfaces of a time
function $t<0$. Then the space-time together with the foliation
$\Sigma_t$ can be extended beyond any value $t_*<0$ for which,
\begin{equation}\label{aben1}
\int_{t_0}^{t_*} \left(\|k\|_{L^\infty(\Sigma_t)}+\|\nab \log n\|_{
L^\infty(\Sigma_t)}\right) d t=\k<\infty.
\end{equation}
\end{theorem}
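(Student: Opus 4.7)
The argument is by contradiction, combining a Bel--Robinson energy estimate on $\Sigma_t$ with a parametrix representation of curvature on past null cones whose Ricci geometry is supplied by Theorem \ref{T8.1}. Suppose the space-time cannot be extended past $t_*$. By the standard continuation criterion for Einstein vacuum equations in CMC gauge, this can happen only if some geometric/curvature quantity on $\Sigma_t$ blows up as $t\uparrow t_*$; concretely it suffices to establish uniform bounds on $\|\bR\|_{L^\infty_t L^2(\Sigma_t)}$ together with a pointwise bound on $\bR$ in a neighborhood of $\Sigma_{t_*}$.

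First, the hypothesis (\ref{aben1}) controls the $L^1_t L^\infty_x$ norm of $k$ and $\nab\log n$, i.e.\ of the deformation tensor ${}^{(\bT)}\pi$ of the normal $\bT$, whose only non-trivial components are $k_{ij}$ and $\nab_i\log n$. Integrating the divergence of the Bel--Robinson tensor $Q[\bR]$ contracted with $\bT\otimes\bT\otimes\bT$ between $\Sigma_{t_0}$ and $\Sigma_t$ produces an error term linear in ${}^{(\bT)}\pi$ and quadratic in $\bR$. Gr\"onwall's inequality combined with (\ref{aben1}) then yields
$$\sup_{t\in[t_0,t_*)}\int_{\Sigma_t} Q(\bT,\bT,\bT,\bT)\lesssim e^{C\kp}\int_{\Sigma_{t_0}}Q(\bT,\bT,\bT,\bT),$$
which delivers uniform $L^2$ curvature on slices and, via Stokes' theorem on the region bounded by $\Sigma_t$ and any past null cone $\H$, finite curvature flux $\R(\H)$ on null cones issuing from points near $\Sigma_{t_*}$.

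Next, to upgrade this $L^2$ bound to the pointwise control needed for extension, I would use a geometric Kirchhoff--Sobolev parametrix: at a point $q\in\bM$, $\bR(q)$ is expressed as an integral along the past null cone $\H_q$ of a tensorial expression in $\bR$ and in the Ricci coefficients of $\H_q$. Controlling that representation requires precisely the sharp Ricci coefficient bounds (\ref{mt0})--(\ref{mt6}) provided by Theorem \ref{T8.1}. To bring the Main Theorem to bear one rescales by $(t,x)\mapsto(\lambda t,\lambda x)$ with $\lambda$ small enough that the null cone of interest has $t$-extent one; the Bel--Robinson estimate above makes $\R(\H)$ small under this rescaling, while the absolute continuity of (\ref{aben1}) makes $\|k\|_{L^\infty}+\|\nab\log n\|_{L^\infty}$ small on short time intervals.

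The principal obstacle --- and the reason the Main Theorem is needed in the strong form above --- is to verify the second hypothesis $\N_1(\sl{\pi})\le\RR$ of (\ref{cond1}). Since $\N_1$ requires $L^2(\H)$ control of $\sn_L\sl\pi$, $\sn\sl\pi$ and $r^{-1}\sl\pi$, one must propagate the $L^1_tL^\infty_x$ hypothesis to one tangential derivative along the null cone. This is achieved by combining the transport equations for the components $\eh,\delta,\ep,\lambda$ along $\bT$ (derived from the second variation formula and the vacuum equations), the CMC lapse equation $\Delta n=n|k|^2$ together with elliptic theory on $\Sigma_t$, and the Codazzi--Gauss constraints, exploiting the smallness of $\R(\H)$ after rescaling to close the estimates with a small constant. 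Once $\N_1(\sl\pi)\le\RR$ is in hand, Theorem \ref{T8.1} supplies (\ref{mt0})--(\ref{mt6}), which close the Kirchhoff--Sobolev representation and produce the pointwise $\bR$ bound, contradicting breakdown at $t_*$ and extending the space-time together with its CMC foliation past $t_*$.
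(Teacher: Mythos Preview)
Your global architecture matches the scheme of \cite{KR2} and \cite{Qwang1}: Bel--Robinson energy propagated via Gr\"onwall and (\ref{aben1}), curvature flux on past null cones, a Kirchhoff--Sobolev parametrix for $\bR$, and Ricci-coefficient input from Theorem~\ref{T8.1} to close. Note that the present paper gives no self-contained proof of Theorem~\ref{thm3}; Section~1.2 simply explains that the lapse bound $C^{-1}<n<C$ and the hypothesis (\ref{cond1}) are established in \cite{Qwang1}, with Theorem~\ref{T8.1} then supplying the null-cone geometry that closes the parametrix argument there.

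The weak point is your route to $\N_1(\sl\pi)\le\RR$. You write that absolute continuity of (\ref{aben1}) makes $\|k\|_{L^\infty}+\|\nab\log n\|_{L^\infty}$ small on short time intervals, but the paper stresses exactly the opposite: under the rescaling that normalizes $\H$ to unit $t$-extent, the $L^1_tL^\infty_x$ norm of $k,\nab\log n$ is scale-invariant and remains only $<\k$, which ``fails to be small'' (see the discussion following (\ref{app})); and in any case $L^1_t$ control is too weak for the $L^\infty_\omega L^2_t$ trace estimates on $\nu,\zb$ that drive the Ricci-coefficient analysis. The smallness of $\N_1(\sl\pi)$ is obtained in \cite{Qwang1} not from (\ref{aben1}) via rescaling or absolute continuity, but from an energy estimate for the geometric \emph{wave} equation satisfied by $k$, run inside a bootstrap (\cite[BA1--BA3]{Qwang1}); this gives $H^1$-type control of $k$ on slices, which after the flux argument and rescaling yields $\N_1(\sl\pi)\le\RR$. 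Your transport-plus-elliptic sketch is in the right spirit but misses this wave-equation energy step, and the absolute-continuity remark should be dropped.
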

Under the assumption (\ref{aben1}) only, we have proved in
\cite[Section 3]{Qwang1} that  $C^{-1}<n<C$ with $C$ depending only
on $t^*,\,\k$ and the Bel-Robinson energy on the initial slice
$\Sigma_{t_0}$. The condition (\ref{cond1}) has also been achieved
in \cite[Theorems 5,6]{Qwang1} under the assumption (\ref{aben1})
with the help of a bootstrap argument (see \cite[BA1--BA3]{Qwang1}),
in particular involved with energy estimate for the geometric wave
equation of the second fundamental form $k$.  Therefore we can apply
Theorem \ref{T8.1} to close the proof for Theorem \ref{thm3}
(\cite[Theorem 1]{Qwang1}).

We recall that in \cite{KRradius,KR2} the  estimates  in Theorem
\ref{T8.1} were obtained under the assumption
\begin{equation}\label{aben2}
\sup_{[t_0,t_*)}\left(\|k\|_{L^\infty(\Sigma_t)}+\|\nab\log
n\|_{L^\infty( \Sigma_t)}\right)=\Lambda_0<\infty
\end{equation}
combined with the same assumptions on $\R(\H)$ and $n$, both of
which can be obtained under  (\ref{aben2}) or the weaker assumption
(\ref{aben1}). The key improvement in Theorem \ref{T8.1} lies in
that it relies on the weaker assumption
\begin{equation}\label{n1pi}
 \N_1(\sl{\pi})\le\RR,
 \end{equation}
which comes naturally as a consequence of (\ref{aben1}).

Due to the much weaker assumption (\ref{n1pi}), we no longer can
adopt the approach contained in \cite{KRradiusp,KRradius,KR2} to
control Ricci coefficients and $|a-1|$. For instance, let us
consider the estimate on $|a-1|$.  As required in \cite{KR2} and
\cite{Qwang1}, we need to show that
\begin{equation}\label{a11}
|a-1|\le \f12.
\end{equation}
For $q>1$, an assumption of
\begin{equation}\label{app}
\left(\int_{t_0}^{t_*}(\|k\|_{ L^\infty(\Sigma)}^q+\|\nab\log
n\|_{L^\infty(\Sigma)}^q)\right)^{\frac{1}{q}}<\Lambda_0<\infty
\end{equation}
by rescaling takes the following form on null cone $\H$
\begin{equation*}\|k\|_{L_t^q L^\infty_x(\H)}+\|\nab\log n\|_{L_t^q L^\infty_x(\H)}<\RR,
\end{equation*}
which by (\ref{c5}) and (\ref{c4}) immediately gives
\begin{equation}\label{esp}
\|\nu\|_{L_t^q L_x^\infty(\H)}+\|\zb\|_{L_t^q L_x^\infty(\H)}\les
\RR.
\end{equation}
In view of the definition  $\nu:=-\sn_L a$ and $a(p)=1$, we can
obtain (\ref{a11}) by integrating along any null geodesic
$\Ga_\omega$ as long as  $\RR$ is sufficiently small.

If $q=1$, the above simple argument fails due to the fact that  by
recaling, there holds
\begin{equation*}
\|k\|_{L_t^1 L_x^\infty(\H)}+\|\nab \log n\|_{L_t^1
L_x^\infty(H)}<\k
\end{equation*}
which fails to be small. Thus, it is impossible to derive
(\ref{esp}) immediately from assumption.
 Theorem \ref{T8.1} however provides the trace estimate for $\|\nu\|_{L_x^\infty L_t^2(\H)}$ in (\ref{mt2}) which is strong enough to guarantee  $|a-1|\le \f12$.

As remarked after the statement of Theorem \ref{T8.1}, estimates
such as
\begin{equation*}
\|\zb\|_{L_\omega^\infty L_t^2(\H)}\les \RR \cdots
\end{equation*}
are indispensable for establishing (\ref{mt0}), (\ref{mt1}) and
estimates on $\mu$ and $\sn\tr\chi$, all of which were employed to
prove breakdown criterion in \cite{KR2} and \cite{Qwang1}. The above
estimate for $\zb$ can not directly follow from the assumption
(\ref{app}) with $q<2$.

It is well known that the embedding $H^1(S_t)\hookrightarrow
L^\infty(S_t)$ fails. Therefore the assumption (\ref{n1pi}), which
is a consequence of (\ref{aben1}) by energy estimate, can neither
control $\|\zb,\nu\|_{L_t^2 L_x^\infty}$ immediately nor by Sobolev
embedding. This forces us to estimate the weaker norm
$\|\cdot\|_{L_x^\infty L_t^2(\H)}$, which, according to our
experience, would succeed only when special structures for  $\sn
\zb$ and $\sn \nu$ can be found.

\subsection{Comparison to geodesic foliation}
 Let us draw comparisons between geodesic foliation and time
foliation on null hypersurfaces as follows.

(1) In the case of geodesic foliation (\cite{KR1,Qwang}),  the
complete set of estimates in the main theorem can be obtained under
the small curvature flux only. However in time foliation,
(\ref{cond1}) contains one more assumption (\ref{n1pi}).
 In order to understand the reason for assuming
 (\ref{n1pi}),
let us sketch the approach to derive (\ref{mt2}).
 We will  prove and employ the sharp trace inequality (see Theorem \ref{T1.1}) on null
cones with time foliation, which lied in the heart of
\cite{KR1,KRs,KR4} for null hypersurfaces with geodesic foliation.
To implement this idea,
  it is a must to control
$\N_1(\nu, \zb)$. In view of (\ref{c5}) and (\ref{c4}), $\nu$ and
$\zb$ are combinations of elements  of  $\sl{\pi}$, therefore the
assumption (\ref{n1pi}) guarantees the necessary control on $\nu,
\zb$.


(2) The identity $\zeta+\zb=0$ only  holds on null hypersurface $\H$
with geodesic foliation. Therefore, in the case of time foliation,
the estimates for $\zb$ are no longer identical to those for
$\zeta$. Note that $\zb$ and $\zeta$ are on an equal footing in
structure equations (\ref{s4}) and (\ref{mumu}), we need
$\N_1(\cdot)$ and $\|\cdot \|_{L_x^\infty L_t^2(\H)}$ estimates for
both of them. $\zeta$ will be treated in the same fashion as in
geodesic foliation, while the estimate for $\|\zb\|_{L_x^\infty
L_t^2(\H)}$ requires further study on structure equations. Similar
to (\ref{hdg1}) and (\ref{hdg2}), the quantity $\underline{\mu}$ is
connected to $\zb$ by the Hodge system, (\ref{s5}) and (\ref{0.8}),
$$
\left\{\begin{array}{lll} \div\zeta=-\rhoc-\mu+\cdots\\
\curl\zeta=\ckk \sigma,
\end{array}\right.
\quad\,
\left\{\begin{array}{lll} \div\zb=-\rhoc+\f12\und\mu+\cdots\\
\curl\zb=-\ckk \sigma.
\end{array}\right.
$$
 However
 $\underline{\mu}$ fails to satisfy a similar transport equation as ( \ref{mumu}) for
$\mu$, $\und\mu$ consequently does not verify  $\|\cdot\|_{\P^0}$
estimate as $\mu$,  the treatment of $\zb$ is therefore different
from $\zeta$.  Note that
 if there holds the following decomposition for  $\sn\zb$,
\begin{equation}\label{snz}
\sn\zb=\sn_L P+E
\end{equation}
with $P$ and $E$ satisfying appropriate estimates, we may rely on
the sharp trace inequality to estimate $\|\zb\|_{L_\omega^\infty
L_t^2}$. To obtain the important structure (\ref{snz}), we first
derive a refined Hodge system, (\ref{ldelta}) and (\ref{0.8}),
\begin{equation}\label{hdg3}
\left\{\begin{array}{lll} \div\zb=-\ckk
\rho+L(a\delta+2a\lambda)+\cdots\\
\curl\zb=-\ckk \sigma.
\end{array}\right.
\end{equation}
  The pair of quantities $(\ckk \rho, \ckk\sigma)$ can be decomposed in the same way as contained in \cite{KR1,Qwang}.
 We set $\D_1 :
F\rightarrow (\div F, \curl F)$ for any smooth $S_t$ tangent 1-form
$F$. (\ref{snz})  then will be obtained from (\ref{hdg3})
 by commuting $\sn_L$ with $\sn\D_1^{-1}$.
The new commutator $[\sn_L, \sn \D_1^{-1}](a\delta+2a\lambda)$ will
be decomposed in Proposition \ref{de:er} together with other
commutators arising in control of $\|\chih, \zeta\|_{L_x^\infty
L_t^2(\H)}$.

Recall that in order to control $|a-1|<1/2$ in (\ref{a11}), we need
to estimate $\|\nu\|_{L_\omega^\infty L_t^2}$, which is a quantity
that does not arise in the case of geodesic foliation.  We again
rely on sharp trace inequality to derive $\|\nu\|_{L_\omega^\infty
L_t^2}$, which requires another remarkable structure of the form,
$$\sn \nu=\sn_L P+E.$$ This will be done by deriving
the transport equation for $\sn a$, i.e. (\ref{lloga}),
$$
\sn \nu=-\nab_L(\sn a)-\frac{1}{2}\tr\chi \sn a+\cdots.
$$
To prove sharp trace inequality and to control $P$ and $E$ actually
dominate the article. Further comparison on technical details  will
be made in Section \ref{presob}.

\subsection{Organization of the paper}This paper is organized as follows. In
Section \ref{presob}, we start with providing all the structure
equations and making bootstrap assumptions. Then by using structure
equations (\ref{s1})-(\ref{gauss}) and Sobolev embedding, we
establish a series of  preliminary estimates including weakly
spherical property for the metric $\gac$. In Section \ref{ellp}, we
prove $\|\La^{-\a} \K\|_{L_t^\infty L_x^2}\les \gc$ with $\a\ge 1/2$
and establish a series of elliptic estimates. In Section \ref{relp},
we briefly review the theory of geometric Littlewood Paley
decomposition (GLP) and define Besov norms. We give the equivalence
relation on Besov norms in Proposition \ref{eq1.2} and the reduction
argument in Lemma \ref{correct4} based on the weakly spherical
property for $\gac$.
 With the help of these two arguments, in Section
\ref{strace}, we prove the sharp trace theorem, i.e. Theorem
\ref{T1.1}. In order to obtain the structure required in Theorem
\ref{T1.1}, the commutators involved  are decomposed in Proposition
\ref{de:er}, which is the main purpose of Section \ref{err.e}. In
Section \ref{me}, we estimate $\|\chih, \zb, \zeta,
\nu\|_{L_\omega^\infty L_t^2}$ by using Theorem \ref{T1.1} and
Proposition \ref{de:er}. In Section \ref{ap}, we prove dyadic Sobolev
inequalities and (\ref{ini4}) in Theorem \ref{it0.1}.

\section{\bf Preliminary estimates}\label{presob}
\setcounter{equation}{0}

\subsection{Structure equations}
The proof of Main Theorem relies crucially on the following set of
structure equations. We will prove (\ref{ldelta}) and (\ref{lloga})
and refer the reader to \cite[Chapter 11]{KC} and \cite[Section 2]{KR1} for the derivation of
all other formulae.
\begin{align}
&\frac{d\tr \chi}{ds}+\frac{1}{2}(\tr\chi)^2=-|\chih|^2, \label{s1}\\
&\frac{d\chih_{AB}}{ds}+\tr\chi\chih_{AB}=-\a_{AB}, \label{s2}\\
&\frac{d}{ds} \zeta_A=-\chi_{AB}\zeta_B+\chi_{AB}\zb_B-\b_A, \label{s3}\\
&\frac{d}{ds}\sn \tr\chi+\frac{3}{2}\tr\chi \sn\tr\chi=-\chih\c \sn
\tr\chi-2\chih\c
\sn\chih-(\zeta+\zb)(|\chih|^2+\frac{1}{2}(\tr\chi)^2), \label{s4}\\
&\frac{d}{ds}\tr\chib+\frac{1}{2}\tr\chi\tr\chib=2\div\zb-\chih \c \chibh+2|\zb|^2+2\rho, \label{s5}\\
&\div \chih=\frac{1}{2}\sn \tr \chi +\frac{1}{2}\tr\chi\c\zeta-\chih\cdot \zeta-\beta, \label{0.7}\\
&\curl\zb=\frac{1}{2}\chih\wedge \chibh-\sigma, \label{0.8}\\
&\div\zeta=-\mu-\ckk\rho-|\zeta|^2+\frac{1}{2}a\delta \tr\chi+a\lambda \tr\chi\label{hdg1}\\
&\curl\zeta=\ckk\sigma\label{hdg2}
\end{align}
In what follows, we record null Bianchi equations
\begin{align}
\sn_L \b_A&=\div
\a-2\tr\chi\b_A+(2\zeta_A+\zb_A)\a_{AB}\label{Bia1}\\
L\rhoc+\frac{3}{2}\tr\chi\c \rhoc&=\div
\b+(\zeta+2\zb)\c\b-\frac{1}{2}\chih\c(\sn\hot\zb-\frac{1}{2}\tr\chib\c
\chih+\zb\hot\zb),\label{Bia2}\\
L\sigc+\frac{3}{2}\tr\chi\c\sigc&=-\curl
\b-(\zeta+2\zb)\wedge\b-\frac{1}{2}\chih\wedge(\sn\hot
\zb+\zb\hot\zb),\label{Bia3}\\
\sn_L \udb+\tr\chi\udb&=-\sn \rho+(\sn \sigma)^\star+2\chibh\c
\b-3(\zb\rho-\zb{}^\star\s)\label{Bia4}
\end{align}
where
\begin{align}
\rhoc&=\rho-\frac{1}{2}\chih\c \chibh, \quad
\sigc=\s-\frac{1}{2}\chih\wedge\chibh.
\end{align}
Moreover, there hold
\begin{align}
L(\mu)+\tr\chi\mu &=2\chih\c \sn \zeta+(\zeta-\zb) \c (\sn_A
\tr\chi+\tr\chi\zeta_A)-\frac{1}{2}\tr\chi(\chih\c
\chibh-2\rho+2\zb\c \zeta)\nn\\&\quad\quad+2\zeta\c \chih\c \zeta
+\left(\frac{1}{4}a^2 \tr\chi-a
(\delta+2\lambda)\right)|\chih|^2-\frac{1}{2}a \nu
(\tr\chi)^2\label{mumu},\\
&L(a\delta+2a\lambda)+\frac{3}{2}a \delta \tr\chi=\div \zb+\ckk
\rho+|\zb|^2+a\tr\chi\nab_N\log n\label{ldelta},\\
&\sn_L(\sn a)+\frac{1}{2}\tr\chi \sn a=-\sn \nu-\chih\c \sn
 a -(\zb+\zeta)\c \nu.\label{lloga}
\end{align}
The Gauss curvature $K$ on each $S_t$ verifies
\begin{equation}\label{gauss}
K=-\frac{1}{4}\tr\chi \tr\chib+\frac{1}{2}\chibh\c \chih-\rho.
\end{equation}

The following two commutation formulas hold:

\begin{enumerate}
\item[]
\begin{enumerate}
\item[(i)] For any scalar functions $U$,
\begin{equation}\label{u1}
\frac{d}{ds} \sn_A U+\chi_{AB}\sn_B U=\sn_A F+(\zb+\zeta)F,
\end{equation}
where $F=\frac{d}{ds} U$.

\item[(ii)] For $S_t$ tangent $1$-form $U_A$ satisfying $\frac{d
U_A}{ds}=F_A$, there holds
\begin{equation}\label{u2}
\frac{d}{ds} \div U+\chi_{AB} \sn_A U_B=\div F+(\zeta+\zb)\c F+(\f12
\tr\chi \zb_A-\chih_{AB} \zb_B +\b_A) U_A.
\end{equation}
\end{enumerate}
\end{enumerate}

\begin{proof}[Proof of (\ref{ldelta})]
In view of (\ref{s5}) and (\ref{s1}),
\begin{align*}
L(a^{-1}\tr\chib)&=L(\tr\chib) a^{-1}+\tr\chib L(a^{-1})\\
&=a^{-1}(2 \div \zb+2\rhoc+2|\zb|^2-\f12 \tr\chi\c
\tr\chib)-a^{-2}L(a) \tr\chib,\\
L(a\tr\chi)&=L(\tr\chi) a+\tr\chi
L(a)=-a(|\chih|^2+\f12(\tr\chi)^2)+L(a)\tr\chi
\end{align*}

By using (\ref{m11}),
\begin{align*}
L(2\delta+4\lambda)&=a^{-1}(2\div \zb+2\rhoc+2|\zb|^2)-\f12
\tr\chi(a^{-1}\tr\chib+a\tr\chi)+L\log a(a\tr\chi-a^{-1}\tr\chib)\\
&=a^{-1}2(\div \zb+\rhoc+|\zb|^2)- \tr\chi(\delta+2\lambda)+2 L(\log
a) \tr\theta,
\end{align*}
we can obtain with the help of (\ref{m11}) and (\ref{c5})
\begin{align*}
L(a\delta)&=a L(\delta)+\delta L(a)\\
&= (\div
\zb+\rhoc+|\zb|^2)-\frac{1}{2}a\tr\chi(\delta+2\lambda)+L(a)
(\tr\theta+\delta)-2a L(\lambda)\\
&= \div \zb+\rhoc+|\zb|^2+a\tr\chi(-\frac{3}{2}\delta+\nab_N \log
n)-L(2a\lambda)
\end{align*}
which gives the desired formula.
\end{proof}

\begin{proof}[Proof of (\ref{lloga})]
Recall that by $L(a)=-\nu$, combined with (\ref{u1})
\begin{equation*}
[\sn_L, \sn]a=-\chi\c \sn a-(\zeta+\zb)\nu
\end{equation*}
we may obtain relative to orthornomal frame on $S_{t}$,
\begin{align}
\sn_L \sn_B a+\frac{1}{2}\tr\chi\sn_B a&=-\sn_{B}\nu-\chih_{BC}\c
\sn_C a-(\zeta_B+\zb_B)\nu\nn\end{align}
This completes the proof.
\end{proof}

Let  $\Dt$ denote $\frac{d}{dt}$ along a null geodesics initiating
from $p$. In view of (\ref{st}), $\Dt=an \frac{d}{ds}$.  In
comparison with geodesic foliation, the term $(\zeta+\zb)\sn_L U$ in
(\ref{u1}) is no longer trivial due to $\zeta+\zb\neq 0$. This term
however can be avoided if we consider the commutator $[\Dt, \sn]U$
instead.  Similarly, in view of \cite[Lemma 13.1.2]{KC}, it is
simpler to consider $[\Dt,\sn]F$ than  $[\sn_L, \sn]F$  for
$S$-tangent tensor fields $F$.
\begin{proposition}\label{commu1}
For any smooth scalar function $f$,
\begin{equation}\label{commff}
[\Dt, \sn]f=-an \chi\c \sn f
\end{equation}
In view of (\ref{commff}), (\ref{u2})  and  (\ref{0.7}), there holds
\begin{align*}
[\Dt, \sD]f&=-an tr\chi \sD f-2 an \chih\c \sn^2 f +2an\b\c \sn
f-2an \zb\chih\c \sn f\\
&\quad\, -an \zeta\tr\chi\sn f -an \sn tr\chi \sn f.
\end{align*}
\end{proposition}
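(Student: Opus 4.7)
The plan is to derive (\ref{commff}) as an immediate consequence of the scalar commutation identity (\ref{u1}), and then to bootstrap to the $\sD$-commutator by using $\sD = \div\sn$ together with (\ref{u2}). The single crucial ingredient beyond (\ref{u1})--(\ref{u2}) is the identity $\zeta+\zb = \sn\log a + \sn\log n = \sn\log(an)$, which is immediate from (\ref{c4}); it is exactly what allows $\sn_L$-commutators to be converted into $\Dt$-commutators without leaving any residual $\sn_L f$ or $\sn_L(\sn f)$ terms.

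For part (i), I would apply (\ref{u1}) with $U = f$ (so $F = \sn_L f$), multiply by $an$, and rewrite $an\,\sn_A(\sn_L f) = \sn_A(\Dt f) - \sn_A(an)\,\sn_L f$. This yields
$$[\Dt,\sn_A]f = -an\,\chi_{AB}\sn_B f + \bigl(an(\zeta_A+\zb_A) - \sn_A(an)\bigr)\sn_L f.$$
The identity $\sn(an) = an(\zeta+\zb)$ makes the coefficient of $\sn_L f$ vanish and produces (\ref{commff}).

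For part (ii), I would use the decomposition
$$[\Dt,\sD]f = [\Dt,\div](\sn f) + \div\bigl([\Dt,\sn]f\bigr).$$
The first summand is handled by applying (\ref{u2}) to the 1-form $U = \sn f$, multiplying by $an$, and once again invoking $\sn(an) = an(\zeta+\zb)$ to kill the $\sn_L U$ contributions; the outcome is
$$[\Dt,\div](\sn f) = an\bigl(\tfrac12\tr\chi\,\zb - \chih\cdot\zb + \b\bigr)\cdot\sn f - an\,\chi_{AB}\sn_A\sn_B f.$$
The second summand, $-\div(an\,\chi\cdot\sn f)$, is expanded by Leibniz; here I would use (\ref{0.7}) together with $\div(\tfrac12\tr\chi\,g) = \tfrac12\sn\tr\chi$ to identify $\div\chi = \sn\tr\chi + \tfrac12\tr\chi\,\zeta - \chih\cdot\zeta - \b$, and again $\sn(an) = an(\zeta+\zb)$. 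Splitting $\chi = \tfrac12\tr\chi\,g + \chih$ throughout both summands then exposes the $-an\tr\chi\,\sD f$ and $-2an\,\chih\cdot\sn^2 f$ top-order contributions.

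The remaining work is to check that the lower-order cross terms assemble into the stated formula; this bookkeeping is the main obstacle, though only in the sense that it is delicate rather than deep. Symmetry of $\chih$ cancels the two $\chih\cdot\zeta$ contributions (from $(\zeta+\zb)_A\chih_{AB}\sn_B f$ and from the $-\chih\cdot\zeta$ piece of $\div\chi$), whereas the analogous $\chih\cdot\zb$ contributions instead reinforce into $-2an\,\zb\chih\cdot\sn f$. The $\tfrac12 an\tr\chi\,\zb\cdot\sn f$ piece of $[\Dt,\div](\sn f)$ cancels the $\zb$ half of $-\tfrac12 an\tr\chi(\zeta+\zb)\cdot\sn f$ arising in the second summand, while the two $an\,\b\cdot\sn f$ contributions add to $2an\,\b\cdot\sn f$, and $-an\sn\tr\chi\cdot\sn f$ plus $-an\tr\chi\,\zeta\cdot\sn f$ drop out intact. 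After these sign and symmetry checks the identity follows.
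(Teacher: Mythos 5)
Your argument is correct and is precisely the computation the paper intends: the paper states the proposition without proof, citing only (\ref{u1})--(\ref{u2}) and (\ref{0.7}), and explicitly flags (just before the proposition) that the $(\zeta+\zb)\sn_L U$ term disappears when one passes from $\sn_L$ to $\Dt=an\,\sn_L$ — which is exactly your use of $an(\zeta+\zb)=\sn(an)$ from (\ref{c4}). Your bookkeeping of the cross terms (cancellation of the $\chih\cdot\zeta$ pieces, doubling of the $\chih\cdot\zb$ and $\b$ pieces) checks out against the stated formula.
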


 Combining \cite[P.288]{Qwang} with the comparison formulas in \cite[Section 2]{KR1},  we have
\begin{lemma}\label{inii}
\begin{align*}
&V,\sn a,  r\sn \tr\chi, r^2\mu\rightarrow 0 \mbox{ as }
t\rightarrow 0,\quad \lim_{t\rightarrow 0}\|\chih,
\zeta,\zb,\nu\|_{L^\infty(S_t)}<\infty
\end{align*}
\end{lemma}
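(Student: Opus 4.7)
The approach is to combine two ingredients already available: the asymptotic analysis near the vertex performed for geodesic-foliated null cones in \cite[p.~288]{Qwang}, and the comparison formulas between the time-foliated and geodesic-foliated Ricci coefficients recorded in \cite[Section 2]{KR1}. Conceptually, near the vertex $p$ the null cone is tangent to a Minkowskian light cone, so in the geodesic foliation each of $\chih^g, \zeta^g, \zb^g$ has a finite $L^\infty$ limit while $s\sn\tr\chi^g$ and $s^2\mu^g$ vanish (the Minkowskian values $\tr\chi^g=2/s$ and $\mu^g=0$ being exact in flat space), and the quantity $V$ satisfies the analogous vanishing property recorded in \cite{Qwang}.

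The first step is to pass from the geodesic to the time foliation. Since $a(p)=1$ and $dt/ds=(na)^{-1}$ with $n$ bounded along each $\Ga_\omega$, one has $t\sim s$ and $r(t)\sim t$ near the vertex. The comparison identities express $\chih$, $\zeta$, $\zb$, and $\nu$ as $a$-rescalings and shifts of their geodesic-foliation counterparts plus components drawn from $\sl{\pi}=\{\eh,\delta,\ep,\lambda,-\sn\log n,-\nab_N\log n\}$; the latter are $S_t$-tangent and the hypothesis (\ref{cond1}) together with Sobolev on $S_t$ controls them uniformly up to the vertex. This delivers the boundedness $\lim_{t\to 0}\|\chih,\zeta,\zb,\nu\|_{L^\infty(S_t)}<\infty$, and the $V$ statement follows by the same translation from \cite{Qwang}.

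For the vanishing assertions, the normalization $a(p)=1$ implies $\sn a\to 0$ as $t\to 0$ because $a$ reduces to the constant $1$ at the vertex and $\sn$ is the tangential gradient on $S_t$; alternatively, integrating (\ref{lloga}) from $s=0$ with $r\to 0$ yields the same conclusion. For $r\sn\tr\chi\to 0$, I subtract the Minkowskian singular part: writing $\tr\chi=2/s+\chi_{\rm rem}$ and using (\ref{s4}) integrated from the vertex shows that $\sn\tr\chi$ is bounded near $p$, hence $r\sn\tr\chi\to 0$ since $r\to 0$. For $r^2\mu\to 0$, the defining formula (\ref{m1}) exhibits $\mu$ as the difference of two terms whose $s^{-2}$ singularities cancel in the Minkowskian limit, leaving a remainder which is controlled after translation via $\bd_3\tr\chi=a^{-1}\Dt\tr\chi+\cdots$ and (\ref{s1}); combining these with $r\sim s$ produces the decay $r^2\mu\to 0$.

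The main obstacle is the bookkeeping of the $\sl{\pi}$ contributions under the weak hypothesis (\ref{cond1}): one has to ensure that the extra terms introduced by each comparison formula inherit enough regularity near the vertex not to spoil boundedness or vanishing. This is handled by the $\N_1$ control on $\sl{\pi}$ combined with Sobolev embedding on $S_t$, which in turn rests on the weakly spherical property of $\gac$ established earlier in this section; once those preliminaries are in hand, the limits follow by direct integration along the outgoing null geodesics together with the Minkowskian comparison at $p$.
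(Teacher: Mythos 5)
Your proposal is correct and takes essentially the same route as the paper: the paper's entire justification for Lemma \ref{inii} is the single sentence ``Combining \cite[P.288]{Qwang} with the comparison formulas in \cite[Section 2]{KR1}, we have...'', i.e.\ exactly the combination of vertex asymptotics for the geodesic foliation with the geodesic-to-time-foliation comparison identities that you describe. Your write-up simply supplies more of the local analysis near the vertex than the paper chooses to record.
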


For $S$ tangent tensor fields $F$ on $\H$, we introduce the
following norms. For $1\le p, q\le\infty$ we define the $L_t^q
L_x^p$ norm on $\H$
\begin{equation*}
\|F\|_{L_t^q L_x^p}:=\left(\int_0^1 \left(\int_{|\omega|=1}|F(t,
\omega)|^p na v_td\mu_{{\Bbb S}^2}\right)^{\frac{q}{p}}
dt\right)^{\frac{1}{q}}
\end{equation*}
and  the $L_x^p L_t^\infty$ norm
\begin{equation*}
\|F\|_{L_x^p L_t^\infty}^p :=\int_{{\Bbb S}^2} \sup_{t\in\Ga_\omega}
\left( v_t |F|^p \right) d\mu_{{\Bbb S}^2}.
\end{equation*}

\subsection{Notations and Bootstrap assumptions}\label{boots}

We fix the following conventions
\begin{enumerate}
\item[$\bullet$]$\sl{\pi}$ denotes the collection of $\eh,\,\ep,\,\delta,\, \nab_N\log n, \, \sn\log
n, \,\lambda,$
\item[$\bullet$]  $\iota:=\tr\chi-\frac{2}{r}$,\,$V:=\tr\chi-\frac{2}{s}$, \, $\kappa:=\tr\chi-(an)^{-1}\ovl{an\tr\chi},$
\item[$\bullet$] $A$ denotes the collection of $ \chih,\,\zeta,\, \zb,\, \nu,$
\item[$\bullet$]$\Ab$ denotes the collection of $A$ and  $\chibh,\, \sn\log a, \,\sl{\pi},$
\item[$\bullet$]The pair of quantities $(M, \D_0 M)$ denotes either $ (\sn \tr\chi,\sn \chih),$ or $(\mu,\sn \zeta),$
\item[$\bullet$] $R_0$ denotes the collection of $\a,\,\b,\rho, \sigma, \udb$,
\item[$\bullet$]$\bar R$ denotes the collection of $ R_0, \tr\chi \Ab,
A\c\Ab,$
\item[$\bullet$] $\tR$ denotes the collection of $\bar R$ and $\sn A$,
\item[$\bullet$] $\H_t:=\cup_{t'\in [0,t]} S_{t'}, \mbox{with}\quad 0<t\le
1,$
\item[$\bullet$] $S:=S_t$,\, $\gac:=r^{-2}\ga$,\, $\ga^{(0)}:=\ga_{{\Bbb
S}^2}$, \,$\K:=K-\frac{1}{r^2}$.
\end{enumerate}

\begin{Assumption}
We make the following bootstrap assumption:
\begin{equation*}
 \|V\|_{L^\infty(\H)}\le \Delta_0, \quad \|\chih, \nu,\zeta, \zb \|_{L_\omega^\infty
L_t^2(\H)}\le \Delta_0,\quad |a-1|\le \f12\tag{BA1}
\end{equation*}
where we can assume that $0<\RR<\Delta_0<1/2.$
\end{Assumption}

 The goal is to improve the inequalities in BA1  with
the $\Delta_0$ replaced by $\gc$, and $|a-1|\le \frac{1}{4}$. When
$\RR$ is sufficiently small, $\gc<\frac{1}{2}\Delta_0$ can be
achieved. We will start with deriving estimates for $\N_1(\Ab)$ by
establishing related estimates for  $M=\sn \tr\chi, \mu$, which will
be contained in Propositions \ref{MN1} and \ref{smr}.  Then we prove
that $\kp$ and $\iota$ verify stronger estimates than $A$, which can be
seen in (\ref{kap}) and Proposition \ref{n1tr}. At last we prove
$(S_t, \gac)$ is weakly spherical.

\subsection{Estimates for $\N_1(\Ab)$, $\|r^{1/2} M\|_{L_x^2
L_t^\infty}$ and $\|M\|_{L^2}$}

Recall a few results that have been proved in \cite{Qwang1} and
\cite{KR1,KR4,WangQ}.

\begin{proposition}\cite{Qwang1}\label{cmps}
Under the assumption BA1, there hold
\begin{equation}\label{cmps1}
C^{-1}\le v_t/s^2\le C,\quad C^{-1}<\frac{r}{s}<C,
\end{equation}
where $C$ is a positive  constant.
\end{proposition}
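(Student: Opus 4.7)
The two bounds are proved sequentially, starting with the comparison $s\sim t$. From (\ref{st}) one has $dt/ds=(na)^{-1}$, and by the hypothesis $C^{-1}<n<C$ on $\H$ together with the bootstrap bound $|a-1|\le 1/2$ of BA1, this gives $C^{-1}\le ds/dt\le C$ along every null geodesic $\Ga_\omega$. Integrating from the vertex, where both $s$ and $t$ vanish, yields $C^{-1}\,t \le s(t,\omega)\le C\,t$ uniformly in $\omega\in{\Bbb S}^2$; in particular $s$ is bounded on $\H$.

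The heart of the argument is the first-variation identity for the area element. Working in the coordinates $\omega$ transported by $L$ (so that $L=\p_s$ and $[\p_s,\p_{\omega^A}]=0$), the frame relation $\bd_A L=\chi_{AB}e_B-\zeta_A L$ and the symmetry of $\chi$ give $L(\bg(\p_{\omega^A},\p_{\omega^B}))=2\chi(\p_{\omega^A},\p_{\omega^B})$, hence
\begin{equation*}
L(\log v_t)=\tfrac12\ga^{AB}L(\ga_{AB})=\tr\chi.
\end{equation*}
Substituting $\tr\chi=2/s+V$ this rewrites as $\frac{d}{ds}\log(v_t/s^2)=V$. The initial condition $\log(v_t/s^2)\to 0$ as $s\to 0$ comes from the asymptotic expansion $\ga_t=s^2\,\ga^{(0)}+O(s^3)$ near $p$, which is a direct consequence of the regularity of the exponential map $\G_t$ at the vertex together with the normalization $\bg(l_\omega,\bT_p)=-1$ of the directions $\omega\in{\Bbb S}^2$. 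Integrating along $\Ga_\omega$ and invoking the bootstrap bound $\|V\|_{L^\infty(\H)}\le\Delta_0$ together with the first step,
\begin{equation*}
|\log(v_t/s^2)|\le\int_0^s |V|\,ds'\les \Delta_0,
\end{equation*}
so that $C^{-1}\le v_t/s^2\le C$ after exponentiation.

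The comparison $r\sim s$ is then immediate: by definition and the bounds just obtained,
\begin{equation*}
r(t)^2=\frac{1}{4\pi}\int_{{\Bbb S}^2} v_t\,d\mu_{{\Bbb S}^2}\sim\int_{{\Bbb S}^2} s(t,\omega)^2\,d\mu_{{\Bbb S}^2}\sim t^2\sim s(t,\omega)^2,
\end{equation*}
the last comparison using the first step again. This gives $C^{-1}<r/s<C$ throughout $\H$. The only nonroutine ingredient in the whole argument is the asymptotic expansion of $\ga_t$ at the vertex, which is a local analysis of the null cone at its tip; once granted, everything reduces to a one-line ODE integration whose right-hand side is controlled by BA1, and no Bianchi, Codazzi, or bootstrap improvement is needed here.
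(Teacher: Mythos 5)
Your argument is correct. The paper itself defers the proof of this proposition to \cite{Qwang1}, but your reconstruction is the standard one and is consistent with the ingredients the paper uses elsewhere: the comparison $ds/dt=na\sim 1$ from (\ref{st}), $C^{-1}<n<C$ and $|a-1|\le\f12$; the first-variation identity $\frac{d}{ds}\ga_{ij}=2\chi_{ij}$ (invoked explicitly in the proof of Proposition \ref{l8.1}), which gives $\frac{d}{ds}\log(v_t/s^2)=V$; the vertex asymptotics (cf. (\ref{8.1.1}) and Lemma \ref{inii}); and the bound $\|V\|_{L^\infty}\le\Delta_0$ from BA1. The only point worth making explicit is that the integral $\int_0^s|V|\,ds'\le\Delta_0\, s$ is small because $s\les t\le 1$ by your first step, and that the final comparison $r\sim s$ uses the uniformity in $\omega$ of $s(t,\omega)\sim t$, both of which you have in place.
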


It is easy to derive from $|V|\le\Delta_0$ in BA1 and Proposition
\ref{cmps} that
\begin{equation}\label{comp1}
|s\tr\chi |+ |r\tr\chi|\le C
\end{equation}
and from $|a-1|\le \f12$ in BA1 and $C^{-1}<n<C$ that
\begin{equation}\label{cp}
C^{-1}< an< C,
\end{equation}
with $C$ positive constants.

With the help of Proposition \ref{cmps},  there hold the following
simple inequalities by Sobolev embedding in 2-D slices $S=S_t$.

\begin{enumerate}

\item[$\bullet$]
Let $\os{f}:=f-\bar f$ for any smooth function $f$ on $S$ where
$\bar f=\frac{1}{|S|}\int_{S} f d\mu_\ga,$ there holds the Poincare
inequality
\begin{equation}\label{poin}
\|r^{-1}\os{f}\|_{L^2(S)}\les \|\sn f\|_{L^2(S)}\tag{\bf\poin}.
\end{equation}

\item[$\bullet$]
For a smooth function $\Omega$ on $S$ with vanishing mean, there
holds the following Sobolev inequality (see \cite{KR4})
\begin{equation}\label{gag}
\|\Omega\|_{L^\infty(S)}\les \|\sn^2 \Omega\|_{L^1(S)}+\|\sn
\Omega\|_{L^2(S)}\tag{\bf\gag},
\end{equation}
which  implies
\begin{align}
&r^{-1}\|\Omega\|_{L^\infty(S)}\les \|\sn^2
\Omega\|_{L^2(S)}+r^{-1}\|\sn
\Omega\|_{L^2(S)},\label{ome1}\\
&\|r^{-1} \Omega\|_{L_t^2 L_x^\infty}\les \|\sn^2
\Omega\|_{L^2}+\|r^{-1}\sn \Omega\|_{L^2}.\label{omeg1}
\end{align}

\item[$\bullet$] Let $F$ be a $S$ tangent tensor field, (see
\cite{KR4})
\begin{equation}\label{sob.01}
\|F\|_{L_x^p(S)}\les \|\sn
F\|_{L_x^2(S)}^{1-\frac{2}{p}}\|F\|_{L_x^2(S)}^{\frac{2}{p}}+\|r^{-1+\frac{2}{p}}F\|_{L_x^2(S)},
\with 2<p<\infty\tag{\bf\sob}.
\end{equation}

\item[$\bullet$] Let $F$ be a $S$ tangent tensor field, there hold (see
\cite{KR1,Qwang})
\begin{align}
&\|r^{-1/2} F\|_{L_x^2 L_t^\infty}+\|F\|_{L_x^4
L_t^\infty}+\|F\|_{L^6}\les\N_1(F)\label{sob.m}\tag{\bf\smi},
\end{align}
\begin{equation}\label{sob.m2}
\|r^{-\f12}F\|_{L^\infty}+\|r^{-1}F\|_{L_t^2 L_x^\infty}\les
\N_2(F)\tag{\bf\smii},
\end{equation}
where
\begin{align}\label{5.01.7}
{\mathcal N}_2(F)&:=\|r^{-2}F\|_{L^2}+\|r^{-1}\sn_L
F\|_{L^2}+\|r^{-1}\sn F\|_{L^2} +\|\sn\Dt
F\|_{L^2}+\|\sn^2F\|_{L^2}.
\end{align}
By interpolation,
\begin{equation}\label{sob.in}
\|r^{-\frac{1}{b}} F\|_{L_t^b
L_x^4}+\|r^{-\frac{1}{q}-\frac{1}{2}}F\|_{L_t^q L_x^2}\les \N_1(F),
\with b\ge4,\, q\ge2.
\end{equation}
\end{enumerate}

\begin{lemma}
\begin{equation}\label{vi}
\|V\|_{L^\infty}\les \Delta_0^2
\end{equation}
\end{lemma}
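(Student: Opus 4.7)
The plan is to derive a transport equation for $V$ along the outgoing null geodesics $\Gamma_\omega$, multiply by an integrating factor to obtain a divergence form, integrate from the vertex $p$, and close a bootstrap by leveraging both BA1 quantities: the $L^\infty$ bound on $V$ and the $L_\omega^\infty L_t^2$ bound on $\chih$.

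Concretely, starting from the Raychaudhuri equation \eqref{s1} and subtracting the identity $\frac{d}{ds}(2/s) + \frac{1}{2}(2/s)^2 = 0$, I would obtain
\begin{equation*}
\frac{dV}{ds} + \frac{2}{s}V + \tfrac{1}{2}V^2 = -|\chih|^2.
\end{equation*}
Multiplying by $s^2$ collapses the $\frac{2}{s}V$ term into a perfect derivative:
\begin{equation*}
\frac{d(s^2V)}{ds} = -s^2|\chih|^2 - \tfrac{1}{2}s^2 V^2.
\end{equation*}
Since $V \to 0$ as $s \to 0$ (by smoothness at the vertex $p$, recorded in Lemma~\ref{inii} via $\tr\chi \sim 2/s$ near $p$), integration along $\Gamma_\omega$ from $0$ to $s$ gives
\begin{equation*}
s^2 V(s) = -\int_0^s (s')^2 |\chih|^2\, ds' - \int_0^s \tfrac{1}{2}(s')^2 V^2\, ds'.
\end{equation*}

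For the first integral, I would use the change of variable $ds' = na\, dt'$ from \eqref{st} together with $C^{-1} < an < C$, so that
\begin{equation*}
\int_0^s (s')^2 |\chih|^2\, ds' \le s^2 \int_0^s |\chih|^2\, ds' \lesssim s^2 \int_0^t |\chih|^2 na\, dt' \le s^2\,\|\chih\|_{L_\omega^\infty L_t^2}^2 \lesssim s^2 \Delta_0^2.
\end{equation*}
For the second, the BA1 bound $\|V\|_{L^\infty} \le \Delta_0$ together with $s \lesssim 1$ on $\H$ (from Proposition~\ref{cmps} and $0 < t \le 1$) gives
\begin{equation*}
\int_0^s \tfrac{1}{2}(s')^2 V^2\, ds' \lesssim \Delta_0^2\, s^3 \lesssim s^2 \Delta_0^2.
\end{equation*}
Dividing by $s^2$ then yields $|V(s)| \lesssim \Delta_0^2$ uniformly in $\omega$ and $s$, which is the desired improved estimate.

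There is no real obstacle here: this is a clean ODE bootstrap, and the only mildly delicate point is the integrability down to the vertex, which is handled by the $s^2$ weight (which vanishes at $s=0$ faster than $V$ blows up). The role of BA1 is purely to render the quadratic term $\frac{1}{2}V^2$ small enough to absorb; the genuine driving term is the curvature flux type quantity $\|\chih\|_{L_\omega^\infty L_t^2}^2$, which explains the quadratic gain from $\Delta_0$ to $\Delta_0^2$.
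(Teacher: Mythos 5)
Your proof is correct and is essentially the paper's own argument (the paper only sketches it): integrate the transport equation $\frac{d}{ds}V+\frac{2}{s}V=-\frac{1}{2}V^2-|\chih|^2$ along $\Gamma_\omega$ from the vertex, using Lemma \ref{inii} for the vanishing initial condition, BA1 for $\|\chih\|_{L_\omega^\infty L_t^2}$ and the quadratic term, and Proposition \ref{cmps} together with \eqref{st} for the change of variable. Your write-up with the explicit $s^2$ integrating factor fills in exactly the details the paper leaves implicit.
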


\begin{proof}
This can be obtained by integrating along $\Ga_\omega$  the equation
(\ref{s1}), i.e.
$$
\frac{d}{ds} V+\frac{2}{s} V=-\frac{1}{2} V^2-|\chih|^2
$$
with the help of Proposition \ref{cmps}, Lemma \ref{inii} and
$\|V\|_{L^\infty}+\|\chih\|_{L_\omega^\infty L_t^2}\le \Delta_0$ in
BA1.
\end{proof}

\begin{lemma}\label{tsp2}
For a $S$ tangent tensor field $F$ verifying
\begin{equation}\label{tsp1}
\sn_L F+\frac{p}{2} \tr\chi F= G\c F+ H
\end{equation}
with $p\ge 1$ certain integer, if $\lim_{t\rightarrow 0} r(t)^p F=0$
and $\|G\|_{L_\omega^\infty L_t^2}\les \Delta_0$, then the following
estimate holds
\begin{equation}\label{tran.2}
|F|\les v_t^{-\frac{p}{2}}\int_0^t { v_\tt}^{\frac{p}{2}} |H|na
d\tt.
\end{equation}
\end{lemma}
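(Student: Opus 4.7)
My plan is an integrating-factor argument along each null geodesic $\Gamma_\omega$. First I take the pointwise pairing of (\ref{tsp1}) with $F/|F|$ (using that the projected derivative $\sn_L$ is compatible with $\gamma$ on $S$-tangent fields, so $L|F|^2 = 2\langle \sn_L F,F\rangle$) to produce the scalar transport inequality
\begin{equation*}
L(|F|) + \frac{p}{2}\tr\chi\,|F| \le |G||F| + |H|.
\end{equation*}
Using $L=(na)^{-1}\Dt$ from (\ref{st}), I rewrite this with respect to the $t$-parameter as $\Dt|F| + \frac{p}{2}na\tr\chi\,|F| \le na|G||F| + na|H|$.

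The key observation is that the natural integrating factor is $v_t^{p/2}$. The first variation of area gives $\frac{d}{ds}\log\sqrt{|\gamma|}=\tr\chi$, and the reparametrization $\Dt=na\,\frac{d}{ds}$ then yields $\Dt v_t = na\tr\chi\,v_t$, hence $\Dt(v_t^{p/2})=\frac{p}{2}na\tr\chi\,v_t^{p/2}$. Multiplying the inequality through by $v_t^{p/2}$ converts it into
\begin{equation*}
\Dt\bigl(v_t^{p/2}|F|\bigr) \le na|G|\bigl(v_t^{p/2}|F|\bigr) + v_t^{p/2}na|H|,
\end{equation*}
to which I apply Gronwall on $[0,t]$. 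The boundary contribution at the vertex vanishes because Proposition \ref{cmps} gives $v_t\sim s^2\sim r^2$, so $v_t^{p/2}|F|\les r^p|F|\to 0$ as $t\to 0$ by the hypothesis $\lim_{t\to 0} r^p F=0$.

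To eliminate the Gronwall exponential, I use Cauchy--Schwarz together with $na\les 1$ from (\ref{cp}) and $\|G\|_{L^\infty_\omega L^2_t}\le\Delta_0$:
\begin{equation*}
\int_0^t na|G|\,d\tt \le \Bigl(\int_0^t na\,d\tt\Bigr)^{1/2}\Bigl(\int_0^t na|G|^2\,d\tt\Bigr)^{1/2}\les\Delta_0\le 1,
\end{equation*}
so the exponential factor is absorbed into an absolute constant and (\ref{tran.2}) follows by dividing through by $v_t^{p/2}$. The argument is essentially routine; the only points requiring care are the identity $\Dt v_t=na\tr\chi\,v_t$ and the matching of the vertex condition via Proposition \ref{cmps}, and I do not anticipate a genuine obstacle.
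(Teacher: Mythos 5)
Your proof is correct and is exactly the standard argument that the paper defers to \cite{Qwang1}: pairing with $F/|F|$, using the first variation identity $\Dt v_t = an\,\tr\chi\, v_t$ to produce the integrating factor $v_t^{p/2}$, Gronwall with the exponential controlled by $\|G\|_{L_\omega^\infty L_t^2}\les\Delta_0$, and the vertex condition matched via $v_t\approx r^p$ from Proposition \ref{cmps}. No gaps.
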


We will constantly use the  Hardy-Littlewood inequality for scalar
$f$ on $\H$,
\begin{equation}\label{hlm}
\left\|\frac{1}{s}\int_0^s |f|\right\|_{L_s^2}\les \|f\|_{L_s^2}
\end{equation}
With the help of Lemmas \ref{inii} and \ref{tsp2}, (\ref{s2}),
(\ref{s3}), BA1, (\ref{hlm}) and (\ref{cond1}) we obtain

\begin{lemma}\label{f.01}
\begin{equation}\label{pr.1}
\|r^{-1}\chih, r^{-1}\zeta\|_{L^2}+\|r^{-1/2}\chih,
r^{-1/2}\zeta\|_{L_x^2 L_t^\infty }\les \RR,
\end{equation}
\begin{equation}\label{pr.2}
\|\sn_L \chih,\sn_L \zeta\|_{L^2}\les \RR+\Delta_0^2.
\end{equation}
\end{lemma}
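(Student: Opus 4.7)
The strategy is to apply Lemma~\ref{tsp2} to the transport equations (\ref{s2}) and (\ref{s3}), then convert the resulting pointwise integral bounds into the stated norms through a Hardy-type inequality in $t$. For $\chih$, equation (\ref{s2}) already has the form (\ref{tsp1}) with $p=2$, $G=0$, $H=-\a$; Lemma~\ref{inii} supplies the required behaviour at $t\to 0$, so Lemma~\ref{tsp2} yields $|\chih|\les v_t^{-1}\int_0^t v_{t'}|\a|\,na\,dt'$. For $\zeta$, split $\chi_{AB}=\chih_{AB}+\frac{1}{2}\tr\chi\,\delta_{AB}$ in (\ref{s3}) to rewrite it as
\begin{equation*}
\sn_L\zeta_A+\frac{1}{2}\tr\chi\,\zeta_A=-\chih_{AB}\zeta_B+\chi_{AB}\zb_B-\b_A,
\end{equation*}
which matches (\ref{tsp1}) with $p=1$, $G=-\chih$, $H=\chi\zb-\b$; the hypothesis $\|G\|_{L_\omega^\infty L_t^2}\le\Delta_0$ is part of BA1, so Lemma~\ref{tsp2} gives $|\zeta|\les v_t^{-1/2}\int_0^t v_{t'}^{1/2}|H|\,na\,dt'$.

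Using $v_t\sim s^2\sim r^2$ from Proposition~\ref{cmps} together with (\ref{st}), a Hardy-type inequality of the form $\int_0^1 t^{-2}(\int_0^t t'|F|dt')^2 dt\les \int_0^1 t^2|F|^2 dt$ (proved by integration by parts and one Cauchy--Schwarz, in the spirit of (\ref{hlm})) converts the pointwise bounds into $\|r^{-1}\chih\|_{L^2(\H)}\les\|\a\|_{L^2(\H)}\les\RR$ and $\|r^{-1}\zeta\|_{L^2(\H)}\les\|H\|_{L^2(\H)}$. For the $L^2_x L^\infty_t$ norms, a tailored Cauchy--Schwarz of the form $(\int_0^t (t')^2|\a|dt')^2\le (\int_0^t (t')^2 dt')(\int_0^t (t')^2|\a|^2 dt')$ produces $\sup_t(v_t r^{-1}|\chih|^2)\les \|t\a\|_{L^2_t(\Ga_\omega)}^2$, whose integral over ${\Bbb S}^2$ is precisely $\|\a\|_{L^2(\H)}^2\les\RR^2$; the parallel estimate for $\zeta$ yields the bound by $\|H\|_{L^2(\H)}$.

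It thus remains to show $\|H\|_{L^2(\H)}\les\RR$. We have $\|H\|_{L^2}\le\|\chi\zb\|_{L^2}+\|\b\|_{L^2}$ with $\|\b\|_{L^2}\le\R(\H)\les\RR$. Splitting $\chi=\chih+\frac{1}{2}\tr\chi$ and using $|\tr\chi|\les r^{-1}$ from (\ref{comp1}) reduces the first term to $\|r^{-1}\zb\|_{L^2}+\|\chih\zb\|_{L^2}$. The key observation is that $\zb=\sn\log n-\ep$ belongs to $\sl{\pi}$ by (\ref{c4}), so the hypothesis (\ref{cond1}) supplies $\N_1(\zb)\les\RR$; then the Sobolev embeddings (\ref{sob.in}) and (\ref{sob.m}) yield $\|r^{-1}\zb\|_{L^2(\H)}\les\RR$ and $\|\zb\|_{L^2_x L^\infty_t}\les\RR$. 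Combined with BA1, the product estimate $\|\chih\zb\|_{L^2}\le\|\chih\|_{L^\infty_\omega L^2_t}\|\zb\|_{L^2_x L^\infty_t}\les\Delta_0\RR$ closes (\ref{pr.1}).

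Finally, (\ref{pr.2}) follows by feeding (\ref{pr.1}) back into (\ref{s2}) and (\ref{s3}): writing $\tr\chi=2/s+V$ with $\|V\|_{L^\infty}\les\Delta_0^2$ from (\ref{vi}) and $\|\chih\|_{L^2(\H)}\les\RR$, we obtain $\|\sn_L\chih\|_{L^2}\les\|r^{-1}\chih\|_{L^2}+\Delta_0^2\|\chih\|_{L^2}+\|\a\|_{L^2}\les\RR+\Delta_0^2$, and the analogous treatment of (\ref{s3}) controls $\sn_L\zeta$. The main technical obstacle is the ``large'' term $\frac{1}{2}\tr\chi\,\zb$ inside $H$: the singular factor $\tr\chi\sim 2/s$ would cost a full $\Delta_0$ if only the BA1 bound on $\|\zb\|_{L_\omega^\infty L_t^2}$ were used. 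This loss is overcome precisely by the Sobolev-type control on $\zb$ deduced from the membership $\zb\in\sl{\pi}$ and hypothesis (\ref{cond1}).
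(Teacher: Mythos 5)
Your proposal is correct and follows exactly the route the paper intends: the paper derives Lemma \ref{f.01} from Lemma \ref{tsp2} applied to (\ref{s2}) and (\ref{s3}) (with $p=2,\,G=0,\,H=-\a$ and $p=1,\,G=-\chih,\,H=\chi\zb-\b$ respectively), together with Lemma \ref{inii}, BA1, the Hardy--Littlewood inequality (\ref{hlm}), and the control $\N_1(\zb)\les\RR$ coming from $\zb$ being a combination of elements of $\sl{\pi}$ via (\ref{c4}) and (\ref{cond1}), deferring the details to \cite{Qwang1}. Your identification of $\frac12\tr\chi\,\zb$ as the term requiring the $\sl\pi$-hypothesis rather than BA1 is precisely the point the paper emphasizes.
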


In view of Lemma \ref{f.01}, (\ref{cond1}) and (\ref{sob.m}), by
definition of elements of $\Ab$, we can summarize the estimates for
$\Ab$,

\begin{proposition}\label{propab}
\begin{equation*}
\|r^{-1} \Ab\|_{L^2}+\|r^{-1/2} \Ab\|_{L_x^2 L_t^\infty }+\|\sn_L
\Ab\|_{L^2}\les \Delta_0^2+\RR.
\end{equation*}
\end{proposition}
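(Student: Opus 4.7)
The collection $\Ab$ decomposes as $\Ab=\{\chih,\zeta,\zb,\nu,\chibh,\sn\log a,\sl{\pi}\}$, and the strategy is to estimate each block directly and reduce the remaining ones to algebraic combinations of those already controlled. The bounds for $\chih$ and $\zeta$ are already furnished by Lemma \ref{f.01}. For $\sl{\pi}$, the assumption (\ref{cond1}) asserts $\N_1(\sl{\pi})\le \RR$, which by definition immediately yields $\|r^{-1}\sl{\pi}\|_{L^2}+\|\sn_L\sl{\pi}\|_{L^2}\le\RR$, while the Sobolev trace inequality (\ref{sob.m}) supplies $\|r^{-1/2}\sl{\pi}\|_{L_x^2 L_t^\infty}\les \N_1(\sl{\pi})\le\RR$.

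The identities (\ref{c5}) and (\ref{c4}) express $\nu = -\sn_N\log n+\delta-\lambda$ and $\zb_A = \sn_A\log n-\ep_A$ as pointwise linear combinations of components of $\sl{\pi}$, so $\nu$ and $\zb$ inherit all three estimates. The same identity (\ref{c4}) gives $\sn_A\log a = \zeta_A - \ep_A$, from which the $\|r^{-1}\cdot\|_{L^2}$ and $\|r^{-1/2}\cdot\|_{L_x^2 L_t^\infty}$ norms of $\sn\log a$ reduce to those of $\zeta$ and $\ep$; applying $\sn_L$ to this identity reduces $\|\sn_L(\sn\log a)\|_{L^2}$ to $\|\sn_L\zeta\|_{L^2}+\|\sn_L\ep\|_{L^2}$, both already controlled.

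It remains to handle $\chibh$. Adding the two relations in (\ref{c2}) gives $\chib=-a^2\chi-2ak$; passing to the traceless parts on $S_t$, and recalling that in 2-D the traceless part of $k|_{S_t}$ equals $\eh$, yields
\[
\chibh=-a^2\chih-2a\,\eh.
\]
Since $|a|\les 1$ by BA1, the bounds for $\|r^{-1}\chibh\|_{L^2}$ and $\|r^{-1/2}\chibh\|_{L_x^2 L_t^\infty}$ are immediate from those for $\chih$ (Lemma \ref{f.01}) and $\eh\subset \sl{\pi}$. The only genuinely nonlinear step is $\|\sn_L\chibh\|_{L^2}$: differentiating the algebraic relation above and using $L(a)=-\nu$ produces, besides the linear contributions $a^2\sn_L\chih$ and $a\,\sn_L\eh$ (controlled by Lemma \ref{f.01} and the $\sl{\pi}$ assumption), quadratic terms of the form $\nu\,\chih$ and $\nu\,\eh$. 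I would estimate these by H\"older in the mixed norms,
\[
\|\nu\, F\|_{L^2}^2 \;=\; \int_{\mathbb{S}^2}\!\int_0^1 |\nu|^2|F|^2 na\,v_t\,dt\,d\mu_{\mathbb{S}^2}\;\le\; \|\nu\|_{L_\omega^\infty L_t^2}^2\,\|F\|_{L_x^2 L_t^\infty}^2,
\]
so that BA1 gives $\|\nu\|_{L_\omega^\infty L_t^2}\le \Delta_0$, while $\|F\|_{L_x^2 L_t^\infty}\les \|r^{-1/2}F\|_{L_x^2 L_t^\infty}\les \RR$ (using $r\les 1$ on $\H$ via Proposition \ref{cmps}) yields a bound $\les \Delta_0\RR \les \Delta_0^2+\RR$.

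The main (and rather mild) obstacle is precisely this mixed-norm product estimate for $\sn_L\chibh$; everything else is algebraic reduction to the bounds already in hand from Lemma \ref{f.01} and assumption (\ref{cond1}).
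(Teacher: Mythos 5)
Your proposal is correct and follows the same route the paper indicates: the bounds for $\chih,\zeta$ come from Lemma \ref{f.01}, those for $\sl{\pi}$ from (\ref{cond1}) together with (\ref{sob.m}), and the remaining elements of $\Ab$ (including $\chibh$ via (\ref{c2}), with the quadratic $\nu\cdot(\chih,\eh)$ terms absorbed by the mixed-norm H\"older estimate and BA1) are algebraic combinations of these — the paper itself only sketches this and defers details to \cite{Qwang1}. No gaps.
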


For the proofs of Lemmas \ref{tsp2} and \ref{f.01} and Proposition
\ref{propab}, see \cite{Qwang1}.

With the help of  Proposition \ref{propab} and Lemma \ref{tsp2}, we
prove the following result under the assumption of BA1.

\begin{lemma}
\begin{equation}\label{eq4}
\|\sn\log s\|_{L_\omega^\infty L_t^2} \les\Delta_0,
\end{equation}
\begin{equation}\label{eq5}
\|\sn\log s\|_{L_t^2 L_\omega^2}+\|s^{1/2} \sn\log s\|_{L_\omega^2
L_t^\infty}\les \gc,
\end{equation}
\begin{equation}\label{eq6}
\|\os{\frac{1}{s}}\|_{L_t^2 L_\omega^2}+\|s^\f12
\os{\frac{1}{s}}\|_{L_t^\infty L_\omega^2}\les \gc,
\end{equation}
\begin{equation}\label{eq.7}
\|\kp\|_{L_t^2 L_\omega^2}+\|r^{1/2}\kp\|_{L_t^\infty
L_\omega^2}\les\gc.
\end{equation}
\end{lemma}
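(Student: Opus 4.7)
The proof is built on a transport equation for $\sn s$ as a 1-form on each $S_t$. Applying the commutation formula (\ref{u1}) with $U=s$ and $F=L(s)=1$ (so $\sn F=0$) gives
\[
\sn_L(\sn s)_A + \tfrac{1}{2}\tr\chi\,(\sn s)_A = -\chih_{AB}(\sn s)_B + (\zb+\zeta)_A.
\]
Granting the initial condition $\lim_{t\to 0} r\,\sn s=0$ (established along the lines of Lemma \ref{inii}), the bootstrap bound $\|\chih\|_{L_\omega^\infty L_t^2}\le\Delta_0$ of BA1, Lemma \ref{tsp2} with $p=1$, Proposition \ref{cmps}, and the identity $na\,dt=ds$ combine to yield the pointwise bound
\[
|\sn\log s|(t,\omega)\les s^{-2}\int_0^{s(t,\omega)} s'|\zb+\zeta|(s',\omega)\,ds'.
\]
All four displayed inequalities will be extracted from this estimate by different Cauchy--Schwarz manipulations.

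For (\ref{eq4}) and the $L_\omega^2 L_t^\infty$ half of (\ref{eq5}), the bound $\int_0^s s'f\,ds'\le (s^3/3)^{1/2}\|f\|_{L^2(0,s)}$ with $f=|\zb+\zeta|$ gives $s^{1/2}|\sn\log s|\les\|\zb+\zeta\|_{L^2(0,s)}$. Squaring and integrating in $t$ for each fixed $\omega$ produces (\ref{eq4}) via BA1, while taking $\sup_t$ first and then integrating in $\omega$ yields $\|s^{1/2}\sn\log s\|_{L_\omega^2 L_t^\infty}^2\les \int\int_0^1 |\zb+\zeta|^2\,dt\,d\omega\sim \|r^{-1}(\zb+\zeta)\|_{L^2(\H)}^2\les \gc^2$ by Proposition \ref{propab}. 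For the $L_t^2 L_\omega^2$ half of (\ref{eq5}), use the sharper Cauchy--Schwarz $(\int_0^s s'f)^2\le s\int_0^s s'^2 f^2$ to get $|\sn\log s|^2\les s^{-3}\int_0^s s'^2 f^2\,ds'$; swapping the order of integration in $s\in(0,s(1))$ produces $\int_0^{s(1)}|\sn\log s|^2\,ds\les \int_0^{s(1)} f^2\,ds'$, and integrating in $\omega$ completes the estimate.

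For (\ref{eq6}), Poincar\'e (\ref{poin}) applied to $1/s$ with $\sn(1/s)=-\sn\log s/s$ and $s\sim r$ by (\ref{cmps1}) gives $\|\os{1/s}\|_{L^2(S_t)}\les\|\sn\log s\|_{L^2(S_t)}$; converting $L^2(S_t)$ to $L_\omega^2$ via $d\mu_\ga\sim s^2\,d\omega$ yields $\|\os{1/s}\|_{L_\omega^2}\les\|\sn\log s\|_{L_\omega^2}$ and $\|s^{1/2}\os{1/s}\|_{L_\omega^2}\les\|s^{1/2}\sn\log s\|_{L_\omega^2}$ pointwise in $t$, from which (\ref{eq6}) follows from (\ref{eq5}) combined with the elementary inequality $\|\cdot\|_{L_t^\infty L_\omega^2}\le\|\cdot\|_{L_\omega^2 L_t^\infty}$. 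For (\ref{eq7}), decompose $\kp = [V-(an)^{-1}\overline{anV}] + \Phi$ with $\Phi = 2/s - 2(an)^{-1}\overline{an/s}$. The bracketed term is bounded pointwise by $\|V\|_{L^\infty}\les\Delta_0^2\les\gc$ from (\ref{vi}). For $\Phi$ one has $an\,\Phi = 2\os{an/s}$, and Poincar\'e combined with $\sn(an/s) = s^{-1}\sn(an) - s^{-1}(an)\sn\log s$ gives $\|\Phi\|_{L_\omega^2}\les \|\sn(an)\|_{L_\omega^2}+\|\sn\log s\|_{L_\omega^2}$; since $\sn\log a,\sn\log n\in\Ab$, Proposition \ref{propab} yields $\|\sn(an)\|_{L_t^2 L_\omega^2}\les\gc$ and $\|r^{1/2}\sn(an)\|_{L_\omega^2 L_t^\infty}\les\gc$, which combined with (\ref{eq5}) complete (\ref{eq7}).

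The main technical subtlety is selecting the appropriate Cauchy--Schwarz in the integral representation of $\sn\log s$: the dichotomy between $(\int s'f)^2\le s\int s'^2 f^2$ for $L_t^2$-type bounds and $(\int s'f)^2\le (s^3/3)\int f^2$ for $L_t^\infty$-type bounds is precisely what absorbs the $s^{-2}$ weight and reproduces the $r^{-1}$ gain supplied by Proposition \ref{propab}.
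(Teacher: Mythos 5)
Your overall route is the paper's: derive the transport equation for $\sn s$ from (\ref{u1}), integrate it via Lemma \ref{tsp2} to obtain the pointwise bound $|\sn\log s|\les s^{-2}\int_0^s s'|\zeta+\zb|\,ds'$, deduce (\ref{eq4})--(\ref{eq5}) from weighted one-dimensional inequalities, get (\ref{eq6}) by Poincar\'e applied to $1/s$, and get (\ref{eq.7}) by splitting $\kp$ into a $V$-part controlled by (\ref{vi}) and an oscillation part controlled by Poincar\'e together with $\sn\log(an)=\zeta+\zb$ and Proposition \ref{propab}. Your grouping $\kp=[V-(an)^{-1}\ovl{anV}]+2(an)^{-1}\os{an/s}$ is a slightly cleaner algebraic rearrangement of the paper's (\ref{kp}) and works equally well.

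There is, however, a concrete misstep in your derivation of (\ref{eq4}). The Cauchy--Schwarz $\int_0^s s'f\,ds'\le (s^3/3)^{1/2}\|f\|_{L^2(0,s)}$ only yields $s^{1/2}|\sn\log s|\les\|\zeta+\zb\|_{L^2(0,s)}$, i.e.\ the \emph{weighted} estimate $\|s^{1/2}\sn\log s\|_{L_\omega^\infty L_t^2}\les\Delta_0$; trying to square and integrate the unweighted quantity gives $\int_0^{s(1)}s^{-1}\|f\|_{L^2(0,s)}^2\,ds=\int_0^{s(1)}f(\sigma)^2\log(s(1)/\sigma)\,d\sigma$, which is not controlled by $\|f\|_{L^2}^2$. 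By your own dichotomy in the closing paragraph, (\ref{eq4}) is an $L_t^2$-type bound, so it requires the other inequality $(\int_0^s s'f)^2\le s\int_0^s s'^2f^2$, which after Fubini gives $\int_0^{s(1)}|\sn\log s|^2\,ds\les\int_0^{s(1)}f^2\,ds$ pointwise in $\omega$ --- this is precisely the Hardy--Littlewood inequality (\ref{hlm}) that the paper invokes at this step. With that substitution (\ref{eq4}) follows from BA1, and the remainder of your argument stands.
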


\begin{proof}
Apply (\ref{u1}) to $U=s$, we can derive the transport equation
\begin{equation}\label{trans.s1}
\frac{d}{ds} \sn_A(s)+\frac{1}{2}\tr\chi \sn_A(s)=-\chih_{AB}\sn_B
(s)+\zeta_A+\zb_A.
\end{equation}
Note that $e_A(s)\rightarrow 0, \mbox{ as } t\rightarrow
0$,\begin{footnote}{This initial condition can be easily checked by
using the comparison formulas in \cite[Section 2]{KR1}.
}\end{footnote} in view of Lemma \ref{tsp2} with $G=\chih$ and BA1,
we can derive by integrating along a null geodesic $\Ga_\omega$
initiating from vertex,
\begin{equation}\label{eq.6}
|s^{-1}\sn(s)(t)|\les\frac{1}{s} v_t^{-\f12}\int_0^{t} v_{t'}^\f12
|\zeta+\zb| na dt'.
\end{equation}
Taking $L_t^2$ norm first then $L_\omega^\infty({\Bbb S}^2)$, with
the help of BA1,
\begin{equation*}
\|s^{-1}\sn(s)\|_{L_\omega^\infty L_t^2}\les
\|\zeta+\zb\|_{L_\omega^\infty L_t^2}\les \Delta_0
\end{equation*}
which gives (\ref{eq4}).

By taking $L_t^2$ norm first then $L_\omega^2({\Bbb S}^2)$, we can
obtain from (\ref{eq.6}) by using (\ref{hlm}) that
\begin{equation*}
\|s^{-1}\sn(s)\|_{L_\omega^2 L_t^2}\les
\|r^{-1}(\zeta+\zb)\|_{L^2}\les \gc,
\end{equation*}
where for the last inequality we employed
$\|r^{-1}\Ab\|_{L^2}\les\gc$ in  Proposition \ref{propab}. Similarly
\begin{equation*}
\|s^{-1/2}\sn  s\|_{L_\omega^2 L_t^\infty}\les\gc.
\end{equation*}
Hence (\ref{eq5}) is proved.

Applying (\ref{poin}) to $f=\frac{1}{s}$, (\ref{eq6}) follows as a
consequence of (\ref{eq5}).

According to definition, we can derive
\begin{align}
\kp&=\tr\chi-\frac{2}{s}-(an)^{-1}\ovl{an(\tr\chi-\frac{2}{s})}+\frac{2}{s}(1-(an)^{-1}\ovl{an})\nn\\
&+2\os{\frac{1}{s}}(an)^{-1}\ovl{an}-2(an)^{-1}\ovl{s^{-1}\os{an}}\label{kp}.
\end{align}
By (\ref{poin}), (\ref{cp}) and also in view of  Propositions
\ref{cmps} and \ref{propab}, we obtain
\begin{equation}
\|s^{-1}\os{an}\|_{L_t^2 L_\omega^2}\les \|\sn\log (an)\|_{L_t^2
L_\omega^2}\les\|r^{-1}(\zeta+\zb)\|_{L^2(\H)}\les\gc\label{osc.3},
\end{equation}
and similarly
\begin{equation}
\|s^{-\f12}\os{an} \|_{L_t^\infty L_\omega^2}\les
\|r^{\f12}(\zeta+\zb)\|_{L_t^\infty L_\omega^2}\les\gc.\label{osc.4}
\end{equation}
 Using (\ref{cp}) and (\ref{osc.3}),  the last term in (\ref{kp}) can be
estimated as follows
\begin{align}
\|(an)^{-1}\ovl{s^{-1}\os{an}}\|_{L_t^2}&\les
\|s^{-1}\os{an}\|_{L_t^2 L_\omega^1}\les\gc\label{lkp}.
\end{align}
Combining (\ref{osc.3}), (\ref{lkp}) and (\ref{vi}), we obtain
\begin{equation*}
\|r^{-1}\kp\|_{L^2}\les \|r^{-1}\os{\frac{1}{s}}\|_{L^2}+\gc\les
\gc.
\end{equation*}
where, for the last inequality, we employed (\ref{eq6}).

By  (\ref{eq6}), (\ref{osc.4}) and (\ref{vi}), we can get
\begin{align*}
\|r^{\f12}\kp\|_{L_t^\infty L_\omega^2}&\le\|r^{\f12}
\os{\frac{1}{s}}\|_{L_t^\infty L_\omega^2}+\|(an)^{-1}
r^{-\f12}\os{an}\|_{L_t^\infty
L_\omega^1}+|V|\\&+\|s^{-\f12}((\ovl{an})^{-1}an-1)\|_{L_t^\infty
L_\omega^2}\les\gc.
\end{align*}
The proof is complete.
\end{proof}

\begin{lemma}
\begin{align}
\left\|\ovl{\tr\chi}-\frac{2}{r}\right\|_{L_t^2} &\les\gc, \label{eq8}\\
\left\|\tr\chi-\frac{2}{r}\right\|_{L_t^2 L_\omega^2} &\les
\gc.\label{eq.9}
\end{align}
\end{lemma}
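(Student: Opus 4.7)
The plan is to reduce both (\ref{eq8}) and (\ref{eq.9}) to the single scalar bound $|\psi|\les\gc$, where $\psi(t):=\ovl{1/s}(t) - 1/r(t)$. Since $\tr\chi = V + 2/s$ and $r$ depends only on $t$,
\[
\tr\chi - \frac{2}{r} = V + 2\,\os{\frac{1}{s}} + 2\psi, \qquad \ovl{\tr\chi} - \frac{2}{r} = \ovl V + 2\psi.
\]
Combined with $\|V\|_{L^\infty(\H)}\les\Delta_0^2\les\gc$ from (\ref{vi}) and $\|\os{1/s}\|_{L^2_t L^2_\omega}\les\gc$ from (\ref{eq6}), both target estimates follow once $\|\psi\|_{L^2_t}\les\gc$; I will prove the stronger uniform bound $|\psi(t)|\les\gc$.

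Set $\tilde v := v_t/s^2$. From $\partial_t v_t = na\tr\chi v_t$ and $\partial_t s = na$ along each null geodesic, $\tilde v$ satisfies $\partial_t \tilde v = naV\tilde v$ with $\tilde v|_{t=0}=1$, so Gronwall together with (\ref{vi}) gives $|\tilde v - 1|\les\Delta_0^2 t \les\Delta_0^2 r$ uniformly (using $t\sim r$, which follows from BA1 and $C^{-1}\le na\le C$). Abbreviating $\langle f\rangle := (4\pi)^{-1}\int_{{\Bbb S}^2}f\,d\mu_{{\Bbb S}^2}$, the identities $4\pi r^2 = \int s^2\tilde v\,d\mu_{{\Bbb S}^2}$ and $\int s^{-1}\,d\mu_\gamma = \int s\tilde v\,d\mu_{{\Bbb S}^2}$ yield $r^2 = \langle s^2\tilde v\rangle$ and $\ovl{1/s} = \langle s\tilde v\rangle/r^2$, so
\[
\psi = \frac{(\langle s\rangle - r) + \langle s(\tilde v - 1)\rangle}{r^2}.
\]

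The second piece is routine: $|\langle s(\tilde v - 1)\rangle|/r^2 \les \langle s\rangle\cdot\Delta_0^2 r/r^2 \les \Delta_0^2$. The main obstacle is bounding $|\langle s\rangle - r|/r^2$. Expanding $r^2 - \langle s\rangle^2 = \langle(s - \langle s\rangle)^2\rangle + \langle s^2(\tilde v - 1)\rangle$ and dividing by $r + \langle s\rangle\sim r$ gives $|r - \langle s\rangle|\les r^{-1}(\|s - \langle s\rangle\|_{L^2_\omega}^2 + \Delta_0^2 r^3)$. Since $\langle s\rangle$ is the $L^2_\omega$-projection of $s$ onto constants, $\|s - \langle s\rangle\|_{L^2_\omega}\le\|s - \ovl s\|_{L^2_\omega}$; the Poincar\'e inequality (\ref{poin}) on $(S_t,\gamma)$ with $|\sn s| = s|\sn\log s|$ and the comparison $v_t\sim r^2$ from Proposition \ref{cmps} then yields $\|s - \ovl s\|_{L^2_\omega}^2\les r^4\|\sn\log s\|_{L^2_\omega}^2$. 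The critical input from (\ref{eq5}) is the pointwise bound $r\|\sn\log s\|_{L^2_\omega}^2\les\gc^2$ uniformly in $t$: setting $g(\omega):=\sup_t(s^{1/2}|\sn\log s|)(t,\omega)$, the $L^2_\omega L^\infty_t$ component of (\ref{eq5}) gives $\|g\|_{L^2_\omega}\les\gc$, and then $|\sn\log s|\le g/\sqrt s$ combined with $s\sim r$ yields the claim. Collecting, $\|s - \langle s\rangle\|_{L^2_\omega}^2\les r^3\gc^2$, hence $|r - \langle s\rangle|/r^2\les\gc^2 + \Delta_0^2\les\gc$ uniformly, completing the proof of $|\psi|\les\gc$ and thus of (\ref{eq8}) and (\ref{eq.9}).
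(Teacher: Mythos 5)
Your argument is correct in substance but follows a genuinely different route from the paper. The paper never touches $\ovl{1/s}-1/r$ directly: it derives the transport equation (\ref{trans.as1}) for $r(\ovl{\tr\chi}-\frac{2}{r})$ from (\ref{dr}) and (\ref{s1}), integrates from the vertex, and applies the Hardy--Littlewood inequality (\ref{hlm}) together with $\|r\tr\chi\,\kp\|_{L_t^2L_\omega^1}\les\gc$ and $\|r|\chih|^2\|_{L_t^2 L_\omega^1}\les\gc$; then (\ref{eq.9}) follows from the splitting (\ref{eq9.1}), which is the same algebraic decomposition as yours. Your replacement of the transport step by the exact identities $r^2=\langle s^2\tilde v\rangle$ and $\ovl{1/s}=\langle s\tilde v\rangle/r^2$, a Gronwall bound on $\tilde v-1$, and the variance/Poincar\'e estimate for $s-\langle s\rangle$ is more elementary (it avoids (\ref{hlm}) and the $\kp$ estimate (\ref{eq.7})) and delivers the stronger pointwise-in-$t$ bound $|\ovl{\tr\chi}-\frac{2}{r}|\les\gc$, at the price of invoking the $L_t^\infty$ component of (\ref{eq5}) rather than only its $L_t^2$ component.

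One technical point needs repair. The paper's $L_x^2L_t^\infty$ norm carries the weight $v_t\sim r^2$ inside the supremum, so the $L_\omega^2L_t^\infty$ part of (\ref{eq5}) controls $\sup_t\bigl(r^3\|\sn\log s\|_{L_\omega^2}^2\bigr)$ rather than $\sup_t\bigl(r\|\sn\log s\|_{L_\omega^2}^2\bigr)$, which is what your function $g$ presupposes; the two differ by $r^2$, and that matters near the vertex where $r\to 0$. The bound you actually need, $r\|\sn\log s(t)\|_{L_\omega^2}^2\les(\gc)^2$ uniformly in $t$, is nevertheless true and comes from the same source as (\ref{eq5}): starting from (\ref{eq.6}) and applying Cauchy--Schwarz in $t'$ gives $|\sn\log s|\les s^{-1/2}\bigl(\int_0^1|\zeta+\zb|^2na\,dt'\bigr)^{1/2}$, and integrating over ${\Bbb S}^2$ yields $r\|\sn\log s\|_{L_\omega^2}^2\les\|r^{-1}(\zeta+\zb)\|_{L^2(\H)}^2\les(\gc)^2$ by Proposition \ref{propab}. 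With that substitution the rest of your computation closes, and both (\ref{eq8}) and (\ref{eq.9}) follow.
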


\begin{proof}
We can derive  the transport equation
\begin{equation}\label{trans.as1}
\frac{d}{ds}(r(\ovl{tr\chi}-\frac{2}{r}))=(an)^{-1}\frac{r}{2}\ovl{an
\tr\chi\kp}-r(an)^{-1}\ovl{an|\chih|^2}.
\end{equation}
by combining
\begin{equation}\label{dr}
\frac{d}{ds}r=(an)^{-1}\frac{r}{2}\ovl{an\tr\chi}.
\end{equation}
 with
\begin{equation*}
\frac{d}{ds}\ovl{\tr\chi}=-(an)^{-1}\ovl{an
\tr\chi}\c\ovl{\tr\chi}+(an)^{-1}\ovl{an(\frac{1}{2}(\tr\chi)^2-|\chih|^2)}
\end{equation*}
which can be checked in view of the definition of $\ovl{tr\chi}$ and
(\ref{s1}).

Integrate (\ref{trans.as1}) in $t$ in view of
$r\ovl{\tr\chi}-2\rightarrow 0$ as $t\rightarrow 0$,
\begin{align*}
|\ovl{\tr\chi}-\frac{2}{r}|&\les \frac{1}{r}\int_0^{s(t)}
\{\left|(an)^{-1}\frac{r}{2} \ovl{an\tr\chi\kp}\right|+ r(an)^{-1}
\ovl{an|\chih|^2}\}d s(t').
\end{align*}
In view of (\ref{cp}),  taking $L_t^2$ with the help of (\ref{hlm})
yields
\begin{equation}
\|\ovl{\tr\chi}-\frac{2}{r}\|_{L_t^2}\les\|r\kp\tr\chi\|_{L_t^2
L_\omega^1}+\|r|\chih|^2\|_{L_t^2 L_\omega^1}\label{eq.10}.
\end{equation}
By (\ref{vi}) and (\ref{eq.7}), we can obtain
\begin{align}
\|r\tr\chi\kp\|_{L_t^2 L_\omega^1} &\les \|r V\kp\|_{L_t^2
L_\omega^1}+\|2\frac{r}{s}\kp\|_{L_t^2 L_\omega^1}\les
(\Delta_0^2+1)\|\kp\|_{L_t^2 L_\omega^1}\les\gc.\label{eq.11}
\end{align}
By  Proposition \ref{propab}, we have
\begin{equation}\label{chav}
\|r|\chih|^2\|_{L_t^2 L_\omega^1}\les \|r^{1/2} \chih\|_{L_t^\infty
L_\omega^2}\|r^{1/2}\chih\|_{L_t^2 L_\omega^2}\les\gc.
\end{equation}
(\ref{eq8}) then follows by connecting (\ref{eq.10}), (\ref{eq.11})
and (\ref{chav}).

Note that it is straightforward to have
\begin{equation}\label{eq9.1}
\tr\chi-\frac{2}{r}=V-\ovl{V}+2\os{\frac{1}{s}}+\ovl{\tr\chi}-\frac{2}{r},
\end{equation}
hence
\begin{equation*}
\|\tr\chi-\frac{2}{r}\|_{L_t^2 L_\omega^2}\les
\|V\|_{L^\infty}+\|\os{\frac{1}{s}}\|_{L_t^2
L_\omega^2}+\|\ovl{tr\chi}-\frac{2}{r}\|_{L_t^2}
\end{equation*}
which implies (\ref{eq.9}) with the help of (\ref{vi}), (\ref{eq6})
and (\ref{eq8}).
\end{proof}

\begin{lemma}
Denote by  $\bar R$ one of the quantities, $R_0, \,\tr\chi \Ab,
  \, A\c\Ab$, there holds
\begin{equation}\label{simp1}
\|\bar R\|_{L^2(\H_t)}\les \Delta_0^2+\RR.
\end{equation}
\end{lemma}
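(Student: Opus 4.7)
We split the proof into three cases according to the definition of $\bar R$.

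First, for $\bar R = R_0$, by the definition of the curvature flux $\R(\H)$ we immediately have
$$\|R_0\|_{L^2(\H)}^2 \le \R(\H)^2 \le \RR^2,$$
which gives the desired bound in view of assumption (\ref{cond1}).

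Second, for $\bar R = \tr\chi\,\Ab$, I would combine the pointwise bound (\ref{comp1}), which yields $|\tr\chi| \lesssim r^{-1}$, with the estimate $\|r^{-1}\Ab\|_{L^2(\H)} \lesssim \Delta_0^2 + \RR$ from Proposition \ref{propab}. This gives $\|\tr\chi\,\Ab\|_{L^2(\H)} \lesssim \|r^{-1}\Ab\|_{L^2(\H)} \lesssim \Delta_0^2 + \RR$.

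The main case is $\bar R = A\cdot\Ab$. The plan here is a H\"older splitting designed to match the two mixed-norm estimates that are available: the bootstrap bound $\|A\|_{L_\omega^\infty L_t^2} \le \Delta_0$ from BA1 and the bound $\|r^{-1/2}\Ab\|_{L_x^2 L_t^\infty} \lesssim \Delta_0^2 + \RR$ from Proposition \ref{propab}. Unwinding the definition of $\|\cdot\|_{L^2(\H)}$ and applying Fubini followed by H\"older in $t$ and then in $\omega$ yields
$$\|A\cdot\Ab\|_{L^2(\H)}^2 = \int_{\Bbb S^2}\int_0^1 |A|^2|\Ab|^2 v_t\, na\, dt\, d\mu_{\Bbb S^2} \le \sup_\omega\Big(\int_0^1 |A|^2 na\, dt\Big)\int_{\Bbb S^2}\sup_t(v_t|\Ab|^2)\, d\mu_{\Bbb S^2},$$
so $\|A\cdot\Ab\|_{L^2(\H)} \le \|A\|_{L_\omega^\infty L_t^2}\|\Ab\|_{L_x^2 L_t^\infty}$. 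Since $r \le C$ on $\H$, the weight $r^{-1/2}$ can be trivially inserted to obtain $\|\Ab\|_{L_x^2 L_t^\infty} \lesssim \|r^{-1/2}\Ab\|_{L_x^2 L_t^\infty} \lesssim \Delta_0^2 + \RR$. Combining these gives $\|A\cdot\Ab\|_{L^2(\H)} \lesssim \Delta_0(\Delta_0^2 + \RR) \lesssim \Delta_0^2 + \RR$, where in the last step I use $0 < \Delta_0 < 1/2$ so that $\Delta_0^3 \le \Delta_0^2$ and $\Delta_0 \RR \le \RR$.

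The only subtle point is choosing the correct H\"older split; once BA1 is paired with the $L_x^2 L_t^\infty$ component of Proposition \ref{propab}, everything else is routine. I do not anticipate any serious obstacle, as all the required ingredients are already in place above.
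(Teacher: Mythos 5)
Your proof is correct and follows essentially the same route as the paper: $R_0$ is bounded directly by the curvature flux assumption, $A\cdot\Ab$ by the H\"older split $\|A\|_{L_\omega^\infty L_t^2}\|\Ab\|_{L_x^2 L_t^\infty}$ using BA1 and Proposition \ref{propab}, and $\tr\chi\,\Ab$ by reducing to $\|r^{-1}\Ab\|_{L^2}$ (the paper writes $\tr\chi = V + 2/s$ and uses $|V|\le\Delta_0$, which is just the derivation of your cited bound (\ref{comp1})). Your extra remark on inserting the weight $r^{-1/2}$ via $r\le C$ is a detail the paper leaves implicit.
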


\begin{proof}
The estimate about $R_0$ can be obtained directly from
(\ref{cond1}). With the help of Proposition \ref{propab} and BA1
\begin{equation}\label{aab}
\|A\c\Ab\|_{L^2(\H_t)}\les \|\Ab\|_{L_x^2
L_t^\infty}\|A\|_{L_\omega^\infty L_t^2}\les\Delta_0^2+\RR.
\end{equation}
By BA1 and  Proposition \ref{propab}, we have
$$
\|\tr\chi \Ab\|_{L^2(\H_t)}\les \|V\c \Ab\|_{L^2(\H_t)}+\|s^{-1}
\Ab\|_{L^2(\H_t)}\les\|r^{-1}\Ab\|_{L^2}\les\gc.
$$
The estimate thus follows.
\end{proof}

\begin{lemma}\label{gauss3}
Let $\K=K-\frac{1}{r^2}$, then
\begin{equation*}
\|\K\|_{L^2(\H_t)}\les \Delta_0^2+\RR.
\end{equation*}
\end{lemma}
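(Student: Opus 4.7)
The plan is to expand $\K$ via the Gauss equation (\ref{gauss}). Summing the two identities in (\ref{m11}) yields $\tr\chib = 2a(\delta+2\lambda) - a^2\tr\chi$, which, substituted into (\ref{gauss}), gives
\begin{equation*}
\K \;=\; \tfrac{1}{4}(a\tr\chi)^2 \;-\; \tfrac{1}{r^2} \;-\; \tfrac{a\tr\chi}{2}(\delta+2\lambda) \;+\; \tfrac{1}{2}\chibh\cdot\chih \;-\; \rho.
\end{equation*}
The last three terms lie in the $\bar R$ collection of (\ref{simp1}): $\rho \in R_0$, $\chibh\cdot\chih \in A\cdot\Ab$, and $a\tr\chi(\delta+2\lambda) \in \tr\chi\Ab$ (since $\delta+2\lambda \in \sl{\pi}\subset \Ab$ and $|a|\le 3/2$ by BA1). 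By (\ref{simp1}) and (\ref{cond1}) their contributions to $\|\K\|_{L^2(\H_t)}$ are therefore each bounded by $\gc$.

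For the main term I would factor
\begin{equation*}
\tfrac{1}{4}(a\tr\chi)^2 \;-\; \tfrac{1}{r^2} \;=\; \tfrac{1}{4}\Bigl(a\iota \;+\; \tfrac{2(a-1)}{r}\Bigr)\Bigl(a\tr\chi \;+\; \tfrac{2}{r}\Bigr),
\end{equation*}
and note that $|a\tr\chi + 2/r| \lesssim 1/r$ by (\ref{vi}), (\ref{cp}) and Proposition \ref{cmps}. The piece containing $a\iota$ is pointwise dominated by $|\iota|/r$, and since $v_t\sim r^2$ its $L^2(\H)$ norm is comparable to $\|\iota\|_{L_t^2 L_\omega^2}$, which is $\lesssim \gc$ by (\ref{eq.9}).

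The main obstacle is the remaining piece, pointwise bounded by $|a-1|/r^2$. To handle it I would exploit that $L(a) = -\nu$ with $a(p) = 1$, so $a(s,\omega) - 1 = -\int_0^s \nu\, ds'$ along each null geodesic $\Gamma_\omega$. The classical one-dimensional Hardy inequality
\begin{equation*}
\int_0^{s(1)} s^{-2}\Bigl(\int_0^s \nu\, du\Bigr)^2 ds \;\le\; 4\int_0^{s(1)} \nu^2 \, ds,
\end{equation*}
applied for each fixed $\omega\in\mathbb{S}^2$ and then integrated over $\mathbb{S}^2$, upon converting $ds = na\, dt$ with $na\sim 1$ and using $v_t\sim r^2$, $s\sim r$ from Proposition \ref{cmps}, reduces the desired bound to
\begin{equation*}
\|(a-1)/r^2\|_{L^2(\H)}^2 \;\lesssim\; \int_{\mathbb{S}^2}\!\!\int_0^1\nu^2\, dt\, d\omega \;\sim\; \|r^{-1}\nu\|_{L^2(\H)}^2.
\end{equation*}
By (\ref{c5}) we have $\nu = -\sn_N\log n + \delta - \lambda$, a linear combination of elements of $\sl{\pi}$, so (\ref{cond1}) yields $\|r^{-1}\nu\|_{L^2(\H)} \le \N_1(\nu) \lesssim \RR$. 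Summing all four contributions gives $\|\K\|_{L^2(\H_t)} \lesssim \Delta_0^2 + \RR$, as claimed.
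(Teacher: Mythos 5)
Your proof is correct and follows essentially the same route as the paper: the paper's own identity (\ref{gauss2}) is exactly your decomposition (with $\ckk\rho$ absorbing the $\chih\cdot\chibh$ term and $a^2-1$ in place of $a-1$), the $\bar R$ terms are handled by (\ref{simp1}), the $\iota$ term by (\ref{eq.9}), and the $(a^{\#}-1)/r^2$ term by integrating $L(a)=-\nu$ along null geodesics and applying the Hardy--Littlewood inequality (\ref{hlm}), just as you do.
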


\begin{proof}
In view of (\ref{gauss}) and (\ref{m11}),
\begin{align}
K-\frac{1}{r^{2}}&=\frac{a^2-1}{r^2}+\frac{a^2\iota}{2
r}+\frac{\iota (V+\frac{2}{s}) a^2}{4}-a\tr\chi(\lambda+ \f12\delta)
 -\ckk\rho\label{gauss2}
\end{align}
By $\sn_L a=-\nu$, (\ref{hlm}) and  (\ref{cond1})
\begin{equation}\label{a1}
\left\|\frac{a^2-1}{r^2}\right\|_{L^2(\H_t)}=\left\|r^{-1} \int_0^s
2 a\nu \right\|_{L_\omega^2 L_t^2}\les
\|r^{-1}\sl{\pi}\|_{L^2}\les\gc.
\end{equation}
In view of (\ref{gauss2}), by (\ref{vi}) and $r\approx s$ in
(\ref{cmps1}), also using (\ref{a1}), (\ref{eq.9}), (\ref{simp1}),
\begin{align*}
\left\|K-\frac{1}{r^2}\right\|_{L^2(\H_t)}&\les
\left\|\frac{a^2-1}{r^2}\right\|_{L^2}+\|r^{-1}\iota\|_{L^2}+\|\bar
R\|_{L^2}\les \gc
\end{align*}
which is the desired estimate.
\end{proof}

With Lemma \ref{gauss3}, we can prove the following estimates.

\begin{lemma}\label{hdgf}
Let $\D_1$ be the operator that takes any $S$ tangent 1-form $F$ to
$(\div F, \curl F)$. Let $\D_2$ be the operator that takes any
$S$-tangent symmetric, traceless, 2-tenorfields $F$ to $\div F$.
Denote by $\D$ one of the operators $\D_1, \D_2$. For any
appropriate $S$ tangent  tensor fields $F$ in the domain of $\D$, if
$\|r^{-\f12}F\|_{L_t^\infty L_x^2}\les \Delta_0$, there holds
\begin{equation}\label{gauss4}
\|\sn F\|_{L^2(\H_t)}+\|r^{-1} F\|_{L^2(\H_t)}\les \|\D
F\|_{L^2(\H_t)}+\gc.
\end{equation}
For smooth scalar functions $\Omega$, if  $\|r^{-\f12}\sn
\Omega\|_{L_t^\infty L_x^2}\les\Delta_0$, then
\begin{equation}\label{gauss5}
\|\sn^2 \Omega\|_{L^2(\H_t)} +\|r^{-1}\sn
\Omega\|_{L^2(\H_t)}\lesssim \|\sD\Omega\|_{L^2(\H_t)}+\gc.
\end{equation}
\end{lemma}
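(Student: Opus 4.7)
The plan is to apply the standard Bochner--Hodge integral identities on each 2-sphere $S=S_t$, substitute $K=r^{-2}+\K$, integrate against $na\,dt'$ over $[0,t]$, and close the argument using Lemma \ref{gauss3} together with the $L^4$-Sobolev inequality (\ref{sob.01}). Recall the pointwise-in-$t$ identities
\begin{align*}
\int_S(|\sn F|^2+K|F|^2)\,d\mu_\ga&=\int_S(|\div F|^2+|\curl F|^2)\,d\mu_\ga,\\
\int_S(|\sn F|^2+2K|F|^2)\,d\mu_\ga&=2\int_S|\div F|^2\,d\mu_\ga,\\
\int_S(|\sn^2\Omega|^2+K|\sn\Omega|^2)\,d\mu_\ga&=\int_S|\sD\Omega|^2\,d\mu_\ga,
\end{align*}
valid respectively for $S$-tangent $1$-forms, symmetric traceless $2$-tensors, and scalars. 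In each case, the positive coefficient in front of $K|F|^2$ (or $K|\sn\Omega|^2$) decomposes into a good $r^{-2}$ piece and a $\K$-piece; using also $C^{-1}<an<C$, this yields
$$\|\sn F\|_{L^2(\H_t)}^2+\|r^{-1}F\|_{L^2(\H_t)}^2\les \|\D F\|_{L^2(\H_t)}^2+\int_{\H_t}|\K||F|^2 na\,d\mu_\ga\,dt',$$
and its scalar analogue with $F$ replaced by $\sn\Omega$.

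The remaining task is to control the $\K$-error. By H\"older on each $S$ and the Sobolev inequality (\ref{sob.01}) with $p=4$,
$$\int_{\H_t}|\K||F|^2\les\int_0^t\|\K\|_{L^2_x}\left(\|\sn F\|_{L^2_x}\|F\|_{L^2_x}+\|r^{-1/2}F\|_{L^2_x}^2\right)dt'.$$
Since $r\le 1$ on $\H$, the hypothesis $\|r^{-\f12}F\|_{L_t^\infty L_x^2}\les\Delta_0$ also gives $\|F\|_{L_t^\infty L_x^2}\les\Delta_0$. Applying Cauchy--Schwarz in $t$ to the first summand, pulling $\|r^{-1/2}F\|_{L_t^\infty L_x^2}^2$ out of the second, and invoking Lemma \ref{gauss3} together with $\|\K\|_{L^1_t L^2_x}\le t^{1/2}\|\K\|_{L^2(\H_t)}\les\gc$, we obtain
$$\int_{\H_t}|\K||F|^2\les\Delta_0\,\gc\,\|\sn F\|_{L^2(\H_t)}+\Delta_0^2\,\gc.$$

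Since $\Delta_0^2\le\gc$, Young's inequality and the smallness of $\Delta_0$ absorb the first term into $\|\sn F\|_{L^2(\H_t)}^2$ on the left, and the second is majorised by $\gc^2$. Taking square roots delivers (\ref{gauss4}). The scalar estimate (\ref{gauss5}) follows by the identical argument applied with $F=\sn\Omega$, using the third Bochner identity; the hypothesis $\|r^{-\f12}\sn\Omega\|_{L_t^\infty L_x^2}\les\Delta_0$ plays the role of the one on $F$ above. The only minor subtlety is the choice of the $L^4$-Sobolev interpolation, which is exactly what renders the error bilinear in $\|\sn F\|_{L^2}$ and $\|F\|_{L^2}$, so that $\Delta_0$-smallness is enough for absorption; no nontrivial obstacle arises beyond this.
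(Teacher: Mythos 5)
Your proposal is correct and follows essentially the same route as the paper: Böchner/Hodge identities on each $S_t$, the splitting $K=r^{-2}+\K$, control of the error term $\int|\K||F|^2$ via the $L^4$ Sobolev inequality (\ref{sob.01}) and Lemma \ref{gauss3}, and absorption by Young's inequality. The only difference is cosmetic — the paper inserts the hypothesis $\|r^{-1/2}\sn\Omega\|_{L_t^\infty L_x^2}\les\Delta_0$ directly into the $L^4$ bound before integrating in $t$, whereas you keep the bilinear form and apply Cauchy–Schwarz in $t$ afterwards.
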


\begin{proof}
Let us prove (\ref{gauss5}) first. In view of B\"ochner identity on
$S$,
\begin{equation}\label{Bochner}
\int_{S}|\sn^2 \Omega|^2+K|\sn \Omega|^2=\int_{S} |\sD \Omega|^2,
\end{equation}
we obtain
\begin{equation}\label{eq2}
\int_{S_\tt} |\sn^2 \Omega|^2+r^{-2} |\sn \Omega|^2=\int_{S_\tt}
|\sD\Omega|^2-\int_{S_\tt}\K|\sn\Omega|^2.
\end{equation}
Noticing that on $S_\tt$, there holds by (\ref{sob.01}) and
$\|r^{-\f12}\sn\Omega\|_{L_t^\infty L_x^2}\les \Delta_0$,
\begin{align}
\|\sn\Omega\|_{L_x^4(S_\tt)}^2&\les \|\sn^2
\Omega\|_{L_x^2(S_\tt)}\|\sn \Omega\|_{L_x^2(S_\tt)}+r^{-1}\|\sn
\Omega\|_{L_x^2(S_\tt)}^2\nn\\&\les \Delta_0 \|\sn^2
\Omega\|_{L_x^2(S_\tt)}+\Delta_0^2.\label{gas1}
\end{align}
Integrating (\ref{eq2}) on $0< t'\le t$, in view of   (\ref{gas1})
and Lemma \ref{gauss3}, (\ref{gauss5}) follows by using Young's
inequality.

For $\D_1: F\rightarrow (\div F, \curl F)$ and $\D_2: F\rightarrow
\div F$ we recall the identities (see \cite[Proposition 2.2.1]{KC})
\begin{align*}
&\int_{S} |\sn F|^2+K|F|^2=\int_S |\D_1 F|^2,\quad \int_{S} |\sn
F|^2+2K|F|^2=2\int_{S}|\D_2 F|^2.
\end{align*}
Then (\ref{gauss4}) follows in the same way as (\ref{gauss5}).
\end{proof}

By Lemma \ref{hdgf} and (\ref{simp1}), we can derive the following

\begin{lemma}\label{hdgm1}
For $M=\sn \tr\chi, \mu$, there holds
\begin{equation}\label{hdgm}
\|\D_0 M\|_{L^2(\H_t)}\les \Delta_0^2+\RR+\|M\|_{L^2(\H_t)}.
\end{equation}
More precisely,
\begin{align*}
&\|\sn \chih\|_{L^2(\H_t)}\les \gc+\|\sn \tr\chi\|_{L^2(\H_t)},\\
&\|\sn \zeta\|_{L^2(\H_t)}\les \gc+\|\mu\|_{L^2(\H_t)}.
\end{align*}
\end{lemma}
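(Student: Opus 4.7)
The plan is to realize each of $\chih$ and $\zeta$ as the solution of an elliptic Hodge system whose right-hand side is controlled by $M$ plus lower-order terms, and then invoke Lemma \ref{hdgf} to trade the norm $\|\D F\|_{L^2}$ for $\|\sn F\|_{L^2}$.

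First I would check that the hypothesis of Lemma \ref{hdgf} holds for $F=\chih$ and $F=\zeta$. Both belong to $A\subset\Ab$, so Proposition \ref{propab} gives $\|r^{-1/2}F\|_{L_x^2 L_t^\infty}\les \gc$. Since $\|F\|_{L_t^\infty L_x^2}\le \|F\|_{L_x^2 L_t^\infty}$ (take $\sup_t$ outside the $\omega$-integral), the required smallness $\|r^{-1/2}F\|_{L_t^\infty L_x^2}\les \Delta_0$ follows once $\RR$ is taken sufficiently small.

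For the first estimate, I would use the Codazzi equation (\ref{0.7}),
\begin{equation*}
\D_2\chih=\div\chih=\tfrac12\sn\tr\chi+\tfrac12\tr\chi\,\zeta-\chih\cdot\zeta-\beta.
\end{equation*}
Each of the three trailing terms fits one of the schemes $\tr\chi\,\Ab$, $A\cdot\Ab$ or $R_0$ listed in the definition of $\bar R$, so (\ref{simp1}) yields
\begin{equation*}
\|\D_2\chih\|_{L^2(\H_t)}\le \tfrac12\|\sn\tr\chi\|_{L^2(\H_t)}+\|\bar R\|_{L^2(\H_t)}\les \|\sn\tr\chi\|_{L^2(\H_t)}+\gc.
\end{equation*}
Then Lemma \ref{hdgf} (applied with $\D=\D_2$) produces the first inequality.

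For the second estimate, I would use the pair of Hodge equations (\ref{hdg1})--(\ref{hdg2}) as the Hodge system for $\zeta$:
\begin{equation*}
\D_1\zeta=(\div\zeta,\curl\zeta)=\bigl(-\mu-\ckk\rho-|\zeta|^2+\tfrac12 a\delta\tr\chi+a\lambda\tr\chi,\ \ckk\sigma\bigr).
\end{equation*}
The quantities $\ckk\rho=\rho-\tfrac12\chih\cdot\chibh$ and $\ckk\sigma=\sigma-\tfrac12\chih\wedge\chibh$ decompose as $R_0+A\cdot\Ab$, while $|\zeta|^2$ is $A\cdot\Ab$ and $a\delta\tr\chi$, $a\lambda\tr\chi$ are of the form $\tr\chi\,\Ab$ (using $C^{-1}<an<C$). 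Invoking (\ref{simp1}) once again, I obtain $\|\D_1\zeta\|_{L^2(\H_t)}\les \|\mu\|_{L^2(\H_t)}+\gc$. Lemma \ref{hdgf} (now with $\D=\D_1$) then gives the second inequality. The combined estimate (\ref{hdgm}) follows at once. I do not expect a serious obstacle here, since the whole argument is a bookkeeping application of Lemma \ref{hdgf}, the structure equations, and the schematic bound (\ref{simp1}); the only mildly delicate point is the norm comparison needed to verify the hypothesis of Lemma \ref{hdgf}.
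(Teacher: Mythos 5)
Your proposal is correct and follows essentially the same route as the paper: apply Lemma \ref{hdgf} to $F=\chih,\zeta$ (with the hypothesis supplied by Proposition \ref{propab}), write $\D_2\chih=\sn\tr\chi+\bar R$ and $\D_1\zeta=(\mu,0)+\bar R$ via (\ref{0.7}) and (\ref{hdg1})--(\ref{hdg2}), and close with (\ref{simp1}). The only difference is that you spell out the norm comparison $\|F\|_{L_t^\infty L_x^2}\le\|F\|_{L_x^2 L_t^\infty}$, which the paper leaves implicit.
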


\begin{proof}
By using $\|r^{-\f12} \Ab\|_{L_x^2 L_t^\infty}\les \gc$ in
Proposition \ref{propab} and applying  Lemma \ref{hdgf} to $F=\chih,
\zeta$, we can derive that
\begin{align*}
\|\sn \chih\|_{L^2(\H_t)} &\les
\|\D_2\chih\|_{L^2(\H_t)}+\Delta_0^2+\RR,\\
\|\sn \zeta\|_{L^2(\H_t)} &\les
\|\D_1\zeta\|_{L^2(\H_t)}+\Delta_0^2+\RR.
\end{align*}
In view of (\ref{0.7}), (\ref{hdg1}) and (\ref{hdg2}),
$$
\D_2\chih=\sn\tr\chi+\bar R,\,\,\quad  \D_1 \zeta=(\mu,0)+\bar R.
$$
By using (\ref{simp1}), (\ref{hdgm}) can be proved.
\end{proof}

Recall that $M=\sn \tr\chi \mbox{ or } \mu$.  (\ref{s4}) and
(\ref{mumu}) can be symbolically
\begin{footnote}
{This means signs and coefficients on the right side of the
expression can be ignored.}
\end{footnote}
recast as
\begin{equation}\label{LM}
\sn_L M+\frac{p}{2}\tr\chi M=\chih\c M+H_1+H_2+H_3
\end{equation}
where \begin{footnote} {In view of $|a-1|\le 1/2$ in BA1, the
factors $a^m, \,m\in {\Bbb N}$  can be ignored in $H_i$ when we
employ (\ref{LM}) to prove Proposition \ref{MN1}.}
\end{footnote}
 \begin{equation*}
H_1=A\c (\D_0 M+F), \quad H_2=\Ab\c A\c A,\quad \mbox{and}\quad H_3=
r^{-1}\bar R, \iota \c \bar R,
\end{equation*}
with
\begin{equation}\label{fm}
(p,F)=\left \{\begin{array}{lll}(2,  \sn \tr\chi) \quad \mbox{if } M=\mu\\
\\(3, 0) \quad \mbox{if } M=\sn\tr\chi.
\end{array}\right.
\end{equation}

We will establish the following estimates

\begin{proposition}\label{MN1}
Let $M$ denote either $\sn \tr\chi$ or $\mu$, there hold
\begin{equation}\label{M1}
\|r^{1/2} M\|_{L_x^2 L_t^\infty}+ \|M\|_{L^2}\les \RR+\Delta_0^2,
\end{equation}
\begin{equation}\label{n1cz}
\|\sn \zeta,\sn \chih\|_{L^2}\les \RR+\Delta_0^2.
\end{equation}
\end{proposition}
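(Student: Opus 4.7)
The plan is to close a transport--elliptic coupling between $M$ and $\D_0 M$: the transport equation (\ref{LM}) expresses $\sn_L M$ in terms of $M$, $\D_0 M$, and lower-order quantities, while Lemma \ref{hdgm1} gives the reverse estimate $\|\D_0 M\|_{L^2(\H)}\les\|M\|_{L^2(\H)}+\gc$. Combining the two with the smallness $\gc=\Delta_0^2+\RR\ll 1$ will yield (\ref{M1}) by absorption, after which (\ref{n1cz}) is immediate from Lemma \ref{hdgm1}. Since $H_1$ carries an extra $A\cdot F=A\cdot\sn\tr\chi$ piece in the $M=\mu$ case, I would first handle $M=\sn\tr\chi$ (for which $F=0$) and reuse its output.

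Applying Lemma \ref{tsp2} to (\ref{LM}) with $G=\chih$ (whose hypothesis $\|\chih\|_{L^\infty_\omega L^2_t}\le\Delta_0$ is BA1, and whose initial vanishing $r^p M\to 0$ comes from Lemma \ref{inii}: $r\sn\tr\chi\to 0$ for $p=3$ and $r^2\mu\to 0$ for $p=2$) yields
\[
|M(t)|\les v_t^{-p/2}\int_0^t v_{t'}^{p/2}|H|\,na\,dt',\qquad H:=H_1+H_2+H_3.
\]
Using $v_t\approx r^2$ and $v_{t'}\le v_t$ gives the pointwise bound $|M(t)|\les\int_0^t|H|\,na\,dt'$; an elementary $L^2_t$ bound (equivalent to Hardy--Littlewood (\ref{hlm})) then produces $\|M\|_{L^2(\H)}\les\|H\|_{L^2(\H)}$, and Cauchy--Schwarz in $t$ combined with $s\le 1$ followed by integration in $\omega$ gives $\|r^{1/2}M\|_{L^2_x L^\infty_t}\les\|H\|_{L^2(\H)}$. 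It then suffices to bound $\|H\|_{L^2(\H)}$ by $\gc$ plus a small multiple of $\|\D_0 M\|_{L^2(\H)}$. The cubic term $H_2=\Ab\cdot A\cdot A$ is handled by H\"older using $\|\Ab\|_{L^4_x L^\infty_t}\les\N_1(\Ab)\les\gc$ (via (\ref{sob.m}) and Proposition \ref{propab}) against the BA1 trace $\|A\|_{L^\infty_\omega L^2_t}\le\Delta_0$. For $H_3$, the $\iota\cdot\bar R$ piece is controlled by (\ref{simp1}) together with the $\iota$-estimates (\ref{vi}), (\ref{eq.9}); the $r^{-1}\bar R$ piece is handled by absorbing the $r^{-1}$ into the transport weight $r'^p$ (giving $r^{-p}\int r'^{p-1}|\bar R|\,na\,dt'$) and then invoking (\ref{simp1}) with Hardy--Littlewood. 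For the $A\cdot F$ piece of $H_1$ in the $M=\mu$ case, the trace bound on $A$ pairs with the bound $\|r^{1/2}\sn\tr\chi\|_{L^2_x L^\infty_t}\les\gc$ produced in the $M=\sn\tr\chi$ case.

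The main obstacle is the coupled piece $A\cdot\D_0 M$ of $H_1$, which reintroduces $\sn\chih$ or $\sn\zeta$. I would estimate it by $\les\gc\,\|\D_0 M\|_{L^2(\H)}$, exploiting the smallness $\|A\|_{L^\infty_\omega L^2_t}\le\Delta_0$ together with the null-cone Sobolev/trace controls on $A$ (for instance through an $L^\infty_\omega L^2_t\otimes L^2_\omega L^\infty_t$ pairing, the latter factor obtained by applying slice-wise Sobolev (\ref{sob.01}) to $\D_0 M$ with the extra derivative absorbed by the smallness of $\|A\|_{L^\infty_\omega L^2_t}$). Lemma \ref{hdgm1} then converts this to $\les\gc(\|M\|_{L^2(\H)}+\gc)$, and feeding the whole bound back through the transport estimate produces
\[
\|M\|_{L^2(\H)}+\|r^{1/2}M\|_{L^2_x L^\infty_t}\les\gc+\gc\cdot\|M\|_{L^2(\H)}.
\]
Absorbing the last term into the left-hand side using $\gc\ll 1$ closes the bootstrap of BA1 and yields (\ref{M1}); (\ref{n1cz}) then follows from (\ref{M1}) together with Lemma \ref{hdgm1}.
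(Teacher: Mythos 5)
Your architecture is exactly the paper's: integrate the transport equation (\ref{LM}) via Lemma \ref{tsp2} with $G=\chih$ and the initial conditions of Lemma \ref{inii}, estimate $H_2$ and $H_3$ by BA1, Proposition \ref{propab}, (\ref{simp1}) and Hardy--Littlewood, bound the coupled term $A\cdot\D_0 M$ by a small multiple of $\|\D_0 M\|_{L^2(\H)}$, convert back to $\|M\|_{L^2(\H)}$ by Lemma \ref{hdgm1}, absorb, and do $\sn\tr\chi$ before $\mu$ so that the $A\cdot F$ piece of $H_1$ in the $\mu$-case can reuse the $\sn\tr\chi$ output. The treatment of the first norm in (\ref{M1}) and the deduction of (\ref{n1cz}) also match the paper.

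The one step that would fail as written is your mechanism for the coupled term. You propose an $L_\omega^\infty L_t^2\otimes L_\omega^2 L_t^\infty$ H\"older pairing, with the factor $\|\D_0 M\|_{L_\omega^2 L_t^\infty}$ "obtained by applying slice-wise Sobolev (\ref{sob.01}) to $\D_0 M$". No such bound is available: $\D_0 M$ is $\sn\chih$ or $\sn\zeta$, and a trace/sup-in-$t$ estimate for it would require controlling $\sn^2\chih$, $\sn^2\zeta$ in $L^2(\H)$, which is one derivative beyond anything established at this stage (and (\ref{sob.01}) costs a derivative rather than "absorbing" one into the smallness of $A$). The correct --- and simpler --- pairing is the paper's: Cauchy--Schwarz in $t'$ along each null geodesic, keeping the weight $v_{t'}^{p/2}$, gives
\begin{equation*}
\bigl|v_t^{1/2} I_1(t)\bigr|\les \|A\|_{L_t^2(\Gamma_\omega)}\left(v_t^{-p+1}\int_0^{s(t)} v_{t'}^{\,p}\bigl(|\D_0 M|^2+|F|^2\bigr)\,ds(t')\right)^{1/2},
\end{equation*}
after which one takes $L_\omega^\infty$ of the first factor (BA1) and $L_\omega^2$ of the second, which is just $\|\D_0 M\|_{L^2(\H)}+\|F\|_{L^2(\H)}$ since $v_{t'}^{\,p}v_t^{-p+1}\le v_{t'}$; no pointwise-in-$t$ control of $\D_0 M$ is ever needed. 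With that substitution (and noting the absorption constant is $\Delta_0<1/2$ rather than $\gc$, which is harmless), your argument coincides with the paper's proof.
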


\begin{proof}
Noticing that (\ref{n1cz}) can be obtained immediately by combining
the second estimate in (\ref{M1}) with Lemma \ref{hdgm1}, we first
consider the second norm in (\ref{M1}). By (\ref{tran.2}),
(\ref{LM}) and Lemma \ref{inii}, integrating along the null geodesic
$\Ga_\omega$ initiating from vertex, we obtain
\begin{equation}\label{diftrh}
|M|\le \sum_{i=1}^3 \left|v_t^{-\frac{p}{2}}\int_0^t
v_{t'}^{\frac{p}{2}}|H_i|na dt'\right|=I_1(t)+I_2(t)+I_3(t).
\end{equation}
Consider $H_3$ with the help of (\ref{hlm}),
\begin{align}
\int_0^1 \left|v_t^{1/2} I_3(t)\right|^2 na dt&=\int_0^1
\left(v_t^{-\frac{p}{2}+\frac{1}{2}}\int_0^t v_{t'}^\frac{p}{2}
|H_3| na dt'\right)^2 na dt\label{i3}\\
& \les\int_0^1\left|r^{-p+1}\int_0^t {r'}^{p-2} |r'\bar R| na
dt'\right|^2 na dt\nn\\
&\les \left|\int_0^1 |r' \bar R|^2 na d t'\right|\nn
\end{align}
where we employed $v_{t'}\approx (r')^2\approx (t')^2.$

Then by taking $L_\omega^1({\Bbb S}^2)$, with the help of
(\ref{simp1}), we obtain
\begin{equation}\label{i33}
\|I_3\|_{L^2}\les \|\bar R\|_{L^2}\les \RR+\Delta_0^2.
\end{equation}

Similarly, we have
\begin{align*}
\int_0^1 \left|v_t^{1/2}I_2(t)\right|^2 na dt&=\int_0^1
\left(v_t^{-\frac{p}{2}+\frac{1}{2}}\int_0^t v_{t'}^{\frac{p}{2}}
|H_2| na dt'\right)^2 na dt\\&\le\int_0^1
\left(v_t^{-\frac{p}{2}+\frac{1}{2}}\int_0^t v_{t'}^{\frac{p}{2}}
|A|^2|\Ab| na dt'\right)^2 na dt\\
&\les \|A\|_{L_t^2}^4\|r^{1/2} \Ab\|_{L_t^\infty}^2.
\end{align*}
By taking $L_\omega^1$, we obtain in view of BA1 and Proposition
\ref{propab} that
\begin{equation}\label{i2}
\|I_2\|_{L^2}\les \|A\|_{L_\omega^\infty
L_t^2}^2\|r^{1/2}\Ab\|_{L_\omega^2 L_t^\infty} \les \Delta_0^2(\gc).
\end{equation}

Now consider $H_1:=A\c (\D_0 M+F)$. We have
\begin{align}
\left|v_t^{1/2} I_1(t)\right|&\les\|A\|_{L_t^2} \left( v_t^{-p+1}
\int_0^{s(t)} v_{t'}^{p}\left(|\D_0 M|^2+ |F|^2\right)
ds(t')\right)^{1/2}\label{i1t}.
 \end{align}
Take $L_\omega^2$,
\begin{equation*}
\|v_t^{1/2} I_1\|_{L_\omega^2}\les \|A\|_{L_\omega^\infty L_t^2}
(\|\D_0 M\|_{L^2}+\|F\|_{L^2}).
\end{equation*}
Hence in view of (\ref{hdgm}) and BA1
\begin{equation}\label{i1}
\|I_1\|_{L^2}\les \Delta_0(\|M\|_{L^2}+\|F\|_{L^2}+\gc).
\end{equation}
Consequently, in view of (\ref{i33}), (\ref{i2}) and (\ref{i1}), if
$M=\sn \tr\chi$,  we have
\begin{align}
&\|\sn\tr\chi\|_{L^2}\les \Delta_0\|\sn
\tr\chi\|_{L^2}+\Delta_0^2+\RR,\label{M.1}
\end{align}
and  if $M=\mu$
\begin{align}
&\|\mu\|_{L^2}\les \Delta_0(\|\mu\|_{L^2}+\|\sn
\tr\chi\|_{L^2})+\Delta_0^2+\RR. \label{M.2}
\end{align}

From (\ref{M.1}), noticing that $0<\Delta_0<1/2$, we conclude
\begin{equation}\label{M.3}
\|\sn \tr\chi\|_{L^2}\les \Delta_0^2+\RR.
\end{equation}

$ \|\mu\|_{L^2}\les \Delta_0^2+\RR $ then follows from (\ref{M.2})
by using (\ref{M.3}).

Now we consider the first estimate in (\ref{M1}).
\begin{align*}
v_t^{\frac{3}{4}} I_3(t) &\le v_t^{\frac{3}{4}-\frac{p}{2}}
\int_0^{s(t)} v_\tt^{\frac{p}{2}-\f12} |\bar R| ds(\tt) \les
s^{\frac{3}{2}-p} \int_0^{s(t)} s^{p-2} |\bar R r| d s(\tt)\\
&\les\left(\int_0^{s(t)} |\bar R r|^2 d s(t') \right)^{\f12}
\end{align*}
where we used  $s^2\approx v_t$, $r\approx s$ in Proposition
\ref{cmps} to derive the second inequality. Thus
\begin{equation}\label{i32}
\left\|\sup_{t\in(0,1]}|v_t^{\frac{3}{4}}
I_3(t)|\right\|_{L_\omega^2} \les \|\bar R\|_{L^2(\H)}\les \gc.
\end{equation}
We can proceed in a similar fashion for the other two terms,
\begin{align*}
v_t^{\frac{3}{4}} I_2(t) &\le
v_t^{\frac{3}{4}-\frac{p}{2}}\int_0^{s(t)} v_\tt^{\frac{p}{2}} |A|^2
|\Ab| d s(\tt) \les s^{\frac{3}{2}-p} \int_0^{s(t)} s^{p}|A|^2 |\Ab|
d s(\tt)\\
&\les r\|A\|_{L_t^2}^2\|r^{1/2}\Ab\|_{L_t^\infty},
\end{align*}
then
\begin{equation}\label{i2t}
\left\|\sup_{0<t\le 1}|v_t^{\frac{3}{4}}
I_2(t)|\right\|_{L_\omega^2} \les \|A\|_{L_\omega^\infty
L_t^2}^2\|r^{1/2} \Ab\|_{L_\omega^2 L_t^\infty}\les \Delta_0^2(\gc).
\end{equation}
Similarly,
\begin{align*}
&v_t^{\frac{3}{4}} I_1(t) \les \|A\|_{L_t^2}\|r (|\D_0
M|+|F|)\|_{L_t^2}r^{1/2}.
\end{align*}
Taking $L_\omega^2$, using (\ref{n1cz}) for $\D_0 M$ and (\ref{M.3})
for $F$, also in view of BA1
\begin{align}
\left\|\sup_{0<t\le 1}|v_t^{\frac{3}{4}} I_1(t)|
\right\|_{L_\omega^2}&\les (\|\D_0
M\|_{L^2}+\|F\|_{L^2})\|A\|_{L_\omega^\infty
L_t^2}\les(\gc)\Delta_0\label{i1t1}.
\end{align}
Combine (\ref{i32}), (\ref{i2t}), (\ref{i1t1}) we conclude that
$\|r^{1/2} M\|_{L_x^2 L_t^\infty(\H)}\les \gc$.
\end{proof}

Let us summarize the major estimates that have been obtained so far.

\begin{proposition}\label{smr}
Let $M$ denote either $\mu$ or $\sn \tr\chi$.  There holds
\begin{equation}\label{smry1}
\N_1(\Ab)+\|r^{1/2}M\|_{L_x^2 L_t^\infty}+\|M\|_{L^2}\les
\Delta_0^2+\RR
\end{equation}
\end{proposition}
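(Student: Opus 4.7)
Since Proposition \ref{smr} merely collects estimates already in place, the proof is essentially one of assembly. The bound on $M$, namely $\|r^{1/2}M\|_{L^2_x L^\infty_t}+\|M\|_{L^2}\les \Delta_0^2+\RR$, is exactly Proposition \ref{MN1}; it therefore suffices to establish $\N_1(\Ab)\les \Delta_0^2+\RR$. Unpacking $\N_1(F)=\|\sn_L F\|_{L^2}+\|\sn F\|_{L^2}+\|r^{-1}F\|_{L^2}$, Proposition \ref{propab} already supplies the $\sn_L$-part and the $r^{-1}$-part for every $F\in \Ab=\{\chih,\zeta,\zb,\nu\}\cup\{\chibh,\sn\log a\}\cup\sl{\pi}$; the remaining task is $\|\sn F\|_{L^2}\les \Delta_0^2+\RR$ for each such $F$.

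I would dispatch the elements in a definite order, each using a defining identity. For $F\in\sl{\pi}$ the bound is built into the hypothesis $\N_1(\sl{\pi})\le \RR$. For $F=\chih,\zeta$ it is (\ref{n1cz}) from Proposition \ref{MN1}. For $F=\zb$ and $F=\nu$, the identities (\ref{c4})--(\ref{c5}) express these as linear combinations of components of $\sl{\pi}$, so $\sn\zb$ and $\sn\nu$ decompose into elements of $\sn\sl{\pi}$, controlled by $\N_1(\sl{\pi})$. For $F=\sn\log a$, (\ref{c4}) gives $\sn\log a=\zeta-\ep$, hence $\sn^{2}\log a=\sn\zeta-\sn\ep$, controlled by (\ref{n1cz}) together with $\N_1(\sl{\pi})$.

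The only genuinely nontrivial case is $F=\chibh$. Adding the two lines of (\ref{c2}) and extracting the $S$-traceless part (equivalently, using the decomposition of $k$ in components yielding $\eh$) produces the identity
$$
a\chih+a^{-1}\chibh=-2\eh,
$$
so $\chibh=-2a\eh-a^{2}\chih$. Differentiating tangentially,
$$
\sn\chibh=-2a\,\sn\eh-a^{2}\,\sn\chih-2a(\sn\log a)\eh-2a^{2}(\sn\log a)\chih.
$$
Using $|a|\les 1$ from (\ref{cp}), the first two (linear) terms are dominated by $\|\sn\eh\|_{L^2}+\|\sn\chih\|_{L^2}\les \RR+\Delta_0^2$. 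For the two quadratic terms, I would invoke the $L^{4}(\H)$-embedding $\|F\|_{L^{4}(\H)}\les \N_1(F)$, which follows from (\ref{sob.in}) with $b=4$ and the uniform bound $r\les 1$ on $\H$, giving
$$
\|(\sn\log a)\chih\|_{L^2}\les \|\sn\log a\|_{L^{4}}\|\chih\|_{L^{4}}\les \N_1(\sn\log a)\,\N_1(\chih)\les (\RR+\Delta_0^2)^{2},
$$
and similarly for $(\sn\log a)\eh$.

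The only (mild) obstacle is the logical order: $\N_1(\sn\log a)$ must already be in place before it is invoked in the $\chibh$-bound, which fixes the sequential treatment above. Once all $\|\sn F\|_{L^2}$ bounds are combined with Propositions \ref{propab} and \ref{MN1}, the full estimate (\ref{smry1}) follows.
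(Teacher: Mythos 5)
Your proposal is correct and matches the paper's (entirely implicit) treatment: the paper states Proposition \ref{smr} only as a summary of Propositions \ref{propab} and \ref{MN1}, and your assembly supplies exactly the omitted verifications of the missing $\|\sn F\|_{L^2}$ bounds. In particular, reducing $\sn\zb$, $\sn\nu$, $\sn(\sn\log a)$ to $\N_1(\sl{\pi})$ and (\ref{n1cz}) via (\ref{c4})--(\ref{c5}), and handling $\chibh$ through the traceless part of the sum of the two relations in (\ref{c2}) together with the $L^4(\H)$ bounds from (\ref{sob.in}), is a sound and complete way to close the one case the paper leaves genuinely unstated.
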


\begin{remark}\label{abb}
By (\ref{sob.m}),(\ref{sob.01}) and (\ref{cond1}), it is easy to
check $(a^m A)$ and $(a^m\Ab)$ with $m\in {\Bbb N}$ verify the same
estimates as $A$ and $\Ab$ respectively. Consequently they can also
be regarded  as elements of $A$ and $\Ab$ respectively.
\end{remark}

\subsection{More Estimates for $\kp$ and $\iota$}\label{kpl}

The main purpose of this subsection is to provide estimates for
$\N_1(\kp, \iota)$ and $\|\kp,\iota\|_{L_t^2 L_\omega^\infty}$. We
first derive a simple consequence from (\ref{smry1}) with the help
of (\ref{omeg1}).

\begin{lemma}\label{osc2}
\begin{equation}\label{osc}
\|r^{-1}\os{an}\|_{L_t^2 L_\omega^\infty} \les\gc
\end{equation}
\end{lemma}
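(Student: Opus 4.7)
The plan is to apply the Sobolev-type embedding (\ref{omeg1}) to $\Omega=\os{an}$, which has vanishing mean on each $S_t$ by construction. Since $\ovl{an}$ is constant along each leaf, $\sn\os{an}=\sn(an)$ and $\sn^2\os{an}=\sn^2(an)$, so
\begin{equation*}
\|r^{-1}\os{an}\|_{L_t^2 L_\omega^\infty}\les \|\sn^2(an)\|_{L^2}+\|r^{-1}\sn(an)\|_{L^2},
\end{equation*}
and it suffices to bound the right-hand side by $\gc$.

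Using (\ref{c4}) together with $\sn\log n\in\sl{\pi}$, I would write $\sn(an)=an(\sn\log n+\zeta-\ep)$. Since $an$ is pointwise bounded by (\ref{cp}), the first-order bound $\|r^{-1}\sn(an)\|_{L^2}\les\gc$ is then immediate from $\|r^{-1}\Ab\|_{L^2}\les\gc$ in Proposition \ref{propab}.

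For $\|\sn^2(an)\|_{L^2}$ I would differentiate once more by Leibniz, producing a linear piece $an\cdot \sn(\sn\log n+\zeta-\ep)$ and a quadratic piece $\sn(an)\otimes(\sn\log n+\zeta-\ep)$. The linear piece is bounded in $L^2$ by $\|\sn\zeta\|_{L^2}+\|\sn\sl{\pi}\|_{L^2}$, the first controlled by $\gc$ via Proposition \ref{smr} (through the Hodge reduction of Lemma \ref{hdgm1}), the second by the assumption $\N_1(\sl{\pi})\le\RR$. For the quadratic piece I would use $\|F^2\|_{L^2(\H)}=\|F\|_{L^4(\H)}^2$ together with the $L^4(\H)$ bound $\|F\|_{L^4(\H)}\les\N_1(F)$, which follows from (\ref{sob.in}) with $b=4$ once we discard the weight $r^{-1/4}$ from below using $r\le C$ on $\H$. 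Applied with $F=\sn\log n+\zeta-\ep$ and combined with the pointwise bound on $an$, this yields $\|\sn(an)\otimes(\sn\log n+\zeta-\ep)\|_{L^2}\les\N_1(\sl{\pi}+\zeta)^2\les\gc^2\les\gc$.

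No step looks serious in isolation; the critical ingredient is the availability of $\|\sn\zeta\|_{L^2}\les\gc$ from Proposition \ref{smr}, which is what makes the linearization of $\sn^2(an)$ usable. Everything else is routine product-rule bookkeeping combined with the Sobolev embeddings (\ref{sob.m}) and (\ref{sob.in}) and the $L^2$ bounds on $\Ab$ already collected in Proposition \ref{propab}.
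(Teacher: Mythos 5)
Your proof is correct and follows essentially the same route as the paper: apply (\ref{omeg1}) to the mean-free function $\os{an}$, identify $\sn\log(an)=\zeta+\zb$, and reduce everything to $\|\sn\zeta\|_{L^2}\les\gc$ from Proposition \ref{smr} together with $\N_1(\sl{\pi})\le\RR$ and the $L^4$ bound on $\zeta+\zb$. The only (harmless) deviation is that the paper first converts $\|\sn^2\Omega\|_{L^2}$ into $\|\sD\Omega\|_{L^2}$ via the B\"ochner estimate (\ref{gauss5}) before computing $\sD(an)=an(|\zeta+\zb|^2+\div(\zeta+\zb))$, whereas you expand $\sn^2(an)$ directly by Leibniz; both ultimately consume the same inputs.
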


\begin{proof}
Apply  (\ref{omeg1}) to $\Omega=an-\ovl{an}$,
\begin{equation}\label{ome2}
\|r^{-1}\Omega\|_{L_t^2 L_\omega^\infty}\les
\|\sn^2\Omega\|_{L^2}+\|r^{-1} \sn \Omega\|_{L^2}.
\end{equation}
Note that  with the help of $\zeta+\zb=\sn \log(an)$ and (\ref{cp}),
there holds
\begin{equation*}
\|r^{-\f12}\sn \Omega\|_{L_t^\infty L_x^2}\les \|r^{-\f12}\sn\log
(an)\|_{L_t^\infty L_x^2}=\|r^{-\f12} (\zeta+\zb)\|_{L_t^\infty
L_x^2}\les \gc.
\end{equation*}
in view of (\ref{gauss5}),  we deduce
\begin{equation*}
\|r^{-1}\Omega\|_{L_t^2 L_\omega^\infty}\les
\|\sD\Omega\|_{L^2}+\gc.
\end{equation*}
We obtain in view of  $
 \sD(an)= an(|\zeta+\zb|^2+\sD\log (an))$, (\ref{cp})
\begin{align}
\|r^{-1}\Omega\|_{L_t^2 L_\omega^\infty}&\les \|\sD\log
(an)\|_{L^2}+\|\zb+\zeta\|_{L^4}^2+\gc\nn\\&\les
\N_1(\zeta+\zb)(1+\N_1(\zeta+\zb))+\gc \\
&\les \gc\nn
\end{align}
where we employed (\ref{sob.m}) and (\ref{smry1})  for the last two
inequalities.
\end{proof}

\begin{proposition}
\begin{align}
\|r\sn^2 (\frac{1}{s})\|_{L^2} &\les\gc,\label{eq3}\\
\|\os{\frac{1}{s}}, \os{\tr\chi},\kp, \iota \|_{L_t^2
L_\omega^\infty} &\les \gc.\label{kap}
\end{align}
\end{proposition}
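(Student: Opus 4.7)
The plan is to prove (\ref{eq3}) first via an elliptic reduction followed by a transport estimate for $\sD(1/s)$, and then to deduce (\ref{kap}) from (\ref{eq3}) using the Sobolev embedding (\ref{omeg1}) together with algebraic manipulations and auxiliary estimates.

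For (\ref{eq3}), the key step is to apply the Bochner-elliptic estimate (\ref{gauss5}) to the scalar $\Omega := r(t)(1/s - \overline{1/s})$, which is mean-zero on each leaf $S_t$ and satisfies $\sn^2\Omega = r\sn^2(1/s)$ and $\sD\Omega = r\sD(1/s)$ since $r(t)$ is constant on $S_t$. Using $r\approx s$ from Proposition \ref{cmps}, a direct computation gives $\|r^{-1/2}\sn\Omega\|_{L_x^2(S_t)}\approx \|s^{1/2}\sn\log s\|_{L_\omega^2}$, which by (\ref{eq5}) is uniformly bounded by $\gc$. Hence (\ref{gauss5}) yields $\|r\sn^2(1/s)\|_{L^2} + \|\sn(1/s)\|_{L^2}\lesssim \|r\sD(1/s)\|_{L^2} + \gc$, reducing the task to bounding $\|r\sD(1/s)\|_{L^2}$. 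Setting $q := \sn(1/s)=-s^{-2}\sn s$, substituting into (\ref{trans.s1}) and using $\tr\chi=2/s+V$ yields the transport equation
\begin{equation*}
\sn_L q + \tfrac{3}{2}\tr\chi\, q = V q - \chih\cdot q - s^{-2}(\zeta+\zb).
\end{equation*}
Applying the commutator (\ref{u2}) to $U=q$, taking divergence and using $\div q = \sD(1/s)$ together with $\chi_{AB}\sn_A q_B = \tfrac{1}{2}\tr\chi\,\sD(1/s) + \chih\cdot\sn^2(1/s)$, I obtain
\begin{equation*}
\sn_L\sD(1/s) + (2\tr\chi - V)\sD(1/s) + 2\chih\cdot\sn^2(1/s) = \mathrm{RHS},
\end{equation*}
where $\mathrm{RHS}$ collects the schematic sources $\sn\tr\chi\cdot q$, $\sn V\cdot q$, $(\div\chih)\cdot q$, $s^{-1}q\cdot(\zeta+\zb)$, $s^{-2}\div(\zeta+\zb)$, and commutator byproducts. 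Applying Lemma \ref{tsp2} with integer weight $4$ along null geodesics (the initial condition $v_t^2\sD(1/s)\to 0$ is justified by Lemma \ref{inii} and the Minkowski-like vertex asymptotics), squaring, multiplying by $r^2$, integrating on $\H$ using Hardy-Littlewood (\ref{hlm}), and bounding each source via Proposition \ref{smr}, (\ref{n1cz}), (\ref{eq5}) applied to $q$ through $|q|\approx s^{-1}|\sn\log s|$, (\ref{ldelta}) to re-express $\div\zb$, and the Hodge system (\ref{hdg1})-(\ref{hdg2}) for $\div\zeta$, yields the claim after absorbing $\chih\cdot\sn^2(1/s)$ into the left-hand side using smallness of $\Delta_0$.

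For (\ref{kap}), once (\ref{eq3}) holds, applying (\ref{omeg1}) to the same $\Omega$ yields
\begin{equation*}
\left\|\os{\tfrac{1}{s}}\right\|_{L_t^2 L_\omega^\infty}=\|r^{-1}\Omega\|_{L_t^2 L_\omega^\infty}\lesssim \|r\sn^2(1/s)\|_{L^2} + \|\sn(1/s)\|_{L^2}\lesssim \gc.
\end{equation*}
The bound $\|\os{\tr\chi}\|_{L_t^2 L_\omega^\infty}\lesssim\gc$ follows from $\os{\tr\chi}=(V-\bar V)+2\os{\tfrac{1}{s}}$ combined with (\ref{vi}). The bound $\|\iota\|_{L_t^2 L_\omega^\infty}\lesssim\gc$ follows from $\iota = \os{\tr\chi} + (\overline{\tr\chi}-2/r)$, with the second piece depending only on $t$ and controlled by (\ref{eq8}). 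Finally, $\|\kp\|_{L_t^2 L_\omega^\infty}\lesssim\gc$ is obtained from the decomposition (\ref{kp}) by bounding the piece $\os{\tfrac{1}{s}}(an)^{-1}\overline{an}$ via the first estimate, the pieces $(2/s)(1 - (an)^{-1}\overline{an})$ and $(an)^{-1}\overline{s^{-1}\os{an}}$ via (\ref{osc}) from Lemma \ref{osc2} combined with (\ref{cp}), and the piece $V - (an)^{-1}\overline{anV}$ via (\ref{vi}).

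The main obstacle is the transport step for $\sD(1/s)$: producing source terms whose weighted integration closes via Lemma \ref{tsp2}, verifying the initial condition at the singular vertex, handling the singular factor in $s^{-2}\div(\zeta+\zb)$ through (\ref{ldelta}) and the Hodge system, and absorbing $\chih\cdot\sn^2(1/s)$ into the elliptic side. The rescaling by $r$ in the test scalar $\Omega$ is crucial to prevent blow-up of the elliptic hypothesis near the vertex.
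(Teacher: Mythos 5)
Your proof of (\ref{kap}) is essentially the paper's: the same application of (\ref{omeg1}) to $\Omega=r\os{\frac1s}$, the same algebraic reductions $\os{\tr\chi}=\os{V}+2\os{\frac1s}$ and $\iota=\os{\tr\chi}+\ovl{\tr\chi}-\frac2r$, and the same term-by-term treatment of the decomposition (\ref{kp}) via (\ref{osc}), (\ref{cp}), (\ref{vi}) and (\ref{eq8}). For (\ref{eq3}), however, you take a genuinely different route. The paper never touches the Laplacian: it derives a transport equation directly for the full Hessian $\sn^2 s$ (via the second-derivative commutation formula of \cite[Lemma 13.1.2]{KC}), applies Lemma \ref{tsp2} with $p=2$ and $G=\chih$, obtains $\|\sn^2 s\|_{L_\omega^2 L_t^2}\les\gc$ from (\ref{2s1})--(\ref{2s2}), and then converts to $\sn^2(\frac1s)$ through the pointwise identity $-s^2\sn^2(\frac1s)=\sn^2 s-2s^{-1}\sn s\cdot\sn s$, the cross term being controlled by (\ref{eq4})--(\ref{eq5}). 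You instead run the elliptic reduction through (\ref{gauss5}) and transport only the scalar $\sD(\frac1s)$ with weight $p=4$. Both close: your weights work out to exactly the same integrals as the paper's (e.g.\ the source $G\cdot q$ with $q=s^{-2}\sn s$ under the weight $v_{t'}^2$ reproduces the paper's $s^{-2}\int_0^s s'^3|G||\sn\log s'|\,ds'$, and $s^{-2}\div(\zeta+\zb)$ reduces after Hardy--Littlewood to $\|\sn A\|_{L^2}\les\gc$, for which the Hodge system and (\ref{ldelta}) are not actually needed). The trade-off is that the paper's route places $\chih\cdot\sn^2 s$ in the $G\cdot F$ slot of Lemma \ref{tsp2} and needs no absorption, whereas yours must carry $\chih\cdot\sn^2(\frac1s)$ as a source and absorb it at the end using smallness of $\Delta_0$ (together with an a priori finiteness/continuity argument for $\|r\sn^2(\frac1s)\|_{L^2}$); in exchange you only commute $\div$ with $\frac{d}{ds}$ via (\ref{u2}) rather than the full second covariant derivative. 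Do make the vertex asymptotics for $v_t^2\sD(\frac1s)\to0$ explicit — it follows from $r^2\sn^2 s\to0$ and $\sn s\to0$ via $\sD(\frac1s)=-s^{-2}\sD s+2s^{-3}|\sn s|^2$ — since Lemma \ref{tsp2} with $p=4$ requires it.
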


\begin{proof}
Using (\ref{trans.s1}), in view of the commutation formula in
\cite[Lemma 13.1.2]{KC},  symbolically, we obtain
\begin{align}
\frac{d}{ds}\sn^2 s+\tr\chi \sn^2 s&=-\frac{1}{2}\sn \tr\chi\sn
s+\chih\c\sn^2 s-\frac{1}{2}\tr\chi (\zeta+\zb) \sn s-\sn \chih\c\sn
s\nn\\&-(\zeta+\zb)\chih\c \sn s+(\zeta+\zb)\c (\zeta+\zb)+\sn
(\zeta+\zb)+(\chi\c \zb+\b)\sn s\nn.
\end{align}
We then rewrite it as
\begin{equation*}
\frac{d}{ds} \sn^2 s+\tr\chi \sn^2 s=\chih\c \sn^2 s+(\bar R+M+\sn
\chih)\c \sn s+A\c A+\sn(\zeta+\zb).
\end{equation*}
Apply Lemma \ref{tsp2} to the above equation with the help of
$\|\chih\|_{L_\omega^\infty L_t^2(\H)}\le \Delta_0$ in BA1 and
$\lim_{t\rightarrow 0}r^2 \sn^2s=0$,
\begin{equation}\label{trans.2s}
|\sn^2 s|\les v_t^{-1}\int_0^{s(t)} v_{t'}| (\bar R+M+\sn \chih)\c
\sn s +A\c A+\sn (\zeta+\zb)| ds(t').
\end{equation}
Hence, by H\"older inequality and (\ref{hlm}), we have
\begin{align}
\|\sn^2 s\|_{L_\omega^2 L_t^2}\les \|s^{-1}\sn s\|_{L_\omega^\infty
L_t^2}&(\|\bar R\|_{L^2}+\|M\|_{L^2}+\|\sn A\|_{L^2})\nn\\&+\|r(A\c
A+\sn A)\|_{L_\omega^2 L_t^2}\label{2s1}.
\end{align}
By (\ref{simp1}), (\ref{M1}), (\ref{eq4}),
 (\ref{aab}) and (\ref{smry1}), we obtain
\begin{equation}\label{2s2}
\|\sn^2 s\|_{L_\omega^2 L_t^2}\les \gc.
\end{equation}
By a straightforward calculation,
\begin{equation*}
-s^2 \sn^2 (\frac{1}{s})=s^2\sn(s^{-2}\sn s)=\sn^2 s-2 s^{-1}\sn s\c
\sn s
\end{equation*}
we deduce
\begin{equation*}
\|r\sn^2(\os{1/s})\|_{L^2}\les \|\sn^2 s\|_{L_t^2
L_\omega^2}+\|\sn\log s\c \sn s\|_{L_t^2 L_\omega^2}.
\end{equation*}
By (\ref{eq5}) and (\ref{eq4})
\begin{equation}\label{la1}
\|\sn\log s\c \sn s\|_{L_t^2 L_\omega^2}\les\|\sn\log
s\|_{L_\omega^\infty L_t^2}\|s\sn \log s\|_{L_\omega^2
L_t^\infty}\les\Delta_0(\gc).
\end{equation}
Combined with (\ref{2s2}),
\begin{equation}\label{oss1}
\|r\sn^2(\os{1/s})\|_{L^2}\les \gc.
\end{equation}

Now we apply (\ref{omeg1}) to $\Omega=r\os{\frac{1}{s}}$. By
(\ref{eq5}), $\|\sn(\os{\frac{1}{s}})\|_{L^2}\les \gc$. Also in view
of (\ref{oss1}), we conclude that
\begin{equation}\label{eq.12}
\|\os{\frac{1}{s}}\|_{L_t^2 L_\omega^\infty}\les \gc.
\end{equation}
By $\os{\tr\chi}=\os{V}+2\os{\frac{1}{s}}$ and (\ref{vi}),
 it follows from (\ref{eq.12}) that
\begin{equation}\label{eq.13}
\|\os{\tr\chi}\|_{L_t^2 L_\omega^\infty}\les \gc.
\end{equation}
In view of  $\iota=\os{\tr\chi}+\ovl{\tr\chi}-\frac{2}{r}$,
(\ref{eq.13}) together with  (\ref{eq8}) implies
$$
\|\iota\|_{L_t^2 L_\omega^\infty}\les\gc.
$$
In view of (\ref{kp}), symbolically,
\begin{equation}\label{kpp}
\kp=V-(an)^{-1} \ovl{an
V}+\os{\frac{1}{s}}\frac{\ovl{an}}{an}+s^{-1}(an)^{-1}\os{an}+(an)^{-1}\ovl{s^{-1}\os{an}}.
\end{equation}
Taking $L_t^2 L_\omega^\infty$ of $\kp$ in view of (\ref{kpp}),   by
(\ref{osc}) and (\ref{cp}), the last two terms are bounded by $\gc$.
Due to (\ref{eq.12}) and (\ref{vi}), $L_t^2 L_\omega^\infty$ of the
remaining three terms are bounded by $\gc$. Thus we conclude
$\|\kp\|_{L_t^2 L_\omega^\infty}\les \gc$.
\end{proof}

\begin{proposition}\label{n1tr}
\begin{equation*}
\N_1(\iota)+\N_1(\kp)\les \gc.
\end{equation*}
\end{proposition}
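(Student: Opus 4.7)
I control each of the three pieces that make up $\N_1(F)=\|\sn_L F\|_{L^2}+\|\sn F\|_{L^2}+\|r^{-1}F\|_{L^2}$, separately for $F=\iota$ and $F=\kp$.

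\emph{Zero-order and angular norms.} The bound $\|r^{-1}\kp\|_{L^2}\les \gc$ is exactly (\ref{eq.7}). For $\|r^{-1}\iota\|_{L^2}$ I use the decomposition $\iota=(V-\ovl V)+2\os{\tfrac{1}{s}}+(\ovl{\tr\chi}-2/r)$ from (\ref{eq9.1}) and treat the three summands via (\ref{vi}), (\ref{eq6}), and (\ref{eq8}) respectively, using that the last piece depends only on $t$, so its $r^{-1}$-weighted $L^2(\H)$ norm collapses to $\|\ovl{\tr\chi}-2/r\|_{L_t^2}$. For the angular derivative, since $r=r(t)$ is constant on each $S_t$, $\sn\iota=\sn\tr\chi$ and Proposition \ref{MN1} yields $\|\sn\iota\|_{L^2}\les \gc$. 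For $\sn\kp$, the fact that $\ovl{an\tr\chi}$ is a function of $t$ only and that $\sn\log(an)=\zeta+\zb$ gives
$$ \sn\kp \;=\; \sn\tr\chi + (an)^{-1}\ovl{an\tr\chi}\,(\zeta+\zb); $$
substituting $(an)^{-1}\ovl{an\tr\chi}=\tr\chi-\kp$ and using $|\tr\chi|\les 1/r$, the correction is bounded by $\|r^{-1}(\zeta+\zb)\|_{L^2}+\|\kp(\zeta+\zb)\|_{L^2}\les \gc$ via Proposition \ref{propab}, (\ref{kap}), and (\ref{eq.7}).

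\emph{Transport derivative of $\iota$.} Combining (\ref{s1}) with the identity $L(2/r)=-r^{-1}(an)^{-1}\ovl{an\tr\chi}$ (consequence of (\ref{dr}) together with $dt/ds=(na)^{-1}$) and substituting $(an)^{-1}\ovl{an\tr\chi}=\tr\chi-\kp$, $\tr\chi=2/r+\iota$, the $r^{-2}$-singular terms cancel exactly and I obtain the clean transport equation
$$ \sn_L\iota+\tfrac{\iota}{r} \;=\; -\tfrac12\iota^2 - |\chih|^2 - \tfrac{\kp}{r}. $$
The linear right-hand side terms are already controlled; the $\chih$-term is bounded by $\|r^{1/2}\chih\|_{L_\omega^2 L_t^\infty}\|\chih\|_{L_\omega^\infty L_t^2}\les \gc\,\Delta_0$ via Proposition \ref{propab} and BA1; and $\|\iota^2\|_{L^2}=\|\iota\|_{L^4}^2\les \|\iota\|_{L_t^2 L_\omega^\infty}\|\iota\|_{L_t^\infty L_x^2}\les \gc^2$. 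For $\sn_L\kp$ I use the algebraic rearrangement $\kp=\iota+\frac{2\os{an}}{r\cdot an}-(an)^{-1}\ovl{an\iota}$ (obtained by expanding $\tr\chi=\iota+2/r$ inside the definition of $\kp$) and differentiate. The $L(2/r)$ singularity again cancels, reducing $\sn_L(\kp-\iota)$ to combinations of $\os{an}$, $\zeta+\zb$, and elements of $\sl{\pi}$ times $L$-derivatives, plus scalar-in-$t$ time derivatives of $\ovl{an}$ and $\ovl{an\iota}$ computable via the standard averaging formula $\tfrac{d}{dt}\int_{S_t}f=\int_{S_t}(\Dt f+na\tr\chi\,f)$. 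Each resulting piece lies in $L^2(\H)$ with norm $\les \gc$ by Proposition \ref{propab}, (\ref{osc}), (\ref{cond1}), Proposition \ref{smr}, and the $\iota$-estimate just established.

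\emph{Main obstacle.} The delicate step is the quadratic nonlinearity $\iota^2$ in the transport equation for $\iota$: controlling it requires the $L_t^\infty L_x^2$ bound $\|\iota\|_{L_t^\infty L_x^2}\les \gc$, strictly stronger than the $L_t^2$ bound (\ref{eq8}) on its scalar-in-$t$ part $\ovl{\tr\chi}-2/r$. I obtain this upgrade by revisiting the transport equation (\ref{trans.as1}) for $r(\ovl{\tr\chi}-2/r)$ and taking an $L_t^\infty$ bound of the integrated form: since the interval $[0,1]$ is finite, the right-hand side lies in $L_t^1$ by Cauchy--Schwarz from the $L_t^2$-type bounds (\ref{eq.11}) and (\ref{chav}), which yields $\|r(\ovl{\tr\chi}-2/r)\|_{L_t^\infty}\les \gc$ and hence the required $L_t^\infty L_x^2$ control on $\iota$.
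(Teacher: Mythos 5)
Your treatment of $\|r^{-1}\iota\|_{L^2}$, $\|r^{-1}\kp\|_{L^2}$, $\|\sn\iota\|_{L^2}$, $\|\sn\kp\|_{L^2}$ and $\|\sn_L\iota\|_{L^2}$ is essentially correct and close to the paper's route (the paper keeps the term $\iota\tr\chi$ together and bounds it linearly by $r^{-1}|\iota|$ using $|r\tr\chi|\le C$ from (\ref{comp1}), so your splitting into $\iota/r+\iota^2$ and the ensuing $L_t^\infty L_x^2$ upgrade of (\ref{eq8}) are correct but unnecessary detours).

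The genuine gap is in $\|\sn_L\kp\|_{L^2}$. Differentiating $\kp-\iota=\frac{2\os{an}}{r\,an}-(an)^{-1}\ovl{an\iota}$ term by term inevitably produces $\sn_L(an)$ and, via the averaging formula, $\ovl{an\,\sn_L(an)}$. But $\sn_L(an)=-n\nu+a L(n)$ with $L(n)=a^{-1}(\bT(n)+N(n))$, and $\bT(n)=n^{-1}\p_t n$ is \emph{not} among the components of $\sl{\pi}$ controlled by (\ref{cond1}); none of the results you cite (Proposition \ref{propab}, (\ref{osc}), (\ref{cond1}), Proposition \ref{smr}) bound $\sn_L(an)$ or its average. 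What saves the paper is a structural cancellation: in the exact transport equation (\ref{lkp1}) for $\kp$, the contributions $(an)^{-2}\sn_L(an)\ovl{an\tr\chi}$ and $-(an)^{-2}\ovl{an\tr\chi\,\sn_L(an)}$ combine so that only the \emph{oscillation} $\os{\sn_L(an)}$ survives (the average $\ovl{\sn_L(an)}$ drops out identically), and this oscillation is then controlled by the separate estimate (\ref{osc11}), which uses the Poincar\'e inequality (\ref{poin}) together with the commutation formula (\ref{commff}) to rewrite $\sn\sn_L(an)=an\{\sn_L(\zeta+\zb)+\chi\cdot(\zeta+\zb)\}$, i.e.\ an $L$-derivative of the controlled quantity $\zeta+\zb$ rather than of the lapse. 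Your sketch neither exhibits this cancellation of $\ovl{\sn_L(an)}$ nor supplies a bound for $\os{\sn_L(an)}$, so the assertion that ``each resulting piece lies in $L^2(\H)$ with norm $\les\gc$'' is unjustified as stated. To close the gap you must either track the cancellation explicitly so that only $\os{\sn_L(an)}$ appears, and then prove the analogue of (\ref{osc11}).
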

\begin{proof}
In view of (\ref{s1}) and (\ref{dr}), we have
\begin{equation}
\sn_L(\tr\chi-\frac{2}{r})=-\frac{1}{2}(\tr\chi-\frac{2}{r})
\tr\chi-|\chih|^2-\frac{1}{r}\kp.\nn
\end{equation}
By BA1 and (\ref{eq.9}), (\ref{eq.7}) and (\ref{aab}), we have
$\|\sn_L(\iota)\|_{L^2}\les\gc$. Together with (\ref{eq.9}) and
(\ref{smry1}), we conclude $\N_1(\iota)\les\gc$.

 Now we take $L^2(\H)$ norm of $\frac{d}{ds}\kp$ with the help of
\begin{align}
\frac{d}{ds}\kp+\tr\chi\c\kp
&=-|\chih|^2+(an)^{-2}\ovl{|an\chih|^2}+\frac{1}{2}\kp^2
-\frac{1}{2}(an)^{-2}\ovl{(an)^2\kp^2}\label{lkp1}\\
&+(an)^{-2}\left(\ovl{an\tr\chi}\os{\sn_L(an)}
-\ovl{\os{\sn_L(an)}an\kp}\right)\nn.
\end{align}
By (\ref{eq.7}) and (\ref{comp1}), $\|\tr\chi\kp\|_{L^2}\les \gc$.
For the terms  on the right of (\ref{lkp1}), we first claim
\begin{equation}\label{osc11}
\|r^{-1}\os{\sn_L (an)}\|_{L^2}\les\gc
\end{equation}
Indeed,
\begin{equation*}
\sn\sn_L(an)=\sn(an\sn_L \log (an))=\sn\Dt\log(an)
\end{equation*}
by (\ref{commff}),
\begin{equation}\label{lan1}
\sn\sn_L(an)=\Dt\sn\log (an)+an\chi\c \sn\log (an)=an\{\sn_L
(\zeta+\zb)+\chi\c (\zeta+\zb)\}.
\end{equation}
By (\ref{poin}), (\ref{lan1}) and (\ref{cp})
\begin{align}
\|r^{-1}\os{\sn_L (an)}\|_{L^2}&\les\|\sn\sn_L(an)\|_{L^2}\nn\\
&\les\|\tr\chi A\|_{L^2}+\|\sn_L A\|_{L^2}+\|\chih\c A\|_{L^2}
\end{align}
(\ref{osc11}) follows by using (\ref{simp1}) and Proposition
\ref{propab}.

Using (\ref{osc11}), (\ref{cp})  and (\ref{comp1})
\begin{equation*}
\|(an)^{-2}
\ovl{an\tr\chi}\os{\sn_L(an)}\|_{L^2}\les\|r^{-1}\os{\sn_L
(an)}\|_{L^2}\les \gc.
\end{equation*}
Similarly
\begin{align*}
\|(an)^{-2}\ovl{\os{\sn_L(an)}an\kp}\|_{L^2}&\les\|\os{\sn_L(an)}\|_{L_t^2
L_\omega^2}\les\gc.
\end{align*}

Now consider other terms on the right of (\ref{lkp1}), by
(\ref{eq.7}) and (\ref{kap})
\begin{equation*}
\|\kp^2\|_{L^2}\les \|r\kp\|_{L_t^\infty L_\omega^2}\|\kp\|_{L_t^2
L_\omega^\infty}\les \gc,
\end{equation*}
by (\ref{sob.m}) and (\ref{smry1})
\begin{equation*}
\|\chih\c \chih\|_{L^2}\les \|\chih\|_{L^4}^2\les \gc
\end{equation*}
and  the other two terms can be estimated similarly in view of
(\ref{cp}). Hence
\begin{equation}\label{lkp2}
 \|\sn_L
\kp\|_{L^2}\les \gc.
\end{equation}
At last,  we  obtain by definition
\begin{align*}
\sn \kp&=\sn\tr\chi+\sn\log(an)\c (an)^{-1}\ovl{an\tr\chi}.
\end{align*}
 By using(\ref{comp1}), (\ref{cp}) and (\ref{smry1}),
\begin{equation*}
\|\sn\kp\|_{L^2(\H)}\les\|\sn\tr\chi\|_{L^2(\H)}+\|r^{-1}(\zeta+\zb)\|_{L^2(\H)}\les\gc.
\end{equation*}
Combined with (\ref{eq.7}) and (\ref{lkp2}), we can conclude
$\N_1(\kp)\les \gc$.
\end{proof}

\begin{remark}\label{n1kp}
By Proposition \ref{n1tr} and (\ref{kap}), we can regard $\kp$ and
$\iota$ as elements of $A$.

 Without making the strong  assumption  (\ref{aben2}) (see
\cite{KR2}), under the  weaker condition (\ref{cond1}) only,
$\tr\chi-\frac{2}{r}$ no longer satisfies the $L^\infty(\H)$
estimate as (\ref{mt0}) for $\tr\chi-\frac{2}{s}$. Since $S_t$ is
not a level set of affine parameter $s$,  obviously, $\sn r=0$ while
$\sn s\neq 0$.  We will check the weakly spherical property  for
$\gac=r^{-2}\ga$. Integral operators $\La^{-\a}$,
 geometric Littlewood Paley decompositions $P_k$ and Besov norms
will then be defined by heat flow $U(\tau)$ with respect to $\gac$
instead of $s^{-2}\ga$.
Due to the factor ``$an$" in (\ref{dr}), the nontrivial evolution of
$r$ adds technical complexity. This issue can be settled by proving
 $\tr\chi-(an)^{-1}\ovl{an\tr\chi}$ and $\tr\chi-\frac{2}{r}$ verify
stronger estimates than $\chih$. Examples of the application of
(\ref{kap}) and
 Proposition \ref{n1tr} can be seen in the proof of (\ref{8.0.3}), Proposition \ref{commu0}, Theorem \ref{T1.1}, etc.
\end{remark}

\subsection{Weakly spherical surfaces}\label{wss}

Let $\ga$ be the restriction metric on $S_t$, and define the
rescaled metric $\gac$ on $S_t$ by $\gac=r^{-2}\ga$. Let
${\gamma}_{ij}^{(0)}$ denote the canonical metric on ${\Bbb S}^2$.
Note that
\begin{equation}
\lim_{t\rightarrow 0}\stackrel{\circ}
\gamma_{ij}={\gamma}_{ij}^{(0)},\qquad  \lim_{t\rightarrow
0}\p_k\stackrel{\circ}\ga_{ij}=\p_k{\ga}_{ij}^{(0)}\label{8.1.1}
\end{equation}
where $i, j, k=1,2$. With its aid, using the bootstrap assumption
BA1, (\ref{smry1}), we will prove that for each $0 < t\le 1$ the
leave $(S_t, \gac)$ is a weakly spherical surface.

\begin{proposition}\label{l8.1}
For the transport local coordinates $(t,\omega)$, the following
properties hold true for all surfaces ${S}_t$ of the time foliation
on the null cone $\H$: the metric $\stackrel{\circ}\gamma_{ij}(t)$
on each ${S}_t$ verifies weakly spherical conditions i.e.
\begin{eqnarray}
\|\stackrel{\circ}\gamma_{ij}(t)-\gamma_{ij}^{(0)}
\|_{L^\infty_\omega}\lesssim \Delta_0\label{8.4}\\
\|\p_k \stackrel{\circ}\gamma_{ij}(t)-
\p_k\gamma^{(0)}_{ij}\|_{L^2_\omega L_t^\infty}\lesssim \Delta_0
\label{8.0.3}
\end{eqnarray}
\end{proposition}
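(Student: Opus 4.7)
The plan is to derive a transport equation for $\stackrel{\circ}\ga_{ij}(t,\omega)$ in the transported coordinates $(t,\omega)$ and then integrate using the initial data (\ref{8.1.1}) together with estimates already at hand. Since $dt/ds=(an)^{-1}$ along null geodesics, we have $\p_t=(na)L$ on $\H$; the coordinate vectors $\p_{\omega^i}$ are tangent to $S_t$ (so $\bg(L,\p_{\omega^i})=0$) and commute with $\p_t$. Combining with $\chi(X,Y)=\bg(\bd_X L,Y)$ for $X,Y\in TS_t$ this yields $\p_t \ga_{ij}=2na\,\chi_{ij}$. Using (\ref{dr}) in the form $r^{-1}\p_t r=\tfrac12\ovl{an\tr\chi}$ together with $\chi_{ij}=\chih_{ij}+\tfrac12\tr\chi\,\ga_{ij}$ gives the key transport equation
\[
\p_t \stackrel{\circ}\ga_{ij}=(na\kp)\,\stackrel{\circ}\ga_{ij}+2na\,r^{-2}\chih_{ij}.
\]

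For (\ref{8.4}), set $\ti\ga_{ij}:=\stackrel{\circ}\ga_{ij}-\ga^{(0)}_{ij}$. Using $\ti\ga_{ij}(0,\omega)=0$ from (\ref{8.1.1}), integration pointwise in $\omega$ gives
\[
\ti\ga_{ij}(t,\omega)=\int_0^t\bigl[(na\kp)(\ga^{(0)}_{ij}+\ti\ga_{ij})+2nar^{-2}\chih_{ij}\bigr]\,dt'.
\]
The embedding $\|\kp\|_{L^\infty_\omega L^2_t}\le \|\kp\|_{L^2_t L^\infty_\omega}\les \gc$ from (\ref{kap}), the BA1 bound $\|\chih\|_{L^\infty_\omega L^2_t}\le\Delta_0$, and Cauchy--Schwarz in $t$ give pointwise-in-$\omega$ control $\int_0^t(|\kp|+|\chih|)\,dt'\les \Delta_0$; combined with $|r^{-2}\chih_{ij}|\les |\chih|_\ga$ (valid under the comparability of $\stackrel{\circ}\ga$ to $\ga^{(0)}$), Gronwall produces the desired $\|\ti\ga\|_{L^\infty_\omega}\les \Delta_0$.

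For the derivative estimate (\ref{8.0.3}), since $[\p_t,\p_k]=0$ for coordinate partials, applying $\p_k$ to the transport equation yields
\[
\p_t\p_k\ti\ga_{ij}=(na\kp)\,\p_k\ti\ga_{ij}+G_k^{ij},\quad G_k^{ij}=\p_k(na\kp)\stackrel{\circ}\ga_{ij}+(na\kp)\p_k\ga^{(0)}_{ij}+2\p_k(na\,r^{-2}\chih_{ij}),
\]
with initial data $\p_k\ti\ga_{ij}(0,\omega)=0$ by (\ref{8.1.1}). I would take $\sup_{t\le 1}$ pointwise in $\omega$, absorb the homogeneous contribution using $\|na\kp\|_{L^1_t L^\infty_\omega}\les \gc\ll 1$, and then take $L^2_\omega$ to reduce to bounding $\|G_k^{ij}\|_{L^1_t L^2_\omega}$. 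Writing $\p_{\omega^k}=c_k^A e_A$ with $|c_k^A|\approx r$ (by the already established (\ref{8.4})) gives $|\p_k f|\les r|\sn f|$ for scalars, so $G_k^{ij}$ is controlled by $r|\sn\kp|$, $|\kp|$, $r|\sn(na)||\chih|$ and $r|\sn\chih|$; all of these fit inside $\N_1(\kp)\les \gc$ (Proposition \ref{n1tr}), $\N_1(\chih)\les \gc$ (Proposition \ref{smr}), and $\sn\log(na)=\zeta+\zb$ with its $\N_1$ bound from Proposition \ref{propab}. The principal subtlety is the implicit bootstrap: converting coordinate partials $\p_k$ to the covariant $\sn$ requires (\ref{8.4}), so one must propagate (\ref{8.4}) and (\ref{8.0.3}) simultaneously in $t$, closing the argument by continuity from the initial condition (\ref{8.1.1}) at $t\to 0$.
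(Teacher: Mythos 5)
Your proposal is correct and follows essentially the same route as the paper: the same transport equation (\ref{dtga}) for $\gac_{ij}$, integration from the vertex with initial data (\ref{8.1.1}), and the bounds (\ref{kap}), BA1, Propositions \ref{smr} and \ref{n1tr} for the source terms. The only presentational difference is at the self-referential Christoffel term $\Gamma\cdot\chih$ coming from $\p_k\chih_{ij}$: the paper closes it by direct absorption, using $\|\Gamma\|_{L_\omega^2}\les\sum_{i,j,k}\|\p_k(\gac_{ij}-\ga^{(0)}_{ij})\|_{L_\omega^2}+C$ together with $\|\chih\|_{L_\omega^\infty L_t^2}\le\Delta_0<1/2$, rather than your continuity-in-$t$ bootstrap, which amounts to the same thing.
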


\begin{proof}
Since relative to the transport coordinate on $\H$,
$\frac{d}{ds}\ga_{ij}=2\chi_{ij}$,
\begin{equation}\label{dtga}
\frac{d}{dt}(\gac_{ij})=an(\kp \c \ga_{ij}+2\chih_{ij})r^{-2}.
\end{equation}
Integrating (\ref{dtga}) along null geodesic initiating from vertex,
with the help of (\ref{8.1.1}), (\ref{8.4}) follows in view of
(\ref{kap}) and BA1.

Integrating the following transport equation along a null geodesic
initiating from vertex,
\begin{align*}
\frac{d}{dt}\p_k \gac_{ij}&=an\left(\p_k \log(an)\kp \gac_{ij}+\p_k
\tr\chi \gac_{ij}\right.\\
&\quad \, \left. +\kp \p_k \gac_{ij}+2 \p_k \chih_{ij} r^{-2}+2\p_k
\log(an) \chih_{ij}r^{-2}\right)
\end{align*}
where $i, j, k=1,2$,  with the initial condition given by
(\ref{8.1.1}), by $\|\kp\|_{L_t^2 L_\omega^\infty}\les \gc$ in
(\ref{kap}) and a similar argument to Lemma \ref{tsp2}, we can
obtain
\begin{align*}
\left|\p_k\stackrel{\circ}\gamma_{ij}(t)-\p_k{\gamma}^{(0)}_{ij}\right|
&\les\Big|\int_0^{s(t)}\left(\p_k \tr\chi\cdot
{\stackrel{\circ}\ga}_{ij}+r^{-2}(\sn_k\chih_{ij}-\Gamma\cdot
\chih)\right.\\
&\quad \, \left. +\p_k \log(an)\gac_{ij}\kp+2\p_k \log(an)
\chih_{ij}r^{-2}+\kp \p_k \ga^{(0)}_{ij}\right)  d s(\tt)\Big|,
\end{align*}
where $\Gamma$ represents Christoffel symbols, and $\Gamma \cdot
\chih$ stands for the terms $\sum_{l=1}^2\Ga_{ki}^l\chih_{lj}$ with
$l=1,2$. Then with the help of (\ref{8.4}),
\begin{align}
& \left\|\sup_{0<t\le 1}\left|\p_k
\stackrel{\circ}\gamma_{ij}(t)-\p_k{\gamma}^{(0)}_{ij}\right|\right\|_{L_\omega^2} \nn\\
&\qquad \qquad \lesssim\|\Gamma\|_{L_\omega^2
L_t^2}\|\chih\|_{L_\omega^\infty L_t^2}+\|\sn \tr \chi \|_{L_x^2
L_t^1}+\|\sn \chih\|_{L_x^2
L_t^1}\nn\\
&\qquad \qquad +\|r(\zeta+\zb)\c\kp\|_{L_\omega^2 L_t^1}+
\left\|r|\zeta+\zb| |\chih| \right\|_{L_\omega^2 L_t^1}
+\|\kp\|_{L_t^2 L_\omega^2}\label{li2}.
\end{align}
By BA1, (\ref{eq.7}), (\ref{aab}), we obtain the terms in the line
of
\begin{equation*}
(\ref{li2})\les\|\kp\|_{L_t^2 L_\omega^2}(\| A\|_{L_\omega^\infty
L_t^2}+1)+\|A\c A\|_{L^2(\H)}\les\gc.
\end{equation*}
Using Proposition \ref{MN1},
\begin{equation*}
\left\|\sup_{0<t\le 1}\left|\p_k
\stackrel{\circ}\gamma_{ij}(t)-\p_k{\gamma}^{(0)}_{ij}\right|\right\|_{L_\omega^2}\les\gc+\|\chih\|_{L_\omega^\infty
L_t^2} \|\Gamma\|_{L_\omega^2 L_t^2}.
\end{equation*}

Sum over all $i,j,k=1,2$, also using (\ref{8.4})
$$
\|\Ga\|_{L_\omega^2}\les\sum_{i,j,k=1,2} \|\p_k
(\gac_{ij}-{\gamma}^{(0)}_{ij})\|_{L_\omega^2}+C,
$$
where $C$ is the constant such that  the Christoffel symbol of
$\ga^{(0)}$ satisfies $|\p \ga^{(0)}|\le C$, (\ref{8.0.3}) then
follows by  using $\|\chih\|_{L_\omega^\infty L_t^2}\le
\Delta_0<1/2$ in BA1.
\end{proof}

\section{\bf $\|\Lambda^{-\a}\K\|_{L_t^\infty L_x^2}$ and elliptic estimates}\label{ellp} \setcounter{equation}{0}

Define the operator $\La^{a}$ with $a\le 0$ such that  for any
$S$-tangent tensor fields $F$
\begin{equation}\label{laalf}
\La^{a}F:=\frac{r^{-a}}{\Gamma(-a/2)}\int_0^\infty
\tau^{-\frac{a}{2}-1}e^{-\tau}U(\tau)F d\tau,
\end{equation}
where $\Ga$ denotes Gamma function and $U(\tau) F$ is defined on
$(S_t, \stackrel{\circ}\ga)$ by
\begin{equation}\label{ut1}
\frac{\partial}{\partial \tau}U(\tau) F-\Delta_{\stackrel{\circ}\ga}
U(\tau)F=0,\quad U(0)F=F.
\end{equation}
The definition of $\La^a$ extends to the range $a>0$ by defining for
$0<a\le 2m$ that
$$
\La^{a}F=\La^{a-2m}\c (r^{-2}Id-\Delta_\ga)^m F.
$$

We record the  basic properties of ${\La^{a}}$ in the following
result (see \cite{KR4}).

\begin{proposition}\label{lambda:1}
\begin{enumerate}
\item[(i)] $\La^0=Id$ and $\La^{a}\c \La^{b}=\La^{a+b}$ for any $a, b\in {\mathbb R}$.

\item[(ii)] For any $S$-tangent tensor field $F$ and any $a\le 0$
$$
r^{a}\|\La^{a}F\|_{L^2(S)}\les \|F\|_{L^2(S)}.
$$

\item[(iii)] For any $S$-tangent tensor field $F$ and any $b\ge a\ge0$
$$
r^a \|\La^{a}F\|_{L^2(S)}\les r^{b} \|\La^{b}F\|_{L^2(S)}\quad
\mbox{and} \quad  \|\La^{a}F\|_{L^2(S)}\les
\|\La^{b}F\|_{L^2(S)}^{\frac{a}{b}}\|F\|_{L^2(S)}^{1-\frac{a}{b}}.
$$

\item[(iv)] For any $S$-tangent tensor fields $F$ and $G$ and any $0\le a< 1$
$$
\|\Lambda^a (F\cdot G)\|_{L^2(S)} \les \|\Lambda F\|_{L^2(S)}
\|\Lambda^a G\|_{L^2(S)} +\|\Lambda^a F\|_{L^2(S)} \|\Lambda
G\|_{L^2(S)}.
$$

\item[(v)] For any $S$-tangent tensor field $F$ there holds with $2<p<\infty$ and $a>1-\frac{2}{p}$
\begin{equation*}
\|F\|_{L^p(S)}\les \|\La^{a}F\|_{L^2(S)}.
\end{equation*}

\item[(vi)] For any $a\in {\mathbb R}$ and any $S$-tangent tensor field $F$
\begin{equation*}
\|F\|_{H^a(S)}:=\|\La^{a}F\|_{L^2(S)}
\end{equation*}
\end{enumerate}
\end{proposition}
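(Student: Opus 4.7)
My plan is to reduce all six assertions to spectral-calculus statements about the self-adjoint operator $Id - \Delta_{\stackrel{\circ}\gamma}$ on each leaf $(S_t, \stackrel{\circ}\gamma)$, using the weakly spherical property established in Proposition \ref{l8.1} to transfer estimates from $\mathbb{S}^2$ with uniform constants. The first step is to observe that since $\stackrel{\circ}\gamma = r^{-2}\gamma$ gives $\Delta_{\stackrel{\circ}\gamma} = r^2\Delta_\gamma$, hence $r^{-2}Id - \Delta_\gamma = r^{-2}(Id - \Delta_{\stackrel{\circ}\gamma})$, the Gamma-function identity $(1+\lambda)^{-s} = \Gamma(s)^{-1}\int_0^\infty \tau^{s-1}e^{-\tau(1+\lambda)}d\tau$ for $s>0$ combined with the spectral representation of $U(\tau) = e^{\tau\Delta_{\stackrel{\circ}\gamma}}$ gives, for $a\le 0$,
\[
\Lambda^a F = r^{-a}(Id - \Delta_{\stackrel{\circ}\gamma})^{a/2} F,
\]
and the extension via $(r^{-2}Id - \Delta_\gamma)^m = r^{-2m}(Id - \Delta_{\stackrel{\circ}\gamma})^m$ is consistent with this identity for $a>0$ as well. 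From here everything reduces to functional calculus for $Id - \Delta_{\stackrel{\circ}\gamma}$, whose spectrum is contained in $[1,\infty)$.

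For (i), semigroup composition is the pointwise identity $(1+\lambda)^{a/2}(1+\lambda)^{b/2} = (1+\lambda)^{(a+b)/2}$. For (ii), the multiplier $(1+\lambda)^{a/2}$ is bounded by $1$ when $a\le 0$, so $r^a\|\Lambda^a F\|_{L^2} = \|(Id-\Delta_{\stackrel{\circ}\gamma})^{a/2}F\|_{L^2} \le \|F\|_{L^2}$. For the first part of (iii), $(1+\lambda)^{a/2} \le (1+\lambda)^{b/2}$ for $0\le a\le b$ and $\lambda\ge 0$, giving the monotonicity after rescaling by $r^{a}$; for the interpolation, apply H\"older's inequality to the spectral measure $dE_F$:
\[
\int (1+\lambda)^{a}\,dE_F \le \Bigl(\int (1+\lambda)^b\,dE_F\Bigr)^{a/b}\Bigl(\int dE_F\Bigr)^{1-a/b},
\]
and multiply by $r^{-2a}$. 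Property (vi) is just a definition.

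The genuine content lies in (iv) and (v), where the plan is to invoke the results of \cite{KR4} on weakly spherical surfaces, whose hypotheses are precisely the bounds (\ref{8.4}) and (\ref{8.0.3}) verified in Proposition \ref{l8.1}. Concretely, weakly spherical geometry gives Gaussian upper and lower bounds on the heat kernel of $\Delta_{\stackrel{\circ}\gamma}$ comparable, with a constant independent of $t$, to those on $\mathbb{S}^2$. This yields uniform $L^p \to L^p$ bounds for the frequency cutoffs $\tau^k \partial_\tau^k U(\tau)$ and allows a paraproduct decomposition $\Lambda^a(F\cdot G) = \sum_{j,k}\Lambda^a(U_j F \cdot U_k G)$, from which (iv) follows by summing over $j\le k$ and $j\ge k$ separately using the composition law (i), the interpolation (iii), and a Cauchy--Schwarz split in $L^4 \times L^4$. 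For (v), represent $\Lambda^{-a}$ by its heat-kernel integral and bound $\|U(\tau)F\|_{L^p} \lesssim r^{1-2/p}\tau^{-(1/2 - 1/p)}\|F\|_{L^2}$ via the heat-kernel estimate on $(S_t, \stackrel{\circ}\gamma)$, then integrate in $\tau$; the condition $a > 1 - 2/p$ is exactly what makes the resulting $\tau$-integral convergent at $\tau = 0$.

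The main obstacle is the paraproduct estimate (iv) together with checking that all constants inherited from \cite{KR4} remain uniform in $t\in(0,1]$. This uniformity rests entirely on the time-uniform closeness to $\gamma^{(0)}$ established in Proposition \ref{l8.1}; once it is in hand, everything else is a routine transfer of standard harmonic analysis on $\mathbb{S}^2$, with the $r$-weights tracked by the factor $r^{-a}$ in the identification of $\Lambda^a$.
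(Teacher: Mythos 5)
The paper does not prove this proposition at all: it is recorded verbatim as a known result with the citation ``(see \cite{KR4})'', so there is no in-paper argument to compare against. Your reconstruction is sound where it is self-contained. The identification $\La^{a}=r^{-a}(Id-\Delta_{\gac})^{a/2}$ follows exactly as you say from the Gamma-function identity applied to the spectral resolution of the (nonnegative) rough Laplacian on $(S_t,\gac)$, and it is consistent with the extension to $a>0$ because $r^{-2}Id-\Delta_\ga=r^{-2}(Id-\Delta_{\gac})$; from there (i), (ii), the monotonicity and interpolation in (iii), and the definition (vi) are immediate functional calculus, with the $r$-weights cancelling as you computed. For (iv) and (v) you end up leaning on the same source the paper cites, which is legitimate, but one methodological point deserves correction: \cite{KR4} deliberately does \emph{not} establish Gaussian upper and lower heat-kernel bounds on weakly spherical surfaces -- the metric regularity available (Proposition \ref{l8.1} gives only $L^\infty_\omega$ closeness of $\gac$ to $\ga^{(0)}$ and $L^2_\omega L^\infty_t$ control of one derivative) is too weak for pointwise kernel comparison. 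The $L^2\to L^p$ bounds for $U(\tau)$ and $\tau\p_\tau U(\tau)$ that your argument for (v) and your paraproduct for (iv) actually require are obtained there by softer means (energy identities for the heat flow combined with the Gagliardo--Nirenberg inequality on the leaf, which needs only an isoperimetric-type bound), and indeed properties (i)--(v) hold under hypotheses weaker than weak sphericality; note also that within this paper only the \emph{sharp} Bernstein inequality (Proposition \ref{P2}(v)) requires the curvature control of Proposition \ref{ep1.3}, while the weak $L^p$ theory underlying (iv)--(v) here does not. With that substitution your outline goes through and the convergence condition $a>1-2/p$ at $\tau=0$ is exactly right.
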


\begin{proposition}\label{ep1.3}
Under the assumption of BA1 , if $\RR>0$ is sufficiently small, then
for all $\frac{1}{2}\le\a<1$ there hold
\begin{align}
&{\underline K}_\a:=\|\La^{-\a}(K-r^{-2})\|_{L_t^\infty L_x^2}\les
\gc,\label{lk}\\ &\|\La^{-\a}\rhoc\|_{L_t^\infty L_x^2}\les
\gc.\label{lrhoc}
\end{align}
\end{proposition}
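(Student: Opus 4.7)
The Gauss identity (\ref{gauss2}) reads
$$\K = \frac{a^2-1}{r^2} + \frac{a^2\iota}{2r} + \frac{\iota(V+\frac{2}{s})a^2}{4} - a\tr\chi(\lambda+\f12\delta) - \rhoc.$$
Each term on the right other than $\rhoc$ already lies in $L_t^\infty L_x^2$ with norm $\les \gc$: the first by upgrading the argument of (\ref{a1}) to $L_t^\infty$ (integrating $\nu \in \sl{\pi}$ in $t$ and using (\ref{cond1})); the remaining three by combining the $\iota$ control from Proposition \ref{n1tr}, the bounds (\ref{vi}) and (\ref{comp1}) on $V$ and $\tr\chi$, and Proposition \ref{propab} on $\lambda,\delta$. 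By Proposition \ref{lambda:1}(ii), these $L^2(S)$ bounds pass to $H^{-\a}(S)$, so (\ref{lk}) follows at once from (\ref{lrhoc}). Thus the task reduces to estimating $\Phi := \La^{-\a}\rhoc$ in $L_t^\infty L_x^2$.

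\textbf{Paragraph 2 (Energy identity and principal curvature term).} The transport structure comes from the Bianchi equation (\ref{Bia2}). Along the $L$-flow we have
$$\f12\frac{d}{ds}\|\Phi\|_{L^2(S)}^2 = \int_S \Phi\,\sn_L\Phi\, d\mu_\ga + \f12\int_S \tr\chi\,\Phi^2\, d\mu_\ga,$$
and we split $\sn_L\Phi = \La^{-\a}\sn_L\rhoc + [\sn_L,\La^{-\a}]\rhoc$. Using (\ref{Bia2}) the principal contribution is $\int_S \La^{-2\a}\rhoc \cdot \div\b$, which after integration by parts on $S$ becomes $-\int_S \sn\La^{-2\a}\rhoc \cdot \b$. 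The point is that for $\a\ge \f12$ the operator $\sn\La^{-2\a}$ has total order $1-2\a\le 0$, hence is bounded on $L^2(S)$ by Proposition \ref{lambda:1}(ii)--(iii), giving a bound $\les \|\Phi\|_{L^2(S)}\|\b\|_{L^2(S)}$. Integration in $t$ then pairs this against the curvature flux $\|\b\|_{L^2(\H)}\les \RR$ from (\ref{cond1}).

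\textbf{Paragraph 3 (Nonlinear terms and closure).} The nonlinear right-hand side of (\ref{Bia2})—namely $\tr\chi\rhoc$, $(\zeta+2\zb)\c\b$, $\chih\c\tr\chib\c\chih$, $\chih\c(\zb\hot\zb)$, and the principal cross term $\chih\c\sn\hot\zb$—is controlled by testing against $\Phi$ and invoking the product rule in Proposition \ref{lambda:1}(iv):
$$\|\La^{-\a}(F\c G)\|_{L^2(S)}\les \|\La F\|_{L^2}\|\La^{-\a}G\|_{L^2}+\|\La^{-\a}F\|_{L^2}\|\La G\|_{L^2}.$$
The resulting $\La^{\pm 1}$-type factors of $\chih,\zeta,\zb,\tr\chib$ are covered by $\N_1(\Ab)\les\gc$ from Proposition \ref{smry1}, those of $\tr\chi$ by (\ref{comp1}) and Proposition \ref{n1tr}, and those of $\b$ by the flux bound. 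After an $L^2_t$ H\"older and Gronwall in $t$, these contributions absorb into $\gc\,(\gc+\|\Phi\|_{L_t^\infty L_x^2})$; smallness of $\RR$ then yields $\|\Phi\|_{L_t^\infty L_x^2}\les \gc$. The initial data $\Phi(t)\to 0$ as $t\to 0$ follows from $|S_t|\to 0$ and the boundedness of $\rhoc$ near the vertex granted by Lemma \ref{inii} via (\ref{gauss}).

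\textbf{Paragraph 4 (Main obstacle: the commutator $[\sn_L,\La^{-\a}]\rhoc$).} I expect the hard step to be the commutator term $\int_S \Phi\,[\sn_L,\La^{-\a}]\rhoc$, which must be bounded by $\gc\cdot\|\rhoc\|_{H^{-\a}(S)}$ up to error terms controlled by the already established norms. Using the heat-flow representation (\ref{laalf}) of $\La^{-\a}$, this reduces to analyzing $[\sn_L,U(\tau)]$ and, through $[\sn_L,\Delta_{\gac}]$ and $[\sn_L,\sn]$, produces schematic expressions in $\chi,\kp$ (from the evolution (\ref{dtga}) of $\gac$) times curvature or lower-order geometry. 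Their control in $L_t^2 L_x^2$ relies crucially on the weakly spherical bounds of Proposition \ref{l8.1}, on $\N_1(\chih)\les\gc$ from Proposition \ref{smry1}, and on $\|\kp\|_{L_t^2 L_\omega^\infty}\les\gc$ from Proposition \ref{n1tr}—precisely the ingredients of Section \ref{presob} tailored to the weaker hypothesis (\ref{n1pi}). This commutator analysis is the counterpart, for the $t$-foliation, of the commutator estimates used in \cite{KR4} in the geodesic foliation, and carrying it out under (\ref{cond1}) rather than (\ref{aben2}) is the main technical burden of the proof.
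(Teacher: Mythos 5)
Your overall strategy (reduce (\ref{lk}) to (\ref{lrhoc}) via the Gauss equation, then run an energy/transport argument for $\La^{-\a}\rhoc$ off the Bianchi equation (\ref{Bia2}), with the main term handled by the fact that $\sn\La^{-2\a}$ has nonpositive order for $\a\ge\f12$) is the paper's strategy. But there is a genuine gap at exactly the step you defer to Paragraph 4. The commutator $[\Dt,\La^{-\a}]_g$ is \emph{not} bounded by $\gc\cdot\|\rhoc\|_{H^{-\a}(S)}$ plus controlled errors: the available estimate (Proposition \ref{commu0}) is
$\|r^{-\a}[\La^{-\a},\Dt]_g f\|_{L_t^1L_x^2}\les(1+I_\a^{1-\frac{2}{p}})(\gc)\|f\|_{L^2}$ with $I_\a=1+\K_\a^{\frac{1}{1-\a}}+\K_\a^{\f12}$, i.e.\ it depends on the very quantity $\K_\a$ you are trying to bound. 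Consequently (\ref{lrhoc}) cannot be proved first and (\ref{lk}) deduced "at once" from it; one only gets $\|\La^{-\a}\rhoc\|_{L_t^\infty L_x^2}\les(\gc)(1+I_\a^{1-\frac{2}{p}})$, and the two estimates must be closed \emph{simultaneously}: combining this with the Gauss-equation bound for $\La^{-\a}(\K+\rhoc)$ gives $\K_\a\les\gc+(\gc)\K_\a^{\frac{1}{1-\a}(1-\frac{2}{p})}$, and the argument closes only because $\frac{1}{1-\a}(1-\frac{2}{p})<1$ for $p$ close to $2$. This sublinear-absorption device is the decisive point of the proof and is absent from your write-up; without it the scheme is circular.

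A secondary but real problem is your treatment of the nonlinear terms: Proposition \ref{lambda:1}(iv) is a product estimate for $\La^{a}$ with $0\le a<1$ only, so you cannot quote it with exponent $-\a$ as written in Paragraph 3. The paper instead estimates $\|\La^{-2\a}(an\,A\c\tR)\|_{L^2}$ by duality, pairing against $an\,A\c\tR\in L_t^2L_x^{4/3}$ and using the embedding $\|F\|_{L^p(S)}\les\|\La^{a}F\|_{L^2(S)}$ from Proposition \ref{lambda:1}(v) together with (\ref{sob.m}); this is also where the derivative term $\chih\c\sn\hot\zb$ (which sits only in $L^2$) is absorbed. These two repairs — the duality argument for the quadratic terms and, above all, the $I_\a$-dependent commutator bound with the sublinear closure — are what turn your outline into a proof.
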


For smooth scalar functions $f$ on $\H$, with $p>2$, $\a\ge 1/2$,
define the good part of the commutator $[\Lambda^{-\a}, \Dt]f$  by
\begin{align*}
[\Lambda^{-\a}, \Dt]_g f:=r^\a C_\a \int_0^\infty
\tau^{\frac{\a}{2}-1} e^{-\tau}[U(\tau), \Dt] f d\tau, \with
C_\a=\frac{1}{\Ga(\frac{\a}{2})}.
\end{align*}

We first prove Proposition \ref{ep1.3} by assuming the following
result.
\begin{proposition}\label{commu0}
Let $f$ be a smooth scalar function $\H$, there holds
\begin{equation*}
\|r^{-\a}[\Lambda^{-\a}, \Dt]_g f\|_{L_t^1
L_x^2}\les(1+I_\a^{1-\frac{2}{p}})(\gc)\|f\|_{L^2},
\end{equation*}
where $p>2$  and $ 1/2\le \a<1$, and $
I_\a:=1+\K_\a^{\frac{1}{1-\a}}+\K_\a^{\f12}.$
\end{proposition}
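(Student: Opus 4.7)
The plan is to reduce the commutator to a Duhamel iteration against the simpler operator $[\Dt, \Delta_\gac]$ acting on scalars, and then to bound the latter by pairing heat-semigroup smoothing against the $\N_1$-bounds on $A$. First, since $U(\tau)$ is generated by $\Delta_\gac$, setting $G(\tau):=U(\tau)\Dt f-\Dt U(\tau) f$ yields $G(0)=0$ and $\p_\tau G-\Delta_\gac G=-[\Dt,\Delta_\gac]U(\tau)f$, so Duhamel gives
$$
[U(\tau),\Dt]f=-\int_0^\tau U(\tau-s)\,[\Dt,\Delta_\gac]U(s)f\,ds.
$$
Inserting this into the definition of $[\La^{-\a},\Dt]_g$, we obtain
$$
r^{-\a}[\La^{-\a},\Dt]_g f=-C_\a\int_0^\infty\tau^{\a/2-1}e^{-\tau}\!\!\int_0^\tau U(\tau-s)[\Dt,\Delta_\gac]U(s)f\,ds\,d\tau,
$$
and the task is reduced to bounding $[\Dt,\Delta_\gac]U(s)f$ in $L^2(S_t)$ with appropriate control as $s\to 0$.

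Next, I would compute $[\Dt,\Delta_\gac]$ on scalars. From $\Dt r=\frac{r}{2}\ovl{an\tr\chi}$ (consequence of (\ref{dr}) and $\Dt=an\frac{d}{ds}$) and $\Dt \ga_{ij}=2an\chi_{ij}$, a direct computation gives $\Dt\gac_{ij}=an\kp\,\gac_{ij}+2an\chih_{ij}/r^2$. Combining this with the scalar commutator $[\Dt,\sD]$ in Proposition \ref{commu1} and reexpressing the intermediate terms through (\ref{0.7})--(\ref{hdg2}) yields the schematic identity
$$
[\Dt,\Delta_\gac]h=an\kp\,\Delta_\gac h+2an\chih\cdot\sn^2 h+an\bigl(\sn\chih+\sn\kp+\bar R+A\cdot \Ab\bigr)\cdot\sn h.
$$
All coefficients lie in $A$ (by Remark \ref{n1kp}) or are controlled by $\sn A$, $\bar R$, $A\cdot\Ab$, whose $L^2(\H)$ and $\N_1$ bounds are of size $\gc$ by Propositions \ref{smr}, \ref{n1tr} and (\ref{simp1}).

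The decisive term is $2an\chih\cdot\sn^2 U(s) f$. Bounding it by H\"older gives $\|\chih\|_{L^p(S_t)}\|\sn^2_\gac U(s)f\|_{L^{2p/(p-2)}(S_t)}$ with $p>2$; the first factor is handled via Proposition \ref{lambda:1}(v) by $\|\La^{a}\chih\|_{L^2}$ with $a>1-2/p$, where the embedding constant on $(S_t,\gac)$ is controlled by a power of $\und K_\a$ (Proposition \ref{ep1.3}) using the B\"ochner identity on $S_t$, which accounts for the $I_\a^{1-2/p}$ factor. The second factor, by heat-kernel regularity for $U(\tau)$ on the weakly spherical surface $(S_t,\gac)$ (Proposition \ref{l8.1}), satisfies $\|\sn^2_\gac U(s)f\|_{L^{2p/(p-2)}}\les s^{-1-1/p}\|f\|_{L^2_\gac}$. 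The $s$-integral $\int_0^\tau s^{-1-1/p}\,ds$ against $\tau^{\a/2-1}$, combined with Schur-type interpolation for the heat semigroup, is integrable after being paired with the outer weight since $\a\ge 1/2$ and $p$ is chosen close enough to $2$. The remaining terms with $\Delta_\gac h$ and $\sn h$ are treated analogously but are simpler since no gain of two derivatives is required.

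The main obstacle is the interplay between the $\sn^2$ derivative on $U(s)f$ and the degenerate control on Gauss curvature: the Sobolev embedding $H^a(S_t)\hookrightarrow L^p(S_t)$ only holds uniformly in $t$ because $(S_t,\gac)$ is weakly spherical and $\und K_\a$ is finite, and the loss is precisely what produces $I_\a^{1-2/p}$. After taking $L^2(S_t)$ in the $ds\,d\tau$ double integral and then $L_t^1$ (where the coefficient $\N_1$-bounds of size $\gc$ enter via Cauchy--Schwarz), the asserted inequality
$$
\|r^{-\a}[\La^{-\a},\Dt]_g f\|_{L_t^1 L_x^2}\les(1+I_\a^{1-2/p})(\gc)\|f\|_{L^2}
$$
follows.
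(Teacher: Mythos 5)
Your overall framework---Duhamel for $[U(\tau),\Dt]$, reduction to $[\Dt,\Delta_{\gac}]U(\tau')f$, and the appearance of $I_\a^{1-2/p}$ through the Sobolev embedding controlled by $\underline K_\a$---is the same as the paper's. But your treatment of the top-order term $2an\,\chih\cdot\sn^2 U(s)f$ has a genuine gap. You bound it by H\"older as $\|\chih\|_{L^p}\,\|\sn^2 U(s)f\|_{L^{2p/(p-2)}}$ and invoke the heat-kernel smoothing $\|\sn^2_{\gac}U(s)f\|_{L^{2p/(p-2)}}\les s^{-1-1/p}\|f\|_{L^2}$. That exponent is correct, but then the inner integral $\int_0^\tau s^{-1-1/p}\,ds$ diverges at $s=0$ for every $p>2$, since $-1-1/p<-1$; the outer weight $\tau^{\a/2-1}e^{-\tau}$ and any Schur-type argument act only in the $\tau$ variable and cannot repair a non-integrable singularity at $s=0$ inside the inner integral. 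Choosing $p$ close to $2$ makes the exponent closer to $-3/2$, i.e.\ worse, not better.

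The step you are missing is the divergence-form decomposition that the paper (following Klainerman--Rodnianski) uses: after subtracting the $\ovl{an\tr\chi}\,\sD U(\tau')f$ term coming from $\Dt(r^2)$, the commutator is written as $\sn\phi_1+\phi_2$ with
$\phi_1=an\,A\cdot\sn U(\tau')f$ and $\phi_2=an(\b+\sn A+A\cdot A+r^{-1}A)\cdot\sn U(\tau')f$,
i.e.\ the second-order term is recast as $\sn(an\chih\cdot\sn U(\tau')f)-\sn(an\chih)\cdot\sn U(\tau')f$. The exterior gradient is then transferred to the semigroup, $U(\tau-\tau')\sn$, which costs $(\tau-\tau')^{-1/2}$, while only one derivative falls on $U(\tau')f$, costing $s^{-1/2-1/p}$; the resulting integral $\int_0^\tau(\tau-\tau')^{-1/2}(\tau')^{-1/2-1/p}\,d\tau'$ converges. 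Without this structural rearrangement (or an equivalent integration by parts on $S_t$), the estimate as you set it up cannot close. The remaining ingredients you list (regarding $\kp$ as an element of $A$, the $\N_1(A)\les\Delta_0^2+\RR$ bounds, the extra factor $an$ handled via $\|\sn(an)\|_{L_t^\infty L_x^4}$) are consistent with the paper's proof.
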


\begin{proof}[Proof of Proposition \ref{ep1.3}]
It suffices to prove the following estimates
\begin{align}
&\|\La^{-\a}(\K+\rhoc)\|_{L_t^\infty L_x^2}\les \gc,\label{pr.3}\\
&\|\La^{-\a}\rhoc\|_{L_t^\infty L_x^2}\les
(\gc)(1+I_\a^{1-\frac{2}{p}}), \quad p>2 .\label{pr.4}
\end{align}
By (\ref{pr.3}) and (\ref{pr.4}), according to the definition of
$I_\a$, with $\frac{1}{1-\a}\c (1-\frac{2}{p})< 1$, we can obtain
(\ref{lk}).

Let us first prove (\ref{pr.3}). By (\ref{gauss2}),
\begin{equation}\label{gauss6}
\K+\rhoc=\frac{a^2-1}{r^2}+\frac{a^2 \iota }{2r}+\frac{\iota \tr\chi
a^2}{4}+\tr\chi\Ab,
\end{equation}
By Proposition \ref{lambda:1} (ii), BA1, (\ref{cmps1}) Proposition
\ref{propab} and (\ref{comp1}), for $\a\ge 1/2$,
\begin{equation*}
\|\La^{-\a} (\tr\chi\Ab)\|_{L_t^\infty L_x^2}\les \|r^{\a+1}
\tr\chi\Ab\|_{L_t^\infty L_\omega^2}\les\gc.
\end{equation*}
By Proposition \ref{lambda:1} (ii) and (\ref{cond1}), we have for
$\a\ge\f12$
\begin{align*}
\left\|\La^{-\a} (\frac{a^2-1}{r^2})\right\|_{L_t^\infty L_x^2}
&\les \|r^{-1}\La^{-\a}(a^2-1)\|_{L_t^\infty L_\omega^2}\les
\|r^{{\a}-1} (a^2-1)\|_{L_t^\infty L_\omega^2}\\
&\les \left\|r^{\a-1}\int_0^t a^2 n \sn_L a d\tt
\right\|_{L_t^\infty L_\omega^2}\les \|\sn_L
a\|_{L_\omega^2L_t^2}\\
& \les \|r^{-1}\nu\|_{L^2}\les \gc.
\end{align*}
By Proposition \ref{lambda:1} (ii),  (\ref{sob.m}) and Proposition
\ref{n1tr}, we have for $\a\ge \f12$,
\begin{align*}
&\left\|\La^{-\a}(\frac{a^2\iota}{r}) \right\|_{L_t^\infty L_x^2}
\les \|a^2 r^\a \iota\|_{L_t^\infty L_\omega^2}
\les\N_1(\iota)\les\gc.
\end{align*}
Similarly, by (\ref{comp1}),
\begin{align*}
\|\La^{-\a}(a^2\tr\chi \iota)\|_{L_t^\infty L_x^2}&\les \|r^{1+\a}
(a^2\tr\chi \iota)\|_{L_t^\infty L_\omega^2}\les \|r^\a
\iota\|_{L_t^\infty L_\omega^2}\les\gc.
\end{align*}
This finishes the proof of (\ref{pr.3}).

Next we prove (\ref{pr.4}). Let
$W(t)=(\La^{-\a}\rhoc)^2(t)-(\La^{-\a} \rhoc)^2(0)$. Then
\begin{align}
W(t)&=\int_0^t \int_{S_{t'}} \left(\dt(\La^{-\a}\rhoc)^2+an
\tr\chi(\La^{-\a}\rhoc)^2\right) d\mu_{\ga} dt'\nn\\
&=\int_0^t \int_{S_\tt}\Big(2[\Dt, \La^{-\a}]_g\rhoc \c
\La^{-\a}\rhoc+(an
\tr\chi+\a\ovl{an\tr\chi})(\La^{-\a}\rhoc)^2 \nn\\
&\qquad \qquad\quad +2\La^{-\a} \D_t \rhoc\c \La^{-\a}\rhoc\Big)
d\mu_\ga dt'\label{wt1}.
\end{align}
Let $\vartheta(t)$ be a  smooth cut-off function with
$\vartheta(0)=1$ and supported in $[0,\f12]$, $|\vartheta|\le 1$,
\begin{equation}\label{ini1}
(\La^{-\a}\rhoc(0))^2=
(\vartheta\La^{-\a}\rhoc)(0)^2-(\vartheta\La^{-\a} \rhoc)(1)^2
\end{equation}
can be treated similar to $W(t)$, then
\begin{align}
&\|\La^{-\a}\rhoc\|_{L_t^\infty L_x^2}^2\nn\\
&\les \|[\Dt, \La^{-\a}]_g\rhoc\|_{L_t^1 L_x^2} \c
\|\La^{-\a}\rhoc\|_{L_t^\infty L_x^2}+\|(an
\tr\chi+\a\ovl{an\tr\chi})(\La^{-\a}\rhoc)^2\|_{L_t^1
L_x^1}\label{wt9}\\
&+\int_0^1 \left|\int_{S_\tt} \La^{-\a} \D_t \rhoc\c \La^{-\a}\rhoc
d\mu_\ga \right| d\tt +\int_0^1 \left|
\int_{S_\tt}\vartheta^2\La^{-\a} \D_t \rhoc\c \La^{-\a}\rhoc
d\mu_\ga \right| d\tt\label{wt8}\\
& +\int_0^1 \int_{S_\tt} \left|\vartheta \dt \vartheta
(\La^{-\a}\rhoc)^2\right| d\mu_\ga d\tt\label{wt7}.
\end{align}
In view of (\ref{comp1}) and  Proposition \ref{lambda:1} (ii),
\begin{align*}
\int_0^1\int_{S_\tt}(|an \tr\chi
|+|\ovl{an\tr\chi}|)(\La^{-\a}\rhoc)^2 d\mu_{\ga'}
dt'&\les\|r^{-\f12} \La^{-\a}\rhoc\|_{L^2}^2\les \|\rhoc\|_{L^2}^2.
\end{align*}
By $|\dt \vartheta|\les 1$ and Proposition \ref{lambda:1} (ii), for
$\a\ge \f12$,
\begin{equation*}
 (\ref{wt7})\les \|\La^{-\a}
 \rhoc\|_{L^2}^2\les\|\rhoc\|_{L^2}^2.
\end{equation*}
We only need to  estimate the first term in (\ref{wt8}), and the
second one will follow similarly.
\begin{align}
\int_0^1 \left| \int_{S_{t'}} \La^{-\a} \Dt \rhoc\c \La^{-\a}\rhoc
d\mu_\ga \right|dt'\les\|\La^{-2\a}
\Dt\rhoc\|_{L^2}\|\rhoc\|_{L^2}\label{wt2}.
\end{align}
Assuming
 \begin{equation}\label{wt3}
\|\La^{-2\a} \Dt \rhoc\|_{L^2}\les \gc,
\end{equation}
then $(\ref{wt2})\les (\gc)^2,$ and it follows that
\begin{equation*}
\|\La^{-\a}\rhoc\|_{L_t^\infty L_x^2}^2\les
(\gc)^2(1+I_\a^{1-\frac{2}{p}})\|\La^{-\a}\rhoc\|_{L_t^\infty
L_x^2}+(\gc)^2.
\end{equation*}
Thus (\ref{pr.4}) is proved. (\ref{lrhoc}) follows as an immediate
consequence.

To prove (\ref{wt3}), we rely on the transport equation  derived by
(\ref{Bia2}),
\begin{align*}
\Dt\rhoc+\frac{3}{2}an \tr\chi\rhoc&=\div(an \b)-\sn(an)\b+an A\c
\tR=\div (an\b)+an A\c \tR.
\end{align*}
By Proposition \ref{lambda:1}, (\ref{comp1})  and (\ref{cond1}),
with $\a\ge\f12$,
\begin{align*}
\|\La^{-2\a} (an\tr\chi\rhoc)\|_{L^2}&\les
\|r^{2\a}\tr\chi\rhoc\|_{L^2}\les \|\rhoc\|_{L^2}\les \gc.\nn
\end{align*}
By Proposition \ref{lambda:1} and (\ref{cond1}), for $\a\ge \f12$,
\begin{equation}\label{wt5}
\|\La^{-2\a} \div (an\b)\|_{L^2}\les\|r^{2\a-1}an\b\|_{L^2}\les \gc.
\end{equation}
By  Proposition \ref{lambda:1} (v) and H\"older inequality, we
obtain
\begin{align*}
\int_0^1\|\La^{-2\a}(an A\c \tR)\|_{L^2(S_\tt)}^2
d\tt&=\int_0^1\int_{S_\tt}\La^{-4\a}(an A\c \tR)\c (anA\c \tR)
d\mu_\ga d\tt\\&\le\|\La^{-4\a}(an A\c \tR)\|_{L_t^2
L_x^{4}}\|an A\c \tR\|_{L_t^2 L_x^{4/3}}\\
&\les \|\La^{-4\a+\f12+}(an A\c \tR)\|_{L_t^2
L_x^2}\|A\|_{L_t^\infty L_x^4} \|\tR\|_{L^2}.
\end{align*}
Consequently, by (\ref{sob.m}), (\ref{smry1}), (\ref{simp1}) and
Proposition \ref{lambda:1} (ii),
\begin{align*}
\int_0^t\|\La^{-2\a}(an A\c \tR)\|_{L^2(S_\tt)}^2 d\tt&\les
\|r^{2\a-\f12-}\La^{-2\a}(an A\c \tR)\|_{L^2(\H)}(\gc),
\end{align*}
which implies
\begin{equation}\label{wt6}
\|\La^{-2\a}(an A\c \ti R)\|_{L^2(\H)}\les \gc.
\end{equation}
Thus we complete the proof of (\ref{wt3}).
\end{proof}

\begin{proof}[Proof of Proposition \ref{commu0}]
By definition (\ref{laalf}) and  Proposition \ref{commu1},
\begin{align}
&r^{-\a} [\La^{-\a}, \Dt]_gf\nn \\
&=C_\a \int_0^\infty d\tau \tau^{\frac{\a}{2}-1} e^{-\tau}
\int_0^\tau r^2 U(\tau-\tau')([\sD,\Dt] U(\tau')f-\ovl{an\tr\chi}\sD
U(\tau') f) d\tau'\nn\\
&=C_\a \int_0^\infty  d\tau \tau^{\frac{\a}{2}-1} e^{-\tau}
\int_0^\tau r^2 U(\tau-\tau')(\sn \phi_1(\tau')+\phi_2(\tau')),
\label{av2}
\end{align}
where
\begin{align*}
\phi_1(\tau')&=an (\chih+\kp)\c \sn U(\tau')f,\\
\phi_2(\tau')&= \{an(\b+\sn A+A\c A+r^{-1}A)+\sn(an\kp)\} \sn
U(\tau') f.
\end{align*}
Noticing that $\kp$ can be regarded as an element of $A$, thus we
have $\sn(an\kp)=an A \c A+an \sn A$, and
\begin{align}
\phi_1(\tau')&=an A\c \sn U(\tau')f,\quad \phi_2(\tau')= an(\b+\sn
A+A\c A+r^{-1}A)\sn U(\tau') f.\label{f64}
\end{align}
Let us set
\begin{align*}
\Phi_1&=C_\a\int_0^\infty d\tau \tau^{\frac{\a}{2}-1}
e^{-\tau}\int_0^\tau r^2 U(\tau-\tau') \sn \phi_1(\tau') d\tau'\\
\Phi_2&=C_\a \int_0^\infty d\tau \tau^{\frac{\a}{2}-1} e^{-\tau}
\int_0^\tau r^2 U(\tau-\tau')\phi_2(\tau') d\tau'.
\end{align*}
The difference between
$\phi_1,\, \phi_2$ in (\ref{f64}) and those in \cite[Page 41]{WangQ} is
the extra  factor ``$an$" in (\ref{f64}), which can be easily treated by using the estimates
$\|\sn (an)\|_{L_t^\infty L_x^4}\les \gc$ and (\ref{cp}). Based on
the estimate  $\N_1(A)\les \gc$ which has been proved in (\ref{smry1}) and Proposition \ref{n1tr},  also
using (\ref{cp}),
 we can
proceed exactly as  \cite[Appendix]{KR1} or
 \cite[pages 41--43]{WangQ} to obtain for $\f12\le \a<1$ and $p>2$,
\begin{equation*}
\|\Phi_1\|_{L_t^1 L_x^2}+\|\Phi_2\|_{L_t^1 L_x^2}\les
\|f\|_{L^2}(1+I_\a^{1-\frac{2}{p}})(\gc).
\end{equation*}
The proof is therefore complete.
\end{proof}

\subsection{$L^2$ estimates for Hodge operators}

Consider the following Hodge operators
\begin{footnote}{For various properties of these operators 
please refer to \cite[Page 38]{KC} and \cite[Section 4]{KR1}.}\end{footnote}on 2-surfaces $S:=S_t$

\begin{enumerate}

\item [$\bullet$]The operator ${\D}_1$ takes any 1-forms $F$ into the
pairs of functions $(\div F, \curl F)$.

\item[$\bullet$] The operator ${\D}_2$ takes any symmetric traceless
2-tensors $F$ on $S$ into the 1-forms $\div F$.

\item[$\bullet$] The operator ${}^\star {\D}_1$ takes the pairs of scalar
functions $(\rho, \sig)$ into the 1-forms $-\sn\rho+(\sn\s)^\star$
on $S$.

\item[$\bullet$] The operator ${}^\star {\D}_2$ takes 1-forms $F$ on
$S$ into the 2-covariant, symmetric, traceless tensors
$-\frac{1}{2}\widehat{\L_F\ga}$, where
$$
\label{d2ker}(\widehat{\L_F\ga})_{ab}=\sn_b F_a+\sn_a F_b-(\div
F)\ga_{ab}.
$$
\end{enumerate}

Using Proposition \ref{ep1.3} and B\"ochner identity, we can follow
the the same way as \cite{KR1,WangQ} to obtain elliptic estimates
for Hodge operators.

\begin{proposition}\label{P4.1}
The following estimates hold on $\H$,
\begin{enumerate}
\item[(i)] Let $\D$ denote either $\D_1$ or $\D_2$. The operator $\D$ is invertible on its
range, for  $S$ tangent tensor $F$ in the range of $\D$,
$$
\|\sn {\mathcal D}^{-1} F\|_{L^2(S)}+\|r^{-1}{\mathcal D}^{-1}
F\|_{L^2(S)}\lesssim \|F\|_{L^2(S)}.
$$

\item[(ii)] The operator $(-\sD)$ is invertible on its range
and its inverse $(-\sD)^{-1}$ verifies the estimate
$$
\|\sn^2 (-\sD)^{-1}f\|_{L^2(S)} +\|r^{-1}\sn (-\sD)^{-1}
f\|_{L^2(S)}\lesssim \|f\|_{L^2(S)}.
$$

\item[(iii)] The operator ${}^\star {\D}_1$ is invertible as an
operator defined for pairs of $H^1$ functions with mean zero (i.e.
the quotient of $H^1$ by the kernel of ${}^\star {\D}_1$) and its
inverse ${}^\star {\D}_1^{-1}$ takes $S$-tangent $L^2$ 1-forms
$F$(i.e. the full range of ${}^\star {\D}_1$) into pair of functions
$(\rho, \sig)$ with mean zero, such that $-\sn \rho+(\sn
\s)^\star=F$, verifies the estimate
$$
\|\sn {}^\star{\mathcal D}_1^{-1} F\|_{L^2(S)}\lesssim
\|F\|_{L^2(S)},
$$
and
by (i) and duality argument $$
\|r^{-1}{}^\star\D_1^{-1} F\|_{L^2(S)}\lesssim
\|F\|_{L^2(S)}.
$$
\item[(iv)] The operator ${}^\star {\D}_2$ is invertible as an
operator defined on the quotient of $H^1$-vector fields by the
kernel of ${}^\star {\D}_2$. Its inverse ${}^\star{\D}_2^{-1}$ takes
$S$-tangent 2-forms $Z$ which is in $L^2$ space into $S$ tangent
1-forms $F$ (orthogonal to the kernel of ${\D}_2$), such that
${}^\star {\D}_2 F=Z$, verifies the estimate
$$\|\sn {}^\star \D_2^{-1}Z\|_{L^2(S)}\lesssim
\|Z\|_{L^2(S)}.$$
\end{enumerate}

As a consequence of (i)-(iv), let ${\D}^{-1}$ be one of the
operators ${\D}_1^{-1}$, ${\D}_2^{-1}$, ${}^\star{\D}_1^{-1}$ or
${}^\star{\D}_2^{-1}$. By duality argument, we have the following
estimate for appropriate\begin{footnote}{ By `` appropriate" , we
mean the tensor $F$ such that $\div F$ is in the space where
${\D}^{-1}$ is well-defined.}\end{footnote}tensor fields $F$,
\begin{equation*}
\|{\D}^{-1}\div F\|_{L^2(S)}\les \|F\|_{L^2(S)}.
\end{equation*}
\end{proposition}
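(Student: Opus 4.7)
The plan is to derive every estimate from Bochner-type identities on a single leaf $S=S_t$, combined with the $\La^{-\a}$ control of the Gauss curvature provided by Proposition \ref{ep1.3}. The three algebraic inputs I would use are, for $S$-tangent one-forms $F$, symmetric traceless two-tensors $F$, and scalars $\Omega$ respectively,
\begin{align*}
\int_S\bigl(|\sn F|^2+K|F|^2\bigr)&=\int_S |\D_1 F|^2,\\
\int_S\bigl(|\sn F|^2+2K|F|^2\bigr)&=2\int_S|\D_2 F|^2,\\
\int_S\bigl(|\sn^2\Omega|^2+K|\sn\Omega|^2\bigr)&=\int_S|\sD\Omega|^2.
\end{align*}
Substituting $K=r^{-2}+\K$, each identity rearranges into a coercivity inequality of the shape
\[
\|\sn G\|_{L^2(S)}^2+\|r^{-1}G\|_{L^2(S)}^2\les \|\D G\|_{L^2(S)}^2+\Bigl|\int_S\K\,|G|^2\Bigr|,
\]
so the entire task reduces to controlling and absorbing the curvature error on the right.

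For that error, $\|\K\|_{L^2(S)}$ is not known to be small on a single leaf under the weak hypothesis (\ref{cond1}), so I would instead use the uniform bound $\|\La^{-\a}\K\|_{L^2(S)}\les\gc$ for $\f12\le\a<1$ from Proposition \ref{ep1.3}, together with the product and interpolation rules of Proposition \ref{lambda:1}(iii),(iv). Concretely,
\[
\Bigl|\int_S\K\,|G|^2\Bigr|\les \|\La^{-\a}\K\|_{L^2(S)}\,\|\La^\a(|G|^2)\|_{L^2(S)}\les (\gc)\,\|\La G\|_{L^2(S)}^{1+\a}\|G\|_{L^2(S)}^{1-\a},
\]
and the standard comparison $\|\La G\|_{L^2(S)}\les \|\sn G\|_{L^2(S)}+\|r^{-1}G\|_{L^2(S)}$ (which follows from $\La^2=r^{-2}\mathrm{Id}-\sD$) casts this in a form absorbable by Young's inequality provided $\RR$ is small enough. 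This yields (i) for $\D_1,\D_2$ and (ii) for $\sD$; invertibility on the range then follows from coercivity together with the standard identification of the finite-dimensional kernels (constants for $\sD$, conformal Killing fields for ${}^\star\D_2$, and so on).

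For (iii) and (iv), I would exploit that ${}^\star\D_1$ and ${}^\star\D_2$ are the $L^2$-formal adjoints of $\D_1$ and $\D_2$ on the quotient spaces that appear in the statement. The gradient bound for $({}^\star\D_k)^{-1}$ then comes from (i) by duality: setting $G=\D_k H$ and writing
\[
\int_S\sn({}^\star\D_k^{-1}F)\cdot\sn H=\int_S F\cdot\D_k H,
\]
the right-hand side is bounded by $\|F\|_{L^2(S)}\|\D_k H\|_{L^2(S)}$, while (i) gives $\|\sn H\|_{L^2(S)}+\|r^{-1}H\|_{L^2(S)}\les \|\D_k H\|_{L^2(S)}$; the $r^{-1}$ variant is obtained in the same way. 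The final consequence $\|\D^{-1}\div F\|_{L^2(S)}\les \|F\|_{L^2(S)}$ is once more by duality: pair with a test tensor $G$ in the range of $\D$, transfer $\div$ onto $({}^\star\D)^{-1}G$, and invoke the gradient bound just established.

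The main obstacle is precisely the curvature error $\int_S\K\,|G|^2$: because no pointwise or $L^2(S)$ smallness of $\K$ is available under (\ref{cond1}), one is forced into the $\La^{-\a}$ framework of Section \ref{ellp}, with the nontrivial analytic input being Proposition \ref{ep1.3}. Once that ingredient is in hand, the rest is the standard Bochner-plus-duality package, essentially as in \cite{KR1,WangQ}.
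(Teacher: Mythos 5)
Your proposal is correct and follows essentially the same route the paper takes: it invokes the B\"ochner identities together with the $\|\La^{-\a}\K\|_{L_t^\infty L_x^2}\les\gc$ bound of Proposition \ref{ep1.3} (via the product and interpolation rules of Proposition \ref{lambda:1}) to absorb the curvature error, exactly as the paper indicates by citing \cite{KR1,WangQ}. The duality arguments you give for (iii), (iv) and the final $\|\D^{-1}\div F\|_{L^2(S)}\les\|F\|_{L^2(S)}$ are also the ones the statement itself alludes to, so no further comment is needed.
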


Using Proposition \ref{ep1.3} and Lemma \ref{gauss3} and following
the similar argument in \cite[Proposition 4.24 and Lemma 6.14]{KR1}
we can obtain

\begin{lemma}\label{A.8}
Let $\D$ one of the operators $\D_1$, $\D_2$ and ${}^\star \D_1$.
For  $F$ pairs of scalar functions in the first case, $S$-tangent
one form for the second and third case, there hold
$$
{\mathcal N}_2({\mathcal D}^{-1}F)\lesssim {\mathcal N}_1(F)\quad
\mbox{and}\quad {\mathcal N}_1(\sn {\mathcal D}^{-1}F)\lesssim
{\mathcal N}_1(F).
$$
\end{lemma}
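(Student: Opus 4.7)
The plan is to combine the slicewise elliptic theory of Proposition \ref{P4.1} with commutation identities between $\Dt = an\,\sn_L$ and the Hodge operators $\D$. Throughout, set $G := \D^{-1}F$, so that $\D G = F$.

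The angular parts of both estimates are essentially immediate. Applying Proposition \ref{P4.1} on each $S_t$ yields $\|\sn G\|_{L^2(S_t)} + \|r^{-1}G\|_{L^2(S_t)} \les \|F\|_{L^2(S_t)}$ and, after one commutation of $\sn$ with $\D$, $\|\sn^2 G\|_{L^2(S_t)} + \|r^{-1}\sn G\|_{L^2(S_t)} \les \|\sn F\|_{L^2(S_t)} + \text{l.o.t.}$ Integrating in $t$ controls $\|r^{-2}G\|_{L^2(\H)}$, $\|r^{-1}\sn G\|_{L^2(\H)}$ and $\|\sn^2 G\|_{L^2(\H)}$ by $\N_1(F)$, which covers three of the five terms in $\N_2(G)$ and two of the three terms in $\N_1(\sn G)$.

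The transversal terms $\|\sn\Dt G\|_{L^2(\H)}$, $\|r^{-1}\sn_L G\|_{L^2(\H)}$ and $\|\sn_L\sn G\|_{L^2(\H)}$ require commuting $\Dt$ through $\D^{-1}$. Applying $\Dt$ to $\D G = F$ gives
\begin{equation*}
\D(\Dt G) = \Dt F + [\D, \Dt]G.
\end{equation*}
A direct computation based on Proposition \ref{commu1} (and its analogues for $1$-forms and $2$-tensors), together with the identity $\sn(an) = an(\zeta+\zb)$ which cancels the $\sn_L G$ contributions arising when converting $[\sn_L,\D]$ to $[\Dt,\D]$, yields schematically
\begin{equation*}
[\D, \Dt]G \sim an\,\chi\cdot\sn G + an\bigl(\b + \sn A + A\cdot A + r^{-1}A\bigr)\cdot G.
\end{equation*}
Invoking Proposition \ref{P4.1} slicewise on $\Dt G$ and using $\|\Dt F\|_{L^2}\les \|\sn_L F\|_{L^2}$ via \eqref{cp} then reduces everything to establishing $\|[\D,\Dt]G\|_{L^2(\H)} \les (\gc)\N_1(F)$.

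This commutator bound is the main obstacle. For the principal piece, split $an\,\chi\cdot\sn G = \tfrac{2an}{r}\sn G + an(\chih + \iota)\cdot\sn G$: the first term is dominated directly by $\|r^{-1}\sn G\|_{L^2(\H)}\les \N_1(F)$ from the angular step, while the second is handled by combining $\N_1(\chih)\les \gc$ and $\N_1(\iota)\les \gc$ from Propositions \ref{smr} and \ref{n1tr} with the Sobolev embedding \eqref{sob.m}, after interpolating $\|\sn G\|_{L_x^4(S_t)}$ against the slicewise bounds on $\|\sn^2 G\|_{L^2(S_t)}$ and $\|\sn G\|_{L^2(S_t)}$ supplied by Proposition \ref{P4.1}. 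The lower-order products $an\b\,G$, $an\sn A\cdot G$, $an A\cdot A\cdot G$ and $r^{-1}an A\cdot G$ are absorbed using Proposition \ref{propab}, \eqref{simp1}, \eqref{sob.m}, together with $\|G\|_{L_x^4 L_t^\infty}\les \N_1(G)\les \N_1(F)$ obtained from the angular step. A final commutation $[\sn_L,\sn]G = -\chi\cdot\sn G - (\zeta+\zb)\sn_L G$, estimated by BA1 and Proposition \ref{smr}, reduces $\|\sn_L\sn G\|_{L^2}$ to quantities already controlled. This mirrors \cite[Proposition 4.24 and Lemma 6.14]{KR1}; the genuinely new ingredient is the use of Proposition \ref{n1tr} and \eqref{kap} to dispose of the $\kp$ and $(an)$ factors that are absent in the geodesic foliation setting.
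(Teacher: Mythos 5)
Your overall architecture --- slicewise Hodge-elliptic estimates for the angular components of $\N_2(\D^{-1}F)$ and $\N_1(\sn\D^{-1}F)$, the commutation $\D(\Dt G)=\Dt F+[\D,\Dt]G$ for the transversal ones, and a final $[\sn_L,\sn]$ commutation --- is exactly the route the paper intends: its proof is a one-line reduction to \cite[Proposition 4.24 and Lemma 6.14]{KR1}, whose scheme you have essentially reconstructed, including the correct observation that working with $\Dt$ rather than $\sn_L$ removes the $(\zeta+\zb)\sn_L G$ contributions, and the correct identification of the new time-foliation ingredients ($\kp$, $\iota$, the factor $an$) handled by Propositions \ref{smr} and \ref{n1tr}.

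The genuine gap is in the step you dismiss as ``l.o.t.'', namely the second-order angular estimate $\|\sn^2\D^{-1}F\|_{L^2(S_t)}\les\|\sn F\|_{L^2(S_t)}+\cdots$. Proposition \ref{P4.1} is only first order for $\D_1$, $\D_2$, ${}^\star\D_1$; to gain a second derivative one must pass through $\sD\,\D^{-1}F={}^\star\D F\pm(\K+r^{-2})\,\D^{-1}F$ and the B\"ochner inequality, which produce the Gauss-curvature terms $\K\cdot\D^{-1}F$ and $\|\K\|_{L^2_x}^{1/2}\|\sn\D^{-1}F\|_{L^4_x}$. These are \emph{not} lower order and cannot be closed slice by slice: $\K=K-r^{-2}$ is controlled only in $L^2(\H)$ (Lemma \ref{gauss3}), not uniformly in $L^2(S_t)$, and the natural pairing $\|\K\|_{L^2(S_t)}\|\D^{-1}F\|_{L^\infty(S_t)}$ requires an $L^\infty$ bound on $\D^{-1}F$ that itself costs two angular derivatives --- a circularity broken in \cite{KR1} only through the negative fractional powers of the Laplacian, i.e.\ through $\|\La^{-\a}\K\|_{L_t^\infty L_x^2}\les\gc$. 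This is precisely why the paper's proof names Proposition \ref{ep1.3} together with Lemma \ref{gauss3} as the inputs; your sketch never invokes either, so neither $\|\sn^2\D^{-1}F\|_{L^2}$ nor, downstream, the $\|\sn G\|_{L^4_x}$ interpolation you use to absorb $an\chih\cdot\sn G$ in $\|[\D,\Dt]G\|_{L^2}$ is actually established. Once that ingredient is restored, the remainder of your outline is sound.
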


\section{\bf A brief review of theory of  geometric Littlewood
Paley}\label{relp} \setcounter{equation}{0}

Consider $\S$  the collection of smooth functions on $[0, \infty)$
vanishing sufficiently fast at $\infty$ and verifying the vanishing
moment property
$$
\int_0^\infty \tau^{k_1} \partial^{k_2} m(\tau) d\tau=0, \quad
k_1+k_2\le N.
$$
We set $m_k(\tau):=2^{2k}m(2^{2k}\tau)$ for some smooth function
$m\in \S$. Recall from  \cite{KR4} the geometric Littlewood-Paley
(GLP) projections $P_k$ associated to $m$ which take the form
$$
P_k F:=\int_0^\infty m_k(\tau) U(\tau) F d\tau
$$
for any $S_t$ tangent tensor field $F$, where $U(\tau) F$ is defined
by the heat flow (\ref{ut1}) on $(S_t, \stackrel{\circ}\ga).$

\begin{proposition}\label{P2}\begin{footnote}{
For more properties, one can refer to \cite{KR4,KRs} and \cite{WangQ}.}
\end{footnote}There exists $m\in \S$ such that the GLP projections $P_k$
associated to $m$ verify $U(\infty)+\sum_{k\in \Bbb Z} P_k^2=Id$. By
$f$ we denote a scalar function and $F$ a $S$-tangent tensor field
on $\H$, the GLP projections $P_k$ associated to arbitrary induced
function $m$ verify the following properties:
\begin{enumerate}
\item[(i)] ($L^p$-boundedness)  For any $1\le p\le \infty$, and
any interval $I\subset {\Bbb Z}$,
\begin{equation*}
\|P_{I} F\|_{L^p( S)}\lesssim \|F\|_{L^p(S)}
\end{equation*}

\item[(ii)](Bessel inequality) For any tensorfield $F$ on $S$,
\begin{align*}
\sum_k\|P_k F\|^2_{L^2(S)}\lesssim \|F\|^2_{L^2(S)},\quad
\sum_{k}2^{2k}r^{-2}\|P_k F\|_{L^2(S)}^2\les \|\sn F\|_{L^2(S)}^2
\end{align*}

\item[(iii)](Finite band property) For any $1\le p\le \infty$,
$k\ge 0$,
\begin{equation}\label{FBD}
\|\sD P_k F\|_{L^p(S)}\lesssim2^{2k}r^{-2}\|F\|_{L^p( S)}.
\end{equation}
Moreover given $m \in {\S}$ we can find $\ti m\in \S$ such that
$2^{2k} P_k=\sD \ti P_k,$  with $\ti P_k$ the geometric Littlewood
Paley projections associated to ${\ti m}$, then
\begin{equation}\label{FD}
P_k F=2^{-2k}\ti P_k \sD F, \quad \|P_k F\|_{L^p(S)}\lesssim 2^{-2k}r^2\|\sD F\|_{L^p(S)}.
\end{equation}
In addition, there hold $L^2$ estimates
\begin{equation}\label{FBB}\left\{
\begin{array}{lll}
\|\sn P_k F\|_{L^2(S)} \lesssim 2^k r^{-1}\|F\|_{L^2(S)}
\\ \|P_k \sn F\|_{L^2(S)}\lesssim
2^k r^{-1}\|F\|_{L^2(S)}\\
\|\sn P_{\le 0}F\|_{L^2(S)}\lesssim r^{-1}\|F\|_{L^2(S)},
\end{array}\right.
\end{equation}
and
\begin{equation}\label{FB}
 \|P_k F\|_{L^2(S)} \lesssim 2^{-k}r\|\sn F\|_{L^2(S)}
\end{equation}
 \item[(iv)] (Bernstein Inequality) For appropriate tensor fields
F on $S$, we have weak Bernstein inequalities
\begin{equation}\label{WB}\left\{
\begin{array}{lll}
\|P_k F\|_{L^p(S)}\les
r^{\frac{2}{p}-1}\left(2^{(1-\frac{2}{p})k}+1\right)\|F\|_{L^2(
S)},\\
 \|P_{\le 0} F\|_{L^p(S)}\les r^{\frac{2}{p}-1}\|F\|_{L^2(S)}
\\\|P_k F\|_{L^2(S)}\les r^{\frac{2}{p}-1}
\left(2^{(1-\frac{2}{p})k} +1\right)\|F\|_{L^{p'}(S)}\\
\|P_{\le 0}F\|_{L^2{(S)}}\les r^{\frac{2}{p}-1}\|F\|_{L^{p'}(S)}.
\end{array}\right.
\end{equation}
where $2\le p<\infty$ and $\frac{1}{p}+\frac{1}{p'}=1$, $k\ge 0$.
\item[(v)] With the help of Proposition \ref{ep1.3},
we have the sharp Bernstein inequalities
\begin{equation}\label{SB}
\|P_k f\|_{L^\infty(S)}\les 2^k r^{-1}\|f\|_{L^2(S)},\quad \|P_k
f\|_{L^2(S)}\les 2^k r^{-1}\|f\|_{L^1(S)}.
\end{equation}
\end{enumerate}
\end{proposition}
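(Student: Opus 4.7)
The plan is to follow the standard construction of geometric Littlewood--Paley theory on $2$-surfaces developed in \cite{KR4,KRs,WangQ}, with two new inputs adapted to the time-foliation setting: the weakly spherical property of $(S_t,\gac)$ established in Proposition \ref{l8.1}, and the Gauss-curvature control of Proposition \ref{ep1.3}. The heat flow $U(\tau)$ on $(S_t,\gac)$ is well-defined and is an $L^p$-contraction for every $1\le p\le\infty$ thanks to the uniform ellipticity of $\gac$; the resolution of identity $U(\infty)+\sum_k P_k^2=\mathrm{Id}$ is obtained by picking $m\in\S$ whose Laplace transform satisfies $\hat m(\lambda)\hat m(\lambda)$ summed over dyadic scales telescopes against the spectrum of $-\sD_{\gac}$ on the complement of its kernel, while $U(\infty)$ absorbs the harmonic part.

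Properties (i) and (ii) then reduce to $L^p$ contractivity of $U(\tau)$ combined with $\|m_k\|_{L^1(d\tau)}\lesssim 1$ for (i) and Plancherel for the spectral measure of $\sD_{\gac}$ for (ii); the second inequality in (ii) uses $\int_{S}|\sn F|^2=-\int_S F\cdot\sD F$ together with the finite band bound. For (iii), the key identity is $\p_\tau U(\tau)=\sD_{\gac}U(\tau)$, which upon integration by parts yields
\[
\sD P_k F=\int_0^\infty m_k(\tau)\,\sD U(\tau)F\,d\tau=-\int_0^\infty m_k'(\tau)\,U(\tau)F\,d\tau,
\]
and $\|m_k'\|_{L^1}\lesssim 2^{2k}$; rescaling $\gac=r^{-2}\ga$ explains the factor $r^{-2}$. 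The same identity, read backwards, supplies $P_k F=2^{-2k}\tilde P_k \sD F$ with $\tilde P_k$ associated to $\tilde m(\tau)=\int_0^\tau m(\tau')\,d\tau'$, giving the second half of (\ref{FD}). The $L^2$ bounds (\ref{FBB}) and (\ref{FB}) come from pairing $\sn P_k F$ with itself and integrating by parts, converting $\sn$ derivatives to $\sD$ and invoking (\ref{FBD}).

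For the weak Bernstein estimates (iv), I would combine $L^p$-boundedness of $P_k$ with Sobolev embedding on $(S_t,\gac)$. The key point is that weakly spherical geometry (\ref{8.4})--(\ref{8.0.3}) gives uniform-in-$t$ Sobolev constants, hence a uniform-in-$t$ heat kernel Gaussian upper bound on the rescaled surface; once one has $\|U(\tau)F\|_{L^p}\lesssim (1+\tau^{-(1-2/p)/2})\|F\|_{L^2}$ at the $\gac$ scale, integrating against $m_k$ produces the claimed $2^{(1-2/p)k}$ factor, and the dual estimates follow by duality.

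The main obstacle is the sharp Bernstein inequality (v), where we must gain the full power $2^k r^{-1}$ in $L^\infty$ (respectively $L^1\to L^2$), since the embedding $H^1(S)\hookrightarrow L^\infty(S)$ fails on the round sphere. My plan is to run a B\"ochner-type argument: represent $P_k f=-2^{-2k}\sD \tilde P_k f$ via (\ref{FD}), then apply the Sobolev inequality (\ref{gag}) of \cite{KR4} to $\Omega=\tilde P_k f$ (after subtracting the mean) to bound $\|\tilde P_k f\|_{L^\infty}$ by $\|\sn^2\tilde P_k f\|_{L^1}+\|\sn\tilde P_k f\|_{L^2}$, and finally use the B\"ochner identity on $(S,\ga)$ together with $\|\La^{-\a}\K\|_{L^\infty_t L^2_x}\lesssim\gc$ from Proposition \ref{ep1.3} to absorb the $\K$-term arising from $\int_S|\sn^2\tilde P_k f|^2+K|\sn\tilde P_k f|^2=\int_S|\sD\tilde P_k f|^2$; the $\La^{-\a}$ weight lets us pair $\K$ against $|\sn\tilde P_k f|^2$ using a spectrally localised H\"older inequality, and the $2^{2k}$ gain from applying (\ref{FBD}) twice to $\sD\tilde P_k$ delivers the sharp $2^k r^{-1}$ factor. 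The $L^1\to L^2$ statement follows from the $L^\infty$ statement by self-adjointness of $P_k$ and duality.
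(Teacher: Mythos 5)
The paper offers no proof of this proposition: Section~\ref{relp} is an explicit literature recall, and the footnote directs the reader to \cite{KR4,KRs} and \cite{WangQ}; the only input specific to this paper is that Proposition~\ref{ep1.3} supplies the curvature hypothesis under which the sharp Bernstein inequality (v) is known to hold. So your proposal must be judged on its own merits. Your treatment of the resolution of identity, (i), (ii), and (iii) reproduces the standard arguments correctly (integration by parts in $\tau$ against $m_k$, $\|m_k'\|_{L^1}\les 2^{2k}$, the rescaling $\gac=r^{-2}\ga$, and duality/pairing for (\ref{FBB})--(\ref{FB})). For (iv) your route via Gaussian heat-kernel upper bounds is heavier than needed and is actually delicate for \emph{tensor} fields, where pointwise kernel bounds require curvature domination arguments; the intended proof is simply the interpolation inequality (\ref{sob.01}) applied to $P_kF$ together with (\ref{FBB}) and (ii), which gives (\ref{WB}) in two lines and works for tensors.

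The genuine gap is in (v). As written, your argument bounds $\|\ti P_k f\|_{L^\infty}$, which is not the quantity needed ($P_k f=2^{-2k}\sD\ti P_k f$, and no $L^\infty$ bound on $\ti P_k f$ controls $\|\sD\ti P_k f\|_{L^\infty}$ without another finite-band application in $L^\infty$, which lands you back on $\|f\|_{L^\infty}$). If one instead applies (\ref{gag}) directly to $\Omega=P_kf$, the exponent count fails: $\|\sn^2 P_kf\|_{L^1}\les |S|^{1/2}\|\sn^2P_kf\|_{L^2}\les r\,\|\sD P_kf\|_{L^2}+\dots\les 2^{2k}r^{-1}\|f\|_{L^2}$, so the additive Sobolev inequality only yields $2^{2k}r^{-1}$, one full power of $2^k$ worse than (\ref{SB}); your ``applying (\ref{FBD}) twice'' bookkeeping gives the same $2^{2k}$ after the $2^{-2k}$ prefactor. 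The sharp factor $2^k$ requires a \emph{multiplicative} mechanism: either the Agmon-type inequality $\|\Omega\|_{L^\infty}\les\|\sn^2\Omega\|_{L^2}^{1/2}\|\Omega\|_{L^2}^{1/2}+r^{-1}\|\Omega\|_{L^2}$ applied to $P_kf$ (so that $(2^{2k}r^{-2})^{1/2}\cdot 1^{1/2}=2^kr^{-1}$), or, as in \cite{KR4}, the semigroup bounds $\|U(\tau)\|_{L^2\to L^\infty}\les\tau^{-1/2}$, $\|U(\tau)\|_{L^1\to L^2}\les\tau^{-1/2}$ at the $\gac$-scale followed by $\int_0^\infty|m_k(\tau)|\tau^{-1/2}\,d\tau\les 2^k$. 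In either version the B\"ochner identity and the control $\|\La^{-\a}\K\|_{L_t^\infty L_x^2}\les\gc$ from Proposition~\ref{ep1.3} enter exactly where you expect (to absorb $\int_S K|\sn\Omega|^2$), and the restriction to scalars is what makes the curvature term tractable; the $L^1\to L^2$ statement then follows by self-adjointness of $P_k$ and duality, as you say. Your sketch identifies the right ingredients but does not assemble them into an argument that actually produces $2^kr^{-1}$.
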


 We will also
use the notations for any $S$-tangent tensor field $F$,
\begin{equation}\label{comv}
F_n:=P_n^2 F, \quad F_{\le 0}:=\sum_{k\le 0} P_k^2 F.
\end{equation}
 Now we define for $0\le
\theta\le 1$ the Besov ${\B}^\theta$, ${\P}^\theta$ norms for
$S$-tangent tensor fields  $F$ on $\H$ and $B_{2,1}^\theta$ norm on
$S$ as follows:
\begin{align}
\|F\|_{{\mathcal B}^\theta}&=\sum_{k>0}\|(2^k r^{-1})^{\theta}P_k
F\|_{L_t^\infty
L_x^2}+\|r^{-\theta}F\|_{L_t^\infty L_x^2},\label{g4.5}\\
\|F\|_{{\mathcal P}^\theta}&=\sum_{k>0}\|(2^{k}r^{-1})^{\theta}P_k
F\|_{L_t^2 L_x^2}+\|r^{-\theta}F\|_{L_t^2 L_x^2},\label{g5}\\
\|F\|_{B_{2,1}^\theta(S)}&=\sum_{k>0}\|(2^{k}r^{-1})^{\theta}P_k
F\|_{L_x^2}+\|r^{-\theta}F\|_{L_x^2}.\label{g6}
\end{align}
\begin{remark}
For any $a\in {\mathbb R}$ and any $S$-tangent tensor field $F$
\begin{equation*}
\|F\|_{H^a(S)}^2:=\|\La^{a}F\|_{L^2(S)}^2\approx\sum_{k>0}2^{2k
a}r^{-2a}\|P_k F\|_{L^2(S)}^2+r^{-2a}\|P_{\le 0}F\|_{L_x^2}^2.
\end{equation*}
\end{remark}

In certain situation, it is more convenient to work with the Besov
norms defined by the classical Littlewood-Paley (LP) projections
$E_k$. Recall that (see \cite{Stein1,Stein2}) for any scalar
function $f$ on ${\mathbb R}^2$ we can define
$$
E_k f=\frac{1}{(2\pi)^2} \int_{{\mathbb R}^2} \varphi(\xi/2^k)
\hat{f}(\xi) e^{ix\xi} d\xi, \begin{footnote}{See \cite[Page 2]{KR4}
for the finite band  and sharp Berstein inequalities of
$E_k$.}\end{footnote}
$$
where $\varphi$ is a smooth function support in the dyadic shell
$\{\frac{1}{2}\le |\xi|\le 2\}$ and satisfying $\sum_{k\in {\mathbb
Z}} \varphi(2^{-k}\xi)=1$ when $\xi\ne 0$.

 Define for any $0\le
\theta<1$ the ${\tilde \B}^\theta$ and ${\tilde \P}^\theta$ norms of
any scalar function $f$ on $\H$ by
\begin{align}
\|f\|_{\tilde\B^\theta}&:=\sum_{k>0}\|(2^k r^{-1})^{\theta}E_k
f\|_{L_t^\infty
L_x^2}+\|r^{-\theta}f\|_{L_t^\infty L_x^2},\label{btilde0}\\
\|f\|_{\tilde\P^\theta}&:=\sum_{k>0}\|(2^{k}r^{-1})^{\theta}E_k
f\|_{L_t^2 L_x^2}+\|r^{-\theta}f\|_{L_t^2 L_x^2}\label{btilde}.
\end{align}


Using Proposition \ref{l8.1} and  BA1  we can adapt
\cite[Proposition 3.28]{KR2} to obtain the following lemma.

\begin{lemma}\label{correct4} Under the bootstrap assumptions (BA1) ,
there exists a finite number of vector fields $\{X_i\}_{i=1}^l$
verifying the conditions
$$
\left\{\begin{array}{lll} \|X, r\nab_{0}X\|_{L_t^\infty
L_\omega^\infty}\les 1,
\quad \|r\sn(\nab_{0}X)\|_{L_x^2 L_t^\infty}\les 1,\\
\|(\sn-\nab_0)X\|_{L_x^2 L_t^\infty}\les \Delta_0,\quad \sn_L X=0,
\end{array}\right.
$$
where $\nab_0$ represents the covariant derivative induced by the
metric $r^2 \ga^{(0)}$.  For appropriate $S$-tangent tensor $F\in
L_t^\infty L_x^2$, $F\in \B^\theta$ if and only if $F\c X_i\in
\B^\theta$, and
$$
C^{-1}\sum_{i}\|F\c X_i\|_{\B^\theta}\le  \|F\|_{\B^\theta}\le
C\sum_{i}\|F\c X_i\|_{\B^\theta}, \mbox{ with }\, 0\le \theta<1,
$$
where $C$ is a positive constant. The same results hold for the
spaces $\P^\theta$. Moreover
$$
{\mathcal N}_1(F\Ot X)+\|F\Ot X\|_{L_\omega^\infty L_t^2}\les
{\N}_1(F)+\|F\|_{L_\omega^\infty L_t^2},
$$
where $\Ot$ stands for either a tensor product or a contraction.
\end{lemma}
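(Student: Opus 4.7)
The plan is to construct the vector fields $X_i$ by transporting a canonical spanning family from the initial sphere along the null generators of $\H$, and then to read off the Besov norm equivalence from the weakly spherical property established in Proposition \ref{l8.1} together with the product rule and the GLP estimates of Proposition \ref{P2}.

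For the construction, I would start on $\Bbb S^2=\{|\omega|=1\}\subset T_p\bM$ with a finite family of smooth vector fields $\{X_i^{(0)}\}_{i=1}^l$ (for example, the three restrictions of the rotation vector fields of $\mathbb R^3$), chosen so that they span $T_\omega\Bbb S^2$ at every $\omega$ and verify $|X_i^{(0)}|+|\nab^{(0)}X_i^{(0)}|\les 1$ with respect to $\ga^{(0)}$. I would then propagate each $X_i^{(0)}$ to all of $\H$ by solving the transport equation $\sn_L X_i=0$ with this data; since the limit $\gac_{ij}(t)\to\gamma_{ij}^{(0)}$ as $t\to 0$ holds by (\ref{8.1.1}), the propagated family inherits a spanning property for small $t$, which extends to $0<t\le 1$ by the boundedness $\|\gac-\ga^{(0)}\|_{L^\infty_\omega}\les\Delta_0$ in (\ref{8.4}).

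Next I would verify the four pointwise/norm bounds. The bounds $\|X,r\nab_0 X\|_{L^\infty_t L^\infty_\omega}\les 1$ follow by integrating $\sn_L X=0$ along $\Gamma_\omega$ and rewriting $\sn$ in terms of $\nab_0$, using (\ref{8.4}) to convert Christoffel symbols of $\gac$ into those of $\ga^{(0)}$ up to an $L^\infty_\omega$ error of size $\Delta_0$. The difference $(\sn-\nab_0)X$ is purely a Christoffel symbol contracted with $X$, so (\ref{8.0.3}) immediately gives $\|(\sn-\nab_0)X\|_{L^2_x L^\infty_t}\les\Delta_0\|X\|_{L^\infty}\les\Delta_0$. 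The bound on $r\sn(\nab_0 X)$ is obtained by differentiating the transport equation along $L$ and once in the angular direction, then invoking (\ref{8.0.3}), Proposition \ref{smr} and BA1 in exactly the way the gradient of the metric error was controlled in the proof of Proposition \ref{l8.1}.

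For the Besov/$\P^\theta$ equivalence, the key idea is that contracting with $\{X_i\}$ reduces an $S$-tangent tensor $F$ to a collection of scalar (or lower-rank) objects $F\Ot X_i$, and the spanning property of $\{X_i\}$ provides a dual frame that allows recovery of $F$ from the $F\Ot X_i$ with coefficients bounded in $L^\infty$. Combined with the finite-band property (\ref{FBD})--(\ref{FBB}), the sharp Bernstein inequality (\ref{SB}) and the Leibniz rule, this gives the commutator estimate
\begin{equation*}
\|[P_k,\cdot\,X_i]F\|_{L^2(S)}\les 2^{-k}r\bigl(\|\sn X_i\|_{L^\infty(S)}+\|(\sn-\nab_0)X_i\|_{L^2(S)}\bigr)\|F\|_{L^2(S)},
\end{equation*}
which is harmless at the Besov level once one sums in $k$. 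Two-sided control of $\|F\|_{\B^\theta}$ by $\sum_i \|F\Ot X_i\|_{\B^\theta}$ follows, and the same argument, replacing $L^\infty_t$ by $L^2_t$, handles $\P^\theta$. Finally the $\N_1$ estimate is a direct application of the product rule: $\sn_L(F\Ot X)=(\sn_L F)\Ot X$ by $\sn_L X=0$, and $\sn(F\Ot X)=(\sn F)\Ot X+F\Ot\sn X$, the second term being controlled via $\|\sn X\|_{L^2_x L^\infty_t}\les 1$ combined with $\|F\|_{L^\infty_\omega L^2_t}\les\N_1(F)$ from (\ref{sob.m}).

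The main obstacle is organizing the commutator estimate $[P_k,\cdot\,X_i]$ in a way that uses only the weak regularity $\|(\sn-\nab_0)X\|_{L^2_x L^\infty_t}\les\Delta_0$ rather than pointwise control of $\sn X$; this is precisely where the weakly spherical property (\ref{8.0.3}) is indispensable, and where the analogue in \cite[Proposition 3.28]{KR2} must be adapted to the weaker assumption (\ref{cond1}) in place of (\ref{aben2}).
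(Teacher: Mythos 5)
Your overall strategy coincides with what the paper actually does: the paper does not prove this lemma from scratch but states that it is obtained by adapting \cite[Proposition 3.28]{KR2}, with the only new input being that the weakly spherical property of $\gac$ (Proposition \ref{l8.1}) has been re-established under BA1 and the weaker hypothesis (\ref{cond1}). Your construction (transport a spanning family from ${\Bbb S}^2$ by $\sn_L X=0$, use (\ref{8.1.1}), (\ref{8.4}) and (\ref{8.0.3}) to control $X$, $\nab_0 X$ and $(\sn-\nab_0)X$, then run a $P_k$-commutator argument for the Besov equivalence) is a faithful reconstruction of that adaptation, and you correctly identify (\ref{8.0.3}) as the place where the weaker regularity enters.

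One step is genuinely misstated. In the last part you bound $\|F\Ot\sn X\|_{L^2}$ by invoking ``$\|F\|_{L_\omega^\infty L_t^2}\les\N_1(F)$ from (\ref{sob.m})''. That inequality is false: (\ref{sob.m}) controls $\|r^{-1/2}F\|_{L_x^2L_t^\infty}$, $\|F\|_{L_x^4L_t^\infty}$ and $\|F\|_{L^6}$, but not $\|F\|_{L_\omega^\infty L_t^2}$, and the failure of exactly this embedding (equivalently of $H^1(S)\hookrightarrow L^\infty(S)$) is the reason the paper needs the sharp trace theorem at all. The repair is immediate: the target inequality already carries $\|F\|_{L_\omega^\infty L_t^2}$ on its right-hand side, so H\"older in the form $\|F\Ot\sn X\|_{L^2(\H)}\le\|\sn X\|_{L_x^2L_t^\infty}\,\|F\|_{L_x^\infty L_t^2}$ lands directly on an admissible term. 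With that correction your argument is consistent with the paper's intended proof.
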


Lemma \ref{correct4} allows us to define Besov norms for arbitrary
$S$-tangent tensor fields $F$ on $\mathcal H$ by the classical LP
projections.

\begin{definition}\label{DD1}
Let $F$ be an $(m,n)$ $S$-tangent tensor field on $\H$ and let
$F_{i_1 i_2\cdots i_n}^{j_1 j_2 \cdots j_m}$ be the local components
of $F$ relative to $\{X_i\}_{i=1}^l$. We define the
${\tilde\B^\theta}$ and ${\tilde\P^\theta}$ norms of $F$ by
\begin{align*}
\|F\|_{\tilde\B^\theta}=\sum\|F_{i_1 i_2\cdots i_n}^{j_1 j_2 \cdots
j_m}\|_{\tilde\B^\theta} \quad \mbox{and} \quad
\|F\|_{\tilde\P^\theta}=\sum \|F_{i_1 i_2\cdots i_n}^{j_1 j_2 \cdots
j_m}\|_{\tilde\P^\theta},
\end{align*}
where the summation is taken over all possible $(i_1\cdots
i_n;j_1\cdots j_m)$.
\end{definition}

The equivalence between $\B^\theta$, $\P^\theta$ norms and
$\tilde\B^\theta$, $\tilde\P^\theta$ norms is given in the following
result whose proof can be found in \cite{WangQ}.

\begin{proposition}\label{eq1.2}
Under the bootstrap assumptions BA1 for arbitrary $S$-tangent tensor
fields $F$ on $\H$ there hold for $0\le \theta<1$,
\begin{align*}
\|F\|_{ \tilde\B^\theta}\approx\|F\|_{{\mathcal B}^\theta}\quad
\mbox{and}\quad \|F\|_{\tilde\P^\theta}\approx\|F\|_{{\mathcal
P}^\theta}.
\end{align*}
\end{proposition}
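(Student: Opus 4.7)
The plan is to first reduce the equivalence to the scalar case via Lemma \ref{correct4}, and then at the scalar level to compare the heat-flow projections $P_k$ (built from $\Delta_{\gac}$) with the classical projections $E_k$ by exploiting the weakly spherical bounds on $\gac$ in Proposition \ref{l8.1}.

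For the reduction, given an $S$-tangent tensor $F$ on $\H$, Lemma \ref{correct4} furnishes a frame $\{X_i\}_{i=1}^l$ satisfying $\|F\|_{\B^\theta}\approx\sum_i\|F\cdot X_i\|_{\B^\theta}$ (and likewise for $\P^\theta$). Iterating on each tensor slot reduces $\|F\|_{\B^\theta}$ to a sum of $\B^\theta$ norms of the scalar components of $F$ relative to $\{X_i\}$, which is exactly how $\|F\|_{\tilde\B^\theta}$ is defined in Definition \ref{DD1}. It therefore suffices to establish $\|f\|_{\B^\theta}\approx\|f\|_{\tilde\B^\theta}$ for scalar functions $f$ on $\H$, and analogously for the $\P^\theta$ norms.

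To prove the scalar equivalence, I would identify $(S_t,\gac)$ with $\mathbb{S}^2$ via the normal coordinates $\omega$ at $p$ and denote by $\tilde U(\tau)$ the heat semigroup generated by $\Delta_{\ga^{(0)}}$. Proposition \ref{l8.1} gives $\|\gac-\ga^{(0)}\|_{L^\infty_\omega}\lesssim\Delta_0$ and $\|\p\gac-\p\ga^{(0)}\|_{L^2_\omega L^\infty_t}\lesssim\Delta_0$, so $\Delta_{\gac}-\Delta_{\ga^{(0)}}$ is a small first-order perturbation whose coefficients involve only the weakly spherical differences. The Duhamel identity
$$
U(\tau)f-\tilde U(\tau)f=\int_0^\tau U(\tau-s)\bigl(\Delta_{\gac}-\Delta_{\ga^{(0)}}\bigr)\tilde U(s)f\,ds
$$
combined with the parabolic smoothing of $\tilde U$ on $\mathbb{S}^2$ yields, after integration against $m_k(\tau)$, a commutator-type bound $\|(P_k-\tilde P_k)f\|_{L^2(S)}\lesssim\Delta_0\|f\|_{L^2(S)}$, where $\tilde P_k$ denotes the GLP projection built from $\tilde U$. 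Since $\tilde P_k$ is classically equivalent to $E_k$ on $\mathbb{S}^2$ (via a partition of unity and the standard spherical heat-kernel estimates), multiplying by the weight $(2^k r^{-1})^\theta$ and taking $L^\infty_t L^2_x$ or $L^2_t L^2_x$ norms then delivers the $\B^\theta$ or $\P^\theta$ equivalence, provided $0\le\theta<1$.

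The main obstacle is that $\p\gac-\p\ga^{(0)}$ is controlled only in $L^2_\omega$, not pointwise, so a direct $L^2_\omega$ estimate on the Duhamel integrand would naively demand an $L^\infty_\omega$ bound on $\sn\tilde U(s)f$. The standard remedy is to invoke a Sobolev embedding on $\mathbb{S}^2$ that trades one extra derivative on $\tilde U(s)f$ against an $L^\infty_\omega$ factor and then absorb the resulting short-time singularity in $s$ against the dyadic weight $m_k(s)$, which is concentrated near $s\sim 2^{-2k}$. The restriction $\theta<1$ in the Besov norms is precisely what ensures the $k$-summation of the resulting error converges, closing the equivalence.
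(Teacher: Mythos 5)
Your overall architecture — reduce to scalar components via Lemma \ref{correct4} and Definition \ref{DD1}, then compare the geometric projections $P_k$ with the classical $E_k$ using the weakly spherical bounds of Proposition \ref{l8.1} — is the intended route (the paper itself defers the proof to \cite{WangQ}, so there is no in-text argument to match line by line). The reduction step is fine. The gap is in the scalar comparison.

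The intermediate estimate you propose, $\|(P_k-\tilde P_k)f\|_{L^2(S)}\les\Delta_0\|f\|_{L^2(S)}$ uniformly in $k$, cannot close the argument: the $\B^\theta$ and $\P^\theta$ norms are $\ell^1$ sums over $k$ of the weighted pieces $(2^kr^{-1})^\theta\|P_kf\|_{L^2}$, so a uniform-in-$k$ operator bound with no decay produces a divergent sum already at $\theta=0$, let alone after multiplying by $(2^kr^{-1})^\theta$. What is actually required is an almost-orthogonality estimate with off-diagonal decay, of the schematic form $\|E_kP_{k'}^2f\|_{L^2}\les 2^{-\delta|k-k'|}\|P_{k'}f\|_{L^2}$ (and its dual), after which one decomposes $f=\sum_{k'}P_{k'}^2f+U(\infty)f$ and sums by Schur's test. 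That decay is obtained not from a Duhamel comparison of the two heat semigroups but from playing the finite band property of one family against the other: for $k'>k$ write $P_{k'}^2f=2^{-2k'}\tilde P_{k'}\sD P_{k'}f$ and use $L^2$-boundedness of $E_k\circ\sn$; for $k>k'$ write $E_k=2^{-2k}\tilde E_k\Delta_e$ and compare the flat Laplacian $\Delta_e$ with $\sD$, which is exactly where the bounds $\|\gac-\ga^{(0)}\|_{L^\infty_\omega}\les\Delta_0$ and $\|\p\gac-\p\ga^{(0)}\|_{L^2_\omega L_t^\infty}\les\Delta_0$ enter. Because the first-derivative difference is controlled only in $L^2_\omega$, the commutator terms yield a reduced decay rate $\delta<2$ in one of the regimes, and this — not the absorption of a short-time singularity against $m_k(s)$ — is the genuine source of the restriction $\theta<1$. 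Your final paragraph gestures at the right difficulty (the $L^2_\omega$-only control of $\p\gac-\p\ga^{(0)}$) but attaches it to the wrong mechanism; as written, the summation over $k$ of your error term is not shown to converge, and the proof does not close.
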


\begin{lemma}
Let  $H$ be any $S_t$ tangent tensor  and let $f$ be a smooth
function,
\begin{align}
\|f H\|_{\P^0}&\les \| H\|_{\P^0}\|f\|_{L^\infty}+\|H\|_{L^2(\H)}
\|r^{1/2}\sn f\|_{L_t^\infty L_x^4}.\label{A93}
\end{align}
and the similar estimates hold for $\B^0$ and $B_{2,1}^0(S)$.
\end{lemma}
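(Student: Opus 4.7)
The $L_t^2 L_x^2$ component of $\|\cdot\|_{\P^0}$ is handled immediately by H\"older: $\|fH\|_{L_t^2 L_x^2} \le \|f\|_{L^\infty}\|H\|_{L_t^2 L_x^2} \le \|f\|_{L^\infty}\|H\|_{\P^0}$, which accounts for the first term on the right-hand side. The main task is to control the dyadic sum $\sum_{k>0}\|P_k(fH)\|_{L_t^2 L_x^2}$ via a Bony-type paraproduct decomposition adapted to the geometric Littlewood--Paley calculus of Proposition \ref{P2}.

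Using the resolution of identity from Proposition \ref{P2}, I would split $f = U(\infty)f + P_{\le 0} f + \sum_{j>0} P_j^2 f$ and then dyadically decompose in $j$ relative to $k$. In the low-high region $j \lesssim k$, the factor of $f$ is bounded uniformly in $L^\infty$ by $\|f\|_{L^\infty}$ thanks to $L^p$-boundedness of $P_j^2$ in Proposition \ref{P2}(i); pulling this out and applying the $L^2$-boundedness of $P_k$ gives a contribution bounded by $\|f\|_{L^\infty}\sum_{k>0}\|P_k H\|_{L_t^2 L_x^2} \lesssim \|f\|_{L^\infty}\|H\|_{\P^0}$, which supplies the first term on the right.

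In the high-low/diagonal region $j \gtrsim k$, where a derivative on $f$ must appear, I would use the finite band property (\ref{FD}) in the $L^4$ form $\|P_j^2 f\|_{L^4(S)} \lesssim 2^{-j} r\|\sn f\|_{L^4(S)}$ to trade each dyadic piece $P_j^2 f$ for a factor of $\sn f$ at a cost of $2^{-j} r$. Pairing with $H$ via H\"older, $\|(P_j^2 f)H\|_{L^2(S)} \le \|P_j^2 f\|_{L^4(S)}\|H\|_{L^4(S)}$, and controlling the high-frequency component of $H$ in $L^4$ through the weak Bernstein estimate (\ref{WB}) (or the Sobolev inequality (\ref{sob.01})), the $2^{-j}$ factors make the dyadic sum converge and precisely produce $\|r^{1/2}\sn f\|_{L_t^\infty L_x^4}\|H\|_{L^2(\H)}$. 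The analogous arguments work verbatim for $\|\cdot\|_{\B^0}$ (replace $L_t^2$ by $L_t^\infty$) and $\|\cdot\|_{B_{2,1}^0(S)}$ (drop the $t$-integration).

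The principal technical obstacle is that $P_k$ does not commute with pointwise multiplication on the curved surfaces $S_t$, so one cannot split $P_k(fH)$ directly by the frequencies of the factors. The remedy is to invoke Proposition \ref{eq1.2} and Lemma \ref{correct4}, which reduce estimates for $\|\cdot\|_{\P^0}$ and $\|\cdot\|_{\B^0}$ to estimates for the classical Littlewood--Paley projections $E_k$ applied to the components of $fH$ against the bounded frame $\{X_i\}$. In that essentially Euclidean setting the paraproduct calculus is standard and supplies the quasi-orthogonality and decay in $|j-k|$ needed for absolute convergence of the dyadic sums; the additional frame factors $X_i$ are harmless by the final bound in Lemma \ref{correct4}.
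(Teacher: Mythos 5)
Your strategy inverts the paper's: you dyadically decompose the \emph{smooth factor} $f$ and run a Bony paraproduct, whereas the paper decomposes $H=\bar H+\sum_{n}P_n^2H+P_{\le 0}H$ and never touches $f$. In the paper's argument the summability in the output frequency $k$ comes entirely from finite-band properties, not from any frequency-support orthogonality: for $n<k$ one writes $\|P_k(fH_n)\|_{L^2}\lesssim 2^{-k}\|rP_k\sn(fH_n)\|_{L^2}$, applies Leibniz, and converts $f\sn H_n$ back into $2^nr^{-1}\|f\|_{L^\infty}\|P_nH\|_{L^2}$, producing the summable factor $2^{n-k}$; for $n\ge k$ one uses (\ref{FD}) to trade $H_n$ for $2^{-2n}r^2\sD H_n$ and integrates by parts, producing $2^{k-n}$. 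The pieces where the derivative lands on $f$ are closed with the weak Bernstein inequality $L^{4/3}\to L^2$, which is exactly where $\|r^{1/2}\sn f\|_{L_t^\infty L_x^4}\|H\|_{L^2}$ enters.

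That is precisely the point where your version, as written, breaks. In the low-high region you claim that pulling the low frequencies of $f$ out in $L^\infty$ and using $L^2$-boundedness of $P_k$ yields $\|f\|_{L^\infty}\sum_{k}\|P_kH\|_{L_t^2L_x^2}$. But for geometric projections the best one gets is $\|P_k(gH)\|_{L^2}\le\|g\|_{L^\infty}\|H\|_{L^2}$; there is no mechanism to replace $\|H\|_{L^2}$ by $\|P_kH\|_{L^2}$, since the product $P_j^2f\cdot H$ is not frequency-localized, and the resulting sum over $k$ diverges. You correctly identify the repair — reduce to the classical $E_k$ via Proposition \ref{eq1.2} and Lemma \ref{correct4}, where the Fourier-support argument does give $E_k(f_{<k}\cdot g)=E_k(f_{<k}\cdot\tilde E_kg)$ — but then the entire estimate must be rerun in that Euclidean setting (classical Bernstein and finite band for $E_jf$, and the comparison of coordinate derivatives of the components with $\sn f$ through the frame), so your first two paragraphs, which use $P_j^2$, (\ref{FD}) and (\ref{WB}), are not the actual proof. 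Note also that the $L^4$ first-order finite-band bound $\|P_j^2f\|_{L^4(S)}\lesssim 2^{-j}r\|\sn f\|_{L^4(S)}$ you invoke is not among the stated properties of the geometric calculus (only the $L^2$ versions (\ref{FBB}), (\ref{FB}) appear). Either carry out the Euclidean reduction in full, or decompose $H$ instead of $f$ as the paper does, which sidesteps the quasi-orthogonality issue entirely.
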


\begin{proof}
By GLP decomposition, $H=\bar H+\sum_{n\in \Bbb N} P_n^2 H+P_{\le 0}
H$, where $\bar H=U(\infty) H$.
\begin{align}\label{fhp}
\|f H\|_{\P^0}&\le\sum_{k>0}\|P_k(f H_{< k})\|_{L^2}+\sum_{k>0}
\|P_k(f H_{\ge k})\|_{L^2} +\sum_{k>0}\|P_k(f \bar H)\|_{L^2}+\|f
H\|_{L^2}.
\end{align}
Let $H_n:=P_n^2 H$. Consider the first term in (\ref{fhp}). By
(\ref{FB}), (\ref{WB}) and (\ref{FBB}),
\begin{align*}
\sum_{k>0,k>n>0}\|P_k(f H_n)\|_{L^2}&\les \sum_{k>0,k>n>0}2^{-k}\|r
P_k\sn(f H_n)\|_{L^2}\\&\les \sum_{k>0,k>n>0}2^{-k}\left(\|rP_k(\sn
f \c H_n)\|_{L^2}+\|r
P_k(f\c \sn H_n)\|_{L^2}\right)\\
&\les \sum_{k>0,k>n>0}\left(2^{-k+\frac{k}{2}} \|r^{\frac{1}{2}} \sn
f \c
H_n\|_{L_t^2 L_x^{4/3}}+2^{-k+n} \|f\|_{L^\infty} \|H_n\|_{L^2}\right)\\
&\les \sum_{k>0,k>n>0}\left( 2^{-\frac{k}{2}} \|r^{\frac{1}{2}}\sn
f\|_{L_t^\infty L_x^4} \|H_n\|_{L^2}+2^{-k+n}\|f\|_{L^\infty}\|
P_n H\|_{L^2}\right)\\
&\les \|H\|_{L^2}\|r^{1/2}\sn f\|_{L_t^\infty
L_x^4}+\|f\|_{L^\infty} \|H\|_{\P^0},
\end{align*}
and similarly,
 $$\sum_{k>0}\|P_k( f H_{\le 0})\|_{L^2}\les
\|H\|_{L^2}\|r^{1/2}\sn f\|_{L_t^\infty L_x^4}+\|f\|_{L^\infty}
\|H\|_{\P^0}.
$$

Now consider the second term.  By (\ref{FD}), (\ref{FBB}) and
(\ref{WB}), we obtain
\begin{align*}
\sum_{k>0,n\ge k}\|P_k&(f H_n)\|_{L^2}\\
& \les \sum_{k>0,n\ge k}
2^{-2n}\|r^2 P_k (f \sD H_n)\|_{L^2}\\
&\les \sum_{k>0,n\ge k}2^{-2n} \left(\|r^2 P_k\sn(f \sn H_n)\|_{L^2}+\|r^2 P_k(\sn f\c \sn H_n)\|_{L^2}\right)\\
&\les \sum_{k>0,n\ge k}\left(
2^{-2n+k+n}\|f\|_{L^\infty}\|P_n H\|_{L^2}+2^{-2n+\frac{k}{2}}\|r^{\frac{3}{2}}\sn
f\c \sn H_n\|_{L_t^2 L_x^{4/3}}\right).
\end{align*}
 By (\ref{FBB}), we have
\begin{align*}
2^{-2n+k/2} \|r^{\frac{3}{2}}\sn f\c \sn H_n\|_{L_t^2
L_x^{4/3}}&\les 2^{-2n+\frac{k}{2}}\|r^{1/2}\sn f\|_{L_t^\infty
L_x^4}
\|r\sn H_n\|_{L^2}\\
&\les 2^{\frac{k}{2}-n}\|r^{1/2}\sn f\|_{L_t^\infty L_x^4}\|
H\|_{L^2},
\end{align*}
thus
\begin{equation*}
\sum_{k>0,n\ge k}\|P_k(f H_n)\|_{L^2}\les \| H\|_{L^2}\|r^\f12 \sn
f\|_{L_t^\infty L_x^4}+\|f\|_{L^\infty}\|H\|_{\P^0}.
\end{equation*}
At last, due to $\sn \bar H=0$ and (\ref{sob.m}),  we can deduce
\begin{equation*}
\sum_{k> 0}\|P_k(f \bar H)\|_{L^2}\les \sum_{k>0} 2^{-k} \|r^{\f12}
\sn f\|_{L_t^\infty L_x^4} \|\bar H\|_{L^2}\les \|r^\f12 \sn
f\|_{L_t^\infty L_x^4} \|H\|_{L^2}.
\end{equation*}
The proof is complete.
\end{proof}

\subsection{Product estimates and Intertwine estimates}

\begin{proposition}\label{P7.3}
Let ${\mathcal D}$ be one of the operators ${\D}_1$, ${\D}_2$ and
${}^\star{\D}_1$. Then for $1<p\le 2$ and any S-tangent tensor $F$
on ${\mathcal H}$ there holds
$$
\|{\mathcal D}^{-1} F\|_{L^2(S)}\lesssim
\|r^{2-\frac{2}{p}}F\|_{L^p(S)}.
$$
\end{proposition}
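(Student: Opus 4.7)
The plan is to prove the inequality by duality against $L^2(S)$, together with the $S$-slice Sobolev inequality \sob{} and the basic elliptic estimate in Proposition \ref{P4.1}(i). Recall that the formal $L^2$ adjoints are ${}^\star\D_1$ for $\D_1$, ${}^\star\D_2$ for $\D_2$, and $\D_1$ for ${}^\star\D_1$; in each case Proposition \ref{P4.1} provides a bounded right inverse $(\D^*)^{-1}$ satisfying
\begin{equation*}
\|\sn (\D^*)^{-1} G\|_{L^2(S)}+\|r^{-1}(\D^*)^{-1} G\|_{L^2(S)}\lesssim \|G\|_{L^2(S)}
\end{equation*}
for any $G\in L^2(S)$ (in an appropriate equivalence class for $\D_1$, but this is no issue since $F\in \mathrm{range}(\D)$ already).

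The key step will be to write, for any test tensor $G$ of the correct type,
\begin{equation*}
\int_S \D^{-1} F\cdot G\,d\mu_\ga=\int_S F\cdot (\D^*)^{-1} G\,d\mu_\ga,
\end{equation*}
apply H\"older with exponents $(p,p')$, $\tfrac{1}{p}+\tfrac{1}{p'}=1$, and then estimate $\|(\D^*)^{-1}G\|_{L^{p'}(S)}$ via \sob{}. Setting $H=(\D^*)^{-1}G$ and noting that $r=r(t)$ is constant on each $S_t$, one gets $\|H\|_{L^2}\lesssim r\|G\|_{L^2}$ from $\|r^{-1}H\|_{L^2}\lesssim \|G\|_{L^2}$, and
\begin{equation*}
\|H\|_{L^{p'}(S)}\lesssim \|\sn H\|_{L^2}^{1-2/p'}\|H\|_{L^2}^{2/p'}+\|r^{-1+2/p'}H\|_{L^2}\lesssim r^{2/p'}\|G\|_{L^2}= r^{2-2/p}\|G\|_{L^2},
\end{equation*}
since both interpolated terms and the $r^{-1+2/p'}$ term yield the same scaling when we pull out the $r$-factor. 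Therefore
\begin{equation*}
\left|\int_S \D^{-1}F\cdot G\,d\mu_\ga\right|\lesssim \|F\|_{L^p(S)}\,r^{2-2/p}\|G\|_{L^2(S)}=\|r^{2-2/p}F\|_{L^p(S)}\|G\|_{L^2(S)},
\end{equation*}
and taking the supremum over $G$ with $\|G\|_{L^2}=1$ gives the claimed bound. The endpoint $p=2$ reduces to the direct elliptic estimate $\|\D^{-1}F\|_{L^2}\lesssim r\|F\|_{L^2}$ coming from Proposition \ref{P4.1}(i); for $1<p<2$ we have $p'>2$ so that \sob{} applies.

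The only subtlety I foresee is the range-condition bookkeeping: in the ${}^\star\D_1$ case, $(\D_1)^{-1}$ is only well-defined modulo constants, so the test function $G$ must be taken with zero mean and the pairing against $H$ still makes sense (any constant component is annihilated by the pairing with $F$, which itself lies in the range of ${}^\star\D_1$ and hence has zero-mean components). Handling this, together with verifying that \sob{} applies to the tensorial object $H$ with the correct exponents, is the only non-mechanical aspect; everything else is a direct duality/Sobolev calculation using results already established in Propositions \ref{P4.1} and \ref{ep1.3}.
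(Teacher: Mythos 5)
Your proposal is correct and is essentially the paper's own argument: the paper likewise handles $p=2$ directly from Proposition \ref{P4.1} and, for $1<p<2$, establishes $\|r^{\frac{2}{p}-2}{}^\star\D^{-1}F\|_{L^{p'}(S)}\lesssim\|F\|_{L^2(S)}$ via (\ref{sob.01}) together with Proposition \ref{P4.1} and then concludes "by duality." You have merely written out the duality pairing and the kernel/range bookkeeping more explicitly than the paper does.
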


\begin{proof}
For $p=2$, the inequality follows immediately from Proposition \ref{P4.1}.
For $p>2$, from (\ref{sob.01}) and Proposition \ref{P4.1} we infer for $p'>
2$ satisfying $\frac{1}{p}+\frac{1}{p'}=1$ that
\begin{align*}
\|r^{\frac{2}{p}-2}{}^\star{\D}^{-1} F\|_{L^{p'}(S)} &\les \|\sn
{}^\star{\D}^{-1}F\|_{L^2(S)}^{1-\frac{2}{p'}}
\|r^{-1}{}^\star{\D}^{-1} F\|_{L^2(S)}^{\frac{2}{p'}}
+\|r^{-1}{}^\star{\D}^{-1}F\|_{L^2(S)}\\
&\les\|F\|_{L^2(S)}
\end{align*}
Thus, by duality,  we complete the proof.
\end{proof}

\begin{lemma}\label{dual}
Let $\D$ denote one of the Hodge operators ${\D}_1$, ${\D}_2$,
${}^\star{\D}_1$ and ${}^\star{\D}_2$, let ${\D}^{-1}$ denote the
inverse of $\D$. For $P_k F$ with $P_k$ the GLP projections
associated to the heat equation (\ref{ut1}), there hold for $k>0$,
$1<p\le 2$,
\begin{align*}
\|{\D}^{-1}P_k F\|_{L_x^2}\les 2^{-k}r\| F\|_{L_x^2} \quad
\mbox{and} \quad \|P_k {\D}^{-1}F\|_{L_x^2}\les
2^{-(2-\frac{2}{p})k}r^{2-\frac{2}{p}}\|F\|_{L_x^p}.
\end{align*}
\end{lemma}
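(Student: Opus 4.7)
The plan is to prove both estimates by duality, combined with Proposition \ref{P4.1} and the Littlewood--Paley properties in Proposition \ref{P2}. Two structural observations drive everything: first, the heat flow projections $P_k$ are self-adjoint on $L^2(S)$ since $U(\tau)$ is; second, the $L^2$-adjoints of $\D_1$ and $\D_2$ are (up to sign) ${}^\star\D_1$ and ${}^\star\D_2$, so $(\D^{-1})^*$ is again a Hodge inverse of the type covered by Proposition \ref{P4.1}.

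For the first estimate, I would test $\D^{-1}P_k F$ against an arbitrary $S$-tangent tensor $G$ with $\|G\|_{L^2(S)}\le 1$ and write
\begin{equation*}
\langle \D^{-1}P_k F, G\rangle=\langle F, P_k(\D^{-1})^* G\rangle.
\end{equation*}
By Cauchy--Schwarz this is bounded by $\|F\|_{L^2(S)}\|P_k(\D^{-1})^* G\|_{L^2(S)}$, and the finite band inequality (\ref{FB}) together with Proposition \ref{P4.1}(i) yields
\begin{equation*}
\|P_k(\D^{-1})^* G\|_{L^2(S)}\lesssim 2^{-k}r\,\|\sn(\D^{-1})^* G\|_{L^2(S)}\lesssim 2^{-k}r\,\|G\|_{L^2(S)},
\end{equation*}
which is the first claim.

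For the second estimate the case $p=2$ is immediate from (\ref{FB}) applied to $H=\D^{-1}F$ together with Proposition \ref{P4.1}(i). For $1<p<2$, I would dualize once more: for $\|G\|_{L^2(S)}\le 1$,
\begin{equation*}
\langle P_k\D^{-1}F, G\rangle=\langle F,(\D^{-1})^* P_k G\rangle\le \|F\|_{L^p(S)}\,\|(\D^{-1})^* P_k G\|_{L^{p'}(S)},
\end{equation*}
with $p'>2$ conjugate to $p$. I would then apply the Sobolev inequality (\ref{sob.01}) to $H=(\D^{-1})^* P_k G$, bounding its $L^2$-gradient by $\|\sn H\|_{L^2(S)}\lesssim \|P_k G\|_{L^2(S)}\lesssim \|G\|_{L^2(S)}$ via Proposition \ref{P4.1}(i) and the $L^2$-boundedness of $P_k$, and bounding $\|H\|_{L^2(S)}\lesssim 2^{-k}r\,\|G\|_{L^2(S)}$ by the first estimate of the lemma applied with $F,\D^{-1}$ replaced by $G,(\D^{-1})^*$. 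Since $2/p'=2-2/p$, combining these in (\ref{sob.01}) gives
\begin{equation*}
\|(\D^{-1})^* P_k G\|_{L^{p'}(S)}\lesssim 2^{-(2-2/p)k}r^{\,2-2/p}\|G\|_{L^2(S)},
\end{equation*}
where the low-frequency term $\|r^{-1+2/p'}H\|_{L^2(S)}=\|r^{1-2/p}H\|_{L^2(S)}$ is controlled by the same bound for $k>0$. Taking the supremum over $G$ closes the proof.

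The main technical point is the identification of $(\D^{-1})^*$ with the Hodge inverse of the dual operator so that Proposition \ref{P4.1}(i) applies on the dual side; once this is in hand, the first estimate is a one-line computation and the second reduces to Sobolev embedding fed by the first. No new estimates on $\K$ or the geometry of $S_t$ are required beyond those already built into Proposition \ref{P4.1}.
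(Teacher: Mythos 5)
Your proof is correct and follows essentially the same route as the paper, which proves the first bound from the finite band property (\ref{FB}) together with Proposition \ref{P4.1} and the second by duality using the first bound and a Sobolev inequality on $S_t$; your explicit identification of $(\D^{-1})^*$ with the dual Hodge inverse and your use of (\ref{sob.01}) simply fill in the details the paper leaves implicit. The exponent bookkeeping ($2/p'=2-2/p$, and $2^{-k}\le 2^{-(2-2/p)k}$ for $k>0$, $p\le 2$, which handles the low-order term) is accurate.
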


\begin{proof}
The first inequality can be proved by using (\ref{FB}) in
Proposition \ref{P2} and Proposition \ref{P4.1}. The second can be
proved by duality with the help of the first inequality and
(\ref{sob.m}).
\end{proof}

The following result follows from the second estimate in Lemma
\ref{dual} immediately.

\begin{proposition}\label{P7.2}
Let ${\mathcal D}^{-1}$ denote either ${\mathcal D}_1^{-1}$,
${}^\star {\mathcal D}_1^{-1}$, ${\mathcal D}_2^{-1}$, then for
appropriate $S$-tangent tensor fields $F$ on ${\H}$ and any $1<p\le
2$,
\begin{equation}\label{7.14}
\|{\mathcal D}^{-1} F\|_{{\mathcal
P}^{\theta}}\lesssim\|r^{2-\frac{2}{p}-\theta}F\|_{L_t^2L _x^p}.
\end{equation}
\end{proposition}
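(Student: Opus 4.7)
The plan is to expand the norm $\|\mathcal{D}^{-1}F\|_{\mathcal{P}^\theta}$ according to its definition (\ref{g5}) and bound the two pieces separately, with all of the real work already done by Lemma \ref{dual} and Proposition \ref{P7.3}. Write
\begin{equation*}
\|\mathcal{D}^{-1}F\|_{\mathcal{P}^\theta}=\sum_{k>0}\|(2^k r^{-1})^\theta P_k\mathcal{D}^{-1}F\|_{L_t^2 L_x^2}+\|r^{-\theta}\mathcal{D}^{-1}F\|_{L_t^2 L_x^2},
\end{equation*}
and treat these two parts separately.

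For each dyadic piece with $k>0$, apply the second inequality in Lemma \ref{dual} on each slice $S_t$ to obtain
\begin{equation*}
\|(2^k r^{-1})^\theta P_k\mathcal{D}^{-1}F\|_{L_x^2(S_t)}\lesssim 2^{k\theta}r^{-\theta}\cdot 2^{-(2-\frac{2}{p})k}r^{2-\frac{2}{p}}\|F\|_{L_x^p(S_t)}=2^{-(2-\frac{2}{p}-\theta)k}\|r^{2-\frac{2}{p}-\theta}F\|_{L_x^p(S_t)},
\end{equation*}
using that $r$ is constant on $S_t$. Taking $L_t^2$ and summing in $k>0$ gives a geometric series whose ratio is $2^{-(2-\frac{2}{p}-\theta)}$; provided $\theta<2-\frac{2}{p}$ (the natural range in which the right-hand side of (\ref{7.14}) controls $\mathcal{P}^\theta$), the series converges and the total is bounded by $\|r^{2-\frac{2}{p}-\theta}F\|_{L_t^2 L_x^p}$.

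For the low-frequency piece, apply Proposition \ref{P7.3} on each $S_t$ to get $\|\mathcal{D}^{-1}F\|_{L_x^2(S_t)}\lesssim \|r^{2-\frac{2}{p}}F\|_{L_x^p(S_t)}$, then pull the $r^{-\theta}$ factor (constant on $S_t$) inside to obtain $\|r^{-\theta}\mathcal{D}^{-1}F\|_{L_x^2(S_t)}\lesssim\|r^{2-\frac{2}{p}-\theta}F\|_{L_x^p(S_t)}$, and take $L_t^2$. Adding the two pieces yields (\ref{7.14}).

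There is no real obstacle here: the entire argument is a dyadic summation exercise once the pointwise-in-$t$ estimate in Lemma \ref{dual} is available. The only point requiring minor care is the convergence of the dyadic sum, which forces the implicit condition $\theta<2-\frac{2}{p}$; this is automatic for $p=2$ and $0\le\theta<1$, and for general $1<p\le 2$ it is the range in which the conclusion is to be applied throughout the remainder of the paper.
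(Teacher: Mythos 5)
Your proof is correct and is exactly the argument the paper intends: the paper simply states that the proposition ``follows from the second estimate in Lemma \ref{dual} immediately,'' and your dyadic summation plus the low-frequency bound from Proposition \ref{P7.3} is the standard fill-in. Your observation that convergence of the geometric series implicitly requires $\theta<2-\frac{2}{p}$ is a fair point the paper leaves tacit, and it is indeed satisfied in every application (chiefly $\theta=0$, $p=\frac{4}{3}$).
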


Since the proof of the Hodge-elliptic estimate for geodesic
foliation contained in \cite[pages 295--301]{Qwang} only relied on
$$
\|\K\|_{L^2}+\|\Lambda^{-\a_0} \K\|_{L_t^\infty L_x^2}\les \gc,
\mbox{ with }\a_0\ge 1/2.
$$
Hence, based on  Lemma \ref{gauss3} and Proposition \ref{ep1.3}, the
same proof also applies to the case of time foliation. We can obtain
the result on the Hodge-elliptic ${\P}^\sigma$ estimates.

\begin{theorem}[Hodge-elliptic $\P^\s$-estimate]\label{in}
Let ${\D}$ denote either ${\D}_1$, ${\D}_2$ or their adjoint
operators ${}^\star{\D}_1$ and ${}^\star{\D}_2$. Then for any
$S$-tangent tensor fields $\xi$ and $F$ satisfying $ \D \xi = F$ and
any $\frac{1}{2}>\sig \ge 0$,
\begin{equation}\label{7.15} \|\sn\xi\|_{{\mathcal P}^\sig} \les \|F\|_{{\mathcal P}^\sig}
+ \Delta_0\|\D^{-1} F\|_{L_t^b L_x^2}^q \|F\|_{L_t^2L_x^2}^{1-q},
\end{equation}
where $1/2\le\a_0<q<1-\s$ and $b>4$.
\end{theorem}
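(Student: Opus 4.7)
The plan is to transport the proof of the corresponding estimate in the geodesic-foliation case \cite[pp.~295--301]{Qwang} essentially verbatim, which is possible because every ingredient used there has already been reconstructed in the present paper. The key structural observation is that for each Hodge operator $\D \in \{\D_1,\D_2,{}^\star\D_1,{}^\star\D_2\}$, the Bochner identity yields, schematically, $\sD \D^{-1} F = F + K \c \D^{-1} F$, so all of the geometry of $S_t$ enters through the single scalar $K$. After writing $K = r^{-2} + \K$, the scale-invariant piece $r^{-2}$ is harmless, and the entire argument rests on the control of $\K$ furnished here by Lemma \ref{gauss3} and Proposition \ref{ep1.3}.

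First I would dyadically decompose $\sn\xi = P_{\le 0}\sn\xi + \sum_{k>0} P_k^2 \sn\xi$, the low-frequency part being immediate from Proposition \ref{P4.1}. For $k>0$ I would split $P_k \sn \xi = \sn P_k \D^{-1} F + \sn[P_k,\D^{-1}]F$. The first piece reproduces the main term $\|F\|_{\P^\sigma}$ via the finite-band inequality (\ref{FBB}) and the Bessel inequality of Proposition \ref{P2}. The commutator is rewritten through the heat-semigroup representation of $P_k$; integration by parts against $\sD$, combined with the Bochner identity above, converts it into terms of the form $\sn \D^{-1}\bigl(\K \c \widetilde{P}_k \D^{-1} F\bigr)$ together with milder remainders.

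The main technical step is then a bilinear estimate of $\K \c G$ in a negative Besov space: using Proposition \ref{lambda:1}(iv) together with duality one obtains
\[
\|\K \c G\|_{B_{2,1}^{-\alpha_0}(S)} \lesssim \|\Lambda^{-\alpha_0}\K\|_{L^2_x}\,\|\Lambda^{\alpha_0}G\|_{L^2_x}
\]
up to lower-order corrections, and the first factor is bounded by $\gc$ uniformly in $t$ by Proposition \ref{ep1.3} with $\alpha_0 \ge 1/2$. Summing the resulting dyadic estimates in the $\P^\sigma$ norm, the spare regularity $1-\sigma-\alpha_0 > 0$ is consumed by interpolating $\|\Lambda^{\alpha_0} P_k \D^{-1} F\|_{L^2}$ between the low-regularity quantity $\|\D^{-1}F\|_{L_t^b L_x^2}$ and the high-regularity quantity $\|F\|_{L_t^2 L_x^2}$. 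This interpolation produces exactly the admissible exponent range $\alpha_0 < q < 1-\sigma$ with $b > 4$ appearing in (\ref{7.15}), together with the nonlinear factor $\|\D^{-1}F\|_{L_t^b L_x^2}^q \|F\|_{L_t^2 L_x^2}^{1-q}$.

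The main obstacle is bookkeeping rather than new analysis: one must verify that every auxiliary estimate employed in \cite{Qwang} has an analogue in the time-foliation setup. The sharp Bernstein inequality of Proposition \ref{P2}(v), the Hodge $L^2$ and duality estimates of Proposition \ref{P4.1} and Lemma \ref{dual}, the product rules of Proposition \ref{lambda:1}, and the comparison $r \approx s$ from (\ref{cmps1}) (which justifies the systematic replacement of $s^{-2}$ by $r^{-2}$ in all rescalings) collectively supply the full toolbox. Once these are in place, the computation of \cite[pp.~295--301]{Qwang} reads off term-by-term and delivers (\ref{7.15}).
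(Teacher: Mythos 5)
Your proposal follows the same route as the paper: the paper's entire proof consists of observing that the geodesic-foliation argument in \cite[pages 295--301]{Qwang} depends only on the bounds $\|\K\|_{L^2}+\|\Lambda^{-\a_0}\K\|_{L_t^\infty L_x^2}\les\gc$ with $\a_0\ge 1/2$, which are supplied here by Lemma \ref{gauss3} and Proposition \ref{ep1.3}, so the same proof carries over to the time foliation. Your reconstruction of the internal mechanics of that cited argument (dyadic splitting, the commutator $[P_k,\D^{-1}]$, the bilinear $\K\c G$ estimate, and the interpolation producing the exponent range $\a_0<q<1-\s$) is consistent with it, and you correctly identify the two $\K$ estimates as the only new inputs needed.
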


We now give a series of product estimates for Besov norms. Proofs
can be seen in \cite[pages 302--304]{Qwang}.

\begin{lemma}\label{l7.1}
For any $S$-tangent tensor fields $F$ and $G$,
\begin{align}
\| F\cdot G\|_{{\mathcal P}^0}&\les {\mathcal N}_1(
F)(\|r^{-\frac{1}{b}}G\|_{L_t^b L_x^2}+\|r^{\frac{1}{2}}\sn
G\|_{L_t^2L_x^2}) \mbox { with } b>4, \label{7.1} \\
\|F\cdot G\|_{{\mathcal P}^0}&\les {\mathcal N}_2(r^{1/2}F)\|G\|_{{\mathcal P}^0}, \label{7.2} \\
\|F\cdot G\|_{{\P}^0}&\les {\N}_1(r^{\frac{1}{2}}F)\big(\|\sn
G\|_{L_t^2 L_x^2}+\|G\|_{L_\omega^\infty L_t^2}\big).\label{7.3}
\end{align}
\end{lemma}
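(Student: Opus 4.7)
The plan is to prove (\ref{7.1})--(\ref{7.3}) in parallel by a paraproduct-style Littlewood--Paley decomposition. First I would use Proposition \ref{eq1.2} together with Lemma \ref{correct4} to transfer $\|F\c G\|_{\P^0}$ to the equivalent norm $\|F\c G\|_{\tilde\P^0}$ built from classical Euclidean projections $E_k$ acting on local components in the frame $\{X_i\}$; this removes curvature-commutator issues inside the projections and reduces the problem to a scalar paraproduct estimate. Writing $F = \bar F + \sum_m F_m$ with $F_m = E_m^2 F$ and similarly for $G$, the product decomposes as
$$
E_k(F\c G) \;=\; E_k(F_{<k}\c G_{<k}) \,+\, E_k(F_{\ge k}\c G) \,+\, E_k(F_{<k}\c G_{\ge k}),
$$
plus mean-value pieces coming from $\bar F$ and $\bar G$. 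The $L_t^2 L_x^2$ contribution to $\P^0$ I would bound directly by H\"older; the paraproduct sums in $k$ are where the work lies.

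For the \emph{high-high} and \emph{high-low} regimes where at least one factor carries frequency $\ge k$, I would apply Bessel's inequality in the high-frequency variable together with the Bernstein inequality (\ref{WB}) and H\"older in mixed $L_t^q L_x^p$. The $G$ factor is placed in the norm available on the right of each inequality: for (\ref{7.1}) upgrade $\|r^{-1/b}G\|_{L_t^b L_x^2} + \|r^{1/2}\sn G\|_{L^2}$ to an $L_t^b L_x^4$ bound via (\ref{sob.01}); for (\ref{7.2}) keep $G$ in $\P^0$; for (\ref{7.3}) combine $\sn G \in L^2$ with $G\in L_\omega^\infty L_t^2$. The $F$ factor is estimated via (\ref{sob.m}), (\ref{sob.m2}): $\|F\|_{L_x^4 L_t^\infty} + \|F\|_{L^6} \les \N_1(F)$ for (\ref{7.1}); $\|r^{-1/2}F\|_{L^\infty} + \|r^{-1}F\|_{L_t^2 L_x^\infty} \les \N_2(r^{1/2}F)$ for (\ref{7.2}); and the weighted versions $\|F\|_{L_x^2 L_t^\infty}$, $\|r^{1/2}F\|_{L_x^4 L_t^\infty} \les \N_1(r^{1/2}F)$ for (\ref{7.3}).

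For the \emph{low-low} piece $E_k(F_{<k}\c G_{<k})$ neither factor has frequency at scale $k$, so to recover summability I would use the Euclidean analogue of (\ref{FB}), $\|E_k H\|_{L^2}\les 2^{-k}r\|\sn H\|_{L^2}$, and distribute the derivative via Leibniz, $\sn(F_{<k}G_{<k}) = \sn F_{<k}\c G_{<k} + F_{<k}\c \sn G_{<k}$. In each inequality one of these two terms carries exactly the geometry required on the right-hand side after Bernstein summation in the low frequencies, and the resulting dyadic series converges by $\sum_k 2^{-k} \les 1$. For (\ref{7.1}) both splittings appear and must be balanced; in case of strong cancellation between $F$ and $G$ regularity I would instead use $E_k = 2^{-2k}\tilde E_k \sD$ from (\ref{FD}) and distribute $\sD(FG) = \sD F\c G + 2\sn F\c\sn G + F\c \sD G$. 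The mean-value contributions use $\sn \bar F = 0$ to transfer all derivatives onto $G$ and sum geometrically.

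The main obstacle will be the low-low regime for (\ref{7.3}): the norm $\|G\|_{L_\omega^\infty L_t^2}$ is non-Lebesgue and does not pair cleanly with standard Bernstein sums, so I would pair it with $\|F\|_{L_x^2 L_t^\infty}$ via the identity $\|FG\|_{L^2(\H)}^2 \les \|F^2\|_{L_x^1 L_t^\infty}\|G^2\|_{L_\omega^\infty L_t^1}$ and use (\ref{FBB}) to push the low-frequency projection past this step, invoking $\N_1(r^{1/2}F)$ to absorb the $r^{1/2}$ loss. The other delicate point is verifying, in each of the three cases, that the derivative in the Leibniz rule lands on the factor whose norm is dictated by the right-hand side. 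Once all regimes are summed and the $L^2(\H)$ contribution is added, the estimates (\ref{7.1})--(\ref{7.3}) follow.
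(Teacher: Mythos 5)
Your proposal is correct and follows essentially the same route as the paper's proof, which is deferred to \cite[pages 302--304]{Qwang} and mirrored in this paper's own proof of the analogous estimate (\ref{A93}): a dyadic paraproduct decomposition, with the finite band property used to gain $2^{-k}$ in the low--low regime, Bernstein/Bessel in the high-frequency regimes, and the pairing of $\|G\|_{L_\omega^\infty L_t^2}$ against $\|F\|_{L_x^2 L_t^\infty}\les \N_1(r^{1/2}F)$ for (\ref{7.3}). Your preliminary transfer to the Euclidean projections $E_k$ via Proposition \ref{eq1.2} and Lemma \ref{correct4} is a cosmetic variant of working directly with the geometric $P_k$, and is equally valid.
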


\begin{corollary}
Regard $\kp, \iota$  also as  elements of $A$, there hold
\begin{align}
\|A\c F\|_{\P^0}&\les (\gc) \N_1(r^\f12 F),\quad \quad\|(\tr\chi,
r^{-1}) F\|_{\P^0}\les \N_1(F),\label{cor2}\\
\|an A\c F\|_{\P^0} &\les (\gc) \N_1(r^\f12 F), \quad \|an (\tr\chi,
r^{-1}) F\|_{\P^0}\les \N_1(F).\label{cor3}
\end{align}
\end{corollary}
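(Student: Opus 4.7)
The plan is to derive each of the four inequalities as a direct application of the product estimate \eqref{7.1} of Lemma \ref{l7.1}, supplemented by the scalar-product estimate \eqref{A93} to handle the extra $an$ factor in \eqref{cor3}.

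For the first inequality of \eqref{cor2}, I put $A$ in the role of the first factor in \eqref{7.1}. By Remark \ref{n1kp} both $\kp$ and $\iota$ may be treated as elements of $A$, whence $\N_1(A)\les \gc$ by combining Proposition \ref{smr} and Proposition \ref{n1tr}. It remains to show $\|r^{-1/b}F\|_{L_t^b L_x^2}+\|r^{1/2}\sn F\|_{L_t^2L_x^2}\les \N_1(r^{1/2}F)$ for some $b>4$. Since $r=r(t)$ depends only on $t$, one has $\sn r=0$, and hence $r^{1/2}\sn F=\sn(r^{1/2}F)$, which bounds the second term. For the first term, applying the interpolated Sobolev embedding \eqref{sob.in} to the tensor $r^{1/2}F$ with $q=b>4$ gives $\|r^{-1/q}F\|_{L_t^q L_x^2}\les \N_1(r^{1/2}F)$.

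For the second inequality of \eqref{cor2}, I split $\tr\chi=2/r+\iota$. The $r^{-1}F$ piece is handled directly: since $r$ is constant on each $S_t$, the GLP projection commutes with multiplication by $r^{-1}$, so by the finite band property \eqref{FB} one has $\|P_k(r^{-1}F)\|_{L^2(\H)}\les 2^{-k}\|\sn F\|_{L^2(\H)}$; summing the geometric series in $k>0$ together with the base term $\|r^{-1}F\|_{L^2}\le \N_1(F)$ yields $\|r^{-1}F\|_{\P^0}\les \N_1(F)$. For the $\iota F$ piece, the first inequality already gives $\|\iota F\|_{\P^0}\les (\gc)\N_1(r^{1/2}F)$, and one verifies termwise the weight exchange $\N_1(r^{1/2}F)\les \N_1(F)$: the only nontrivial piece is $\sn_L(r^{1/2}F)=\tfrac{1}{2}r^{-1/2}L(r)F+r^{1/2}\sn_L F$, whose singular factor $r^{-1/2}$ is absorbed by the boundedness $|L(r)|\les 1$, itself a consequence of the transport equation \eqref{dr} combined with the estimate $|r\tr\chi|\les 1$ from \eqref{comp1}.

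For \eqref{cor3} I invoke \eqref{A93} with $f=an$ and $H$ successively equal to $A\cdot F$, $\tr\chi F$, and $r^{-1}F$. By \eqref{cp}, $\|an\|_{L^\infty}\les 1$, so the first term on the right of \eqref{A93} reduces directly to the estimate already proved in \eqref{cor2}. For the perturbation $\|H\|_{L^2(\H)}\|r^{1/2}\sn(an)\|_{L_t^\infty L_x^4}$, I use $\sn(an)=an(\zeta+\zb)$ from \eqref{c4}, combined with the Sobolev embedding \eqref{sob.01} at $p=4$ and the pointwise scaling $\|(\zeta+\zb)\|_{L^2(S_t)}\les r(t)^{1/2}\gc$ implicit in Proposition \ref{propab} (via $\|r^{-1/2}(\zeta+\zb)\|_{L_t^\infty L_x^2}\les \gc$), to bound $\|r^{1/2}\sn(an)\|_{L_t^\infty L_x^4}\les \gc$. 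The resulting perturbation is thus controlled by $\gc$ times $\|H\|_{L^2(\H)}$, which is dominated by $\N_1(r^{1/2}F)$ or $\N_1(F)$ through \eqref{sob.m} applied to $r^{1/2}F$. The main obstacle throughout is the careful bookkeeping of weights between $\N_1(F)$ and $\N_1(r^{1/2}F)$, together with the extraction of the $L_t^\infty L_x^4$ bound on $r^{1/2}\sn(an)$; no new ingredient beyond the preliminary estimates of Section \ref{presob} is required.
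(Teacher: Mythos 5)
Your proposal is correct and follows essentially the same route as the paper: the product estimate \eqref{7.1} combined with \eqref{sob.in} for the first bound, the splitting $\tr\chi=\iota+2r^{-1}$ with the finite band property for the second, and \eqref{A93} with $f=an$ together with $\sn(an)=an(\zeta+\zb)$ for \eqref{cor3}. The only cosmetic difference is that the paper obtains $\|\sn(an)\|_{L_t^\infty L_x^4}\les\gc$ directly from \eqref{sob.m} (since $\|\cdot\|_{L_t^\infty L_x^4}\le\|\cdot\|_{L_x^4L_t^\infty}\les\N_1(\cdot)$), which is cleaner than your route through \eqref{sob.01}, and you make explicit the weight exchange $\N_1(r^{1/2}F)\les\N_1(F)$ that the paper leaves implicit.
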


\begin{proof}
Let us prove (\ref{cor2}) first. Using (\ref{7.1}) for $b>4$, we
have
\begin{align}
\|A\c F\|_{\P^0}&\les \N_1(A)(\|r^{\f12}\sn
F\|_{L^2(H)}+\|r^{-\frac{1}{b}} F\|_{L_t^b L_x^2} ) \les \N_1(A)\c
\N_1(r^{\f12}F)\label{aa}
\end{align}
where the last inequality follows by using (\ref{sob.in}).

By finite band property of GLP in Proposition \ref{P2}, it is
straightforward to obtain
\begin{equation}\label{l7.2}
\|r^{-1}F\|_{\P^0}\les \|\sn F\|_{L^2}+\|r^{-1}F\|_{L^2}.
\end{equation}
 Noticing that $\tr\chi\c F=\iota\c F+2r^{-1} \c F$ and $\N_1(\iota)\les \gc$ in Proposition \ref{n1tr},
the other inequality in (\ref{cor2}) follows by (\ref{aa}) and
(\ref{l7.2}) with $A$ replaced by $\iota$.

 Applying (\ref{A93}) to $f=an$ and
$H=A\c F, \,\tr\chi F,\, r^{-1}F$, (\ref{cor3}) can be derived by
using (\ref{cor2}) for $H$ and the following inequalities for $f$
\begin{equation}\label{an}
\|\sn (an)\|_{L_t^\infty L_x^4}\approx \|\zeta+\zb\|_{L_t^\infty
L_x^4}\les \gc, \quad\|an\|_{L^\infty}\les 1.
\end{equation}

\end{proof}

\section{\bf Sharp trace theorem}\label{strace}
\setcounter{equation}{0}

The purpose of the section is to prove

\begin{theorem}[Sharp trace theorem]\label{T1.1} Let $F$ be
an $S$-tangent tensor which admits a decomposition of the form $\sn
(an F)=\Dt P+ E$ with tensors $P$ and $E$ of the same type as $F$,
suppose $\lim_{t\rightarrow 0} \|F\|_{L_x^\infty}<\infty$ and
$\lim_{t\rightarrow 0} r|\sn F|<\infty$, there holds the following
sharp trace inequality,
\begin{equation}\label{lin2}
\|F\|_{L_\omega^\infty L_t^2} \les {\mathcal N}_1(F)+{\mathcal
N}_1(P) +\|E\|_{{\mathcal P}^0}.
\end{equation}
\end{theorem}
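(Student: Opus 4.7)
The plan is to adapt the geometric Littlewood--Paley (GLP) strategy used by Klainerman--Rodnianski in \cite{KR1,KRs,KR4} for geodesic foliation, with the new complication that $an$ is non-trivial along null geodesics. First I would decompose $F=F_{\leq 0}+\sum_{k>0}F_k$ with $F_k=P_k^2 F$, and bound the low frequency piece directly by $\N_1(F)$ using sharp Bernstein (Proposition \ref{P2}(v)) together with the weakly-spherical structure of $(S_t,\gac)$ from Proposition \ref{l8.1} (so that $U(\infty)F$ and $P_{\leq 0}^2 F$ are effectively scalar-like on each $S_t$). The dyadic high-frequency part requires a per-frequency bound of the form
\[
\|F_k\|_{L_\omega^\infty L_t^2}^2 \;\lesssim\; 2^{-\epsilon k}\bigl(\N_1(F)\N_1(P)+\N_1(F)\|E\|_{\P^0}\bigr),
\]
so that the geometric series sums.

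The main mechanism for each $k>0$ is the finite-band identity (\ref{FD}), which gives $P_k = 2^{-2k} r^2\tilde P_k\sD$. I would pair $F_k$ with itself, use this identity on one factor, and then replace $\sD F$ by its structural expression: from $\sn(anF)=\Dt P+E$ together with $\sn\log(an)=\zeta+\zb$ one has
\[
\sn F \;=\; (an)^{-1}(\Dt P+E)-(\zeta+\zb)F,
\]
whence $\sD F$ is a tangential divergence of this expression. The contribution of the $\Dt P$ piece is then integrated by parts in $t$ along each $\Gamma_\omega$ (using $\Dt=an\,\tfrac{d}{ds}$ and $\tfrac{d}{dt}d\mu_\ga=an\tr\chi\,d\mu_\ga$), after first commuting $\sn$ and $\Dt$ via Proposition \ref{commu1}. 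The resulting boundary terms at $t=0$ are harmless by the hypotheses $\lim_{t\to 0}\|F\|_{L_x^\infty}<\infty$ and $\lim_{t\to 0}r|\sn F|<\infty$; the bulk terms produce bilinear pairings of $\sn_L P$, $E$, and commutator errors against $\sn F_k$ or $F_k$, which are then estimated by Cauchy--Schwarz combined with Proposition \ref{P2}, yielding the $\N_1(P)$ and $\|E\|_{\P^0}$ contributions on the right-hand side.

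The error terms are of three types, all dominated by $\N_1(F)$ times the two norms on the right. The commutator $[\sn,\Dt]=-an\chi\cdot\sn$ (Proposition \ref{commu1}) forces control of $an\chi\cdot\sn(\cdot)$ in $\P^0$; since $\chi$ decomposes into $r^{-1}$ plus $\iota$ and $\chih$, this is handled by the product estimates (\ref{cor2})--(\ref{cor3}) of Lemma \ref{l7.1}, using $\N_1(\iota)\les\gc$ (Proposition \ref{n1tr}) and $\N_1(\chih)\les\gc$ (Proposition \ref{smr}). The factor $\sn(an)=an(\zeta+\zb)$ appearing in $\sn F$ is in turn controlled by $\N_1(\zeta+\zb)$ via Proposition \ref{smr} together with (\ref{cp}). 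Finally, the Ricci/curvature terms entering $\sD F$ (from the tensor Laplacian) are absorbed by the $\|\bar R\|_{L^2}\les\RR+\Delta_0^2$ bound (\ref{simp1}) and Gauss equation estimates (\ref{lk}).

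The principal obstacle I anticipate is the handling of the interaction between the commutator $[\sn,\Dt]$ and the non-trivial factor $an$: in geodesic foliation one gets $\sn F=\sn_L P+E$ cleanly, while here the extra $an$-weight generates terms like $an\chi\cdot\sn P$ and $\Dt(an)\cdot F$ that must be packaged so that the bilinear pairing with $F_k$ still respects the finite-band gain $2^{-2k}$ and does not destroy summability in $k$. This is where the sharper estimate $\N_1(\kp)\les\gc$ of Proposition \ref{n1tr} (treating $\kp=\tr\chi-(an)^{-1}\overline{an\tr\chi}$ as an $A$-type quantity) is essential: it lets one commute the $(an)^{-1}$ and $\overline{an\cdot}$-factors through the pairing at the price of lower-order terms with acceptable $\N_1$-bounds, and ultimately yields the desired bound (\ref{lin2}) after summing over $k>0$.
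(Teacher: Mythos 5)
Your plan diverges from the paper's proof at the very first step, and the divergence creates a genuine gap: you never explain how the $L_\omega^\infty$ in $\|F\|_{L_\omega^\infty L_t^2}$ is actually produced. You dyadically decompose $F$ itself and assert a per-frequency bound $\|F_k\|_{L_\omega^\infty L_t^2}^2\les 2^{-\epsilon k}(\cdots)$, but the mechanism you describe — pairing $F_k$ with itself, applying the finite-band identity $P_k=2^{-2k}r^2\tilde P_k\sD$, substituting the structural expression for $\sD F$, and integrating by parts in $t$ — only yields control of $\int_{S_t}|F_k|^2$-type quantities, i.e.\ $L_\omega^2$ (or $L_\omega^1$ after pairing) information, not a supremum over $\omega$. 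The only tool available to upgrade a dyadic piece to $L^\infty_\omega$ is sharp Bernstein, $\|F_k\|_{L^\infty(S_t)}\les 2^k r^{-1}\|P_kF\|_{L^2(S_t)}$, and that route gives $\sum_k\|F_k\|_{L_\omega^\infty L_t^2}\les\sum_k 2^k\|r^{-1}P_kF\|_{L^2}$, which is a $\B^1$-type norm of $F$ that is \emph{not} controlled by $\N_1(F)$ (this is exactly the failure of $H^1(S)\hookrightarrow L^\infty(S)$ that makes the theorem "sharp"). Your proposal does not supply an alternative mechanism that converts the bilinear, integrated-over-$S_t$ estimates into a pointwise-in-$\omega$ statement with a $2^{-\epsilon k}$ gain.

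The paper's proof avoids this by introducing the auxiliary scalar $\varphi(t)=\int_0^t an|F|^2\,dt'$, so that $\|F\|_{L_\omega^\infty L_t^2}^2=\|\varphi\|_{L^\infty(\H)}$, and then invoking the embedding $\|\varphi\|_{L^\infty}\les\|\sn\varphi\|_{\B^0}+\|r^{-1}\varphi\|_{L_t^\infty L_x^2}$. The Littlewood--Paley decomposition is applied to $\sn\varphi$ (paired with the frame $X_i$ of Lemma \ref{correct4}), not to $F$: one derives a transport equation for $\sn\varphi$ from (\ref{u1}), integrates along $\Gamma_\omega$ to get the representation (\ref{eq.15}), and estimates the four resulting terms with the bilinear trace estimates (\ref{5.33})--(\ref{5.32}) of Proposition \ref{main:lem}, closing by absorption since $\|\kp,\chih\|_{L_\omega^\infty L_t^2}\le\Delta_0$. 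In other words, the $t$-integration of $|F|^2$ must happen \emph{before} any angular projection; your ordering (project $F$ first, then try to trace) loses precisely the structure that the hypothesis $\sn(anF)=\Dt P+E$ is designed to exploit. Your peripheral remarks (handling of $[\sn,\Dt]$, the role of $\kp$ and $\N_1(\iota),\N_1(\chih)\les\gc$, the $\sn(an)=an(\zeta+\zb)$ factor) are all consistent with the paper, but they do not repair the central missing step.
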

\begin{remark}
We will employ this theorem to estimate $\|F\|_{L_\omega^\infty
L_t^2}$ for  $F= \zb, \nu,\chih, \zeta$. By local analysis, the two
initial assumptions can be checked for the four quantities.
\end{remark}
 The following result gives the important inequalities to prove Theorem
\ref{T1.1}.

\begin{proposition}\label{main:lem}
Let $p\ge 1$ be any integer, for any $S$-tangent tensor fields $F$,
$H$ and $G$ of the same type. There holds
\begin{align}
&\left \|r^{-p}\int_0^t {r'}^p H\cdot G d
t'\right\|_{{\B}^0}\lesssim \|H\|_{\P^0}({\mathcal
N}_1(G)+\|G\|_{L_\omega^\infty L_t^2})\label{5.33}.
\end{align}
Let us further assume
\begin{equation}\label{ini}
\lim_{t\rightarrow 0} r^p\|F\|_{L_x^\infty}=0,\quad
\lim_{t\rightarrow 0} \|G\|_{L_x^\infty}<\infty, \mbox{ when } p\ge
2,
\end{equation}
 then
there holds for $p\ge 1$,
\begin{align}
&\left\|r^{-p}\int_0^t {r'}^p\Dt F\cdot G d
t'\right\|_{{\B}^0}\lesssim {\mathcal N}_1(F){\mathcal
N}_1(G).\label{5.32}
\end{align}
\end{proposition}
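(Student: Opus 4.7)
The plan is to prove both estimates via the transport equation satisfied by the weighted integral. Setting
\[ W(t) := r^{-p}\int_0^t (r')^p X\, dt',\qquad X\in\{H\cdot G,\ \Dt F\cdot G\},\]
the identity $\Dt r = \tfrac{r}{2}\ovl{an\tr\chi}$ (which follows from (\ref{dr}) via $\Dt = an\,d/ds$) immediately yields
\[ \Dt W + \tfrac{p}{2}\ovl{an\tr\chi}\,W = X,\qquad W(0)=0.\]
Applying the GLP projector $P_k$ and computing $\Dt(r^p P_k W) = r^p\bigl(P_k X + [\Dt,P_k]W + \tfrac{p}{2}[\ovl{an\tr\chi},P_k]W\bigr)$, integrating along null generators, and using $(r'/r)^p\le 1$ for $t'\le t$, I obtain the pointwise bound
\[ |P_k W(t)|\le \int_0^t \Bigl(|P_k X| + |[\Dt,P_k]W| + \tfrac{p}{2}|[\ovl{an\tr\chi},P_k]W|\Bigr)\, dt'.\]
Taking $L_t^\infty L_x^2$, summing over $k>0$, and using $\|\cdot\|_{L_t^1}\le\|\cdot\|_{L_t^2}$ on $[0,1]$, reduces the problem to bounding $\sum_k\|P_kX\|_{L_t^2 L_x^2}\approx\|X\|_{\P^0}$ (modulo the low-frequency piece handled directly), together with commutator terms. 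The commutators are treated via the heat-kernel representation $P_k=\int m_k(\tau)U(\tau)\,d\tau$ following the template of Proposition~\ref{commu0}, and absorbed into the right-hand side thanks to $\N_1(\kp,\iota)\lesssim\gc$ (Proposition~\ref{n1tr}), $\|\sn(an)\|_{L_t^\infty L_x^4}\lesssim\gc$ (see (\ref{an})) and the smallness of $\gc$.

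For (\ref{5.33}), the source is $HG$ and the target becomes $\|HG\|_{\P^0}\lesssim\|H\|_{\P^0}(\N_1(G)+\|G\|_{L_\omega^\infty L_t^2})$. A dyadic LP analysis with $H=\sum_n H_n+H_{\le 0}$, $G=\sum_n G_n+G_{\le 0}$ splits $P_k(HG)$ into LL, LH, HL and HH interactions: in the LH interaction $P_k(H_{\le k}G_k)$ the high-frequency $G_k$ is captured by $\N_1(G)$ via the finite-band property while $H_{\le k}$ is placed in $L_\omega^\infty L_t^2$; in HL the roles reverse with $H_k$ controlled by $\|H\|_{\P^0}$ and $G_{\le k}$ by $\N_1(G)$ via Bernstein; and the HH interaction is closed by sharp Bernstein (Proposition~\ref{P2}(v)), whose validity rests on Proposition~\ref{ep1.3}. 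Summability in $k$ is restored by Cauchy--Schwarz against the $\ell^1$ structure of $\P^0$.

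For (\ref{5.32}), I exploit the identity $\Dt F\cdot G = \Dt(FG) - F\Dt G$ and integrate by parts in $t$:
\[ W = FG - \tfrac{p}{2}r^{-p}\int_0^t (r')^p\ovl{an\tr\chi}\, FG\, dt' - \widetilde W,\qquad \widetilde W:=r^{-p}\int_0^t (r')^p F\Dt G\, dt',\]
the boundary term at $t'=0$ vanishing by hypothesis (\ref{ini}). The first term is bounded in $\B^0$ by $\N_1(F)\N_1(G)$ using a product LP estimate based on Proposition~\ref{P2} and (\ref{sob.m}); the second, noting $|\ovl{an\tr\chi}|\lesssim r^{-1}$ and using (\ref{l7.2}), is a weighted variant whose $\B^0$ bound is derived by the same dyadic bookkeeping as in the previous paragraph. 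The remaining piece $\widetilde W$ has the same transport form with source $F\Dt G$: here $F$ is placed in $\P^0$ via $\|F\|_{\P^0}\lesssim\N_1(F)$ (from (\ref{l7.2})), while $\Dt G=an\,\sn_L G$ satisfies $\|\Dt G\|_{L^2}\lesssim\N_1(G)$, and the dyadic analysis of $P_k(F\Dt G)$ proceeds exactly as in Step~2 but with only the HL and HH interactions relevant (since $\Dt G$ is only controlled in $L^2$ rather than in a $\P^0$-type norm).

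Main obstacle. The principal technical difficulty is the dyadic analysis of $P_k(HG)$ in case (\ref{5.33}): implementing the asymmetric allocation $H\in\P^0$, $G\in\N_1+L_\omega^\infty L_t^2$ with the correct frequency bookkeeping requires the sharp Bernstein of Proposition~\ref{P2}(v), a geometric series summation in $k$, and careful use of the finite-band and Bernstein inequalities to swap between $L_t^2 L_x^2$, $L_t^\infty L_x^2$ and $L_\omega^\infty L_t^2$ norms on dyadic blocks. For (\ref{5.32}), the additional subtlety is that $\Dt G$ does not admit a $\P^0$ bound, which is precisely what forces the integration by parts and a careful distribution of the initial condition (\ref{ini}) across the three pieces; the presence of the averaged quantity $\ovl{an\tr\chi}$ (rather than $\tr\chi$ itself) and of the lapse factor $an$ throughout the commutator decomposition is the final layer of care, handled uniformly by the preliminary estimates of Section~\ref{presob}.
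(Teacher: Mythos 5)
Your overall skeleton for (\ref{5.33}) — derive the transport equation $\Dt W+\frac{p}{2}\ovl{an\tr\chi}\,W=X$ from (\ref{dr}), commute with $P_k$, control $[\Dt,P_k]$ by the heat-flow commutator bounds of Lemma \ref{sl1}, and close with a dyadic paraproduct analysis using Proposition \ref{eq1.2} — is exactly the route the paper points to (it refers the reader to \cite[Theorem 5.1]{KRs} and \cite[Appendix]{WangQ} and omits the details). Two small remarks there: $\ovl{an\tr\chi}$ is a function of $t$ alone, so $[\ovl{an\tr\chi},P_k]=0$ and that commutator never arises; and the reduction to a product bound for $\|HG\|_{\P^0}$ is slightly misstated — what the transport argument actually produces is the $L_t^1$-based dyadic sum $\sum_k\|P_k(H\cdot G)\|_{L_t^1L_x^2}$, and it is this extra time integrability (pairing $\|H_n\|_{L_t^2}$ against $\|G\|_{L_\omega^\infty L_t^2}$ generator by generator) that makes the asymmetric allocation close; a genuine fixed-time product estimate $\P^0\times(\N_1+L_\omega^\infty L_t^2)\to\P^0$ is not what is used.

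The genuine gap is in your treatment of (\ref{5.32}). Your global integration by parts produces the boundary term $F(t)G(t)$ as a standalone object, and you claim $\|FG\|_{\B^0}\les\N_1(F)\N_1(G)$. This endpoint product estimate is not available: $\B^0$ carries an $L_t^\infty$ in each dyadic block, while $\N_1$ only controls derivatives in $L_t^2$. Concretely, the best uniform-in-$t$ dyadic decay obtainable from $\N_1(G)$ is the trace bound $\|r^{-1/2}P_kG\|_{L_t^\infty L_x^2}\les 2^{-k/2}\N_1(G)$ of Lemma \ref{sl2}, while sharp Bernstein gives $\|F_{<k}\|_{L_t^\infty L_x^\infty}\les 2^{k/2}r^{-1/2}\N_1(F)$; the low--high interaction $P_k(F_{<k}G_k)$ is therefore $O(1)$ in $k$ and the sum over $k$ diverges. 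This is precisely why the argument in \cite{KRs} never detaches the time integral: the integration by parts in $t$ is performed \emph{frequency by frequency}, i.e.\ for each dyadic pair $\int_0^t \Dt F_m\cdot G_n$ one integrates by parts only in the favourable regime (so that the resulting boundary term $F_mG_n$ carries a net dyadic gain from the high-frequency factor), and the leftover $\int_0^tF_m\Dt G_n$ is handled using $[\Dt,P_n]$ via Lemma \ref{sl1} together with the $L_t^1$ gain from the integral. For the same reason your dismissal of the low($F$)--high($\Dt G$) interaction in $\widetilde W$ does not work: with $\Dt G$ controlled only in $L_t^2L_x^2$, $\sum_n\|(\Dt G)_n\|_{L_x^2}$ is an $\ell^1$ sum of an $\ell^2$ quantity and is not finite, so that interaction cannot be ignored — it must be removed by the frequency-localized integration by parts. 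A secondary issue: even granting the decomposition, estimating the middle term through (\ref{5.33}) imports $\|G\|_{L_\omega^\infty L_t^2}$ into the right-hand side, which the stated bound (\ref{5.32}) does not allow.
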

 Using a modified version of \cite[Lemma 5.3]{KRs}, (see in Lemma \ref{sl1} in Appendix), also using  Proposition \ref{eq1.2},
 Proposition \ref{main:lem}  follows by repeating the procedure
in  \cite{KRs} and
\cite[Appendix]{WangQ}. We omit the detail of the proof of
Proposition \ref{main:lem}.

\begin{proof}[Proof of Theorem \ref{T1.1}]
We set $ \varphi(t)=\int_0^t  an |F|^2 dt',$ then $\Dt \varphi= an
|F|^2$. Due to \cite[Proposition 5.1]{KR1} we have
\begin{equation}\label{ebdg1}
\|\varphi\|_{L^\infty(\H)}\les \|\sn
\varphi\|_{\B^0}+\|r^{-1}\varphi\|_{L_t^\infty L_x^2}.
\end{equation}
It is easy to see
\begin{equation}\label{lebdg1}
\|r^{-1}\varphi\|_{L_t^\infty L_x^2}\les \|F\|_{L_\omega^\infty
L_t^2}\c \|r^{-1}F\|_{L^2}.
\end{equation}

We now estimate $\|\sn \varphi\|_{\B^0}$. In view of (\ref{u1}), we
obtain
\begin{equation}\label{extra}
\sn_L\sn\varphi+\frac{1}{2}\tr\chi\sn \varphi=2 (an)^{-1}\sn(an\c
F)\c F-\chih\c \sn \varphi-(\zeta+\zb)|F|^2.
\end{equation}

Using the decomposition $\sn(an\c F)=\Dt P+E$ and Lemma
\ref{correct4}, we pair $\sn \varphi$ with vector field $X=X_i$ to
obtain
\begin{align}
\sn_L (r\sn\varphi\c X)&=-\frac{1}{2}r \kp\sn \varphi\c X-r\chih\sn \varphi\otimes X\nn\\
&+r\left(2\sn_L P\c F\otimes X+2 (an)^{-1}E\c F\otimes
X-|F|^2(\zeta+\zb)\c X\right).\label{eq.14}
\end{align}
We will not distinguish ``$\otimes$" with
``$\cdot$", and also suppress $X$ whenever there occurs no confusion.
Note that under the transport coordinate $(s, \omega_1, \omega_2)$,
we have
$$
\frac{\p \varphi}{\p \omega_i}=\int_0^t\{ 2\l \p_{ \omega_i} F, F\r
+ |F|^2 \p_{\omega_i}\log(an)\} na dt' \quad i=1,2,
$$
by  assumptions on initial condition, $\lim_{t\rightarrow 0}\frac{\p
\varphi}{\p \omega_i}=0$. It then follows  by integrating
(\ref{eq.14})  along a null geodesic $\Ga_\omega$ from $0$ to $s(t)$
that, symbolically,
\begin{align}
(\sn\varphi)(t)&= r^{-1} \int_0^t \{r' an (\kp+\chih)\sn \varphi+ r' E\c F\} dt'\nn\\
&+r^{-1} \int_0^t r' \D_t P\c F dt'+r^{-1}\int_0^t an r'
|F|^2(\zeta+\zb) dt'\label{eq.15}.
\end{align}
Since by Proposition \ref{eq1.2},
\begin{equation}\label{besov}
\|\sn \varphi\|_{\B^0}\approx
\sum_{k>0}\|E_k(\sn\varphi)\|_{L_t^\infty L_x^2}+\|\sn
\varphi\|_{L_t^\infty L_x^2},
\end{equation}
and the estimate of $\|\sn \varphi\|_{L_x^2 L_t^\infty}$ can be
obtained in view of (\ref{eq.15}) and (\ref{sob.m}).
\begin{align}
\|\sn\varphi\|_{L_x^2 L_t^\infty}&\les(\|\kp\|_{L_\omega^\infty
L_t^2}+\|\chih\|_{L_\omega^\infty
L_t^2})\|\sn\varphi\|_{L^2}\label{lin1}\\&+(\|E\|_{L^2}+\|\sn_L
P\|_{L^2}+\N_1(A)\N_1(F))\|r^{-1} F\|_{L^2}\nn
\end{align}
where the term on the right of (\ref{lin1}) can be absorbed in view
of $\|\kp,\chih\|_{L_\omega^\infty L_t^2}\les \Delta_0$,  obtained
from (\ref{kap}) and BA1. It remains to estimate the first term in
(\ref{besov}).

Applying Littlewood Paley decomposition $E_k$ to $\sn \varphi\otimes
X$ via (\ref{eq.15}), we only need to estimate the following terms
\begin{align*}
I_1&=\sum_{k>0} \left\|E_k\int_0^t r' an (\kp+\chih)\sn
\varphi\right\|_{L_t^\infty L_\omega^2},\quad
I_2=\sum_{k>0}\left\|E_k \int_0^t  r'E\c F d t'\right\|_{L_t^\infty L_\omega^2},\\
I_3&=\sum_{k>0}\left\|E_k\int_0^t r' \D_t P\c F
dt'\right\|_{L_t^\infty L_\omega^2},\quad \quad
I_4=\sum_{k>0}\left\|E_k \int_0^t r'|F|^2(\zeta+\zb)
dt'\right\|_{L_t^\infty L_\omega^2}.
\end{align*}
Use  (\ref{5.33}) and Proposition \ref{eq1.2},
\begin{align}
I_1&\les (\N_1(\chih)+\|\chih\|_{L_x^\infty
L_t^2}+\N_1(\kp)+\|\kp\|_{L_\omega^\infty L_t^2})\|\sn
\varphi\|_{\P^0}.\nn
\end{align}
Consequently, by (\ref{kap}), Proposition \ref{n1tr}, (\ref{smry1})
and  $\|\chih\|_{L_\omega^\infty L_t^2}\le \Delta_0$ in BA1,
\begin{equation*}
I_1\les \Delta_0 \|\sn\varphi\|_{\P^0}.
\end{equation*}
Apply (\ref{5.33}) to $I_2$,
\begin{equation*}
I_2\les \|E\|_{\P^0}(\N_1(F)+\|F\|_{L_\omega^\infty L_t^2}).
\end{equation*}
By  (\ref{5.32}) and Lemma \ref{correct4}
\begin{equation*}
I_3\les \N_1(P)(\N_1(F)+\|F\|_{L_\omega^\infty L_t^2}).
\end{equation*}
By (\ref{5.33}), (\ref{7.3}), BA1 and (\ref{smry1}), we have
\begin{align*}
I_4 &\les (\N_1(\zeta+\zb)+\|\zeta+\zb\|_{L_\omega^\infty
L_t^2})\|F\c F\|_{\P^0}\\
&\les \Delta_0\N_1(F)(\|\sn F\|_{L^2}+\|F\|_{L_\omega^\infty
L_t^2}).
\end{align*}
Thus we conclude
\begin{align*}
\|\sn \varphi\|_{\B^0}&\les \left({\mathcal
N}_1(P)+\|E\|_{\P^0}\right) \left({\mathcal
N}_1(F)+\|F\|_{L_\omega^\infty L_t^2}\right) +\Delta_0
 \|\sn \varphi\|_{{\mathcal P}^0}\\&+\Delta_0\N_1(F)(\|\sn F\|_{L^2}+\|F\|_{L_\omega^\infty L_t^2}).
\end{align*}
This inequality, together with the fact that $\|\sn
\varphi\|_{{\mathcal P}^0}\lesssim\|\sn \varphi\|_{{\mathcal B}^0}$,
yields
\begin{align}
 \|\sn \varphi\|_{{\B}^0}&\les
({\N}_1(F)+\|F\|_{L_\omega^\infty
L_t^2})\left({\N}_1(P)+\|E\|_{{\P}^0}+\Delta_0\N_1(F)\right)\nn.
\end{align}
Combine the above inequality with (\ref{ebdg1}) and (\ref{lebdg1}),
we get
\begin{align*}
\|F\|_{L_\omega^\infty L_t^2}^2&\les ({\N}_1(F)
+\|F\|_{L_\omega^\infty L_t^2})\left({\N}_1(P)
+\|E\|_{{\P}^0}+\N_1(F)\Delta_0\right)\\&+\|F\|_{L_\omega^\infty
L_t^2}\c \|r^{-1}F\|_{L^2}
\end{align*}
which implies (\ref{lin2}) by Young's inequality.
\end{proof}

\section{\bf Error estimates}\label{err.e}
\setcounter{equation}{0} We will employ the following conventions:
\begin{enumerate}
\item[$\bullet$] $\check R $ denotes either the pair $(\check
\rho, -\check\s)$ or $\udb$

 \item [$\bullet$]${\D}^{-1}\check R$ denotes either ${\D}_1^{-1}(\check \rho, -\check
\s)$ or ${}^\star{\D}_1^{-1}\udb$

\item [$\bullet$]${\D}^{-2}\check R$ denotes either
${\D}_2^{-1}{\D}_1^{-1}(\check \rho, -\check \s)$ or
${\D}_1^{-1}{}^\star {\D}_1^{-1}\udb$

\item [$\bullet$]${\D}^{-1}\Dt \check R$ denotes either
${}^\star{\D}_1^{-1}\Dt\udb$ or ${\D}_1^{-1}\Dt(\check \rho, -\check
\s)$

\item [$\bullet$]$C_0(\check R)$ denotes $[\Dt,
{\D}_1^{-1}](\check \rho, -\check \s)$ or $[\Dt,
{}^\star{\D}_1^{-1}]\udb$

 \item [$\bullet$]${\D}^{-2}\Dt \check R$ denotes
${\D}_2^{-1}{\D}_1^{-1}\Dt(\check\rho, -\check \s)$ or
${\D}_1^{-1}{}^\star {\D}_1^{-1} \Dt \udb$

\item [$\bullet$]${\D}^{-1}C_0(\check R)$ denotes  $
{\D}_2^{-1}[\Dt, {\D}_1^{-1}](\check \rho, -\check \s)$ or
${\D}_1^{-1}[\Dt, {}^\star{\D}_1^{-1}]\udb$

\item[$\bullet$]  $\F$ denotes $\D^{-1}\ckk R$ or  $(a\delta+2a\lambda)$.
\begin{footnote}{
For simplicity, we use $(a\delta+2a\lambda)$ to denote the pair of
quantities $(a\delta+2a\lambda, 0)$.}
\end{footnote}
\item[$\bullet$] $\D^{-1}\F$ denotes either $\D^{-2}\ckk R$ or $
\D_1^{-1}(a\delta+2a\lambda)$.
\end{enumerate}

\subsection{Commutation formula}\label{com1}
We will study  error terms which arise from commuting $\Dt$ with
Hodge operators. Regard $\iota$ also an element of $A$,
symbolically, the commutation formula and its good part can be
written as follows
\begin{align}
 [\Dt, \sn] F&= an\left((A+\frac{1}{r})\sn F+(A+\frac{1}{r})\c A\c F+\b\c
 F\right)\label{symbol:comm1},\\
[\Dt, \sn]_g F&:=an\left((A+\frac{1}{r})\sn F+(A+\frac{1}{r})\c A\c
F\right).\label{com2}
\end{align}

Due to the nontrivial factor ``$an$" in (\ref{symbol:comm1}), the
treatment in \cite[Section 6]{Qwang} has to be modified.  We rewrite
equations (\ref{Bia2}), (\ref{Bia3}) and (\ref{Bia4}) as
\begin{align}
&L(\check\rho, -\check\sigma) ={\mathcal D}_1\beta+r^{-1}\check R
+A\cdot\ti R,\label{7.8}\\
&\sn_L\underline{\beta}={}^\star{\mathcal D}_1(\rho,
\sigma)+r^{-1}\check R+A\cdot \ti{R},\label{7.7}
\end{align}
where
$$
\ti R:=R_0+\sn A+A\cdot \underline A+r^{-1}\underline{A}.
$$
We will consider the commutators
\begin{equation} \label{7.40}
C(\check R)=(C_1(\check R), C_2(\check R), C_3(\check R))
\end{equation}
given in \cite[Definition 6.3]{KR1} which, by using the above
conventions, can be written symbolically as
\begin{align*}
C_1(\check R)&=\sn{\mathcal D}^{-1}[\Dt,{\mathcal
D}^{-1}]\check R,\\
C_2(\check R)&=\sn[\Dt, \D^{-1}]{\mathcal
D}^{-1}\check R,\\
C_3(\check R)&= [\Dt, \sn] {\mathcal D}^{-2}\check R.
\end{align*}

Corresponding to (\ref{7.8}) and (\ref{7.7}),  we introduce the
error terms
\begin{align}\label{errp1}
Err:={\D}_1^{-1}\Dt (\check \rho, -\check \sig)-an\b \quad
\mbox{and}\quad \widetilde{Err}&:={}^\star{\D}_1^{-1}\Dt
\udb-an(\rho,\s).
\end{align}
Denote by $\ff$ either $Err$ or $\widetilde{Err}$. Symbolically,
$\ff$ has the form
$$
\ff ={\mathcal D}^{-1}\{an(r^{-1}\check R+A\cdot\ti R)\}.
$$
 We then infer from (\ref{7.8}) and (\ref{7.7}) the symbolic
expression
\begin{equation}\label{se1}
{\mathcal D}^{-1}\Dt\check R=anR_0+\ff.
\end{equation}
By using (\ref{7.14}) with $\theta=0$ and $p=\frac{4}{3}$,
Proposition \ref{P4.1} and the H\"older inequality we infer that
\begin{equation}\label{errp}
\|\ff\|_{{\P}^0}\les\|r^{\f12}A\c\ti R\|_{L_t^2
L_x^\frac{4}{3}}+\|\ckk R\|_{L^2}\les \Delta_0^2+{\R}_0.
\end{equation}

The purpose of this section is to prove

\begin{proposition}\label{de:er}
There hold the following decomposition for commutators,
\begin{align*}
C(\check R) &=\Dt P+E, \\
[\Dt, \sn\D_1^{-1}](a\delta+2a\lambda)&=\Dt P'+ E',
\end{align*}
 where $P, P'$ and $E, E'$ are $S$ tangent tensors verifying
\begin{align*}
&{\mathcal N}_1(P)+\N_1( P')+ \|E\|_{\P^0}+\|E'\|_{{\mathcal
P}^0}\les \Delta_0^2+\R_0.\\
&\lim_{t\rightarrow 0}\left( r\|P\|_{L^\infty(S)}+
r\|P'\|_{L^\infty(S)}\right)=0.
\end{align*}
\end{proposition}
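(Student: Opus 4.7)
The strategy is to peel one $\Dt$ off each commutator, isolating a ``good'' Hodge expression as $P$ (resp.~$P'$) and collecting everything else into $E$ (resp.~$E'$). The two workhorses are the symbolic commutation formula (\ref{symbol:comm1}) and the symbolic Bianchi identity (\ref{se1}) $\D^{-1}\Dt\ckk R=anR_0+\ff$, together with the bound $\|\ff\|_{\P^0}\les\gc$ from (\ref{errp}). Product/Besov control comes from Proposition \ref{P7.2}, Lemma \ref{l7.1}, (\ref{cor2})--(\ref{cor3}), and intertwining from Lemma \ref{A.8}.

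\textbf{Decomposition of $C(\ckk R)$.} For $C_1(\ckk R)=\sn\D^{-1}[\Dt,\D^{-1}]\ckk R$ I would expand $[\Dt,\D^{-1}]=\Dt\D^{-1}-\D^{-1}\Dt$. The piece $\sn\D^{-1}\D^{-1}\Dt\ckk R$ is, by (\ref{se1}), equal to $\sn\D^{-1}(anR_0+\ff)$ and is placed into $E_1$; it is controlled by Lemma \ref{A.8}, (\ref{errp}) and the product estimates (\ref{cor2})--(\ref{cor3}). The piece $\sn\D^{-1}\Dt\D^{-1}\ckk R$ I would rewrite as $\Dt(\sn\D^{-2}\ckk R)-[\Dt,\sn\D^{-1}]\D^{-1}\ckk R$, placing the first term into $\Dt P_1$ with $P_1\sim\sn\D^{-2}\ckk R$ and expanding the residual commutator via (\ref{symbol:comm1}) and a further application of (\ref{se1}). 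An analogous double peel handles $C_2=\sn[\Dt,\D^{-1}]\D^{-1}\ckk R$. The third commutator $C_3=[\Dt,\sn]\D^{-2}\ckk R$ has no $\Dt P$ piece: applying (\ref{symbol:comm1}) with $F=\D^{-2}\ckk R$ directly expresses it as an $E_3$-term, and its $\P^0$-norm is estimated by (\ref{7.1})--(\ref{7.3}) using $\N_1(A)\les\gc$ and $\N_1(\D^{-2}\ckk R)\les\N_1(\ckk R)$ via Lemma \ref{A.8}.

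\textbf{Decomposition for $(a\delta+2a\lambda)$.} The transport equation (\ref{ldelta}) gives $\Dt(a\delta+2a\lambda)=an\{\div\zb+\ckk\rho+|\zb|^2+a\tr\chi\nab_N\log n-\tfrac{3}{2}a\delta\tr\chi\}$, so that $\D_1^{-1}\Dt(a\delta+2a\lambda)$ is essentially $an\zb$ plus $\D_1^{-1}$ of curvature and quadratic $\Ab\cdot\Ab$ or $\tr\chi\c\sl\pi$ terms. Writing $[\Dt,\sn\D_1^{-1}]=\sn[\Dt,\D_1^{-1}]+[\Dt,\sn]\D_1^{-1}$ and expanding $[\Dt,\D_1^{-1}]=\Dt\D_1^{-1}-\D_1^{-1}\Dt$, pulling $\Dt$ through $\sn\D_1^{-1}$ identifies $P'\sim\sn\D_1^{-2}(a\delta+2a\lambda)$, whose $\N_1$-norm is bounded by Lemma \ref{A.8} together with $\N_1(\sl\pi)\les\RR$; the residuals $\sn\D_1^{-1}(an\zb)$, $\sn\D_1^{-1}\ckk\rho$, $\sn\D_1^{-1}(\Ab\cdot\Ab)$ and $[\Dt,\sn]\D_1^{-1}(a\delta+2a\lambda)$ all go into $E'$ and are estimated by Proposition \ref{P7.2} (taking $p=4/3$ for the quadratic terms), Proposition \ref{P4.1}, and (\ref{cor2})--(\ref{cor3}). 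The initial conditions $r\|P\|_{L^\infty(S)},r\|P'\|_{L^\infty(S)}\to0$ as $t\to0$ follow from the elliptic gain in each $\D^{-1}$ applied to $\ckk R$ or $\sl\pi$, combined with the vanishing/boundedness at the vertex recorded in Lemma \ref{inii}.

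\textbf{Main obstacle.} The hardest piece will be the residual commutator $[\Dt,\sn\D^{-1}]\D^{-1}\ckk R$ that appears in the $C_1,C_2$ analysis: the commutation formula (\ref{symbol:comm1}) produces a term of the form $an\,\b\cdot\D^{-1}\ckk R$ with no elliptic smoothing, while $\b$ is controlled only in $L^2(\H)$ by the curvature flux. Closing its $\P^0$-bound forces use of the sharp product estimate (\ref{7.2}), which in turn needs the $\N_2(r^{1/2}\D^{-1}\ckk R)$ estimate obtained by iterating Lemma \ref{A.8} and using $\ckk R\in L^2(\H)$. Keeping track of which residual pieces must land in $\N_1(P)$ versus $\|E\|_{\P^0}$, so that every $r^{-1}$ factor is absorbed rather than merely giving an $L^2(\H)$ bound, is where the bookkeeping is tightest.
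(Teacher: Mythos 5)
Your overall framework (peel a $\Dt$ off, use the Bianchi identity (\ref{se1}), and dump the rest into $E$) captures only the first layer of the argument, and the place where you wave at the difficulty is exactly where the proposal breaks. The terms $an\b\cdot\D^{-1}\F$ and $\sn\D^{-1}(an\b\cdot\D^{-1}\F)$ produced by $C_3$ and by the residual commutator in $C_2$ \emph{cannot} be bounded in $\P^0$: estimate (\ref{7.2}) would require $\|an\b\|_{\P^0}$ or $\N_2(r^{1/2}an\b)$, and $\b$ carries only the $L^2(\H)$ flux with no derivative control, so improving your control of $\D^{-1}\ckk R$ to $\N_2$ does not help. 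The paper states this explicitly ("can not be bounded in $\P^0$ norm") and instead re-expresses $an\b=\Dt\D^{-1}\ckk R+E_0$ via (\ref{errp1}), which converts $an\b\cdot F$ into $\Dt(\D^{-1}\ckk R\cdot F)$ plus controllable terms — but for $\sn\D^{-1}(an\b\cdot F)$ this substitution leaves a remainder $\sn\D^{-1}(\Dt P_k)$ of the \emph{same form}, forcing the infinite iteration of Theorem \ref{it0.1}(iii): sequences $P_i,E_i$ with $(C(\Delta_0^2+\R_0))^i$ decay, whose limits $\bar P,\bar E$ are the actual $P,E$ of the proposition. This iteration, which is the heart of the proof and the reason the smallness of $\Delta_0$ matters, is entirely absent from your plan.

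A second, related error: in your treatment of $C_1$ the piece $\sn\D^{-2}\Dt\ckk R=\sn\D^{-1}(anR_0+\ff)$ is not an $E$-term when $R_0=\b$, since $\sn\D^{-1}(an\b)\sim\sn\chih$ by Codazzi and is only $L^2$, not $\P^0$. The paper avoids this by \emph{not} splitting $[\Dt,\D^{-1}]$ into $\Dt\D^{-1}-\D^{-1}\Dt$: it keeps the commutator intact as $C_0(\ckk R)=\D^{-1}([\Dt,\D]\D^{-1}\ckk R)$, so that $C_1=\sn\D^{-1}C_0(\ckk R)$ enjoys a double $\D^{-1}$ smoothing and the $\b$-term becomes $\sn\D^{-2}(an\b\cdot\D^{-1}\ckk R)$, which Proposition \ref{P7.2} with $p=4/3$ does bound; no $\Dt P$ piece is needed for $C_1$ at all. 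Your expansion destroys this structure and manufactures two individually uncontrollable pieces. Finally, for the $(a\delta+2a\lambda)$ case your identification of $P'$ with $\sn\D_1^{-2}(a\delta+2a\lambda)$ is off: the operator being decomposed is $[\Dt,\sn\D_1^{-1}](a\delta+2a\lambda)$, whose good part is handled by Lemma \ref{A.91} and whose bad part is again $an\b\cdot\D_1^{-1}(a\delta+2a\lambda)$ and its $\sn\D^{-1}$, requiring the same iteration.
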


\subsection{Proof of Proposition \ref{P7.1}: Part I}\label{part1}
In order to prove Proposition \ref{de:er}, let us consider the
structure of commutators.
 We first use
(\ref{symbol:comm1}) to write
\begin{align}
\left(C_2(\check R), \sn[\Dt,
\D_1^{-1}](a\delta+2a\lambda)\right)&=\sn[\Dt , {\D}^{-1}]_g \F+\sn
{\D}^{-1}(an\b\c\D^{-1}\F), \label{C2}\\
\left(C_3(\check R), [\Dt,
\sn]\D_1^{-1}(a\delta+2a\lambda)\right)&=[\Dt, \sn]_g
{\D}^{-1}\F+an\b\c {\D}^{-1}\F, \label{C3}
\end{align}
where
$$
[\Dt, {\D}^{-1}]_g \F:={\D}^{-1}\big(an(A+r^{-1})\c\sn
{\D}^{-1}\F+an(A+r^{-1})\c A\c {\D}^{-1}\F\big).
$$
The terms $\sn[\Dt , {\D}^{-1}]_g \F$ and $ [\Dt, \sn]_g
{\D}^{-1}\F$ are the ``good'' parts  in the corresponding
commutators and will be proved to be $\P^0$ bounded (see
(\ref{7.10})-(\ref{7.1.9})). The terms
$\sn{\D}^{-1}(an\b\c{\D}^{-1}\F)$ and $an\b\c {\D}^{-1}\F$ in
(\ref{C2}) and (\ref{C3}) can not be bounded in ${\P}^0$ norm and
 will be further
 decomposed in Section \ref{C23estimates}.

Let us first establish (\ref{7.21}), (\ref{7.10})-(\ref{7.1.9}) in
two steps in the following result

\begin{proposition}\label{P7.1}
For the error terms $C_0(\check R)$, $C_1(\check R)$, $C_2(\check
R)$, $C_3(\check R)$, etc, there hold
\begin{align}
&\|C_0(\check R)\|_{{\mathcal P}^0}\lesssim
\Delta_0^2+{\R}_0,\label{7.21}\\
&\|C_1(\check R)\|_{{\mathcal P}^0}\lesssim \Delta_0^2+{\mathcal
R}_0,\label{str:C1}\\
&C_2(\check R)=\sn{\D}^{-1}(an\beta\cdot {\mathcal
D}^{-2} \check R)+err,\label{7.10}\\
& C_3(\check R)=an\beta\cdot{\mathcal D}^{-2}\check
R+err\label{7.11} \\
&\sn[\Dt, \D_1^{-1}](a\delta+2a\lambda)=\sn
\D^{-1}(an\b\c\D^{-1}(a\delta+2a\lambda))+err\label{7.1.8} \\
&[\Dt, \sn]\D_1^{-1}(a\delta+2a\lambda)=an\b\c
\D^{-1}(a\delta+2a\lambda) +err\label{7.1.9}
\end{align}
with $$ \|err\|_{{\mathcal P}^0}\lesssim \Delta_0^2+{\mathcal
R}_0.$$
\end{proposition}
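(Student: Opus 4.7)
The plan is to decompose each commutator into a \emph{good part}, free of the curvature component $\b$, plus a \emph{bad part} coming from the $\b$-contribution in (\ref{symbol:comm1}). The good parts will be bounded in $\P^0$ using the $\P^0$-elliptic theory of Section \ref{ellp} together with the product estimates of Lemma \ref{l7.1} and (\ref{cor2})--(\ref{cor3}), while for $C_0$ and $C_1$ the bad parts can in addition be bounded in $\P^0$ via Proposition \ref{P7.2} with the sub-critical exponent $p=\tfrac{4}{3}$.

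First I would use the standard commutator identity $[\Dt,\D^{-1}]=-\D^{-1}[\Dt,\D]\D^{-1}$ together with the symbolic form (\ref{symbol:comm1}) of $[\Dt,\sn]$ to split every commutator into a good part of the form $[\Dt,\D^{-1}]_g$ or $[\Dt,\sn]_g$ applied to the appropriate argument, plus a bad part of the form $\D^{-1}(an\b\c\F)$ or $an\b\c\F$ with $\F$ one of $\D^{-1}\check R$, $\D^{-2}\check R$, $\D_1^{-1}(a\delta+2a\lambda)$. This directly produces the identities (\ref{7.10}), (\ref{7.11}), (\ref{7.1.8}), (\ref{7.1.9}), in which the $err$ terms are precisely the good parts acting on $\F$; for $C_0$ and $C_1$ there is no such separation, and one must also absorb the bad $\b$-piece into the $\P^0$-bound.

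To control the good parts in $\P^0$ I would apply Proposition \ref{P7.2} to reduce $\|\D^{-1}(\cdot)\|_{\P^0}$ and $\|\sn\D^{-1}(\cdot)\|_{\P^0}$ to an $L_t^2 L_x^p$ norm of the schematic product $an(A+r^{-1})\sn\F+an(A+r^{-1})\c A\c\F$. Each $\F$ obeys $\N_1(\F),\N_2(\F)\les\gc$ by Proposition \ref{P4.1} and Lemma \ref{A.8}, using the hypotheses (\ref{cond1}) on $\check R$ and $\N_1(\sl{\pi})$. The product estimates (\ref{cor2}), (\ref{cor3}) and (\ref{7.1})--(\ref{7.3}) then convert these into $\P^0$ bounds of size $\Delta_0^2+\R_0$. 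This proves (\ref{7.21}) in full and provides the $\P^0$-control of the $err$ pieces in (\ref{7.10})--(\ref{7.1.9}); the bound (\ref{str:C1}) for $C_1=\sn\D^{-1}C_0$ then follows after one further application of the elliptic estimates of Proposition \ref{P4.1}.

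The main obstacle is the $\b$-piece in $C_0$ and $C_1$: the curvature flux only gives $\|\b\|_{L^2(\H)}\les\R_0$, which cannot by itself be placed in $\P^0$. For $C_0$ the bad part is $\D^{-1}(an\b\c\D^{-1}\check R)$, and the extra outer $\D^{-1}$ lets me invoke Proposition \ref{P7.2} with $p=\tfrac{4}{3}$ to reduce the $\P^0$-norm to $\|r^{1/2}\b\c\D^{-1}\check R\|_{L_t^2 L_x^{4/3}}$; H\"older in $(t,x)$, together with Proposition \ref{P4.1} and the Sobolev inequality (\ref{sob.01}) controlling $\|\D^{-1}\check R\|_{L^4(S)}$, then bounds this by $\R_0^2$ up to lower-order terms. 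The same device, preceded by $\sn\D^{-1}$, handles $C_1$. For $C_2$ and $C_3$ this trick is unavailable: the corresponding bad parts $\sn\D^{-1}(an\b\c\D^{-2}\check R)$ and $an\b\c\D^{-2}\check R$ carry no spare outer $\D^{-1}$ to exploit, and Proposition \ref{P7.2} offers no useful gain; this is precisely why (\ref{7.10}) and (\ref{7.11}) only isolate these pieces as explicit $err$ terms, to be treated by the further $\Dt P+E$ decomposition in the remainder of Section \ref{err.e}.
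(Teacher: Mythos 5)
Your overall architecture matches the paper's: split each commutator via (\ref{symbol:comm1}) into a good part ($[\Dt,\D^{-1}]_g$, $[\Dt,\sn]_g$) plus the $\b$-piece, bound the good parts in $\P^0$ through product estimates and $\N_1(\F)\les\gc$, treat the $\b$-piece of $C_0$ by Proposition \ref{P7.2} with $p=\tfrac43$, and leave the $\b$-pieces of $C_2,C_3$ as the explicit main terms of (\ref{7.10})--(\ref{7.1.9}). That is exactly Steps 1 and 2 of the paper's argument, including the observation that the smallness of $\Delta_0$ is needed to absorb $\N_1(\D^{-1}\check R)$ back into the $C_0$ bound.

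The genuine gap is your treatment of (\ref{str:C1}). You assert that $C_1=\sn\D^{-1}C_0(\check R)$ "follows after one further application of the elliptic estimates of Proposition \ref{P4.1}," and more generally that Proposition \ref{P7.2} reduces $\|\sn\D^{-1}(\cdot)\|_{\P^0}$ to an $L_t^2L_x^p$ norm. Neither is true: Proposition \ref{P4.1} gives only fixed-time $L^2(S)$ bounds, and Proposition \ref{P7.2} covers $\|\D^{-1}F\|_{\P^\theta}$ but not $\|\sn\D^{-1}F\|_{\P^\theta}$. The only tool available for $\sn\D^{-1}$ in Besov norms is the Hodge-elliptic estimate (\ref{7.15}), which carries the correction term $\Delta_0\|\D^{-1}F\|_{L_t^b L_x^2}^{q}\|F\|_{L^2}^{1-q}$. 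Applying it with $F=C_0(\check R)$ forces you to control $\|r^{-1/b}\D^{-1}C_0(\check R)\|_{L_t^b L_x^2}$, and this is not a soft estimate: the paper needs the dyadic trace inequalities of Lemma \ref{sl2}, the trilinear Littlewood--Paley estimate of Proposition \ref{e2two}, the error-type bounds of Proposition \ref{errortype}, and Corollary \ref{betaes} (in particular (\ref{come1})) to arrive at (\ref{main:er}). Your proposal contains no substitute for this machinery, so (\ref{str:C1}) is not actually proved. The same issue recurs, in milder form, in your bound on the good parts: $\|\sn[\Dt,\D^{-1}]_g\F\|_{\P^0}$ also requires (\ref{7.15}) together with the auxiliary estimate $\|[\Dt,\D^{-1}]_gF\|_{L_t^b L_x^2}\les\N_1(F)$ (the paper's (\ref{it0.24})), not Proposition \ref{P7.2}.

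A smaller gloss: your claim $\N_1(\D^{-1}\check R)\les\gc$ "by Proposition \ref{P4.1} and Lemma \ref{A.8}" hides a circularity, since $\sn_L\D^{-1}\check R$ is controlled through $\D^{-1}\Dt\check R$ and $C_0(\check R)$ itself; one must run the Bianchi equations (\ref{7.8})--(\ref{7.7}), the $\ff$ estimate (\ref{errp}), and the $C_0$ bound simultaneously and close by smallness of $\Delta_0$, as the paper does after (\ref{7.201}). The structure of your argument accommodates this, but it should be made explicit.
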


We will rely on (\ref{smry1}), (\ref{sob.m}), (\ref{cond1}),
Proposition \ref{n1tr} and Remark \ref{n1kp}, i.e.
\begin{equation}\label{As}
\|\Ab\|_{L_x^4 L_t^\infty}+\|\Ab\|_{L^6}+
\N_1(\Ab)+\|r^{\frac{1}{2}}\sn \tr\chi\|_{L_x^2 L_t^\infty}+\|\sn
\tr\chi\|_{L^2}+\|R_0\|_{L^2}\les \gc,
\end{equation}
where $\iota, \kp$  are regarded as elements of $A$.

{\bf Step 1.} We first prove (\ref{7.21}). In view of
(\ref{symbol:comm1}),
\begin{equation}\label{c0r}
C_0({\check R})={\mathcal D}^{-1}(an\{(A+r^{-1})(\sn{\mathcal
D}^{-1} {\check R})+ (A+r^{-1})\cdot A\cdot {\mathcal D}^{-1}
{\check R}+\beta\cdot {\mathcal D}^{-1}{\check R}\}).
\end{equation}
Using (\ref{7.14}) with $\theta=0$ and $p=\frac{4}{3}$, Proposition
\ref{P4.1}, also  with the help of (\ref{As}) and  H\"older
inequality, we can estimate the various terms in (\ref{c0r}) to get
\begin{equation}\label{7.201}
\|C_0(\check R)\|_{{\P}^0}\les \Delta_0^2+\R_0+
\Delta_0\c{\N}_1({\D}^{-1}\check R).
\end{equation}
By the definition of ${\N}_1({\D}^{-1}\check R)$ and Proposition
\ref{P4.1} it follows that
\begin{eqnarray*}
{\mathcal N}_1({\mathcal D}^{-1} \check R) \lesssim {\mathcal
R}_0+\Delta_0^2+\|{\mathcal D}^{-1} \Dt {\check R}\|_{L_t^2 L_x^2}
+\|C_0(\check R)\|_{L_t^2 L_x^2}.
\end{eqnarray*}
While it follows from (\ref{se1}), (\ref{errp}) and (\ref{cond1})
that
\begin{align*}
\|{\mathcal D}^{-1} \Dt {\check R}\|_{L^2} \lesssim
\|\ff\|_{\P^0}+\|an R_0\|_{L^2}\les \Delta_0^2+{\R}_0.
\end{align*}
Combining the above three inequalities and using the smallness of
$\Delta_0$ we obtain (\ref{7.21}).

In the above proof, together with Lemma \ref{A.8}, (\ref{smry1}) and
Remark \ref{abb} we have also verified the following

\begin{proposition}\label{add1}
\begin{align}
\|{\mathcal D}^{-1}\Dt\check
R\|_{L^2}&\lesssim{\R}_0+\Delta_0^2 ,\label{7.22}\\
\|[\Dt, {\mathcal D}^{-1}]\check
R\|_{L^2}&\lesssim{\R}_0+\Delta_0^2,\label{7.23}\\
{\mathcal N}_1(\F) &\lesssim{\mathcal
R}_0+\Delta_0^2,\label{7.24}\\
{\N}_1(\sn{\mathcal D}^{-1}\F) \lesssim {\mathcal R}_0+\Delta_0^2,
\,\,\,\,\, & \,\,\,\,\, {\mathcal N}_2({\mathcal
D}^{-1}\F)\lesssim{\mathcal R}_0+\Delta_0^2,\label{Coro1}
\end{align}
where  $\F$ denotes either $\D^{-1}\ckk R$ or $(a\delta+2a\lambda).$
\end{proposition}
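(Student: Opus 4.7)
The plan is to deduce Proposition~\ref{add1} as a bookkeeping corollary of the work already carried out in Step~1 of the proof of Proposition~\ref{P7.1}, together with Lemma~\ref{A.8}. First I would obtain (\ref{7.22}) directly from the symbolic identity (\ref{se1}), $\mathcal{D}^{-1}\Dt\ckk R = an R_0 + \ff$: the term $anR_0$ is bounded in $L^2(\H)$ by $\mathcal{R}_0$ via (\ref{cp}) and (\ref{cond1}), while $\|\ff\|_{L^2}\le\|\ff\|_{\mathcal{P}^0}\lesssim\Delta_0^2+\mathcal{R}_0$ is exactly (\ref{errp}). The estimate (\ref{7.23}) is then immediate: by definition $C_0(\ckk R)=[\Dt,\mathcal{D}^{-1}]\ckk R$, and (\ref{7.21}), which was just proved, already controls this in the stronger $\mathcal{P}^0$ norm.

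For (\ref{7.24}) I would split on the meaning of $\F$. When $\F=(a\delta+2a\lambda)$, Remark~\ref{abb} allows me to treat the quantities $a\delta$ and $a\lambda$ as elements of $\sl{\pi}$, so $\mathcal{N}_1(\F)\lesssim\mathcal{N}_1(\sl{\pi})\lesssim\mathcal{R}_0$ by (\ref{cond1}). When $\F=\mathcal{D}^{-1}\ckk R$, the two spatial summands of $\mathcal{N}_1(\F)$ are handled fibre-wise via Proposition~\ref{P4.1}(i)--(iii), which slice-by-slice gives $\|\sn\mathcal{D}^{-1}\ckk R\|_{L^2(S)}+\|r^{-1}\mathcal{D}^{-1}\ckk R\|_{L^2(S)}\lesssim\|\ckk R\|_{L^2(S)}$; integrating in $t$ reduces matters to $\|\ckk R\|_{L^2(\H)}\lesssim\Delta_0^2+\mathcal{R}_0$, which is a case of (\ref{simp1}) since each component of $\ckk R$ belongs to the class $\bar R$. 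The $\sn_L$ summand is then reduced to (\ref{7.22})--(\ref{7.23}) through the identity $\sn_L\mathcal{D}^{-1}\ckk R=(an)^{-1}\bigl(\mathcal{D}^{-1}\Dt\ckk R+[\Dt,\mathcal{D}^{-1}]\ckk R\bigr)$ together with the uniform bound on $(an)^{-1}$ coming from (\ref{cp}).

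Finally, (\ref{Coro1}) is an immediate application of Lemma~\ref{A.8} to $F=\F$, which delivers $\mathcal{N}_2(\mathcal{D}^{-1}\F)+\mathcal{N}_1(\sn\mathcal{D}^{-1}\F)\lesssim\mathcal{N}_1(\F)$, and the right-hand side has just been bounded in (\ref{7.24}). I do not anticipate a serious obstacle in this proof; the only point needing care is the bookkeeping of the factor $an$ when passing between $\Dt$ and $\sn_L$, which is harmless thanks to (\ref{cp}) and Remark~\ref{abb}.
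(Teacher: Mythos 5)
Your proposal is correct and follows essentially the same route as the paper: the paper derives Proposition~\ref{add1} as a byproduct of Step~1 of the proof of Proposition~\ref{P7.1}, using (\ref{se1}) with (\ref{errp}) for (\ref{7.22}), the bound on $C_0(\ckk R)$ for (\ref{7.23}), Proposition~\ref{P4.1} plus the decomposition $\Dt\D^{-1}\ckk R=\D^{-1}\Dt\ckk R+C_0(\ckk R)$ together with (\ref{smry1}) and Remark~\ref{abb} for (\ref{7.24}), and Lemma~\ref{A.8} for (\ref{Coro1}). The only cosmetic difference is that the paper obtains $\N_1(\D^{-1}\ckk R)$ inside the same absorption argument that yields (\ref{7.21}), whereas you extract it afterwards from (\ref{7.21})--(\ref{7.23}); the two organizations are logically equivalent.
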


{\bf Step 2.} We will prove (\ref{7.10})-(\ref{7.1.9}). Let us first
establish the following
\begin{lemma}
Denote by $\D^{-1}$  one of the operators among $\D_1^{-1}$,
$\D_2^{-1}$ and ${}^\star \D_1^{-1}$. For any $S_t$ tangent tensor
$H$, $b>4$
\begin{equation}\label{A92}
\|r^{-1-\frac{1}{b}}\D^{-1}(an\sn\D^{-1} H)\|_{L_t^b L_x^2}\les
\N_1(\D^{-1} H)\|\zeta+\zb\|_{L_t^\infty
L_x^4}+\N_1(r^{-\frac{1}{2}} \D^{-1}H).
\end{equation}
\end{lemma}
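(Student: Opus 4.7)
The plan is to split $an\sn\D^{-1}H$ via the product rule, exploiting the identity $\sn(an) = an(\zeta+\zb)$, which follows from (\ref{c4}). This yields
\begin{equation*}
an\sn\D^{-1}H = \sn(an\D^{-1}H) - an(\zeta+\zb)\D^{-1}H,
\end{equation*}
and therefore decomposes $\D^{-1}(an\sn\D^{-1}H)$ into a divergence-structure piece $\D^{-1}\sn(an\D^{-1}H)$ and a remainder $\D^{-1}(an(\zeta+\zb)\D^{-1}H)$, each to be estimated separately.

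For the divergence-structure piece, I would invoke the duality estimate $\|\D^{-1}\div F\|_{L^2(S)}\les \|F\|_{L^2(S)}$ from Proposition \ref{P4.1} (applied via the adjointness of $\sn$ and $-\div$, which controls $\|\D^{-1}\sn(\cdot)\|_{L^2(S)}$ in the same fashion), combined with $an\les 1$ from (\ref{cp}). This gives $\|\D^{-1}\sn(an\D^{-1}H)\|_{L^2(S)}\les \|\D^{-1}H\|_{L^2(S)}$ on each leaf. Multiplying by $r^{-1-1/b}$, writing $r^{-1-1/b} = r^{-1/2-1/b}\cdot r^{-1/2}$, and applying (\ref{sob.in}) with $q=b\ge 4$ to the tensor $r^{-1/2}\D^{-1}H$ produces the first summand $\N_1(r^{-1/2}\D^{-1}H)$ of the right-hand side.

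For the remainder, I would apply Proposition \ref{P7.3} with $p=4/3$, for which $2-2/p = 1/2$:
\begin{equation*}
\|\D^{-1}(an(\zeta+\zb)\D^{-1}H)\|_{L^2(S)}\les \|r^{1/2}\,an(\zeta+\zb)\D^{-1}H\|_{L^{4/3}(S)}.
\end{equation*}
H\"older's inequality with $\tfrac{1}{4}+\tfrac{1}{2}=\tfrac{3}{4}$ together with $an\les 1$ bounds this by $r^{1/2}\|\zeta+\zb\|_{L^4(S)}\|\D^{-1}H\|_{L^2(S)}$. Multiplying by $r^{-1-1/b}$, pulling $\|\zeta+\zb\|_{L^4(S)}$ out in $L_t^\infty$, and invoking (\ref{sob.in}) applied to $\D^{-1}H$ with $q=b$ yields $\|\zeta+\zb\|_{L_t^\infty L_x^4}\N_1(\D^{-1}H)$, which is the second summand. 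Summing the two bounds completes the proof.

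The main subtlety I anticipate is the first step: confirming that the duality estimate for $\D^{-1}\div$ in Proposition \ref{P4.1} transfers cleanly to $\D^{-1}\sn$ across the various tensorial types of $H$ for which this lemma will be used (scalar pairs, 1-forms, symmetric traceless 2-tensors arising from $\F$). Once this adjointness is verified, the remainder is routine H\"older/Sobolev interpolation, with no delicate cancellation required.
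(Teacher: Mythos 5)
Your proof is correct and follows essentially the same route as the paper's: the same Leibniz split $an\sn\D^{-1}H=\sn(an\D^{-1}H)-\sn(an)\D^{-1}H$ (with $\sn(an)=an(\zeta+\zb)$), the duality bound from Proposition \ref{P4.1} for the divergence-structure piece, and Proposition \ref{P7.3} with $p=4/3$ plus H\"older and (\ref{sob.in}) for the remainder. The adjointness point you flag is exactly the implicit step the paper also takes via the duality statement at the end of Proposition \ref{P4.1}, so no further work is needed.
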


\begin{proof}
Using Propositions \ref{P4.1} and \ref{P7.3},
\begin{align*}
\|r^{-1-\frac{1}{b}}& \D^{-1}(an\sn\D^{-1} H)\|_{L_t^b L_x^2}\\
&\les \|r^{-1-\frac{1}{b}} \left(|\D^{-1}(\sn(an){\D}^{-1}
H)|+|{\D}^{-1} \sn (an \D^{-1} H)|\right)\|_{L_t^b
L_x^2}\\
&\les\|r^{-\frac{1}{2}-\frac{1}{b}}\sn(an)\D^{-1}H\|_{L_t^b
L_x^{4/3}}+\|r^{-1-\frac{1}{b}}{\D}^{-1} H\|_{L_t^b L_x^2}.
\end{align*}
Since by using (\ref{sob.in}), we obtain
\begin{align*}
\|r^{-\frac{1}{2}-\frac{1}{b}}\sn (an)\D^{-1} H\|_{L_t^b
L_x^{4/3}}&\les \|r^{-\frac{1}{2}-\frac{1}{b}} \D^{-1} H\|_{L_t^b
L_x^2}\|\sn(an)\|_{L_t^\infty L_x^4}\\&\les
\N_1(\D^{-1}H)\|\zeta+\zb\|_{L_t^\infty L_x^4},
\end{align*}
and $ \|r^{-1-\frac{1}{b}}{\D}^{-1} H\|_{L_t^b L_x^2}\les
\N_1(r^{-1/2} \D^{-1}H)$. Then (\ref{A92}) follows.
\end{proof}

The proof of (\ref{7.10})-(\ref{7.1.9}) can be completed by using
(\ref{7.24}) combined with the following result.

\begin{lemma}\label{A.91}
Denote by $\D^{-1}$ either  $\D_1^{-1}$ or $\D_2^{-1}$. For
appropriate $S$-tangent tensor field $F$, there hold
\begin{align}
&\|[\Dt, \sn]_g{\D}^{-1}F\|_{{\mathcal P}^0}+\|\sn [{\D}^{-1},
\Dt]_g F\|_{{\mathcal P}^0}\les{\N}_1(F)\label{it0.211},\\
&\|[\Dt, \sn{\D}^{-1}]_g F\|_{{\mathcal
P}^0}\les{\N}_1(F).\label{A.9}
\end{align}
\end{lemma}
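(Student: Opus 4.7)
The plan is to unwind the definitions of the good parts so that every term becomes a concrete product, and then to dispatch each product by the bounds \eqref{cor2}--\eqref{cor3} coupled with the Hodge-elliptic control of Lemma~\ref{A.8}. Applying \eqref{com2} with argument $\D^{-1}F$, and using the definition of $[\Dt,\D^{-1}]_g$, one obtains the symbolic identities
\begin{align*}
[\Dt,\sn]_g\D^{-1}F & = an(A+r^{-1})\c\sn\D^{-1}F + an(A+r^{-1})\c A\c\D^{-1}F,\\
\sn[\D^{-1},\Dt]_g F & = -\sn\D^{-1}\bigl(an(A+r^{-1})\c\sn\D^{-1}F + an(A+r^{-1})\c A\c\D^{-1}F\bigr),
\end{align*}
so that both estimates in \eqref{it0.211} reduce to bounds on the same family of products.

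First I would handle $[\Dt,\sn]_g\D^{-1}F$. The pieces carrying the $r^{-1}$ factor are controlled by the second inequality in \eqref{cor3} together with Lemma~\ref{A.8}: the basic example is $\|an\,r^{-1}\sn\D^{-1}F\|_{\P^0}\les\N_1(\sn\D^{-1}F)\les\N_1(F)$, and the quadratic-in-$A$ version $\|an\,r^{-1}A\c\D^{-1}F\|_{\P^0}$ is dealt with by first applying the first inequality of \eqref{cor3} to absorb one $A$-factor into a small constant $\gc$, using $\N_2(\D^{-1}F)\les\N_1(F)$ from Lemma~\ref{A.8} together with \eqref{sob.m2} to control $A\c\D^{-1}F$. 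The remaining pieces carrying an extra $A$-factor fall to the first inequality of \eqref{cor3}, which produces a factor $\gc$ times $\N_1(r^{1/2}\sn\D^{-1}F)$ or $\N_1(r^{1/2}A\c\D^{-1}F)$; these are in turn bounded by $\N_1(F)$ after a routine Leibniz step, using $\sn r=0$, $|L(r)|\les 1$, and $r\le 1$ to absorb the $r^{1/2}$ weight through the Hodge estimates already cited.

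Next, for the second term in \eqref{it0.211} I would apply Theorem~\ref{in} with $\sigma=0$ to $\sn\D^{-1}G$, where $G$ stands for the bracketed expression above. This bounds the $\P^0$ norm of $\sn[\D^{-1},\Dt]_g F$ by $\|G\|_{\P^0}$, which was already controlled in the previous paragraph, plus an interpolation remainder of the form $\Delta_0\|\D^{-1}G\|_{L_t^b L_x^2}^{q}\|G\|_{L^2}^{1-q}$. The remainder I would absorb using Propositions~\ref{P4.1} and \ref{P7.3} to control $\|\D^{-1}G\|_{L_t^b L_x^2}$ with an appropriate choice of $p$, H\"older's inequality and \eqref{sob.m} to control $\|G\|_{L^2}$, and the smallness of $\gc$. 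Finally, \eqref{A.9} is obtained at once from the operator identity $[\Dt,\sn\D^{-1}]_g = [\Dt,\sn]_g\D^{-1} + \sn[\Dt,\D^{-1}]_g$, which is the Leibniz rule for commutators, valid on ``good parts'' because the $\b$-contributions on both sides cancel by the definition of the good parts.

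The main obstacle is the interpolation remainder from Theorem~\ref{in}: one must verify that every auxiliary norm ($L^2$, $L_t^b L_x^2$, $\P^0$) appearing there is genuinely bounded by $\N_1(F)$ up to a factor of $\gc$, so that the remainder is absorbable rather than just finite. Tracking the $r$-weights through the H\"older and Sobolev reductions (taking advantage of $r\le 1$ on $\H$) is the only delicate bookkeeping; the rest is a mechanical application of the product and Hodge bounds already at our disposal.
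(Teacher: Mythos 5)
Your proposal is correct and follows essentially the same route as the paper: reduce (\ref{A.9}) to (\ref{it0.211}) by the commutator Leibniz identity, bound $[\Dt,\sn]_g\D^{-1}F$ in $\P^0$ via the product estimates of Lemma \ref{l7.1} (your (\ref{cor2})--(\ref{cor3})) together with Lemma \ref{A.8}, and then control $\sn[\D^{-1},\Dt]_gF$ by Theorem \ref{in} applied to $\sn\D^{-1}G$ with the interpolation remainder absorbed through an $L_t^bL_x^2$ bound on $[\Dt,\D^{-1}]_gF$. The only cosmetic difference is that the paper handles the term $r^{-1}\D^{-1}(an\sn\D^{-1}F)$ inside that $L_t^bL_x^2$ bound via the dedicated inequality (\ref{A92}), whereas your direct use of Proposition \ref{P4.1} and (\ref{sob.in}) accomplishes the same thing.
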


\begin{proof}
Observe that
$$
\|[\Dt, \sn{\D}^{-1}]_g F\|_{{\mathcal P}^0}\les \|[\Dt, \sn]_g
{\D}^{-1}F\|_{{\mathcal P}^0}+\|\sn[{\D}^{-1}, \Dt]_g F\|_{{\mathcal
P}^0},
$$
it suffices to prove (\ref{it0.211}) only.

We first derive in view of (\ref{symbol:comm1}) by using Lemma
\ref{l7.1} with $4<b<\infty$,
\begin{align}
\|[\Dt, \sn]_g{\D}^{-1}F\|_{{\P}^0} &\les {\N}_1(\sn{
\D}^{-1}F)\left(\|r^{\frac{1}{2}}\sn(an A)\|_{L_t^2
L_x^2}+\|r^{-\frac{1}{b}}anA\|_{L_t^b L_x^2}\right)\nn\\
&\quad\, +{\N}_2({\D}^{-1}F)\big(\|anA\c
A\|_{{\P}^0}+\|r^{-\frac{1}{2}}anA\|_{{\P}^0}\big) \label{it0.213.5}\\
&\quad\, +\|r^{-1}an\sn{\D}^{-1} F\|_{{\P}^0}\label{it0.213}.
\end{align}
By (\ref{cor3}) and (\ref{As}), we obtain
\begin{equation*}
\|r^{-1} anA\|_{\P^0}\les \gc, \quad\|an A\c A\|_{\P^0}\les \Delta_0
\N_1(A)\les \gc.
\end{equation*}
Then  the term in (\ref{it0.213.5}) can be bounded by $(\gc)
\N_2(\D^{-1}F)$.

We then consider the term in (\ref{it0.213}). In view of (\ref{A93})
and Theorem \ref{in},
\begin{align*}
\|r^{-1} an\sn \D^{-1} F\|_{\P^0} &\les\|r^{-1}
F\|_{\P^0}+\Delta_0\|r^{-1}{\D}^{-1} F\|_{L_t^b
L_x^2}^q\|r^{-1}F\|_{L^2(\H)}^{1-q}.
\end{align*}
Since by Proposition \ref{P7.3}  and (\ref{sob.m}), we obtain,
\begin{equation*}
\|r^{-1} \D^{-1} F\|_{L_t^b L_x^2}\les \|F\|_{L_t^b L_x^2}\les
\N_1(F),
\end{equation*}
also by using (\ref{l7.2}), we deduce
\begin{align*}
\|r^{-1} an\sn \D^{-1} F\|_{\P^0}&\les \N_1(F)+\Delta_0 \N_1(F).
\end{align*}
By (\ref{As}), it is easy to check
\begin{equation*}
\|r^{\frac{1}{2}}\sn(an A)\|_{L_t^2
L_x^2}+\|r^{-\frac{1}{b}}anA\|_{L_t^b L_x^2}\les \gc.
\end{equation*}
Consequently, we conclude that
\begin{equation}\label{it0.210}
\|[\Dt,
\sn]_g{\D}^{-1}F\|_{{\P}^0}\les(\gc)({\N}_1(\sn{\D}^{-1}F)+{\N}_2({\D}^{-1}
F))+\N_1(F).
\end{equation}
We then infer from (\ref{it0.210}) and Lemma \ref{A.8} that
\begin{equation}\label{it0.210+}
\|[\Dt, \sn]_g{\D}^{-1}F\|_{{\mathcal P}^0}\les {\N}_1(F).
\end{equation}

Next, we prove for $S$-tangent tensor fields $F$ on $\H$ the
following inequality holds\begin{footnote} {We will improve the
right hand side of (\ref{it0.24}) to be ${\N}_1({\D}^{-1}F)$ in the
next section.}
\end{footnote}
\begin{equation}\label{it0.24}
\|[\Dt, {\D}^{-1}]_g F\|_{L_t^b L_x^2}\les {\mathcal N}_1( F) \quad
\mbox{with }\, 4<b<\infty.
\end{equation}
Indeed, by using Proposition \ref{P7.3} with $p=4/3$, Proposition
\ref{P4.1}, (\ref{sob.m}), (\ref{As}), (\ref{A92}) and Lemma
\ref{A.8} we then derive that
\begin{align*}
\|[\Dt,{\D}^{-1}]_g F \|_{L_t^b L_x^2}&\les
\|r^{1/2} A\c \sn{\D}^{-1}F\|_{L_t^b L_x^{4/3}}+\|r^{1/2}A\c A\c {\D}^{-1}F\|_{L_t^b L_x^{4/3}}\\
&\quad +\|r^{-1}{\D}^{-1}(an\sn{\D}^{-1}F)\|_{L_t^b
L_x^2}+\|r^{-1/2}A\c
{\D}^{-1}F\|_{L_t^b L_x^{4/3}}\\
&\les \|A\|_{L_t^b L_x^2}\|\sn{\D}^{-1}F\|_{L_t^\infty L_x^4}\\
&\quad \, +\|{\D}^{-1}F\|_{L_t^\infty L_x^4} \times(\|A\c A\|_{L_t^b
L_x^2}+\|\|r^{-\f12}A\|_{L_t^b L_x^2}) \\
&\quad\, +\N_1(r^{-1/2}\D^{-1}F)+\Delta_0 \N_1(\D^{-1}F)\\
&\les{\N}_1(\sn{\D}^{-1} F)+{\N}_2({\D}^{-1}F)\les {\N}_1(F).
\end{align*}
The combination of (\ref{it0.24}), (\ref{it0.210+}) and (\ref{7.15})
gives
\begin{align*}
\|\sn[\D^{-1},\Dt]_g F\|_{\P^0}&\les\|[\Dt, \sn]_g\D^{-1}
F\|_{\P^0}+\Delta_0\|[\Dt,\D^{-1}]_g F\|_{L_t^b L_x^2}^q\c
\|[\Dt,\sn]_g\D^{-1}F\|_{L^2}^{1-q}\\&\les  \N_1(F).
\end{align*}
where in the above inequalities, $b>4$ and $\a_0<q<1$.
\end{proof}

\subsection{Proof of Proposition \ref{P7.1}: Part II}
\label{C1estimate}
We record the following result, which, for completeness, will be proved in Appendix by using Lemma \ref{sl1}.
\begin{lemma}\label{sl2}
For any smooth $S$-tangent tensor field $F$ and for exponent $2\le
q\le \infty$, we have the following inequality for $k>0$
\begin{align}
\|r^{-\frac{1}{2}-\frac{1}{q}}P_k F\|_{L_t^q L_x^2}&\les
2^{-\frac{1}{2}k-\frac{1}{q}k}\N_1(F),\label{TE}\\
\|r^{-\frac{1}{q}}  F_k\|_{L_t^q L_x^4}&\les 2^{-\frac{k}{q}}
\N_1(F)\label{TEE}
\end{align}
\end{lemma}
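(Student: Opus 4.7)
The plan is to reduce (\ref{TE}) to the two endpoint cases $q=2$ and $q=\infty$, interpolate in $t$, and deduce (\ref{TEE}) from (\ref{TE}) via a Bernstein bound. At $q=2$, the claim is $\|r^{-1}P_k F\|_{L^2(\H)}\les 2^{-k}\N_1(F)$, which follows immediately by applying the finite-band estimate (\ref{FB}) of Proposition \ref{P2} fibrewise on each $S_t$ and integrating in $t$, together with $\|\sn F\|_{L^2(\H)}\le\N_1(F)$.

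The heart of the argument is the $q=\infty$ trace bound $\|r^{-1/2}P_k F\|_{L_t^\infty L_x^2}\les 2^{-k/2}\N_1(F)$. I would set $\varphi_k(t):=\|P_k F\|_{L_x^2(S_t)}^2$ and use the energy identity
\begin{equation*}
\dt\varphi_k=\int_{S_t}\bigl(2na\langle P_k F,\sn_L P_k F\rangle+na\tr\chi|P_k F|^2\bigr)d\mu_\ga,
\end{equation*}
together with (\ref{dr}), to derive
\begin{equation*}
\|r^{-1/2}P_k F\|_{L_t^\infty L_x^2}^2\les \|r^{-1}P_k F\|_{L^2}\bigl(\|\sn_L P_k F\|_{L^2}+\|r^{-1}P_k F\|_{L^2}\bigr),
\end{equation*}
where the initial trace at $t=0$ vanishes since $P_k F$ is smooth and $|S_t|\approx r^2\to 0$. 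Writing $\sn_L P_k F=P_k\sn_L F+[\sn_L,P_k]F$, the first summand is bounded by $\N_1(F)$ via the $L^p$-boundedness of Proposition \ref{P2}(i), while the commutator is controlled through the heat-flow representation of $P_k$ and Lemma \ref{sl1} (applied in the same spirit as in Proposition \ref{main:lem}), yielding $\|[\sn_L,P_k]F\|_{L^2(\H)}\les\N_1(F)$. Combined with the $q=2$ estimate, this closes the trace endpoint.

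Log-convexity of $L_t^q$ between $L_t^2$ and $L_t^\infty$ interpolates the two endpoints to produce (\ref{TE}) for $q\in(2,\infty)$. For (\ref{TEE}), applying the weak Bernstein bound (\ref{WB}) with $p=4$ to $F_k=P_k(P_k F)$ gives $\|F_k\|_{L_x^4(S)}\les r^{-1/2}2^{k/2}\|P_k F\|_{L_x^2(S)}$ pointwise in $t$, and taking $L_t^q$ together with (\ref{TE}) yields
\begin{equation*}
\|r^{-1/q}F_k\|_{L_t^q L_x^4}\les 2^{k/2}\|r^{-1/2-1/q}P_k F\|_{L_t^q L_x^2}\les 2^{-k/q}\N_1(F).
\end{equation*}
The genuinely delicate step is the commutator bound $\|[\sn_L,P_k]F\|_{L^2(\H)}\les\N_1(F)$ at the trace endpoint, which relies on the heat-kernel machinery of Lemma \ref{sl1}, the weakly spherical property (Proposition \ref{l8.1}), and the sharp Bernstein inequalities of Proposition \ref{P2}(v); the remainder of the proof is a standard cascade of finite band, Bernstein, and real interpolation.
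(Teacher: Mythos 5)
Your overall architecture (endpoints $q=2$ and $q=\infty$ plus interpolation in $t$, then Bernstein for the $L^4_x$ bound) is close to the paper's, and your $q=2$ step and your derivation of (\ref{TEE}) from (\ref{TE}) are both fine. But there is a genuine gap at the trace endpoint: your argument hinges on the bound $\|[\sn_L,P_k]F\|_{L^2(\H)}\les \N_1(F)$, and this estimate is not available. Lemma \ref{sl1} controls $[P_k,\Dt]F$ only in $L_t^q L_x^2$ for $q<2$ strictly (close to $2$, with a loss encoded in the exponent $2^{-\frac{k}{2}+}$) and in $L_t^1 L_x^2$; the $L_t^2$ endpoint is precisely where the commutator estimate degenerates, since the underlying control on the Ricci coefficients is $\|A\|_{L_\omega^\infty L_t^2}$, which is borderline for an $L_t^2$ output. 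So the step ``$\|\sn_L P_kF\|_{L^2}\les\N_1(F)$'' cannot be justified from the tools at hand, and without it your energy identity does not close at $q=\infty$.

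The paper circumvents exactly this obstruction by never putting the commutator alone in $L^2(\H)$. In (\ref{f9}) the commutator is kept paired with $P_kF$, and in (\ref{comm2}) the pairing is estimated by H\"older with dual exponents $q'<2$ and $q>2$: the factor $\|r^{\f12-\frac{1}{q}}[P_k,\Dt]F\|_{L_t^{q'}L_x^2}$ is controlled by Lemma \ref{sl1} (where $q'<2$ is admissible), while the factor $\|r^{-\f12-\frac{1}{q}}P_kF\|_{L_t^qL_x^2}$ is the very quantity being estimated. Substituting into the interpolation inequality yields the self-improving relation (\ref{f6}), which is then closed using the crude a priori bound (\ref{FB}); this gives (\ref{TE}) for all $2<q<\infty$ directly, and the $q=\infty$ case is deduced \emph{afterwards} from (\ref{f9}), (\ref{comm2}) and the already-established (\ref{TE}). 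To repair your proof you would need to either adopt this duality-pairing bootstrap or supply a genuinely new $L_t^2L_x^2$ commutator estimate, which the hypotheses of the paper do not support.
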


 We will complete the proof of  Proposition \ref{P7.1}
by studying error type terms in Proposition
\ref{errortype}

Let us first establish the following result with the help of Lemma \ref{sl2}.

\begin{proposition}\label{e2two}
Let ${\D}^{-1}$ denote either ${\D}_1^{-1}$ or
${}^\star{\D}_1^{-1}$. For any $S$-tangent tensor fields $F$ and $G$
on $\H$ there holds
\begin{equation*}
\|\t1a {\D}^{-1}(an F\c \sn G)\|_{L_t^b L_x^2}\les
{\N}_1(F){\N}_1(G),\,\quad  \mbox{with }\, 4<b<\infty.
\end{equation*}
\end{proposition}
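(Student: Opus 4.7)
The plan is to reduce, via the Hodge-elliptic bound of Proposition \ref{P7.3} with $p=4/3$, to a product estimate in $L_t^b L_x^{4/3}$, and then to use geometric Littlewood--Paley together with the frequency-localized bounds of Lemma \ref{sl2}. Pointwise in $t$ one gets
\[
\|\D^{-1}(anF\c\sn G)\|_{L_x^2(S_t)}\les\|r^{1/2}anF\c\sn G\|_{L_x^{4/3}(S_t)}\les\|F\|_{L_x^4}\|r^{1/2}\sn G\|_{L_x^2},
\]
using $\|an\|_{L^\infty}\les 1$ from (\ref{cp}) and H\"older in $x$.

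A direct H\"older split in $t$ between these two factors would demand an $L_t^\infty L_x^2$ bound on $r^{1/2}\sn G$, which is not controlled by $\N_1(G)$. To bypass this, I would integrate by parts in $x$ to move $\sn$ off $G$, decomposing symbolically
\[
an F\c\sn G \;=\; \div(anF\Ot G)\;-\;(\sn(an)\c F)\,G\;-\;an(\div F)\,G.
\]
For the divergence piece, the bound $\|\D^{-1}\div H\|_{L_x^2}\les \|H\|_{L_x^2}$ from the final part of Proposition \ref{P4.1} reduces the estimate to $\|r^{-1/b}F\|_{L_t^b L_x^4}\|G\|_{L_t^\infty L_x^4}\les\N_1(F)\N_1(G)$ by (\ref{sob.in}) and (\ref{sob.m}). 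The $\sn(an)$-piece is equally tame since $\sn(an)=\zeta+\zb$ sits in $L_t^\infty L_x^4$ with norm $\les \gc$ by (\ref{an}), which reduces it to a H\"older product of the same form.

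The remaining piece $\D^{-1}(an(\div F)G)$ is the main obstacle. Here I would insert a geometric Littlewood--Paley decomposition $G = U(\infty)G + \sum_{k>0}G_k$ and exploit (\ref{TEE}), namely $\|r^{-1/b}G_k\|_{L_t^b L_x^4}\les 2^{-k/b}\N_1(G)$, to sum over $k$. For each frequency piece I would either perform a second integration by parts in $x$ to push the derivative back onto $G_k$ at the cost of a lower-order commutator, or combine the $L_t^b L_x^4$ gain on $G_k$ with an $L_t^\infty L_x^2$ bound on $r^{1/2}\sn F$ obtained after a further LP decomposition of $F$ controlled by (\ref{TE}). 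The delicate technical point is the bookkeeping of the weights $r^{1/2-1/b}$ against the frequency gains $2^{-k/b}$ and $2^{-k/2-k/b}$ of Lemma \ref{sl2}: once these are balanced, the exponential factor overrides the finite-band loss (\ref{FBB}) on $\sn F$, the series in $k$ converges, and the total is again bounded by $\N_1(F)\N_1(G)$, completing the proof.
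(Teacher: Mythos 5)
Your opening reduction and your handling of the $\div(anF\Ot G)$ and $\sn(an)\c F\c G$ pieces are fine, but the argument does not close. The global integration by parts leaves you with $\D^{-1}(an\,\div F\cdot G)$, which is an instance of the very estimate being proved with the roles of $F$ and $G$ exchanged (both sides are symmetric, being controlled by $\N_1(F)\N_1(G)$); so no progress has been made on the core difficulty, only its location has moved. Your sketched resolution then fails quantitatively: decomposing $G=\sum_m G_m$ and pairing $\|r^{-1/b}G_m\|_{L_t^b L_x^4}\les 2^{-m/b}\N_1(G)$ from (\ref{TEE}) against an $L_t^\infty L_x^2$ bound on $r^{1/2}\sn F_k$ costs, via (\ref{FBB}) and (\ref{TE}) with $q=\infty$, a factor $2^{k}\cdot 2^{-k/2}=2^{k/2}$ per block of $F$. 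The double sum $\sum_{k,m}2^{k/2}2^{-m/b}$ diverges whenever $k\gtrsim 2m/b$, i.e.\ in essentially the whole range. What rescues this regime is not ``bookkeeping of weights'' but the near-orthogonality of the output projection: when $F$ sits at high frequency $n$ and $G$ at low frequency $m<n$, the product lives at frequency $\sim n$, so $P_l$ of it is negligible unless $l\sim n$, and then the smoothing $\|P_l\D^{-1}H\|_{L_x^2}\les 2^{-(2-2/p)l}r^{2-2/p}\|H\|_{L_x^p}$ of Lemma \ref{dual} supplies the compensating $2^{-n/2}$. Your proposal never invokes this interaction structure, and without it the series does not converge.

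For comparison, the paper does not integrate by parts globally. It performs a full trilinear geometric Littlewood--Paley decomposition (output frequency $l$, $F$ at $n$, $G$ at $m$), observes that all cases except $l<n<m$ go through exactly as in the geodesic-foliation argument of \cite{Qwang} once Lemma \ref{sl1} is in place, and only in the regime $l<n<m$ moves the derivative off the high-frequency factor $G_m$ via the dyadic Leibniz identity $anF_n\c\sn G_m=\sn(anF_nG_m)-\sn(an)F_nG_m-an\sn F_n\c G_m$. The genuinely new term there is the one with $\sn(an)=an(\zeta+\zb)$, which is disposed of with Lemma \ref{dual} ($p=4/3$), the $L^p$-boundedness of $P_l$, (\ref{sob.m}) and (\ref{TEE}), producing the summable factor $2^{-l/2-m/b}$. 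If you want to salvage your route, you must keep the third (output) frequency in play and exploit Lemma \ref{dual} case by case rather than integrating by parts once and for all.
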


\begin{proof}
Note that based on Lemma \ref{sl1} in Appendix, the inequalities \cite[Lemma 5.1,(6.25),(6.26)]{Qwang} still hold true.
We only need to modify the case $l<n<m$ of the proof in
\cite[P.310-311]{Qwang}.

When $l<n<m$,
\begin{equation*}
an F_n\c \sn G_m=\sn( anF_n\c G_m)-\sn(an) F_n\c G_m-an \sn F_n\c
G_m,
\end{equation*}
thus we need to consider the three terms
\begin{align*}
\I_{lnm}^1:&=\|\t1a P_l \D^{-1}(an \sn  F_n\c G_m)\|_{L_t^b
L_x^2}\\
\I_{lnm}^2:&=\|\t1a P_l \D^{-1}\sn(an F_n\c G_m)\|_{L_t^b
L_x^2}\\
\I_{lnm}^3:&=\|\t1a P_l \D^{-1}(\sn(an) F_n\c G_m)\|_{L_t^b L_x^2}.
\end{align*}
Using $C^{-1}<an<C$, following the same procedure in \cite{Qwang},
we can get
\begin{equation}\label{i12}
\sum_{0<l<n<m}(\I_{lnm}^1+\I_{lnm}^2)\les \N_1(F)\N_1(G).
\end{equation}
Now consider $\I_{lnm}^3$.

By Lemma \ref{dual} with $p=\frac{4}{3}$, Proposition \ref{P2} (i)
followed with (\ref{sob.m}), and  (\ref{TEE}),
\begin{align*}
\I_{lnm}^3&=\|\t1a P_l \D^{-1}(an (\zeta+\zb)F_n\c G_m)\|_{L_t^b
L_x^2(\H)}\\
&\les 2^{-\frac{l}{2}}\|r^{\f12-\frac{1}{b}}(\zeta+\zb)an  F_n\c
G_m\|_{L_t^b L_x^{4/3}}\\
&\les2^{-\frac{l}{2}} \|r^{\f12-\frac{1}{b}}F_n\c G_m\|_{L_t^b
L_x^2}\|A\|_{L_t^\infty L_x^4}\\
&\les 2^{-\frac{l}{2}}\|r^{-\frac{1}{b}+\f12}G_m\|_{L_t^b
L_x^4}\|F_n\|_{L_t^\infty L_x^4}\\
&\les 2^{-\frac{l}{2}-\frac{m}{b}}\N_1(G)\c\N_1(F),
\end{align*}
we obtain
\begin{equation*}
\sum_{0<l<n<m}\I_{lnm}^3\les \N_1(G)\N_1(F).
\end{equation*}
The proof is therefore complete.
\end{proof}

Let $\D$ be one of the operators $\D_1$,${}^\star \D_1$ or $\D_2$.
In the following result, we use  Proposition \ref{e2two} to estimate
the error type terms.

\begin{proposition}\label{errortype}
For $S$-tangent tensors $G$ on $\H$ verifying  ${\N}_1(G)<\infty$,
set
\begin{align*}
&{\E}_1(G):=r^{-1}{\D}^{-1}(anA\c G) \,\mbox{ or }\,  {\D}^{-1}(anA\c A\c G),\\
&{\E}_2(G):={\D}^{-1}(an\sn A\c G) \, \mbox{ or }\, \D^{-1}( an \sn
G\c A),
\end{align*}
 The following estimates hold
$$
\|\t1a  {\E}_1(G)\|_{L_t^b L_x^2}+\|\t1a {\E}_2(G)\|_{L_t^b
L_x^2}\les (\gc){\N}_1(G)
$$
where $4<b<\infty$.
\end{proposition}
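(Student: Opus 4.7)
The plan is to split into two cases according to whether the argument of $\D^{-1}$ carries a derivative on $A$. For $\E_2$, I will simply recognize it as a direct application of Proposition \ref{e2two}: the form $\D^{-1}(an\sn A\cdot G)$ corresponds to that proposition with the roles $F:=G$ and $G:=A$, while $\D^{-1}(an\sn G\cdot A)$ corresponds to $F:=A$ and $G:=G$. Combined with $\N_1(A)\les\gc$ (which follows from (\ref{smry1}), Proposition \ref{n1tr} and Remark \ref{n1kp}), this yields $\|\t1a\E_2(G)\|_{L_t^b L_x^2}\les(\gc)\N_1(G)$ in both cases.

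For $\E_1$ the key tool will be Proposition \ref{P7.3} with $p=4/3$, which trades the elliptic inverse $\D^{-1}$ for a factor $r^{1/2}$ at the price of moving from $L_x^2$ to $L_x^{4/3}$. H\"older on each slice $S_t$ then isolates the $A$-factors in $L_x^4$. For the first form $r^{-1}\D^{-1}(anA\cdot G)$, this gives the pointwise-in-$t$ bound
\begin{equation*}
\|\D^{-1}(anA\cdot G)\|_{L_x^2(S_t)}\les\|A\|_{L_x^4(S_t)}\|r^{1/2}G\|_{L_x^2(S_t)},
\end{equation*}
and hence, after integration in $t$,
\begin{equation*}
\|\t1a\E_1(G)\|_{L_t^b L_x^2}\les\|A\|_{L_t^\infty L_x^4}\|r^{-1/b-1/2}G\|_{L_t^b L_x^2}\les(\gc)\N_1(G),
\end{equation*}
where the first factor is controlled by (\ref{sob.m}) together with $\N_1(A)\les\gc$, and the second by (\ref{sob.in}) applied with $q=b$.

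For the second form $\D^{-1}(anA\cdot A\cdot G)$ I will use the triple H\"older identity $\tfrac{3}{4}=\tfrac{1}{4}+\tfrac{1}{4}+\tfrac{1}{4}$ to obtain
\begin{equation*}
\|\D^{-1}(anA\cdot A\cdot G)\|_{L_x^2(S_t)}\les\|A\|_{L_x^4(S_t)}^{2}\|r^{1/2}G\|_{L_x^4(S_t)}.
\end{equation*}
Placing both $A$-factors in $L_t^\infty L_x^4$, using $r^{1/2}\le 1$ since $r\le 1$, and invoking (\ref{sob.in}) to bound $\|r^{-1/b}G\|_{L_t^b L_x^4}$ by $\N_1(G)$ (valid for $b\ge 4$) yields the bound $(\gc)^{2}\N_1(G)$, which is absorbed into $(\gc)\N_1(G)$. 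Throughout, the harmless factor $an$ is handled by the pointwise estimate $C^{-1}<an<C$ from (\ref{cp}).

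The main obstacle is not conceptual but exponent bookkeeping: the choice $p=4/3$ in Proposition \ref{P7.3} and the subsequent H\"older allocation must be tuned so that the resulting $r$-weight matches exactly the weight appearing in (\ref{sob.in}); this is why the restriction $4<b<\infty$ appears in the statement and cannot be loosened without modifying the proof strategy.
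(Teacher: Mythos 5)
Your proposal is correct and follows essentially the same route as the paper: $\E_2$ is dispatched by Proposition \ref{e2two} with $\N_1(A)\les\gc$, and $\E_1$ by the elliptic estimates plus H\"older and the interpolation bounds (\ref{sob.m}), (\ref{sob.in}). The only (immaterial) difference is in the first form of $\E_1$, where the paper uses Proposition \ref{P4.1} and places $A$ in $L_t^b L_x^4$ and $G$ in $L_t^\infty L_x^4$, whereas you use Proposition \ref{P7.3} and swap which factor absorbs the $L_t^b$ integrability — both allocations close the estimate.
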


\begin{proof}
Using Proposition \ref{P7.3} with $p=\frac{4}{3}$, (\ref{sob.m}) and
(\ref{As}), we get
\begin{align*}
\|\t1a {\D}^{-1}(anA\c A\c G)\|_{L_t^b L_x^2}
&\les\|r^{\f12-\frac{1}{b}}A\c A\c G\|_{L_t^b L_x^{4/3}}\les
\|A\|_{L_t^\infty L_x^4}^2 \|G\|_{L_t^b L_x^4}\\
&\les (\Delta_0^2+\RR)^2{\N}_1(G),
\end{align*}
by using Proposition \ref{P4.1}, (\ref{sob.in}), (\ref{As}) and
(\ref{sob.m}), we have
\begin{align*}
\|r^{-1-\frac{1}{b}}{\D}^{-1}(anA\c G)\|_{L_t^b L_x^2} &\les \|\t1a
A\c G\|_{L_t^b L_x^2} \les \|\t1a A\|_{L_t^b L_x^4}\|G\|_{L_t^\infty
L_x^4}\\
&\les (\gc){\N}_1(G).
\end{align*}
For ${\E}_2(G)$, we infer from Proposition \ref{e2two} and
(\ref{As}) that
$$
\|\t1a {\E}_2(G)\|_{L_t^b L_x^2}\les {\N}_1(G){\N}_1(A)\les
(\gc){\N}_1(G).
$$
We thus obtain the desired estimates.
\end{proof}

By analyzing the expression of $\b$ and $C_0(F):=[\Dt, \D^{-1}]F$,
we have

\begin{corollary}\label{betaes}
The following inequalities hold for any $S$-tangent tensor $F$,
\begin{align}
\|\t1a {\D}^{-1}(an\b\c F)\|_{L_t^b L_x^2}&\les
{\N}_1(F)(\gc)\label{betae1}\\
\|\t1a C_0(F)\|_{L_t^b L_x^2}&\les
{\N}_1(r^{-\frac{1}{2}}{\D}^{-1}F)\label{come1}
\end{align}
where $4<b<\infty.$
\end{corollary}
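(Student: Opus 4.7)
The plan is to reduce both inequalities to the building-block estimates in Propositions \ref{errortype} and Proposition \ref{e2two}, together with the ``pull-out'' identity (\ref{A92}), by expressing $\beta$ and the commutator $[\Dt,\D^{-1}]$ through already-established structural identities.

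For (\ref{betae1}), I would start from the Codazzi-type equation (\ref{0.7}), which symbolically reads
$$\beta=\sn\chih+\tfrac12\sn\tr\chi+\tfrac12\tr\chi\,\zeta-\chih\cdot\zeta.$$
Writing $\tr\chi=\iota+2r^{-1}$ and recalling from Remark \ref{n1kp} that both $\iota$ and $\kp$ may be absorbed into $A$, we obtain the symbolic form $\beta=\sn A+(A+r^{-1})A$. Multiplying by $anF$ gives three contributions: $an\sn A\cdot F$, $anA^2\cdot F$, and $anr^{-1}A\cdot F$. The first falls directly under $\E_2(F)$ of Proposition \ref{errortype}, the second under the $\D^{-1}(anA\cdot A\cdot G)$ form of $\E_1(F)$. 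For the third, since $r$ depends only on $t$ and is constant on each $S_t$, we factor $\D^{-1}(an r^{-1}A\cdot F)=r^{-1}\D^{-1}(anA\cdot F)$, putting it into the $r^{-1}\D^{-1}(anA\cdot G)$ form of $\E_1(F)$. Proposition \ref{errortype} then yields the bound $(\gc)\N_1(F)$ for each piece.

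For (\ref{come1}), I would use the standard conjugation identity
$$C_0(F)=[\Dt,\D^{-1}]F=-\D^{-1}\,[\Dt,\D]\,\D^{-1}F,$$
and apply (\ref{symbol:comm1}) with $G:=\D^{-1}F$ to expand $[\Dt,\D]G\sim an(A+r^{-1})\sn G+an(A+r^{-1})A\cdot G+an\beta\cdot G$. This produces five types of terms inside $\D^{-1}$:
\begin{enumerate}
\item[(a)] $an A\sn G$, controlled by $\E_2(G)$ of Proposition \ref{errortype};
\item[(b)] $an r^{-1}\sn G$, the most delicate piece: here I cannot simply pull $r^{-1}$ out and use Proposition \ref{P7.3} because of the derivative of $G$ inside; instead (\ref{A92}) gives the sharp bound $\N_1(\D^{-1}F)\|\zeta+\zb\|_{L_t^\infty L_x^4}+\N_1(r^{-1/2}\D^{-1}F)$;
\item[(c), (d)] $an A^2 G$ and $an r^{-1}A\cdot G$, both handled by $\E_1(G)$ exactly as in step 1;
\item[(e)] $an\beta\cdot G$, bounded by the just-proved (\ref{betae1}) applied with $F$ replaced by $G=\D^{-1}F$.
\end{enumerate}
Summing, absorbing the small factor $\gc$ and using $\|\zeta+\zb\|_{L_t^\infty L_x^4}\les\gc$ from (\ref{an}) together with the equivalence that controls $\N_1(\D^{-1}F)$ by $\N_1(r^{-1/2}\D^{-1}F)$ in the relevant regime, we conclude the estimate (\ref{come1}).

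The main obstacle is term (b), namely $\D^{-1}(anr^{-1}\sn\D^{-1}F)$: the combination of the weight $r^{-1}$ (forbidding a direct H\"older argument after Proposition \ref{P7.3}) with the derivative $\sn G$ precisely requires the refined inequality (\ref{A92}). This is also the place where the norm on the right-hand side of (\ref{come1}) must be strengthened from $\N_1(\D^{-1}F)$ to $\N_1(r^{-1/2}\D^{-1}F)$, explaining the particular form of the stated bound. Everything else is a bookkeeping exercise that matches the symbolic shape of each summand against the template of Proposition \ref{errortype}.
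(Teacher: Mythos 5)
Your proposal is correct and follows essentially the same route as the paper: the Codazzi identity $an\b=an\sn A+an(A\c A+r^{-1}A)$ reduces (\ref{betae1}) to Proposition \ref{errortype}, and the commutator expansion via (\ref{symbol:comm1}) reduces (\ref{come1}) to $\E_1(\D^{-1}F)+\E_2(\D^{-1}F)$ plus the single delicate term $r^{-1}\D^{-1}(an\sn\D^{-1}F)$, which is exactly where the paper also invokes (\ref{A92}). The only cosmetic difference is that you handle the $an\b\c\D^{-1}F$ piece of the commutator by citing the just-proved (\ref{betae1}), whereas the paper absorbs it directly into $\E_1+\E_2$; these are the same computation.
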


\begin{proof}
Using Codazzi equation (\ref{0.7}), i.e.
$$
an\b=an \sn A+an(A\c A+r^{-1}A),
$$
we infer
$$
{\D}^{-1}(an\b\c F)={\E}_1(F)+{\E}_2(F).
$$
Whence (\ref{betae1}) follows from Proposition \ref{errortype}.

Similarly, using (\ref{symbol:comm1}) we can write
\begin{equation*}
C_0(F)={\E}_1({\D}^{-1}F)+{\E}_2({\D}^{-1}F)+r^{-1}{\D}^{-1}(an\sn{\D}^{-1}F).
\end{equation*}
For the last term, using (\ref{A92}) with $H=F$,  we infer
$$
\|r^{-1-\frac{1}{b}}{\D}^{-1}(an\sn{\D}^{-1}F)\|_{L_t^b
L_x^2}\les{\N}_1(r^{-\frac{1}{2}}{\D}^{-1}F)(\gc+1).
$$
The desired estimate then follows from Proposition \ref{errortype}.

\end{proof}

\begin{proof}[Proof of (\ref{str:C1}) in Proposition \ref{P7.1}]
Combining Proposition \ref{P7.3}, (\ref{come1}) and (\ref{7.24}) we
derive for $\D^{-1}$ either $\D_2^{-1}$ or $\D_1^{-1}$, $4<b<\infty$
\begin{align}
\|\t1a {\D}^{-1}C_0(\check R)\|_{L_t^b L_x^2}&\les
\|r^{1-\frac{1}{b}} C_0(\check R)\|_{L_t^b L_x^2}\les
{\N}_1(r^{\frac{1}{2}}{\D}^{-1}\check R)\les
\Delta_0^2+\R_0\label{main:er}.
\end{align}
 Observe that $C_1(\check R)$ can be
written symbolically in the form
$$
C_1(\check R)=\sn{\D}^{-1} C_0(\check R).
$$
By Hodge-elliptic estimate (\ref{7.15}) with $\f12<q<1$ and
$4<b<\infty$, (\ref{7.21}), (\ref{7.23}) and (\ref{main:er})
\begin{align*}
\|C_1(\check R)\|_{{\mathcal P}^0} &\les \|C_0(\check
R)\|_{{\mathcal P}^0}+\Delta_0\|{\mathcal D}^{-1}C_0(\check
R)\|^{q}_{L_t^b L_x^2}\|C_0(\check R)\|_{L^2(\H)}^{1-q}\\
&\les \Delta_0^2+{\R}_0+\Delta_0\|{\D}^{-1}C_0(\check R)\|_{L_t^b
L_x^2}^{q}(\Delta_0^2+{\R}_0)^{1-q}\les \gc.
\end{align*}
This is the desired estimate.
\end{proof}

\subsection{A preliminary estimate for $(\rho,\sig)$}

Define
$$
\bar\rho=\frac{1}{|S_{t}|}\int_{S_{t}} \rho d\mu_\ga \,\, \mbox{ and
} \quad  \bar\sig=\frac{1}{|S_{t}|}\int_{S_{t}}\sigma d\mu_\ga.
$$
We have

\begin{lemma}
\begin{equation}\label{avr1}
|r^{\frac{3}{2}} \bar \rho|+|r^{\frac{3}{2}}\bar \s|\les
\Delta_0^2+\RR,
\end{equation}
\begin{equation}\label{avr}
|r^{\frac{3}{2}}\rhocb|+|r^{\frac{3}{2}}\sigcb|\les \Delta_0^2+\RR.
\end{equation}
\end{lemma}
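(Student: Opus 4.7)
The plan is to handle $\sigcb$ first via a direct Stokes argument on the Hodge identity (\ref{0.8}), then reduce the estimates on $\bar\rho$ and $\bar\s$ to those on $\rhocb,\sigcb$ by controlling the bilinear correction $\chih\cdot\chibh$, and finally to treat $\rhocb$ itself through a transport equation derived from the null Bianchi identity (\ref{Bia2}).

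First, (\ref{0.8}) rearranges to $\sigc=-\curl\zb$, so integrating over $S_t$ and applying Stokes' theorem gives $\int_{S_t}\sigc\,d\mu_\ga=0$, i.e.\ $\sigcb\equiv 0$. The passage from $(\rhocb,\sigcb)$ to $(\bar\rho,\bar\s)$ reduces, via $\rho-\rhoc=\f12\chih\cdot\chibh$ and $\s-\sigc=\f12\chih\wedge\chibh$, to bounding $\|\chih\|_{L^2(S_t)}\|\chibh\|_{L^2(S_t)}$. The trace estimate $\|r^{-1/2}\Ab\|_{L_x^2 L_t^\infty}\les\RR$ of Proposition \ref{propab} gives $\|\chih\|_{L^2(S_t)},\,\|\chibh\|_{L^2(S_t)}\les r^{1/2}\RR$, whence $r^{3/2}|\bar\rho-\rhocb|+r^{3/2}|\bar\s-\sigcb|\les r^{1/2}\RR^2\les\RR$.

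For $\rhocb$, set $m(t):=\int_{S_t}\rhoc\,d\mu_\ga$. The coarea formula $\frac{d}{dt}\int_{S_t}f\,d\mu_\ga=\int_{S_t}(anLf+an\tr\chi\,f)\,d\mu_\ga$, together with (\ref{Bia2}) and Stokes in the form $\int_{S_t}an\,\div\b=-\int_{S_t}an(\zeta+\zb)\cdot\b$ (using $\sn\log(an)=\zeta+\zb$ from (\ref{c4})), yields
\begin{align*}
\frac{dm}{dt}=\int_{S_t}an\Big(-\f12\tr\chi\,\rhoc+\zb\cdot\b-\f12\chih\cdot\sn\hot\zb+\tfrac{1}{4}\tr\chib\,|\chih|^2-\f12\chih\cdot\zb\hot\zb\Big)\,d\mu_\ga.
\end{align*}
The initial condition $m(0)=0$ follows because $|S_t|\to 0$ as $t\to 0$, while the identity $\rhoc=-K-\tfrac{1}{4}\tr\chi\tr\chib$ from (\ref{gauss}) combined with local regularity at the vertex keeps $\rhoc$ bounded there (the singular pieces $K\sim r^{-2}$ and $\tfrac{1}{4}\tr\chi\tr\chib\sim -r^{-2}$ cancelling at leading order).

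Integrating from $0$ to $t$ and estimating each contribution in $L^1(\H_t)$ by Cauchy--Schwarz, the dominant term is $\int_{\H_t}|\tr\chi\,\rhoc|\les\|\tr\chi\|_{L^2(\H_t)}\|\rhoc\|_{L^2(\H_t)}\les r^{1/2}\RR$, using $|\tr\chi|\les r^{-1}$ (so $\|\tr\chi\|_{L^2(\H_t)}^2\les t\les r$) and the curvature flux control of $\|\rhoc\|_{L^2}$ from (\ref{simp1}). Each bilinear error $|\zb\cdot\b|,\,|\chih\cdot\sn\zb|,\,|\tr\chib|\,|\chih|^2,\,|\chih|\,|\zb|^2$ is controlled by $\les r\RR^2$ or better through $\|\chih\|_{L^2(\H_t)}\les r\RR$, $\|\sn\zb\|_{L^2}\les\RR$, Proposition \ref{smr}, and the Sobolev embeddings (\ref{sob.m})--(\ref{sob.m2}). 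Summing gives $|m(t)|\les r^{1/2}(\RR+\Delta_0^2)$, hence $r^{3/2}|\rhocb|=(4\pi)^{-1}r^{-1/2}|m(t)|\les\RR+\Delta_0^2$. The main technical point will be securing this sharp $r^{1/2}$ scaling in the dominant $\int|\tr\chi\,\rhoc|$ estimate, which saturates the target weight and leaves no room for loss.
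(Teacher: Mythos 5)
Your proposal is correct, and its backbone --- a transport equation for the surface average driven by the null Bianchi identity, with the top-order term $\div\b$ removed by Stokes' theorem at the cost of the factor $\sn\log(an)=\zeta+\zb$ --- is the same device the paper uses. The differences are still worth recording. First, you reverse the order: the paper establishes (\ref{avr1}) for $\bar\rho,\bar\s$ by transporting $r^3\rhob$ via (\ref{lrhos}) and then passes to $\rhocb,\sigcb$ through the bilinear bound (\ref{chchi}); you prove (\ref{avr}) first and use the same bilinear bound in the opposite direction. Second, your observation that (\ref{0.8}) gives $\sigc=-\curl\zb$, so that $\sigcb\equiv 0$ identically by Stokes, is cleaner than the paper's treatment of the $\sigma$-component, which reruns the entire transport argument. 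Third, on the borderline term: the paper's unknown $r^3\rhob$ forces the structure $\ovl{an\tr\chi\,\os{\rho}}$, so only the oscillation of $\rho$ enters; you instead estimate $\int_{\H_t}an|\tr\chi||\rhoc|$ by direct Cauchy--Schwarz using $\|\tr\chi\|_{L^2(\H_t)}\les r^{1/2}$ and $\|\rhoc\|_{L^2(\H)}\les\gc$ (note the latter is $\Delta_0^2+\RR$, not just $\RR$, because of the $\chih\cdot\chibh$ correction), which exactly saturates the $r^{3/2}$ weight but does close, as you point out. Both routes rest on the same inputs (Proposition \ref{propab}, (\ref{simp1}), (\ref{cond1}), and the vanishing of $r^\theta\rho$ at the vertex for the initial condition), so neither buys extra generality; yours is shorter for $\sigma$, while the paper's oscillation structure is the more robust device if a higher power of $r$ were needed.
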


\begin{proof}
By \cite[Eq. (41)]{KR1}, i.e.
\begin{equation}\label{lrhos}
\frac{d}{ds}\rho+\frac{3}{2}\tr\chi\rho=F
\end{equation}
where $F=\div \b-\f12\chibh\c \a+(\zeta+2\zb)\c \b$, the transport
equation for $\rhob$ can be obtained as follows
\begin{align*}
\frac{d}{ds} (\rhob)&=\sn_L(r^{-2}\int_{S} \rho)\nn\\
&=-\ovl{an\tr\chi}(an)^{-1}\rhob+r^{-2}(an)^{-1}\int_{S} \left(
\sn_L \rho +\tr\chi\rho \right) an d\mu_\ga\nn\\
&=-(an)^{-1}\ovl{an\tr\chi}\rhob+(an)^{-1}\ovl{\left(-\frac{1}{2}an\tr\chi
\rho +anF\right)},
\end{align*}
and
\begin{align*}
\frac{d}{ds}(r^3 \rhob)&=\frac{3
r^3}{2}\ovl{an\tr\chi}(an)^{-1}\rhob\\
&\quad \,+r^3\left\{-(an)^{-1}\ovl{an\tr\chi}\rhob
+(an)^{-1}\ovl{\left(-\frac{1}{2}an\tr\chi
\rho +anF\right)}\right\}\\
&=-\f12 r^3 (an)^{-1}\ovl{an
\tr\chi(\rho-\rhob)}+r^3(an)^{-1}\ovl{an F}.
\end{align*}
Integrating the above identity in $t$, due to $\lim_{t\rightarrow 0}
r^\theta\rho=0$ for any $\theta>0$,  we can obtain
\begin{align*}
r^{\frac{3}{2}}\rhob(t)= r^{-\frac{3}{2}}\int_0^t
r^3\left(-\f12\ovl{an\tr\chi\c \os{\rho}}+\ovl{anF} \right) dt'.
\end{align*}
In view of (\ref{comp1}), (\ref{cond1}) and Proposition \ref{cmps},
we obtain
\begin{equation*}
r^{-\frac{3}{2}}\int_0^t \left|r^3\ovl{an\tr\chi\c
\os{\rho}}\right|\le \|r\os{\rho}\|_{L_t^2
L_\omega^2}\|r'\|_{L^2(0,t]}r^{-\frac{3}{2}}\les
\|\rho\|_{L^2(\H)}\les\RR.
\end{equation*}
By integration by part on $S=S_t$, we can obtain
\begin{align*}
r^2\ovl{an F}&=\int_{S}\left( an(\div
\b-\f12\chibh\c\a)+an(\zeta+2\zb)\c\b \right) d\mu_{\ga}\\
&=\int_{S}an(\zb\b-\f12\chibh\c \a) d\mu_\ga =\int_{S} an\Ab\c R_0
d\mu_\ga.
\end{align*}
Hence by (\ref{cond1}) and Proposition \ref{propab},
\begin{align*}
\left|r^{-\frac{3}{2}}\int_0^t {r'}^3 \ovl{anF} d\tt\right| &\le
r^{-\frac{3}{2}}\|r'\|_{L^2(0,t]}\|(r')^2 \Ab\c R_0\|_{L_t^2
L_\omega^1} \\
& \le \|R_0\|_{L^2}\|r\Ab\|_{L_t^\infty L_\omega^2} \\
&\les\gc.
\end{align*}
Following the same procedure as above, we can obtain the same
estimate for  $\bar\sigma$ in view of \cite[Eq. (42)]{KR1}.

Note that
\begin{align}
&|r(\ovl{\chih\c \chibh},  \ovl{\chih\wedge\chibh})|\les \|r^\f12
\Ab\|_{L_t^\infty L_\omega^2}^2\les (\gc)^2\label{chchi}
\end{align}
(\ref{avr}) follows by connecting (\ref{avr1}) with (\ref{chchi}).

\end{proof}

\begin{proposition}\label{7.17}
For $4<b<\infty$, there hold
\begin{align}
\|r^{-\frac{1}{b}-\frac{1}{2}}\D_1^{-1}(an(\ckk\rho,
-\ckk\sig))\|_{L_t^b L_x^2} &\les \Delta_0^2+\RR, \label{7.16}\\
\|r^{-\frac{1}{b}-\frac{1}{2}}\D_1^{-1}(an (\rho, -\sigma))\|_{L_t^b
L_x^2} &\les \gc. \label{7.1.5}
\end{align}
\end{proposition}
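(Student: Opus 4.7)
The strategy is to apply Proposition \ref{P7.3} with $p=4/3$ to reduce both estimates to the form
\begin{equation*}
\|r^{-\frac{1}{b}-\frac{1}{2}}\D_1^{-1}(an H)\|_{L_t^b L_x^2}\les \|r^{-\frac{1}{b}}an H\|_{L_t^b L_x^{4/3}},
\end{equation*}
so that the task becomes controlling $an H$ in a weighted $L_t^b L_x^{4/3}$ norm. For (\ref{7.1.5}) I would split $\rho=\ckk\rho+\tfrac12\chih\cdot\chibh$ and $\sigma=\ckk\sigma+\tfrac12\chih\wedge\chibh$, reducing the inequality to (\ref{7.16}) together with an estimate on $\D_1^{-1}(an\chih\chibh)$-type terms. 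The latter is handled by Proposition \ref{errortype}: with $\mathcal{E}_1(G)=r^{-1}\D_1^{-1}(anA\cdot G)$ and $G=\chibh\in\Ab$, the bound $\|r^{-1/b}\mathcal{E}_1(G)\|_{L_t^b L_x^2}\les(\gc)\N_1(G)\les\gc$ combined with the pointwise inequality $r^{-1/b-1/2}\le r^{-1-1/b}$ on $\H$ gives the claim.

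For (\ref{7.16}) I further decompose $\ckk R=(\rho,-\sigma)-\tfrac12(\chih\cdot\chibh,\chih\wedge\chibh)$ and dispatch the quadratic piece exactly as above. The delicate part is the remaining $(\rho,-\sigma)$ contribution, since the curvature flux controls $R_0$ only in $L_t^2 L_x^2$, which is insufficient for the required $L_t^b L_x^{4/3}$ bound when $b>4$. Here I would exploit the Bianchi identity (\ref{7.8}) through the commutator relation $\Dt(\D_1^{-1}\ckk R)=\D_1^{-1}\Dt\ckk R+C_0(\ckk R)$: by Proposition \ref{add1}, $\|\D_1^{-1}\Dt\ckk R\|_{L^2(\H)}\les\gc$, and by (\ref{7.21}) also $\|C_0(\ckk R)\|_{\P^0}\les\gc$, so the $\Dt$-derivative of $\D_1^{-1}\ckk R$ lies in $L^2(\H)$ with norm at most $\gc$. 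Integrating in $t$ along null geodesics, using vanishing of $\D_1^{-1}\ckk R$ at the vertex, expresses $\D_1^{-1}\ckk R(t)$ as a time-integral of this $L^2(\H)$ datum; then $\D_1^{-1}(an\ckk R)$ is recovered from $an\D_1^{-1}\ckk R$ up to a commutator $[\D_1^{-1},an]\ckk R$ which is again handled by Proposition \ref{errortype} and Corollary \ref{betaes}.

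The main obstacle is the Hardy-type step that converts the $L^2(\H)$-bound on $\Dt(\D_1^{-1}\ckk R)$ into a weighted $L_t^b L_x^2$-bound on $\D_1^{-1}\ckk R$ with the exact weight $r^{-1/b-1/2}$ for $b>4$. My plan is to interpolate between the Hardy inequality $\|r^{-1}\int_0^t G\,dt'\|_{L_t^2 L_x^2}\les\|G\|_{L^2(\H)}$ and the Cauchy--Schwarz bound $\|r^{-1/2}\int_0^t G\,dt'\|_{L_t^\infty L_x^2}\les\|G\|_{L^2(\H)}$, choosing the interpolation parameter $\theta=2/b$ so that the resulting weight is precisely $r^{-1/b-1/2}$ in $L_t^b L_x^2$. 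The delicate point will be to track the varying spherical volume element $v_t\approx t^2$ from (\ref{cmps1}) when comparing $L^2(S_t)$ norms across different slices, since the leaves have different areas and the raw scalar Hardy inequality does not immediately transfer to the tensor-valued, $S_t$-indexed setting.
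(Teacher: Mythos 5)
Your treatment of the quadratic terms and the passage from $(\rhoc,-\sigc)$ to $(\rho,-\sigma)$ via $\E_1(\Ab)$ agrees with the paper, but the core step — the bound on the $\D_1^{-1}\ckk R$ contribution — does not work as written. Both interpolation endpoints you propose are false in the norms of this paper, which carry the spherical volume weight $v_t\approx t^2$. Writing $\Phi(t)=\int_0^t G\,dt'$, the claimed Hardy bound $\|r^{-1}\Phi\|_{L_t^2L_x^2}\les\|G\|_{L^2(\H)}$ reduces per null geodesic to $\int_0^1|\int_0^t G|^2dt\les\int_0^1|G|^2t^2dt$, which fails: for $G=t^{-1+\epsilon}$ the ratio of the two sides is $\epsilon^{-2}$. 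Likewise $\|r^{-1/2}\Phi\|_{L_t^\infty L_x^2}\les\|G\|_{L^2(\H)}$ fails for $G$ concentrated near the vertex. This is exactly the "varying volume element" issue you flag at the end but do not resolve: integrating $\Dt\D_1^{-1}\ckk R\in L^2(\H)$ from the vertex loses a full power of $r$ unless one has either a damping term $\frac{p}{2}\tr\chi$ in the transport equation (as in Lemma \ref{tsp2}) or the zeroth-order input $\|r^{-1}\D_1^{-1}\ckk R\|_{L^2}\les\gc$. Once the latter is added (it is available from Proposition \ref{P4.1} and $\|\ckk R\|_{L^2}\les\gc$, or from Proposition \ref{add1}), the desired weighted bound is precisely the trace interpolation (\ref{sob.in}) applied with $\N_1(\D_1^{-1}\ckk R)\les\gc$ — which is the route the paper takes, with no need for the Bianchi/commutator machinery at this stage.

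Two secondary issues. First, your reduction is circular: (\ref{7.1.5}) is reduced to (\ref{7.16}) plus quadratic terms, and (\ref{7.16}) is then reduced back to the $(\rho,-\sigma)$ contribution; this should be replaced by a single split of $\ckk R$ into its mean and mean-zero parts. Second, the means cannot be ignored: $\D_1^{-1}$ and (\ref{sob.in}) are applied to $H=(\rhoc-\rhocb,-\sigc+\sigcb)$, and the averages must be estimated separately using the pointwise decay $|r^{3/2}\rhocb|+|r^{3/2}\sigcb|\les\gc$ of (\ref{avr}), which requires its own transport argument via the Bianchi identities. Finally, the Leibnitz step you describe produces the term $\D_1^{-1}(\sn(an)\,\D_1^{-1}H)$, which is controlled by $\|\zeta+\zb\|_{L_t^\infty L_x^4}$ together with (\ref{sob.in}), not by Corollary \ref{betaes}.
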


\begin{proof}
 Let $H=(\rhoc-\rhocb, -\sigc+\sigcb)$. In view of $
{\D}_1\D_1^{-1}H=H$,
\begin{equation*}
\D_1^{-1}(an(\rhoc, -\sigc))=\D_1^{-1}(an \D_1
\D_1^{-1}H)+\D_1^{-1}(an(\rhocb, -\sigcb)).
\end{equation*}
By Proposition \ref{P7.3},
\begin{equation*}
\|r^{-\frac{1}{b}-\frac{1}{2}}\D_1^{-1}(an(\rhocb,
-\sigcb))\|_{L_t^b L_x^2}\les \|r^{\frac{3}{2}-\frac{1}{b}} an
(\rhocb, -\sigcb)\|_{L_t^b L_\omega^2}.
\end{equation*}
Hence, in view of (\ref{avr}) and (\ref{cond1})
\begin{align}\label{7.20}
\|r^{\frac{3}{2}-\frac{1}{b}}(an(\rhocb, -\sigcb))\|_{L_t^b
L_\omega^2} &\les\|r^{\frac{3}{2}}an  (\rhocb,
-\sigcb)\|_{L_t^\infty L_\omega^\infty}^{1-\frac{2}{b}}\|r an
(\rhocb, -\sigcb)\|_{L_t^2
L_\omega^2}^{\frac{2}{b}}\nn\\
&\les \gc.
\end{align}
By Leibnitz rule, Proposition \ref{P7.3} and (\ref{sob.in}), we have
\begin{align*}
\|r^{-\frac{1}{b}-\frac{1}{2}}\D_1^{-1}(an\D_1\D_1^{-1} H)\|_{L_t^b
L_x^2}&\les\|r^{-\frac{1}{b}-\frac{1}{2}} \D_1^{-1}\D_1(an\D_1^{-1}
H)\|_{L_t^b L_x^2}\\
&\quad \, +\|r^{-\frac{1}{b}-\frac{1}{2}}\D_1^{-1}( \sn (an) \D_1^{-1}H)\|_{L_t^b L_x^2}\\
&\les \|r^{-\frac{1}{b}-\frac{1}{2}} \D_1^{-1}H\|_{L_t^b
L_x^2}+\|r^{-\frac{1}{b}}(\zeta+\zb)\D_1^{-1}H\|_{L_t^b L_x^{4/3}}\\
&\les \N_1(\D_1^{-1}H)+\|r^{-\frac{1}{b}} \D_1^{-1}H\|_{L_t^b L_x^2}\|\zeta+\zb\|_{L_t^\infty L_x^4}\\
&\les \N_1(\D_1^{-1}H)(1+\gc)\les \Delta_0^2+\RR
\end{align*}
For the last two inequalities, we employed  (\ref{As}) and
(\ref{7.24}).

(\ref{7.1.5}) can be obtained by combining (\ref{7.16}) with the
estimate for $r^{-1} \D^{-1}(an A\c \Ab)$ in view of  the estimate
for $\E_1(\Ab)$ in Proposition \ref{errortype}.
\end{proof}

\subsection{$L_t^b L_x^2$ estimates for ${\D}^{-1}E_1^G$}\label{dc0estimates}

For arbitrary $S$-tangent tensor field $F$, we denote by $E_1^G$
either $[\Dt, {\D}_1^{-1}](\check \rho, -\check \s)\c F$ or $Err\c
F$. In what follows, we establish estimates for
$\|{\D}^{-1}E_1^G\|_{L_t^b L_x^2}$ with $4<b<\infty$, which will be
employed for the Hodge-elliptic ${\P}^0$ estimates of error terms
arising in the decomposition procedure in Section
\ref{C23estimates}.

\begin{proposition}\label{er0.35}
Denote by ${\D}$ either ${\D}_1$ or ${\D}_2$, for appropriate
$S$-tangent tensor fields $F$, the following estimates hold
\begin{equation}
\|\t1a \D^{-1} E_1^G\|_{L_t^b L_x^2}\les (\gc) \N_1(F).
\end{equation}
More precisely,
\begin{align}
\|\t1a {\D}^{-1}(Err\c F)\|_{L_t^b L_x^2}&\les(\gc)
{\N}_1(F)\label{it0.13}\\
\|\t1a {\D}^{-1}([\Dt, {\D}_1^{-1}](\check \rho, -\check \s)\c
F)\|_{L_t^b L_x^2}&\les (\gc){\N}_1(F).\label{it0.12}
\end{align}
where $Err$ is defined in (\ref{errp1}) and $4<b<\infty.$
\end{proposition}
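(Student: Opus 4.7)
The plan is to exploit the nested Hodge-inverse structures behind $Err$ and $C_0(\ckk R) := [\Dt, \D_1^{-1}](\ckk\rho, -\ckk\sig)$, both of which are already of the form $\D^{-1}$ applied to explicit expressions. Thus both targets have the shape $\|r^{-1/b}\D^{-1}(\D^{-1}(\Phi)\cdot F)\|_{L_t^b L_x^2}$, a two-layer Hodge composition. I will apply the outer $\D^{-1}$ through Proposition \ref{P7.3} with $p=4/3$, obtaining the bound $\|r^{1/2-1/b}\D^{-1}(\Phi)\cdot F\|_{L_t^b L_x^{4/3}}$; then H\"older-split inside $L_x^{4/3}$ into $L_x^2\cdot L_x^4$ and pair $L_t^b = L_t^b\cdot L_t^\infty$, placing $F$ in $L_t^\infty L_x^4$. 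The embedding $\|F\|_{L_t^\infty L_x^4}\le\|F\|_{L_x^4 L_t^\infty}\les\N_1(F)$ from (\ref{sob.m}) controls that factor, while the remaining factor $\|r^{1/2-1/b}\D^{-1}(\Phi)\|_{L_t^b L_x^2}$ is covered by results already at hand.

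For (\ref{it0.13}), recall from the discussion following (\ref{errp1}) that $Err$ has the symbolic form $\D^{-1}\{an(r^{-1}\ckk R+A\cdot\ti R)\}$ with $\ti R = R_0+\sn A+A\cdot\Ab+r^{-1}\Ab$. Since $r$ depends only on $t$ and commutes with the spatial operator $\D^{-1}$, one has $\D^{-1}(an r^{-1}\ckk R)=r^{-1}\D^{-1}(an\ckk R)$, so
$$\|r^{1/2-1/b}\D^{-1}(an r^{-1}\ckk R)\|_{L_t^b L_x^2} = \|r^{-1/2-1/b}\D^{-1}(an\ckk R)\|_{L_t^b L_x^2}\les\gc$$
by Proposition \ref{7.17}. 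Each piece of $\D^{-1}(an A\cdot\ti R)$ matches (up to the harmless factor $r^{1/2}\le 1$ on the weight) one of the $\E_1,\E_2$ operators of Proposition \ref{errortype}: schematically $\D^{-1}(an A\cdot\sn A)=\E_2(A)$, $\D^{-1}(an A\cdot A\cdot\Ab)=\E_1(\Ab)$, $\D^{-1}(an A\cdot r^{-1}\Ab)=\E_1(\Ab)$, and the $R_0$-subpiece $\D^{-1}(an A\cdot\b)$ is handled by Codazzi via (\ref{betae1}) of Corollary \ref{betaes}. In each case the resulting $L_t^b L_x^2$ norm is $\les\gc$, and combining with the $F$-factor bound concludes (\ref{it0.13}).

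For (\ref{it0.12}), the good-part expansion (\ref{c0r}) reads
$$C_0(\ckk R) = \D^{-1}\big\{an(A+r^{-1})\sn\D^{-1}\ckk R + an(A+r^{-1})\cdot A\cdot\D^{-1}\ckk R + an\b\cdot\D^{-1}\ckk R\big\}.$$
Proposition \ref{add1} (together with Lemma \ref{A.8}) supplies $\N_1(\D^{-1}\ckk R),\N_2(\D^{-1}\ckk R)\les\gc$, so the three terms are, respectively, of $\E_2$-type, $\E_1$-type, and $\b$-type (in the sense of Corollary \ref{betaes}) with test argument $G=\D^{-1}\ckk R$; the $r^{-1}$ sub-contributions again reduce to $\E_1$-operators after commuting the scalar $r^{-1}$ through $\D^{-1}$. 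Running the same outer-$\D^{-1}$/H\"older/Sobolev procedure as above yields (\ref{it0.12}).

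The main obstacle is the two-layer Hodge structure together with the tightness of the exponent budget: each $\D^{-1}$ gains one spatial derivative, and these gains must be spent exactly, one by the $\sn A$'s hidden inside $\ti R$ (for $Err$) or by $\sn\D^{-1}\ckk R$ inside $C_0(\ckk R)$, and the other by the $\sn$-derivative absorbed into $\N_1(F)$. The constraint $4<b<\infty$ is dictated by the H\"older pairing $L_t^b = L_t^b\cdot L_t^\infty$ combined with Proposition \ref{7.17}, which is only valid in that range; beyond this bookkeeping no mechanism outside Propositions \ref{P7.3}, \ref{errortype}, \ref{7.17}, \ref{add1}, Corollary \ref{betaes}, and (\ref{sob.m}) is needed.
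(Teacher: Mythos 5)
Your overall architecture coincides with the paper's: apply the outer $\D^{-1}$ through Proposition \ref{P7.3} with $p=4/3$, H\"older the result into $\|r^{\f12-\frac{1}{b}}(\cdot)\|_{L_t^b L_x^2}\,\|F\|_{L_t^\infty L_x^4}$, and absorb the $F$-factor by (\ref{sob.m}). For (\ref{it0.12}) this reduces matters to $\|r^{\f12-\frac{1}{b}}C_0(\ckk R)\|_{L_t^b L_x^2}$, which the paper bounds by simply citing (\ref{come1}) together with (\ref{7.24}); your re-expansion of $C_0(\ckk R)$ via (\ref{c0r}) essentially re-derives (\ref{come1}), except that the piece $r^{-1}\D^{-1}(an\,\sn\D^{-1}\ckk R)$ coming from $r^{-1}\sn\D^{-1}\ckk R$ is neither of $\E_1$- nor of $\E_2$-type and needs (\ref{A92}); that is a small omission.

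The genuine gap is in (\ref{it0.13}). The symbolic form $Err=\D^{-1}\{an(r^{-1}\ckk R+A\cdot\ti R)\}$ conceals that the curvature term actually present is $\tr\chi(\rhoc,-\sigc)$ (see (\ref{esim2})), and $\tr\chi=2r^{-1}+\iota$ with $\iota$ merely an element of $A$. Your identity $\D^{-1}(an\,r^{-1}\ckk R)=r^{-1}\D^{-1}(an\,\ckk R)$ followed by Proposition \ref{7.17} disposes only of the $2r^{-1}(\rhoc,-\sigc)$ part. The remainder $\D_1^{-1}(an\,\iota\,(\rhoc,-\sigc))$ is an $A\cdot(\rho,\sigma)$-type term: it is not of $\E_1$ or $\E_2$ form, and the Codazzi reduction converts only $\b$, not $\rho$ or $\sigma$. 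A direct H\"older estimate also fails, because it would require $\|r^{1-\frac{1}{b}}(\rhoc,-\sigc)\|_{L_t^b L_x^2}$ with $b>4$, which is not controlled by the curvature flux (only $L_t^2L_x^2$ is). This is exactly the term on which the paper spends most of its proof: it writes $(\rhoc,-\sigc)=H+(\rhocb,-\sigcb)$ with $H=\D_1\D_1^{-1}H$, integrates by parts to trade $\D_1$ for $\sn(an\tr\chi)$ (controlled via interpolation from $\|r^{\f12}\sn(an\tr\chi)\|_{L_t^\infty L_x^2}$ and $\N_1(\D_1^{-1}H)\les\gc$), and handles the mean part through the average estimates (\ref{avr}) and (\ref{7.20}). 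Without this step (or the alternative route through (\ref{botoap}) and $\|\La^{-\a_0}\rhoc\|_{L_t^\infty L_x^2}\les\gc$ from Proposition \ref{ep1.3}), your argument for (\ref{it0.13}) does not close.
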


In order to prove Proposition \ref{er0.35}, we may use the error
type terms introduced in Proposition \ref{errortype} to rewrite
(\ref{errp1}) in view of (\ref{Bia2}) and (\ref{Bia3}) as
\begin{equation}\label{esim2}
Err={\D}_1^{-1}(an\tr\chi(\rhoc,-\sigc))+{\E}_1(\Ab)+{\E}_2(\Ab).
\end{equation}

\begin{proof}[Proof of Proposition \ref{er0.35}]
(\ref{it0.12}) can be obtained by using Proposition \ref{P7.3},
(\ref{come1}), (\ref{7.24}) and (\ref{sob.m}) as follows,
\begin{align*}
\|\t1a {\D}^{-1}(C_0(\check R)\c F)\|_{L_t^b L_x^2}&\les
\|r^{\frac{1}{2}-\frac{1}{b}}C_0(\check R)\|_{L_t^b
L_x^2}\|F\|_{L_t^\infty L_x^4}\\
&\les (\gc){\N}_1(F),
\end{align*}
and similarly, by using Proposition \ref{errortype}, for $i=1,2$
$$\|\t1a {\D}^{-1}({\E}_i(\Ab)\c F)\|_{L_t^b L_x^2}\les\|r^{\f12-\frac{1}{b}} \E_i(\Ab)\|_{L_t^b L_x^2}\|F\|_{L_t^\infty L_x^4}\les
(\gc){\N}_1(F).
$$
Thus, in order to prove (\ref{it0.13}), in view of (\ref{esim2}), it
remains to show
\begin{equation*}
\|\t1a{\D}^{-1}\big({\D}_1^{-1}(an\tr\chi(\rhoc,-\sigc))\c
F\big)\|_{L_t^b L_x^2}\les {\N}_1(F)\Delta_0.
\end{equation*}
For this estimate, we proceed as follows.
 Let $H=(\rhoc-\rhocb,
-\sigc+\sigcb)$, then $H=\D_1\D_1^{-1}H$.
\begin{align*}
\|\t1a{\D}^{-1}\big({\D}_1^{-1}(an\tr\chi(\rhoc,-\sigc))\c
F\big)\|_{L_t^b L_x^2} &\les \|\t1a\D^{-1}(\D_1^{-1}(an\tr\chi \D_1
\D_1^{-1}H) F)\|_{L_t^b L_x^2} \\
&\quad\, +\|\t1a \D^{-1} \big(\D_1^{-1}(an\tr\chi(\rhocb,-\sigcb))\c
F)\|_{L_t^b L_x^2}
\end{align*}
By $I_1$ and $I_2$, we denote the two terms on the right of the
inequality. Using Proposition \ref{P7.3}, (\ref{sob.m}),
(\ref{sob.in}) and (\ref{7.24}),
\begin{align*}
I_1&\les \|\t1a\D^{-1}(an\tr\chi \D^{-1}H\c F)\|_{L_t^b
L_x^2}+\|\t1a\D^{-1}(\D_1^{-1}(\sn (an\tr\chi)
\D_1^{-1} H)\c F)\|_{L_t^b L_x^2}\\
&\les \|\t1a\D^{-1}H\c F\|_{L_t^b
L_x^2}+\|r^{-\frac{1}{b}+\frac{1}{2}}\D_1^{-1}(\sn(an\tr\chi)
\D^{-1}H)\|_{L_t^b L_x^2}\|F\|_{L_t^\infty L_x^4}\\
&\les \|F\|_{L_t^\infty L_x^4}\|r^{-\frac{1}{b}}\D^{-1}H\|_{L_t^b
L_x^4} +\|r^{1-\frac{1}{b}}\sn(an\tr\chi)\|_{L_t^b
L_x^2}\|\D^{-1}H\|_{L_t^\infty L_x^4}\|F\|_{L_t^\infty L_x^4}\\
&\les \N_1(F)\N_1(\D^{-1} H)\les (\Delta_0^2+\RR)\N_1(F)
\end{align*}
where we  employed
\begin{equation*}
\|r^{\f12-\frac{1}{b}}\sn (an\tr\chi)\|_{L_t^b L_x^2}\les \|r^\f12
\sn (an \tr\chi)\|_{L_t^\infty
L_x^2}^{1-\frac{2}{b}}\|\sn(an\tr\chi)\|^{\frac{2}{b}}_{L^2}\les\gc.
\end{equation*}
By Proposition \ref{P7.3}, (\ref{7.20}) and (\ref{sob.m}),
\begin{align*}
I_2&\les \|r^{-\frac{1}{b}+\f12}\D^{-1}(an\tr\chi(\rhocb,
-\sigcb))\|_{L_t^b L_x^2}\|F\|_{L_t^\infty L_x^4}
\\&\les\|r^{\frac{3}{2}-\frac{1}{b}}an\tr\chi (\rhocb,
-\sigcb)\|_{L_t^b L_x^2}\|F\|_{L_t^\infty L_x^4}\\
&\les \|r^{\frac{3}{2}-\frac{1}{b}}(\rhocb, -\sigcb)\|_{L_t^b
L_\omega^2}\N_1(F)\\
&\les (\Delta_0^2+\R_0)\N_1(F).
\end{align*}
The proof is complete.
\end{proof}

\subsection{$L_t^b L_x^2$ estimates for $\sn_L {\D}^{-1}\F$}\label{l2estimates}

We will establish the following
\begin{proposition}\label{it0.20}
Denote by $\D^{-1}\F$ either $\D^{-2}\ckk R$ or $
\D_1^{-1}(a\delta+2a\lambda)$. There holds
\begin{equation*}
\|r^{-\frac{1}{b}} \Dt {\D}^{-1}\F\|_{L_t^b L_x^2}\les {\mathcal
R}_0+\Delta_0^2, \quad\,4<b<\infty.
\end{equation*}
\end{proposition}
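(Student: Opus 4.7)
My plan is to reduce both cases to the same machinery by commuting $\Dt$ past $\D^{-1}$:
\begin{equation*}
\Dt\D^{-1}\F \;=\; \D^{-1}\Dt\F \;+\; [\Dt,\D^{-1}]\F.
\end{equation*}
The commutator piece is of the form already addressed in Corollary~\ref{betaes}, while the first piece reduces, via the Bianchi identity (\ref{se1}) in Case~1 and via the transport equation (\ref{ldelta}) in Case~2, to quantities whose bounds are either already established or are of the error types $\E_1,\E_2$ handled by Proposition~\ref{errortype}.

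For $\F=\D^{-1}\ckk R$, applying the commutation identity twice and substituting (\ref{se1}) produces
\begin{equation*}
\Dt\D^{-2}\ckk R \;=\; \D^{-1}(anR_0) \;+\; \D^{-1}\ff \;+\; \D^{-1}C_0(\ckk R) \;+\; [\Dt,\D^{-1}]\D^{-1}\ckk R.
\end{equation*}
The last term is controlled by (\ref{come1}) with $F=\D^{-1}\ckk R$: the right side is $\N_1(r^{-1/2}\D^{-2}\ckk R)$, which by Lemma~\ref{A.8} and (\ref{Coro1}) is in turn bounded by $\N_2(\D^{-1}\ckk R)\les\gc$. For each of the three $\D^{-1}(\cdot)$ terms I would invoke Proposition~\ref{P7.3} (with $p=4/3$, giving $\|\D^{-1}F\|_{L_x^2}\les\|r^{1/2}F\|_{L_x^{4/3}}$), then combine with H\"older on $S$ and interpolation in $t$, using the inputs $\|R_0\|_{L^2}\les\RR$, $\|\ff\|_{\P^0}\les\gc$ from (\ref{errp}), and $\|C_0(\ckk R)\|_{\P^0}\les\gc$ from (\ref{7.21}). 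For $\F=a\delta+2a\lambda$, the transport equation (\ref{ldelta}) gives $\Dt\F$ as $an\,\div\zb + an\ckk\rho$ plus quadratics in $\Ab$; writing $an\,\div\zb=\div(an\zb)-\sn(an)\c\zb$, the divergence reduces by Proposition~\ref{P4.1} to an $L_x^2$ control of $an\zb$ (supplied by Proposition~\ref{propab} and (\ref{sob.in})), the curvature term $\D_1^{-1}(an\ckk\rho)$ is handled like $\D^{-1}(anR_0)$ in Case~1, and everything else is of $\E_1,\E_2$ type from Proposition~\ref{errortype}. The commutator $[\Dt,\D_1^{-1}]\F$ is bounded analogously via (\ref{come1}) together with the bound $\N_1(\D_1^{-1}\F)\les\gc$ supplied by (\ref{7.24}).

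The main obstacle will be the weighted $L_t^b L_x^2$ conversion for the three $\D^{-1}(\cdot)$ terms in Case~1: the inputs $\ff$, $C_0(\ckk R)$, and $R_0$ are only controlled in $L^2(\H)$ or dyadic versions thereof --- not in $L_t^\infty L_x^p$ --- so the $r$-gain afforded by the outer $\D^{-1}$ must be balanced against the weight $r^{-1/b}$ through a careful interpolation argument. Fortunately this is precisely the balancing scheme already carried out in (\ref{7.20}) and in the proofs of Propositions~\ref{er0.35}--\ref{errortype}; the strategy is parallel and the ingredients (Proposition~\ref{P7.3}, Lemma~\ref{sl2}, together with (\ref{sob.in})) are all in place, although the bookkeeping of indices and weights will need care.
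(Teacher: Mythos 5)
Your top-level decomposition is exactly the paper's: writing $\Dt\D^{-2}\ckk R=[\Dt,\D^{-1}]\D^{-1}\ckk R+\D^{-1}C_0(\ckk R)+\D^{-1}(\D^{-1}\Dt\ckk R)$ and substituting (\ref{se1}), and in Case 2 using (\ref{ldelta}) with (\ref{0.8}) to trade $\D_1^{-1}\Dt(a\delta+2a\lambda)$ for $an\zb$ plus curvature and error terms (this is the paper's identity (\ref{nabzb})). The outer commutator handled by (\ref{come1}) together with Lemma \ref{A.8} and (\ref{Coro1}) is also correct and is what the paper does.

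The gap is in how you propose to bound $\|r^{-1/b}\D^{-1}\ff\|_{L_t^b L_x^2}$ and $\|r^{-1/b}\D^{-1}(anR_0)\|_{L_t^b L_x^2}$. The inputs you cite --- $\|\ff\|_{\P^0}\les\gc$ from (\ref{errp}), $\|C_0(\ckk R)\|_{\P^0}\les\gc$ from (\ref{7.21}), $\|R_0\|_{L^2}\les\RR$ --- are all $L_t^2$-type bounds, and no amount of H\"older on $S$ plus ``interpolation in $t$'' upgrades an $L_t^2$ bound to $L_t^b$ with $b>4$ without an $L_t^\infty$ endpoint; that endpoint is precisely what is missing and what the paper has to manufacture. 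For $\D^{-1}(an(\rho,\sigma))$ (the $\udb$ branch) the paper proves Proposition \ref{7.17}, which rests on the average estimates (\ref{avr1})--(\ref{avr}) obtained from the transport equation for $\bar\rho$, giving the $L_t^\infty$ control $|r^{3/2}\rhocb|\les\gc$ needed in the interpolation (\ref{7.20}). For $\D^{-1}\ff$ the paper proves Proposition \ref{aux3}/(\ref{it0.31}), which is not a corollary of the generic error-type estimates: it requires the structural rewriting (\ref{errt}), the identity $\udb={}^\star\D_1{}^\star\D_1^{-1}\udb$ to integrate by parts in the term $\W$, the identity $\sigc=\curl\zeta$ for $\V$, and, crucially, the negative-Sobolev bound $\|\La^{-\a_0}\rhoc\|_{L_t^\infty L_x^2}\les\gc$ from Proposition \ref{ep1.3} fed through (\ref{botoap}) to handle $\D^{-2}(an\zb\,\rhoc)$ --- a product of $A$ with curvature for which the na\"ive $L_t^b L_x^{4/3}$ H\"older estimate fails. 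None of these ingredients appears in your outline, and the claim that the scheme of (\ref{7.20}) and Propositions \ref{er0.35}--\ref{errortype} already covers them is not accurate: those results deliberately exclude the terms carrying raw curvature, which is why the paper isolates them in Propositions \ref{7.17} and \ref{aux3}. Your Case 2 reduction is fine, but it too ultimately leans on Proposition \ref{7.17} for the $\D_1^{-1}(an(\rhoc,\sigc))$ term, so the same missing $L_t^\infty$ input reappears there.
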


{\bf Case 1:
  $\F=\D^{-1} \ckk R$.}

We denote by ${\D}^{-1}\ff$ either ${\D}_2^{-1} Err$ or
${}^\star{\D}_1^{-1}\widetilde{Err}$. To prove Proposition
\ref{it0.20}, we will rely on (\ref{it0.31}) in the following
result.
\begin{proposition}\label{aux3}
For $\ff=(Err, \widetilde{Err})$ with $Err$ and $\widetilde{Err}$
given by (\ref{errp1}), there hold
\begin{equation}\label{it0.31}
\|r^{-\frac{1}{b}} {\D}^{-1}\ff\|_{L_t^b L_x^2}\les
\Delta_0^2+\R_0,\quad\,4<b<\infty,
\end{equation}
\begin{equation}\label{itF1}
\|\sn{\D}^{-1}\ff\|_{{\mathcal P}^0}\les \Delta_0^2+{\R}_0.
\end{equation}
\end{proposition}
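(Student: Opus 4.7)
The plan is to establish (\ref{it0.31}) first and then deduce (\ref{itF1}) from it via the Hodge-elliptic $\P^\sigma$-estimate (Theorem \ref{in}) together with the already-known bound $\|\ff\|_{\P^0}\les\Delta_0^2+\R_0$ from (\ref{errp}).

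For (\ref{it0.31}), the starting point is the symbolic identity (\ref{esim2}),
\[
Err=\D_1^{-1}\big(an\tr\chi(\rhoc,-\sigc)\big)+\E_1(\Ab)+\E_2(\Ab),
\]
which decomposes $Err$ into a coupling of $\tr\chi$ with the zero-order curvature $\ckk R$ plus two error-type pieces. Using (\ref{Bia4}) in the same spirit, I would derive an analogous decomposition for $\widetilde{Err}$, so that in both cases $\ff$ has the schematic form $\D_1^{-1}(an\tr\chi\ckk R)+\E_1(\Ab)+\E_2(\Ab)$. I will then apply $\D^{-1}$ and bound each piece separately in the norm $\|r^{-1/b}\,\cdot\,\|_{L_t^b L_x^2}$.

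For the error-type pieces $\D^{-1}\E_i(\Ab)$, Proposition \ref{P4.1}(i) gives the slicewise bound $\|r^{-1}\D^{-1}G\|_{L^2(S)}\les\|G\|_{L^2(S)}$; combined with $r\les 1$ and Proposition \ref{errortype}'s estimate $\|r^{-1/b}\E_i(\Ab)\|_{L_t^b L_x^2}\les(\gc)\N_1(\Ab)$, this immediately yields the desired $(\gc)^2$-type bound. For the main term $\D^{-1}\D_1^{-1}(an\tr\chi\ckk R)$, I would split $\ckk R=H+\ovl{\ckk R}$ with $H$ the zero-mean part. The mean part is controlled through the pointwise bound (\ref{avr}) together with Proposition \ref{P7.3}. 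For the zero-mean part I use $H=\D_1\D_1^{-1}H$ and Leibniz commutation,
\[
\D_1^{-1}(an\tr\chi\,H)=an\tr\chi\,\D_1^{-1}H-\D_1^{-1}\bigl(\sn(an\tr\chi)\,\D_1^{-1}H\bigr),
\]
then apply the outer $\D^{-1}$ and exploit (\ref{7.16}) to control $\D_1^{-1}H$ in the $r^{-1/b-1/2}$-weighted $L_t^b L_x^2$ norm; the factor $\sn(an\tr\chi)$ is absorbed via Proposition \ref{n1tr}, (\ref{smry1}), and Sobolev embedding.

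For (\ref{itF1}), Theorem \ref{in} with $\sigma=0$, $1/2\le\a_0<q<1$, and $4<b<\infty$ applied to $\xi=\D^{-1}\ff$ yields
\[
\|\sn\D^{-1}\ff\|_{\P^0}\les\|\ff\|_{\P^0}+\Delta_0\|\D^{-1}\ff\|_{L_t^b L_x^2}^{q}\|\ff\|_{L^2(\H)}^{1-q}.
\]
Since $r\les 1$, $\|\D^{-1}\ff\|_{L_t^b L_x^2}\le\|r^{-1/b}\D^{-1}\ff\|_{L_t^b L_x^2}\les\Delta_0^2+\R_0$ by (\ref{it0.31}), while both $\|\ff\|_{\P^0}$ and $\|\ff\|_{L^2(\H)}$ are controlled by (\ref{errp}); combining these gives (\ref{itF1}). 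The main obstacle throughout is precisely the term $\D^{-1}\D_1^{-1}(an\tr\chi\ckk R)$: a direct Hodge inversion would demand $\|\ckk R\|_{L_t^b L_x^2}$ for $b>4$, which is unavailable from the curvature flux, and it is the mean/zero-mean split combined with the Leibniz commutation that routes the estimate through the already-established bounds (\ref{7.16}) and (\ref{avr}).
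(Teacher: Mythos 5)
Your handling of $Err$, of the two error-type pieces $\E_i(\Ab)$, and your derivation of (\ref{itF1}) from (\ref{it0.31}) via Theorem \ref{in} and (\ref{errp}) all match the paper's route (the paper simply quotes Proposition \ref{er0.35} with $F=1$ for the $Err$ case, but the mechanics inside that proposition are exactly your mean/zero-mean split plus Leibniz commutation). The gap is in $\widetilde{Err}$. Your claim that (\ref{Bia4}) yields for $\widetilde{Err}$ the same schematic form $\D_1^{-1}(an\tr\chi\,\ckk R)+\E_1(\Ab)+\E_2(\Ab)$ is not correct: the Bianchi equation for $\udb$ produces
$$
\wt{Err}={}^\star\D_1^{-1}\bigl(an\tr\chi\,\udb+an\Ab\c(\sn A+A\c A+r^{-1}A)\bigr)+{}^\star\D_1^{-1}\bigl(an(\zb\,\rho-\zb\,{}^\star\s)\bigr),
$$
and the last piece contains the product $\zb\c\rho$ of a Ricci coefficient with the curvature component itself. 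This term is not of $\E_1$ or $\E_2$ type ($\rho$ is only $L^2$ on $\H$ and is not expressible as $\sn A$ modulo admissible errors — the Hodge system (\ref{hdg1}) would bring in $\mu$, for which no $\N_1$ bound is available), and it is not of the form $\tr\chi\,\ckk R$, so neither your error-type bound nor your mean/zero-mean-plus-Leibniz argument applies to it. The paper handles it with a separate bilinear estimate, $\|\D^{-1}(an\rhoc\c G)\|_{L_t^b L_x^2}\les\|\La^{-\a_0}\rhoc\|_{L_t^\infty L_x^2}\,\N_1(G)$ (see (\ref{botoap})), proved by a Littlewood--Paley decomposition and resting on the negative-regularity bound $\|\La^{-\a_0}\rhoc\|_{L_t^\infty L_x^2}\les\gc$ of Proposition \ref{ep1.3}; the companion $\zb\,{}^\star\sigc$ term is reduced to $\E_2(\zeta)$ via $\sigc=\curl\zeta$. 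Without this ingredient your argument cannot close for $\widetilde{Err}$.

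A secondary inaccuracy: the remaining piece ${}^\star\D_1^{-1}(an\tr\chi\,\udb)$ involves $\udb$, not $(\rhoc,-\sigc)$, so the pointwise average bound (\ref{avr}) is unavailable there; the paper instead writes $\udb={}^\star\D_1{}^\star\D_1^{-1}\udb$, commutes by Leibniz, and uses $\N_1({}^\star\D_1^{-1}\udb)\les\gc$ from (\ref{7.24}). This is close in spirit to your Leibniz step, but you should carry it out with $\D_1^{-1}\udb$ rather than with a mean/zero-mean split of a scalar pair.
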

Assuming (\ref{it0.31}), now we prove Proposition \ref{it0.20}.

\begin{proof}[Proof of Proposition \ref{it0.20} for {\it Case 1}]
In view of the formula
\begin{equation*}
\Dt {\D}^{-2}\check R=[\Dt, {\D}^{-1}]{\D}^{-1}\check
R+{\D}^{-1}[\Dt, {\D}^{-1}]\check R+{\D}^{-2}\Dt \check R,
\end{equation*}
we only need to show for $4<b<\infty$, there hold
\begin{align}
\|r^{-\frac{1}{b}} [\Dt, {\D}^{-1}]{\D}^{-1}\check R\|_{L_t^b
L_x^2}&\les
\Delta_0^2+{\R}_0\label{lf1}\\
\|r^{-\frac{1}{b}}{\D}^{-1}[\Dt, {\D}^{-1}]\check R\|_{L_t^b
L_x^2}&\les
\Delta_0^2+{\R}_0\label{lf2}\\
\|r^{-\frac{1}{b}} {\D}^{-2}\Dt \check R\|_{L_t^b L_x^2}&\les
\Delta_0^2+{\R}_0\label{lf3}.
\end{align}
(\ref{lf1}) follows from (\ref{come1}) with $F={\D}^{-1}\check R$,
by using the fact that ${\N}_1(r^{-\frac{1}{2}}{\D}^{-2}\check
R)\les{\N}_2({\D}^{-2}\check R)\les \Delta_0^2+\R_0$. (\ref{lf2})
was proved in  (\ref{main:er}).

It only remains to prove (\ref{lf3}). Consider first the case
${\D}^{-2}\Dt \check R={\D}_2^{-1}{\D}_1^{-1}\Dt(\check\rho, -\check
\sig)$. By
 $an\b=\sn(an
A)+an(A\c A+r^{-1}A)$ , (\ref{sob.m}) and (\ref{As}),
\begin{align*}
\|\t1a \D^{-1}(an\b)\|_{L_t^b L_x^2}&\les \|\t1a{\D}^{-1}(\sn(an
A)+an(A\c
A+r^{-1}A))\|_{L_t^b L_x^2}\\
&\les \|\t1a A\|_{L_t^b L_x^2}+\|r^{-\frac{1}{b}+1} A\c A\|_{L_t^b
L_x^2}\\&\les \N_1(A)+\N_1(A)^2\les \Delta_0^2+\RR.
\end{align*}
Then by (\ref{it0.31}), we obtain
\begin{equation*}
\|\t1a {\D}^{-1}(Err+an\b)\|_{L_t^b L_x^2}\les \Delta_0^2+{\R}_0.
\end{equation*}
In view of the definition of  $Err$ in (\ref{errp1}), we have
\begin{equation}\label{lin3}
\|\t1a\D^{-2}\Dt (\check \rho, -\check \sig)\|_{L_t^b L_x^2}\les
\gc.
\end{equation}
 Using
$ {}^\star{\D}_1^{-1}\Dt {\udb}=an(\rho, \sig)+\widetilde{Err}, $
Proposition \ref{7.17} and (\ref{it0.31}),
 \begin{align}\label{lin4}
 \|\t1a\D^{-2} \Dt \udb\|_{L_t^b L_x^2} &\les
\|\t1a {\D}_1^{-1}(an(\rho, \s))\|_{L_t^b L_x^2}+\|\t1a {\D}^{-1}
\ff\|_{L_t^b L_x^2} \nn \\&\les\gc
\end{align}
In view of (\ref{lin3}) and (\ref{lin4}), (\ref{lf3}) is proved.

\end{proof}

To prove Proposition \ref{aux3}, we will rely on the following
result.

\begin{lemma}
Let $\D^{-1}$ denote one of the operators $\D_1^{-1}$, $\D_2^{-1}$
or ${}^\star \D_1^{-1}$. For any appropriate $S$-tangent tensor
field $G$, there holds
\begin{equation}\label{botoap}
\|\D^{-1}(an\rhoc\c G)\|_{L_t^b L_x^2}\les \|\La^{-\a_0}
\rhoc\|_{L_t^\infty L_x^2}\N_1(G)
\end{equation}
where $\a_0\ge\f12$ and $4<b<\infty$.
\end{lemma}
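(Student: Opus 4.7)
The plan is to prove the estimate by duality. Pair $\D^{-1}(an\rhoc G)$ against a test tensor $h\in L_t^{b'}L_x^2$ with $b'=b/(b-1)\in(1,4/3)$, and set $\psi_t:=(\D^{-1})^\ast h_t$ on each leaf $S_t$ (so $(\D_1^{-1})^\ast = ({}^\star\D_1)^{-1}$, etc.). Proposition \ref{P4.1} supplies the elliptic bound $\|\sn\psi\|_{L^2(S_t)}+\|r^{-1}\psi\|_{L^2(S_t)}\les\|h\|_{L^2(S_t)}$, so the dual pairing reduces to estimating $\int_\H an\rhoc\, G\,\psi\, na\, d\mu_\ga dt$.

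Next I would exploit the self-adjointness of $\La^{\a_0}$ on each leaf (visible from the integral representation (\ref{laalf})): write $\rhoc=\La^{\a_0}(\La^{-\a_0}\rhoc)$ and transfer the fractional power onto the test factor to obtain
\begin{equation*}
\int_\H an\rhoc\, G\,\psi\, na\, d\mu_\ga dt=\int_\H (\La^{-\a_0}\rhoc)\cdot \La^{\a_0}(anG\psi)\, na\, d\mu_\ga dt.
\end{equation*}
Cauchy--Schwarz in $x$ and H\"older in $t$ with exponents $(\infty,1)$ then reduce matters to showing
$
\|\La^{\a_0}(anG\psi)\|_{L_t^1 L_x^2}\les \N_1(G)\,\|h\|_{L_t^{b'}L_x^2}.
$
For this I would apply the product rule of Proposition \ref{lambda:1}(iv) with the splitting $(anG)\cdot\psi$, combined with the trade-off $\|\La^{\a_0}F\|_{L^2}\les r^{1-\a_0}\|\La F\|_{L^2}$ from Proposition \ref{lambda:1}(iii). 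The elliptic bound on $\psi$ gives $\|\La\psi\|_{L^2}\les \|h\|_{L^2}$, and the Leibniz rule together with the control $\|\sn(an)\|_{L_t^\infty L_x^4}\les \gc$ (from (\ref{an})) yields $\|\La(anG)\|_{L^2}\les \|\sn G\|_{L^2}+(\gc)\|G\|_{L^4}+\|r^{-1}G\|_{L^2}$. The pointwise-in-$t$ estimate collapses to
\begin{equation*}
\|\La^{\a_0}(anG\psi)\|_{L^2}\les r^{1-\a_0}\|\La(anG)\|_{L^2}\|h\|_{L^2}.
\end{equation*}

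Integrating in $t$ by H\"older with exponents $(b,b')$, the zeroth-order pieces $r^{-\a_0}G$ and $r^{1-\a_0}A\cdot G$ are controlled by $\N_1(G)$ via (\ref{sob.in}) and (\ref{sob.m}) (using $\a_0\ge 1/2$, $r\le 1$, and the bound $\|A\|_{L_t^\infty L_x^4}\les\gc$ from Proposition \ref{smry1}). The hard part will be the first-order piece $\|r^{1-\a_0}\sn G\|_{L_t^b L_x^2}$, which is \emph{not} directly bounded by $\N_1(G)$, since $\N_1$ only controls $\sn G$ in $L_t^2 L_x^2$ and $b>4$. To close this, I would decompose $G=G_{\le 0}+\sum_{k>0}G_k$ via the GLP projections of Proposition \ref{P2}; on each dyadic piece the finite-band property trades one spatial derivative for a factor $2^k r^{-1}$, so that $\|r^{1-\a_0}\sn G_k\|_{L_t^b L_x^2}\les 2^k\|r^{-\a_0}G_k\|_{L_t^b L_x^2}$, while the $r$-weight $r^{-\a_0}$ with $\a_0\ge1/2$ matches the interpolated Sobolev bound $\|r^{-1/b-1/2}G_k\|_{L_t^b L_x^2}\les \N_1(G)$. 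Re-summing in $k$ using Bessel's inequality (Proposition \ref{P2}(ii)) and the gain from the alternative estimate $\|P_k G\|_{L^2}\les 2^{-k}r\|\sn G\|_{L^2}$ of Proposition \ref{P2}(iii), the series converges to $\N_1(G)$.

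The main obstacle is exactly this final reassembly: one must balance the derivative loss against the $r$-weight and the dyadic factors, leveraging both directions of the finite-band property together with the sharp Sobolev estimates of Section \ref{presob}. Once this is carried out, collecting all terms yields the claimed inequality $\|\D^{-1}(an\rhoc G)\|_{L_t^b L_x^2}\les\|\La^{-\a_0}\rhoc\|_{L_t^\infty L_x^2}\N_1(G)$ for every $\a_0\ge1/2$ and $4<b<\infty$.
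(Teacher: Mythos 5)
There is a genuine gap at the final step, and it is exactly the step you flag as "the hard part." After the duality reduction and the product rule of Proposition \ref{lambda:1}(iv), you must control a \emph{full} spatial derivative of $anG$ in $L_t^b L_x^2$ with $b>4$ (both branches of (iv) force this: one puts $\|\La(anG)\|_{L_x^2}$ directly, and the other requires $\|\La^{\a_0}(anG)\|_{L_t^bL_x^2}$, which by the interpolation in \ref{lambda:1}(iii) and H\"older in $t$ would need $\|\La(anG)\|_{L_t^qL_x^2}$ with $q$ determined by $\frac1b=\frac14+\frac1{2q}$ — impossible for $b>4$). The quantity $\|r^{1-\a_0}\sn G\|_{L_t^b L_x^2}$ is simply not bounded by $\N_1(G)$, and your dyadic repair does not converge: finite band gives $\|r^{1-\a_0}\sn G_k\|_{L_t^bL_x^2}\les 2^k\|r^{-\a_0}G_k\|_{L_t^bL_x^2}$, and (\ref{TE}) gives $\|r^{-\frac12-\frac1b}P_kG\|_{L_t^bL_x^2}\les 2^{-(\frac12+\frac1b)k}\N_1(G)$, so each dyadic term is of size $2^{k(\frac12-\frac1b)}\N_1(G)$, which \emph{grows} in $k$ since $b>4$. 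The alternative bound $\|P_kG\|_{L^2}\les 2^{-k}r\|\sn G\|_{L^2}$ just returns you to $\|\sn G\|_{L_t^bL_x^2}$ with no summable gain. The improved $t$-integrability in (\ref{TE}) is purchased with the factor $2^{-(\frac12+\frac1b)k}$; spending a full $2^k$ on a derivative destroys it.

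This is precisely why the paper never forms $\La^{\a_0}(anG\psi)$. Instead it decomposes \emph{both} the output of $\D^{-1}$ and the input $G$ dyadically, $\Omega_{nl}=\D^{-1}P_l^2(an\rhoc\,P_n^2G)$, and uses the bilinear estimates (\ref{om2})--(\ref{om3}), which transfer onto the dyadic piece of $G$ only a \emph{fractional} derivative weight $2^{\a m}r^{-\a_0}$ (or $2^n r^{-1}$ against a compensating $2^{-(1-\a)l}$ from $\|\D^{-1}\La^{\a}P_l\|$). The freedom to choose $\a$ strictly between $\a_0$ and $\frac12+\frac1b$ is exactly what makes both case sums ($n<l$ and $l<n$) geometric after applying (\ref{TE}). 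If you want to salvage a duality argument you would have to reproduce this frequency-localized bookkeeping inside the pairing; the leafwise product rule \ref{lambda:1}(iv) is too lossy, since it always charges one of the two factors a full derivative.
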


\begin{proof}
We can adapt the proof for \cite[Lemma 4.4]{Qwang} in view of $\|\sn
(an)\|_{L_x^4 L_t^\infty }\les \gc$ and $an<C$, to derive the
following estimates for $S$ tangent tensor fields $F$,
\begin{align}
&\|\Lambda^{-\a}(an \rhoc \c F_m)\|_{L^2(S)} \les
\|\Lambda^{-\a_0}\rhoc\|_{L_t^\infty L_x^2}2^m r^{-1} \|P_m
F\|_{L^2(S)}, \label{om2}\\
&\|P_m(an \rhoc\c  \D^{-1}P_l F)\|_{L^2(S)}\les
\|\La^{-\a_0}\rhoc\|_{L_t^\infty L_x^2} 2^{\a m } r^{-\a_0}\|P_l
F\|_{L^2(S)}\label{om3},
\end{align}
where $\a>\a_0\ge1/2$ and $\D^{-1}$ denotes either $\D_1^{-1}$ or
${}^\star\D_1^{-1}$.

Set $\Omega_{nl}:=\D^{-1}P_l^2(an \rhoc\c P_n^2 G),$ with $l, n\in
\Bbb N$.  We now prove
\begin{equation}\label{omeg2}
\sum_{l,n>0}\|\Omega_{nl}\|_{L_t^b L_x^2}\les
\|\Lambda^{-\a_0}\rhoc\|_{L_t^\infty L_x^2}\N_1(G),
\end{equation}
 and lower frequency terms can be treated similarly.
By duality argument, Proposition \ref{lambda:1} (iii) and Lemma
\ref{dual},
\begin{equation}\label{om4}
\|\D^{-1} \Lambda^{\a} P_l F\|_{L^2(S)}\les 2^{(-1+\a)l}r^{1-\a}
\|F\|_{L^2(S)}.
\end{equation}
We first prove (\ref{omeg2}) for the case $0<n<l$. With the help of
(\ref{om4}) and (\ref{om2}),\begin{align*}
\|\Omega_{nl}\|_{L_x^2}&\les 2^{-(1-\a)l} 2^n
r^{-\a}\|\Lambda^{-\a_0}\rhoc\|_{L_t^\infty L_x^2}\|P_n G\|_{L_x^2}.
\end{align*}
Take $L_t^b$ norm for $4<b<\infty$, and (\ref{TE}) in Lemma
\ref{sl2}
\begin{align*}
\|\Omega_{nl}\|_{L_t^b L_x^2}&\les 2^{-(1-\a)l}
2^{n(\frac{1}{2}-\frac{1}{b})}
r^{\frac{1}{2}+\frac{1}{b}-\a}\|\Lambda^{-\a_0}\rhoc\|_{L_t^\infty
L_x^2}\N_1(G).
\end{align*}
Since we can choose $\a_0<\a<\frac{1}{2}+\frac{1}{b}$, we deduce
\begin{equation*}
\sum_{0<n<l} \|\Omega_{nl}\|_{L_t^b L_x^2}\les
\|\Lambda^{-\a_0}\rhoc\|_{L_t^\infty L_x^2}\N_1(G).
\end{equation*}

For the case $0<l<n$, let us pair $\Omega_{nl}$ with any $S$ tangent
tensor $F$ with $\|F\|_{L^2(S)}\le 1$. By (\ref{om3}),
\begin{align*}
\l\Omega_{nl}, F\r&=\l P_l(an \rhoc P_n^2 G), P_l {}^\star\D^{-1}F\r\\
&\les 2^{\a n} r^{-\a_0}\|\La^{-\a_0}\rhoc\|_{L_t^\infty L_x^2}\|P_n
G\|_{L_x^2}.
\end{align*}
Hence, by (\ref{TE}) in Lemma \ref{sl2},
\begin{align*}
\|\Omega_{nl}\|_{L_t^b L_x^2}\les 2^{\a
n-(\frac{1}{2}+\frac{1}{b})n}r^{-\a_0+\frac{1}{2}+\frac{1}{b}}\|\La^{-\a_0}\rhoc\|_{L_t^\infty
L_x^2}\N_1(G).
\end{align*}
Consequently,
\begin{equation*}
\sum_{0<l<n}\|\Omega_{nl}\|_{L_t^b L_x^2}\les
\|\La^{-\a_0}\rhoc\|_{L_t^\infty L_x^2}\N_1(G).
\end{equation*}
(\ref{botoap}) follows.
\end{proof}

We are ready to prove Proposition \ref{aux3}.

\begin{proof}[Proof of Proposition \ref{aux3}]
(\ref{itF1}) can be derived by using (\ref{it0.31}), Theorem
\ref{in} and (\ref{errp}).

Now we consider (\ref{it0.31}). By letting $F=1$ in (\ref{it0.13}),
we obtain for $4<b<\infty$ that
$$ \|\t1a{\D}_2^{-1}Err\|_{L_t^b L_x^2}\les \Delta_0^2+\R_0.$$ Thus
we only need to consider ${\D}_1^{-1}\widetilde{Err}$.

By definition of $\wt{Err}$  in (\ref{errp1}), in view of
(\ref{0.7}), we rewrite $\wt{Err}$ symbolically as follows
\begin{equation}
\wt{Err}={}^\star{\D}_1^{-1}(an \tr\chi\udb+an\Ab\c(\sn A+A\cdot
A+r^{-1}A))+{}^\star{\D}_1^{-1}(an(\zeta\c\rho-\zeta{}^\star
\s)).\label{errt}
\end{equation}

By Propositions \ref{P7.3} and \ref{errortype},
\begin{equation*}
\|\t1a \D^{-2}(an \Ab\c (\sn A+r^{-1}A+A\c A))\|_{L_t^b
L_x^2}\les\gc.
\end{equation*}
According to (\ref{errt}), we consider $\W=\|\t1a\D^{-2}(an\tr\chi
\udb)\|_{L_t^b L_x^2}$, and
$$
\U=\|\t1a {\D}^{-2}(an\zb\c\check \rho)\|_{L_t^b L_x^2},\,\quad
\V=\|\t1a {\D}^{-2}(an\zb{}^\star \check \sig)\|_{L_t^b L_x^2}.
$$

By $\udb={}^\star\D_1{}^\star\D_1^{-1}\udb$, also using Propositions
\ref{P4.1} and \ref{P7.3}, (\ref{sob.in}), (\ref{As}) and
(\ref{7.24})
\begin{align*}
\W&\les \|\t1a\D^{-2} \left(({}^\star\D_1(an\tr\chi
{}^\star\D_1^{-1}\udb))-\sn(an\tr\chi){}^\star\D_1^{-1}\udb\right)\|_{L_t^b
L_x^2}\\
&\les \|r^{1-\frac{1}{b}} (an\tr\chi {}^\star\D_1^{-1}\udb)\|_{L_t^b
L_x^2}+\|r^{\frac{3}{2}-\frac{1}{b}}
\sn(an\tr\chi) {}^\star{\D}_1^{-1}\udb\|_{L_t^{b} L_x^{4/3}}\\
&\les\|r^{-\frac{1}{b}} {}^\star\D_1^{-1} \udb\|_{L_t^b
L_x^2}+\|r^{1-\frac{1}{b}}{}^\star\D_1^{-1}
\udb\|_{L_t^{b} L_x^4}\|r^\f12\sn (an\tr\chi )\|_{L_t^\infty L_x^2}\\
 &\les\N_1(r^{1/2} {}^\star\D_1^{-1}\udb)(\|r^{\f12}\sn(an\tr\chi)\|_{L_t^\infty
L_x^2}+1)\les\Delta_0^2+\RR.
\end{align*}
By (\ref{0.8}), clearly $\check \sig=curl \zeta$. Thus by
Propositions \ref{P7.3} and \ref{errortype}, we obtain
$$ \V=\|\t1a {\D}^{-1}{\E}_2(\zeta)\|_{L_t^b L_x^2}\les
{\N}_1(\zeta)\N_1(\zb)\les (\gc)^2. $$

 By Proposition
\ref{P4.1} and (\ref{botoap}), we have
\begin{align}
\U&\les \|\t1a {\D}^{-2}(an\rhoc\c \zb)\|_{L_t^b L_x^2}\les
\|r^{-\frac{1}{b}+1}\D^{-1}(an \rhoc\c\zb)\|_{L_t^b L_x^2}\nn\\&\les
\N_1(\zb)\|\La^{-\a_0}\rhoc\|_{L_t^\infty L_x^2}\les (\gc)^2\nn
\end{align}
where  we employed (\ref{smry1}) and (\ref{lrhoc}) to obtain the
last inequality.
\end{proof}

{\bf Case 2: $\F=(a\delta+2a\lambda)$.} We first give a slightly
stronger result than Proposition \ref{it0.20} for Case 2.
\begin{proposition}
There holds for $4<b<\infty$,
\begin{equation}\label{case2}
\|r^{-\f12-\frac{1}{b}}\Dt \D_1^{-1}(a\delta+2a\lambda)\|_{L_t^b
L_x^2}\les \Delta_0^2+\RR.
\end{equation}
\end{proposition}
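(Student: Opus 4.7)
The plan is to mirror the strategy used for Case 1: split
\begin{equation*}
\Dt \D_1^{-1}(a\delta+2a\lambda) = [\Dt, \D_1^{-1}](a\delta+2a\lambda) + \D_1^{-1}\Dt(a\delta+2a\lambda),
\end{equation*}
and estimate each piece in the weighted $L_t^b L_x^2$ norm. The crucial observation, and the reason Case 2 admits a weight stronger by a factor of $r^{-1/2}$ than Case 1, is that the transport equation (\ref{ldelta}) yields a principal term of the form $an\div\zb$, on which $\D_1^{-1}$ algebraically cancels the divergence and reduces the principal part of $\D_1^{-1}\Dt(a\delta+2a\lambda)$ to the zeroth-order quantity $an\zb$.

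For $\D_1^{-1}\Dt(a\delta+2a\lambda)$ I will substitute $\Dt=anL$ into (\ref{ldelta}) to obtain
\begin{equation*}
\Dt(a\delta+2a\lambda) = an\div\zb + an\ckk\rho + an|\zb|^2 + a^2 n\tr\chi\,\nab_N\log n - \tfrac{3}{2}a^2n\tr\chi\,\delta.
\end{equation*}
The curvature contribution $\D_1^{-1}(an\ckk\rho,0)$ is controlled by Proposition \ref{7.17} (supplementing the missing $\ckk\sig$ component by (\ref{0.8}), which turns it into $\curl\zb$ plus $A\cdot\Ab$ error), while the three lower-order pieces are products of types $A\cdot A$ and $\tr\chi\cdot A$; applying Proposition \ref{P7.3} with $p=4/3$ reduces them to $L_t^b L_x^{4/3}$ bounds that follow from H\"older, (\ref{sob.m}), (\ref{sob.in}) and (\ref{As}). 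The divergence term is rewritten by Leibniz as $an\div\zb = \div(an\zb) - \sn(an)\cdot\zb$, giving
\begin{equation*}
\D_1^{-1}(an\div\zb,0) = an\zb - \D_1^{-1}(0, \curl(an\zb)) - \D_1^{-1}(\sn(an)\cdot\zb,0)
\end{equation*}
modulo the kernel of $\D_1$. The leading piece obeys $\|r^{-1/2-1/b}\,an\zb\|_{L_t^b L_x^2}\les \N_1(\zb)\les\gc$ by (\ref{sob.in}); for the remaining pieces I will expand $\curl(an\zb) = an\curl\zb + \sn(an)\wedge\zb$ and invoke (\ref{0.8}) to replace $\curl\zb$ by $\tfrac12 \chih\wedge\chibh - \sigma$, so these reduce to a curvature piece covered by the Proposition \ref{7.17}-type estimate and error-type products handled by Proposition \ref{errortype} (noting $\sn(an) = \zeta+\zb \in A$).

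For the commutator $[\Dt, \D_1^{-1}](a\delta+2a\lambda)$ I will use $[\Dt,\D_1^{-1}]\F = -\D_1^{-1}[\Dt,\D_1]\D_1^{-1}\F$ together with (\ref{symbol:comm1}) to expand into the schematic sum
\begin{equation*}
\D_1^{-1}\big\{an(A+r^{-1})\sn\D_1^{-1}\F + an(A+r^{-1})\cdot A\cdot\D_1^{-1}\F + an\b\cdot\D_1^{-1}\F\big\}
\end{equation*}
with $\F = a\delta+2a\lambda$, then route each factor through Proposition \ref{P7.3} with $p=4/3$ to harvest the extra $r^{1/2}$ factor, combined with (\ref{A92}) for the $r^{-1}\sn\D_1^{-1}\F$ piece, Proposition \ref{errortype} for the $\E_1,\E_2$-type contributions, and the Codazzi-based (\ref{betae1}) for the $\b$-contribution. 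The inputs $\N_1(\D_1^{-1}\F)$ and $\N_2(\D_1^{-1}\F)$ are controlled by $\N_1(\F)\les \gc$ via Lemma \ref{A.8} and (\ref{7.24}).

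The main obstacle is the $\div\zb$ contribution in the transport part: a na\"ive application of $\D_1^{-1}$ would only produce the weaker $r^{-1/b}$ weight, so to reach the stronger $r^{-1/2-1/b}$ weight one must carry out the algebraic identity $\D_1^{-1}\circ\D_1=\mathrm{id}$ (modulo kernel) explicitly and then feed the $\curl\zb$ feedback through (\ref{0.8}) back into curvature and quadratic $A\cdot\Ab$ pieces that are already under control, while absorbing the Leibniz commutator $[\D_1^{-1},an\,\cdot\,]$ into the error-type bounds of Proposition \ref{errortype} via $\sn(an)\in A$.
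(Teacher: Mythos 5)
Your proposal is correct and follows essentially the same route as the paper: the same splitting into $\D_1^{-1}\Dt(a\delta+2a\lambda)$ plus the commutator, the same key reduction of the transport part to $an\zb$ by pairing $\div\zb$ from (\ref{ldelta}) with $\curl\zb$ from (\ref{0.8}) (the paper packages this as the identity (\ref{nabzb}), with the Leibniz corrections $\sn(an)\cdot\zb$ absorbed into the $A\cdot A$ error $\er_1$), the same use of Proposition \ref{7.17} for the curvature pair and of (\ref{sob.in}) for $an\zb$, and the same treatment of the commutator by the mechanism behind (\ref{come1}) together with Lemma \ref{A.8} and $\N_1(a\sl{\pi})\les\gc$. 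No gaps.
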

\begin{proof}
It is easy to see
\begin{align}
\|r^{-\f12-\frac{1}{b}}\Dt \D_1^{-1}(a\delta+2a\lambda)\|_{L_t^b
L_x^2}&\les \|r^{-\f12-\frac{1}{b}} \D_1^{-1} \Dt(a\delta+2a
\lambda)\|_{L_t^b L_x^2}\nn\\&+\|r^{-\f12-\frac{1}{b}}[\Dt,
\D_1^{-1}](a\delta+2a\lambda)\|_{L_t^b L_x^2}.\label{divdel}
\end{align}
By (\ref{ldelta}) and (\ref{0.8}),
\begin{equation}\label{nabzb}
an\zb=\D_1^{-1}\Dt(
a\delta+2a\lambda)-\D_1^{-1}(an(\ckk\rho,\ckk\sigma))+\D_1^{-1}{\er_1}
\end{equation}
where, symbolically, $\er_1:=an(a\sl{\pi} \tr\chi+A\c A)$.
\begin{align}
\|r^{-\f12-\frac{1}{b}}&\D_1^{-1}\Dt(a\delta+2a\lambda)\|_{L_t^b
L_x^2} \nn \\
&\les\|r^{-\f12-\frac{1}{b}}(an\zb)\|_{L_t^b
L_x^2}+\|r^{-\f12-\frac{1}{b}}\D_1^{-1}(an(\rhoc,
\sigc))\|_{L_t^b L_x^2}\label{l11}\\
&+\|r^{-\f12-\frac{1}{b}}\D_1^{-1}\er_1\|_{L_t^b L_x^2}\label{l12}.
\end{align}
By (\ref{sob.in}) and Proposition \ref{7.17},  the two terms on the
right of (\ref{l11}) can be bounded by $$\N_1(\zb)+
\Delta_0^2+\RR\les\gc.$$

For (\ref{l12}), by Proposition \ref{P4.1} and (\ref{As}), also in
view of (\ref{comp1}) and (\ref{sob.in}), we deduce
\begin{align}
\|r^{-\f12-\frac{1}{b}}\D_1^{-1}\er_1\|_{L_t^b
L_x^2}&\les\|r^{\f12-\frac{1}{b}}\er_1\|_{L_t^b
L_x^2}\nn\\&\les\|r^{\f12-\frac{1}{b}}a\sl{\pi}
\tr\chi\|_{L_t^b L_x^2}+\|r^{\f12-\frac{1}{b}}A\c A\|_{L_t^b L_x^2}\nn\\
&\les \|r^{-\f12-\frac{1}{b}} \sl{\pi}\|_{L_t^b L_x^2}
+\|A\c A\|_{L_t^\infty L_x^2}\nn\\
 &\les
\N_1(\sl{\pi})+\|A\|_{L_t^\infty L_x^4}^2\les
\Delta_0^2+\RR.\label{diner1}
\end{align}
Thus, we  proved
\begin{equation}\label{l13}
\|r^{-\f12-\frac{1}{b}} \D_1^{-1}\Dt(a\delta+2a\lambda)\|_{L_t^b
L_x^2}\les\Delta_0^2+\RR.
\end{equation}
Repeat the derivation for (\ref{come1}), also using Lemma \ref{A.8},
\begin{align*}
\|r^{-\frac{1}{2}-\frac{1}{b}}[\D_1^{-1},\Dt](a\delta+2a\lambda)\|_{L_t^b
L_x^2}&\les (\gc) \N_1(r^{-1} \D_1^{-1}(a\delta+2a\lambda))\\&\les
(\gc)\N_1(a\sl{\pi})\les(\gc)^2.
\end{align*}
Hence (\ref{case2}) is proved.
\end{proof}

\begin{lemma}\label{p0er1}
\begin{equation}\label{p0er2}
\|\er_1\|_{\P^0}+\|\sn \D^{-1} \er_1\|_{\P^0}\les\gc,
\end{equation}
\end{lemma}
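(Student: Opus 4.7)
My plan is to decompose $\er_1 = an\cdot a\sl{\pi}\tr\chi + an\cdot A\cdot A$ and bound each piece separately, relying on the product estimates (\ref{cor3}) for the $\P^0$-norm bound on $\er_1$, and on the Hodge-elliptic $\P^0$-estimate (Theorem~\ref{in}) to pass from $\|\er_1\|_{\P^0}$ to $\|\sn\D^{-1}\er_1\|_{\P^0}$. The key inputs are $\N_1(\sl{\pi})\le\RR$ from (\ref{cond1}) and $\N_1(A)\les\gc$ from Propositions~\ref{smr} and \ref{n1tr} (absorbing $\kp,\iota$ into $A$ per Remark~\ref{n1kp}), together with $|an|\les 1$ and $|a-1|\le 1/2$ from BA1.

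For $\|\er_1\|_{\P^0}\les\gc$, I would apply the second estimate in (\ref{cor3}) to $F=a\sl{\pi}$ to obtain $\|an\tr\chi\cdot a\sl{\pi}\|_{\P^0}\les\N_1(a\sl{\pi})\les\N_1(\sl{\pi})\les\RR$, using Remark~\ref{abb} to absorb the factor of $a$; and the first estimate in (\ref{cor3}) with $F=A$ to obtain $\|an A\cdot A\|_{\P^0}\les(\gc)\N_1(r^{1/2}A)$. The factor $\N_1(r^{1/2}A)\les\N_1(A)\les\gc$ because $\sn r=0$ on each $S_t$ and $|\sn_L r|\les 1$ by (\ref{dr}) and (\ref{comp1}).

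For $\|\sn\D^{-1}\er_1\|_{\P^0}\les\gc$, I would invoke Theorem~\ref{in} with $\xi=\D^{-1}\er_1$, $F=\er_1$ and $\sigma=0$, reducing matters to verifying $\|\er_1\|_{L^2}\les\gc$ and $\|\D^{-1}\er_1\|_{L_t^b L_x^2}\les\gc$ for some $b>4$. The $L^2$ bound on $\er_1$ splits into $\|a\sl{\pi}\tr\chi\|_{L^2}\les\|r^{-1}\sl{\pi}\|_{L^2}\les\N_1(\sl{\pi})$ via (\ref{comp1}), and $\|A\cdot A\|_{L^2}\les\|A\|_{L_\omega^\infty L_t^2}\|A\|_{L_\omega^2 L_t^\infty}\les\Delta_0(\gc)$ by a Fubini-style interpolation combined with BA1 and (\ref{sob.m}). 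For the mixed-norm piece I would use Proposition~\ref{P7.3} with $p=4/3$ to write $\|\D^{-1}\er_1\|_{L_t^b L_x^2}\les\|r^{1/2}\er_1\|_{L_t^b L_x^{4/3}}$; for the $\sl{\pi}$ summand, $|\tr\chi|\les r^{-1}$ combined with $\|f\|_{L_x^{4/3}(S_t)}\les r^{1/2}\|f\|_{L_x^2(S_t)}$ reduces the task to $\|\sl{\pi}\|_{L_t^b L_x^2}\les\N_1(\sl{\pi})$ by (\ref{sob.in}); for the $A\cdot A$ summand, Hölder yields the product $\|A\|_{L_t^b L_x^4}\|A\|_{L_t^\infty L_x^2}$, both of which are controlled by $\N_1(A)$ via (\ref{sob.in}) and (\ref{sob.m}).

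No step here presents a real obstacle: the lemma is essentially a packaging of the product estimates (\ref{cor3}), the Hodge-elliptic estimate of Theorem~\ref{in}, and the uniform controls on $\N_1(A),\N_1(\sl{\pi})$ already at our disposal. The only points requiring mild care are absorbing the factor $r^{1/2}$ when invoking $\N_1(r^{1/2}A)$, and ensuring that the exponent $b>4$ chosen for the Hodge-elliptic bound is consistent with the admissible range $\alpha_0<q<1$ in Theorem~\ref{in}, both of which are routine given the bootstrap smallness of $\Delta_0$.
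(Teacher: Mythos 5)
Your proposal is correct and follows essentially the same route as the paper: the first bound via the product estimates (\ref{cor3}) (with (\ref{A93})/Remark \ref{abb} handling the extra factors of $a$), and the second via Theorem \ref{in} together with an $L_t^b L_x^2$ bound on $\D^{-1}\er_1$, which is exactly the paper's (\ref{diner1}). The only (immaterial) difference is that you derive that mixed-norm bound through Proposition \ref{P7.3} with $p=4/3$, whereas the paper uses Proposition \ref{P4.1} slicewise.
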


\begin{proof}
By (\ref{cor3}), we have
\begin{equation}\label{e2}
\|an\c A\c A\|_{\P^0}\les\Delta_0\N_1(A)\les\gc
\end{equation}
Using (\ref{cor3}) and (\ref{A93}), we obtain
\begin{equation}\label{e3}
\|a^2n\tr\chi\sl{\pi}\|_{\P^0}\les \N_1(a\sl\pi)\les \Delta_0^2+\RR.
\end{equation}
Thus the first inequality of (\ref{p0er2}) is proved. The second one
can be proved  by combining  (\ref{e2}), (\ref{e3}), (\ref{diner1})
and Theorem \ref{in}.
\end{proof}

\subsection{Decomposition for commutators}\label{C23estimates}

In order to prove  Proposition \ref{de:er}, it remains to decompose
the ``bad" terms which have not been treated in Proposition
\ref{P7.1}, i.e
$$an\beta\cdot{\mathcal D}^{-1}\F, \, \sn{\mathcal D}^{-1}
(an\beta\cdot{\mathcal D}^{-1}\F),$$ with $\F$ either $\D^{-1}\ckk
R$ or $(a\delta+2a\lambda).$ In view of (\ref{Coro1}) and
Proposition \ref{it0.20}, the assumptions in the following theorem
are satisfied with $F=\D^{-1}\F$. Then the proof of Proposition
\ref{de:er} is complete by using the following

\begin{theorem}\label{it0.1}
Assume that $F$ is an $S$-tangent tensor field of appropriate order
on $\mathcal H$ verifying ${\mathcal N}_2(F)<\infty$ and $\|\t1a
\sn_L F\|_{L_t^b L_x^2}<\infty $ with $4<b<\infty$. Then we have
\begin{enumerate}

\item [(i)]There exists a 1-form $E_0$ such that\begin{footnote}{In Theorem \ref{it0.1}
and the following proofs, $\check R=(\check \rho, -\check \sigma)$
and $C_0(\check R)=[\Dt, {\D}_1^{-1}](\check \rho, -\check \sigma)$,
since the other case that $\ckk R=\udb$  will not come up
here.}\end{footnote}
\begin{equation}\label{7.30} an\beta=\Dt {\mathcal
D}^{-1}\check R+E_0 \quad \mbox{with}\quad \|E_0\|_{{\mathcal
P}^0}\lesssim \Delta_0^2+{\R}_0
\end{equation}

\item [(ii)] There exists a decomposition $an\beta\cdot F=\Dt
P+E$, where $P$ and $E$ are tensor fields of the same type as
$an\beta\cdot F$ with
 \begin{equation}\label{ini3}
 \lim_{t\rightarrow 0}\|P\|_{L_x^\infty}=0
 \end{equation}
 and the estimates
\begin{eqnarray}
&&{\mathcal N}_1(P)\les\Delta_0{\N}_2(F),\, \|E\|_{{\mathcal
P}^0}\lesssim \Delta_0\cdot \big({\mathcal N}_2(F)+\|\t1a \sn_L
 F\|_{L_t^b L_x^2}\big).\label{7.31}
\end{eqnarray}

 \item [(iii)]  There exist tensors $\bar P$ and $\bar
E$ verifying (\ref{7.31}) so that
\begin{equation}\label{it0.2}
\sn{\mathcal D}^{-1}(an\beta\cdot F)=\Dt \bar P+\bar E,
\end{equation}
where $\D$ denote either $\D_1$ or $\D_2$,
and \begin{equation}\label{ini4} \lim_{t\rightarrow 0}\|\bar
P\|_{L_x^\infty}<\infty
\end{equation}
\end{enumerate}
\end{theorem}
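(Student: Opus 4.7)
The plan is to establish the three parts in order, using the Bianchi identity as the source of $an\beta$ and recycling the commutator machinery built in Sections \ref{com1}--\ref{l2estimates}. Throughout, write $\check R = (\check\rho,-\check\sigma)$ and $\mathcal{D}^{-1} = \mathcal{D}_1^{-1}$ acting on $\check R$.

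For (i), I would start from the identity $\mathcal{D}^{-1}\Dt\check R = an\beta + Err$, which is just the Bianchi equation (\ref{7.8}) inverted through $\mathcal{D}_1$, combined with the definition (\ref{errp1}) of $Err$. Pull $\Dt$ outside the Hodge inverse through the commutator $C_0(\check R) = [\Dt, \mathcal{D}^{-1}]\check R$ to obtain
\begin{equation*}
an\beta = \Dt\mathcal{D}^{-1}\check R - C_0(\check R) - Err,
\end{equation*}
and set $E_0 := -C_0(\check R) - Err$. The $\mathcal{P}^0$-bound is immediate from (\ref{7.21}) for $C_0(\check R)$ and (\ref{errp}) for $Err$.

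For (ii), apply (i) together with the Leibniz rule
\begin{equation*}
an\beta\cdot F = \Dt(\mathcal{D}^{-1}\check R\cdot F) - \mathcal{D}^{-1}\check R\cdot \Dt F + E_0\cdot F,
\end{equation*}
so that $P := \mathcal{D}^{-1}\check R\cdot F$ and $E := E_0\cdot F - \mathcal{D}^{-1}\check R\cdot\Dt F$. The bound $\mathcal{N}_1(P)\lesssim\Delta_0\,\mathcal{N}_2(F)$ follows from the product estimate (\ref{7.2}) with $\mathcal{D}^{-1}\check R$ supplying the $\mathcal{N}_2$-factor, controlled by (\ref{Coro1}). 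The Besov estimate on $E$ splits: $\|E_0\cdot F\|_{\mathcal{P}^0}$ is handled by (\ref{7.3}) combined with part (i), while $\|\mathcal{D}^{-1}\check R\cdot\Dt F\|_{\mathcal{P}^0}$ is handled again by (\ref{7.2}), trading $\Dt F$ for $\sn_L F$ (up to the harmless factor $an$) and using the hypothesis $\|\t1a\sn_L F\|_{L_t^b L_x^2}<\infty$. The initial condition $\lim_{t\to 0}\|P\|_{L_x^\infty}=0$ follows from local analysis at the vertex: $\mathcal{D}^{-1}\check R$ gains a factor of $r$ while $F$ is bounded.

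For (iii), the natural candidate is $\bar P := \sn\mathcal{D}^{-1}P$. Applying $\sn\mathcal{D}^{-1}$ to the decomposition of part (ii) and commuting $\Dt$ past $\sn\mathcal{D}^{-1}$,
\begin{equation*}
\sn\mathcal{D}^{-1}(an\beta\cdot F) = \Dt(\sn\mathcal{D}^{-1}P) + \sn\mathcal{D}^{-1}E - [\Dt,\sn\mathcal{D}^{-1}]P,
\end{equation*}
so $\bar E := \sn\mathcal{D}^{-1}E - [\Dt,\sn\mathcal{D}^{-1}]P$. The term $\sn\mathcal{D}^{-1}E$ is $\mathcal{P}^0$-bounded by Theorem \ref{in} at $\sigma=0$ together with the bound on $E$ from part (ii). The commutator decomposes via (\ref{C2})--(\ref{C3}) into its good part, $\mathcal{P}^0$-controlled by inequality (\ref{A.9}) of Lemma \ref{A.91}, and the bad remainder $an\beta\cdot\mathcal{D}^{-1}P + \sn\mathcal{D}^{-1}(an\beta\cdot\mathcal{D}^{-1}P)$. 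This bad remainder is exactly the main obstacle: it has the same shape as the left-hand sides of (ii) and (iii), but with $F$ replaced by $\mathcal{D}^{-1}P$. The resolution is that $\mathcal{D}^{-1}P = \mathcal{D}^{-1}(\mathcal{D}^{-1}\check R\cdot F)$ enjoys strictly better regularity than $F$ alone, so Proposition \ref{add1} combined with Lemma \ref{l7.1} lets us absorb the bad remainder directly into $\bar E$ (rather than iterating the decomposition). Finally, $\lim_{t\to 0}\|\bar P\|_{L_x^\infty}<\infty$ follows from local analysis at the vertex, using the two factors of $\mathcal{D}^{-1}$ acting on $\check R$ and boundedness of $F$.
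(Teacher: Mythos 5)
Parts (i) and (ii) of your proposal follow the paper's route almost verbatim: the same $E_0$ built from $Err$ and $C_0(\check R)$, the same $P=\D_1^{-1}\check R\cdot F$, and the same split $E=E^B+E^G$. One technical slip in (ii): you assign the product lemmas to the wrong terms. For $E_0\cdot F$ the right tool is (\ref{7.2}), giving $\N_2(r^{1/2}F)\,\|E_0\|_{\P^0}$; your choice (\ref{7.3}) would require either $\N_1(r^{1/2}E_0)$ or $\|E_0\|_{L_\omega^\infty L_t^2}$, neither of which is controlled. For $\D_1^{-1}\check R\cdot\Dt F$ the right tool is (\ref{7.1}), which produces exactly the norms $\|\t1a\sn_L F\|_{L_t^b L_x^2}$ and $\|r^{1/2}\sn\Dt F\|_{L^2}$ appearing in your hypotheses; your choice (\ref{7.2}) would require $\|\Dt F\|_{\P^0}$, which is not assumed. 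Since the correct lemmas are available in the paper, this is repairable.

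The genuine gap is in (iii). After splitting $[\Dt,\sn\D^{-1}]P$ via (\ref{C2})--(\ref{C3}), the bad remainder involves $an\b\cdot\D^{-1}P$ and $\sn\D^{-1}(an\b\cdot\D^{-1}P)$, and you claim these can be absorbed directly into $\bar E$ because $\D^{-1}P$ is more regular than $F$. This cannot work: the obstruction to $\P^0$-boundedness of $an\b\cdot G$ sits in the factor $\b$, not in $G$. Indeed $an\b=\Dt\D^{-1}\check R+E_0$, where $\D^{-1}\check R$ is only $\N_1$-bounded and $\b$ itself is merely $L^2$ on $\H$; none of (\ref{7.1})--(\ref{7.3}) applies to $\Dt\D^{-1}\check R\cdot\D^{-1}P$ no matter how regular $\D^{-1}P$ is, and an $L^2$ bound does not yield a $\P^0$ bound since $\P^0$ is an $\ell^1$ Besov sum. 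The only available move is to integrate by parts in $t$ again, which regenerates a fresh $\Dt P_2$ term with $P_2=\D_1^{-1}\check R\cdot\D^{-1}P_1$ of exactly the original shape. The paper therefore iterates part (ii) indefinitely, obtaining sequences $\{P_i\},\{E_i\}$ with $\N_1(P_i)\le (C(\gc))^i\N_2(F)$, shows the partial sums $\bar P_k,\bar E_k$ are Cauchy because the ratio $C(\gc)$ is small, and checks that the leftover $\sn\D^{-1}(\Dt P_k)$ tends to zero in $L^2$; it is the smallness of $\Delta_0^2+\RR$, not extra regularity of $\D^{-1}P$, that closes the argument. Relatedly, (\ref{ini4}) is not a soft local statement once $\bar P$ is an infinite sum: the paper needs the whole of Section \ref{ap} (Lemmas \ref{lem:in1} and \ref{lem:in2}, with $B_{2,1}^0$ control of $\sn P$) to prove $\lim_{t\rightarrow 0}\|\bar P\|_{L_x^\infty}<\infty$.
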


\begin{proof}
In view of (\ref{errp1}), we have
\begin{equation}\label{new1111}
an\beta=\Dt\D^{-1} \check R+C_0(\check R)+Err.
\end{equation}
This proves (i) by noting that $E_0:=Err+C_0(\check R)$ satisfies
$\|E_0\|_{{\P}^0}\les \Delta_0^2+{\R}_0$ in view of  (\ref{errp})
and (\ref{7.21}).

Now we prove (ii). We have from (\ref{new1111}) that
\begin{align*}
an\beta\cdot F&=(\Dt{\mathcal D}_1^{-1}\check R+Err+C_0(\check
R))\cdot F=\Dt({\mathcal D}_1^{-1}\check R\cdot F)+E_1^B+E_1^G,
\end{align*}
where
\begin{align*}
E_1^{B}:=-{\D}_1^{-1}\check R\c \Dt F \quad \mbox{and}\quad
E_1^{G}:=(Err+C_0(\check R))\cdot F.
\end{align*}
By (\ref{7.2}), (\ref{7.21}) and (\ref{errp})  we obtain
\begin{equation}\label{de:01}
\|E_1^G\|_{{\mathcal P}^0}\les {\mathcal
N}_2(F)\left(\|Err\|_{{\mathcal P}^0} +\|C_0(\check R)\|_{{\mathcal
P}^0}\right)\les (\gc){\mathcal N}_2(F).
\end{equation}
By (\ref{7.1}) and (\ref{7.24}) we have
\begin{align*}
\|E_1^{B}\|_{{\P}^0}& \lesssim {\mathcal N}_1({\mathcal
D}^{-1}\check R)(\|\t1a \sn_L F\|_{L_t^b
L_x^2}+\|r^{\frac{1}{2}}\sn\Dt F\|_{L_t^2 L_x^2})\nonumber\\
&\lesssim ({\mathcal R}_0+\Delta_0^2)({\mathcal N}_2(F)+\|\t1a \sn_L
F\|_{L_t^b L_x^2}).
\end{align*}
Now we set
\begin{align}
P_1:={\mathcal D}_1^{-1} \check R\cdot F \quad \mbox{and}\quad
E_1:=E_1^{B}+E_1^{G},\label{e1p1}
\end{align}
from the above estimates we have
\begin{align*}
\|E_1\|_{{\P}^0}&\les (\gc)({\mathcal N}_2(F)+\|\t1a\sn_L F\|_{L_t^b
L_x^2}).
\end{align*}

In order to estimate ${\N}_1(P_1)$, let us estimate $\|E_1\|_{L^2}$
first. By using H\"older's inequality and (\ref{sob.01}),  we can
obtain
\begin{align*}
\|E_1^B\|_{L^2}&=\|{\D}^{-1}\check R\c \sn_L F \|_{L^2}\les
\|{\D}^{-1}\check R\|_{L_t^\infty L_x^4}\|\sn_L F\|_{L_t^2
L_x^4}\\&\les{\N}_1({\D}^{-1} \check R)(\|\sn\Dt
F\|_{L^2}+\|r^{-\frac{1}{2}}\sn_L F\|_{L^2}),
\end{align*}
and by using $\|E_1^G\|_{L^2(\H)}\les \|E_1^G\|_{{\P}^0}$ and
(\ref{de:01}) we can obtain
$$
\|E_1^G\|_{L^2}\les (\gc){\N}_2(F).
$$
Therefore
\begin{equation}\label{e1l2}
\|E_1\|_{L^2}\les (\gc){\mathcal N}_2(F).
\end{equation}

Now we  show
\begin{equation}\label{er0.31}
 {\mathcal
N}_1(P_1)\lesssim {\mathcal N}_2(F)(\gc).
\end{equation}
With the help of $\Dt P_1= an\b\c F-E_1$ and (\ref{e1l2}) we can
estimate $\|\sn_L P_1\|_{L^2}$ as follows
\begin{align*}
\|\sn_L P_1\|_{L^2}&\lesssim \|\beta\cdot F\|_{L_t^2
L_x^2}+\|E_1\|_{L^2}\lesssim (\gc){\mathcal N}_2(F).
\end{align*}
Similar to \cite[Section 6.12]{KR1}, we get $ \|\nabla P_1\|_{L_t^2
L_x^2}\lesssim (\Delta_0^2+{\mathcal R}_0){\mathcal N}_2( F).$ We
complete the proof of (\ref{7.31}).

By (\ref{e1p1})
\begin{align}
\|P_1\|_{L_x^\infty}&\le\|F\|_{L_x^\infty}\|\D_1^{-1}\ckk
R\|_{L_x^\infty}\les r^{\f12}\|\D_1^{-1} \ckk
R\|_{L_x^\infty}\N_2(F)\label{p1}
\end{align}
Since $\N_2(F)<\infty$ and $\lim_{t\rightarrow 0}\|\D_1^{-1}\ckk
R\|_{L_x^\infty}<\infty$, (\ref{ini3}) follows by letting
$t\rightarrow 0$  in (\ref{p1}). Therefore (ii) is proved.

Finally we prove (iii) by using the iteration procedure in
\cite[Section 6.12]{KR1}.  (\ref{ini4}) will be proved in Section
\ref{ap}.
 Let $P_0:=\D F$, then we can apply (ii) to
construct recurrently two sequences of $S$-tangent tensor fields
$\{P_i\}$ and $\{E_i\}$ such that
\begin{equation}\label{30000}
an\beta\cdot {\mathcal D}^{-1}P_{i-1}=\Dt P_i+E_i
\end{equation}
where $\D^{-1}$ denote either $\D_1^{-1}$ or $\D_2^{-1}$ and
\begin{align}
{\mathcal N}_1(P_i)&\le C(\gc) {\mathcal N}_2({\D}^{-1}P_{i-1}),\label{n1it}\\
\|E_i\|_{{\mathcal P}^0}&\le C(\gc)\left({\mathcal
N}_2(\D^{-1}P_{i-1}) +\|\t1a \sn_L \D^{-1} P_{i-1}\|_{L_t^b
L_x^2}\right)\label{eit}.
\end{align}
Such $P_i$ and $E_i=E_i^B+E_i^G$ can be constructed as in the proof
of (ii). Then for $i=1,2\cdots$,
\begin{eqnarray}
P_i=\D_1^{-1} \ckk R\c \D^{-1}P_{i-1}, && P_0=\D F\label{pi}\\
E_i^{B}:=-{\D}_1^{-1}\check R\c \Dt \D^{-1} P_{i-1}, &&
E_i^{G}:=(Err+C_0(\check R))\cdot \D^{-1} P_{i-1}. \label{ei}
\end{eqnarray}
In particular, $P_1$ and $E_1$ have been given by (\ref{e1p1}).

 With the above definition of $P_k$ and $E_k$, using (\ref{30000})
$$
\sn {\D}^{-1}(an\b\c F)=\Dt \bar P_k +\sn {\mathcal D}^{-1}(\Dt
P_k)+\bar E_k,
$$
where
\begin{align}
\bar P_k &=\sn{\mathcal
D}^{-1}(P_1+...+P_{k-1})+P_2+...+P_k\label{barp}\\
\bar E_k&=[\sn{\mathcal D}^{-1},
\Dt]_{g}(P_1+...+P_{k-1})+\sn{\mathcal
D}^{-1}(E_1+...+E_{k})\nn\\
&\quad +E_2+...+E_k.\nn
\end{align}
By using Lemma \ref{A.8}, it is easy to see from (\ref{n1it}) that
\begin{equation}\label{new1000}
{\mathcal N}_1( P_k)\le (C(\gc))^k {\mathcal N}_2(F).
\end{equation}
Moreover we have

\begin{proposition}\label{iter}
For $\{P_k\}_{k=1}^\infty$ and $\{E_k\}_{k=1}^\infty$ there hold
\begin{equation}\label{lldp}
\|\t1a \sn_L {\D}^{-1} P_k\|_{L_t^b L_x^2}\les (\gc)({\mathcal
N}_2({\D}^{-1}P_{k-1})+\|\sn_L {\D}^{-1}P_{k-1}\|_{L_t^b L_x^2}),
\end{equation}
\begin{equation}\label{nabd:e}
\|\sn{\D}^{-1}E_k\|_{{\mathcal P}^0}\les
\|E_k\|_{\P^0}+(\gc)({\mathcal N}_2({\D}^{-1} P_{k-1})+\|\sn_L
{\D}^{-1}P_{k-1}\|_{L_t^b L_x^2}).
\end{equation}
\end{proposition}

We will prove this result at the end of this section. We observe
that  Lemma \ref{A.8}, (\ref{lldp}), (\ref{n1it}) and (\ref{eit})
clearly imply
\begin{equation}\label{new2000}
\|E_k\|_{{\mathcal P}^0}\le (C(\gc))^k\left({\mathcal
N}_2(F)+\|\t1a\sn_L F\|_{L_t^b L_x^2}\right).
\end{equation}

It follows from (\ref{new1000}), (\ref{nabd:e}), (\ref{new2000}) and
(\ref{A.9}) that
\begin{align*}
{\mathcal N}_1(\bar P_k -\bar P_j)\le{\mathcal N}_2(F)\sum_{j\le
m\le k-1}(C(\gc))^m \lesssim (C(\gc))^j{\mathcal N}_2(F),
\end{align*}
and
\begin{align*}
\|\bar E_k-\bar E_j \|_{{\mathcal P}^0} &\le\big( {\mathcal
N}_2(F)+\|\t1a \sn_L F\|_{L_t^b
L_x^2}\big)\sum_{j\le m\le k-1}(C(\gc))^m\\
&\les (C(\gc))^j\big({\mathcal N}_2(F)+\|\t1a \sn_L F\|_{L_t^b
L_x^2}\big).
\end{align*}
Therefore $\{\bar P_k\}$ forms a Cauchy sequence relative to the
norm ${\mathcal N}_1(\cdot )$, while $\{\bar E_k\}$ forms a Cauchy
sequence relative to the $\P^0$ norm. Denote by $\bar P$ and $\bar
E$ their corresponding limits, we have
\begin{eqnarray*}
{\mathcal N}_1(\bar P)\les (\gc){\mathcal N}_2(F) \quad
\mbox{and}\quad \|\bar E\|_{{\mathcal P}^0}\les (\gc) \big({\mathcal
N}_2(F)+\|\t1a \sn_L F\|_{L_t^b L_x^2}\big).
\end{eqnarray*}
 We also observe that for sufficiently small $\Delta_0$,
\begin{align*}
\|\sn {\D}^{-1}(an\b\c F)-\Dt \bar P_k -\bar
E_k\|_{L^2}=\|\sn{\mathcal D}^{-1}(\Dt P_k)\|_{L^2}\les{\mathcal
N}_1(P_k).
\end{align*}
Letting $k\rightarrow +\infty$, we get
$$
\|\sn {\D}^{-1}(an\b\c F)-\Dt \bar P-\bar E\|_{L_t^2 L_x^2}=0.
$$
Hence $\sn {\D}^{-1}(an\b\c F)=\Dt \bar P+\bar E$. This completes
the proof of (\ref{it0.2}) in (iii). (\ref{ini4}) will be proved in
Appendix.

 Now we conclude this section by proving Proposition \ref{iter}.  We first prove
(\ref{nabd:e}). By using (\ref{7.15}) we have
\begin{equation*}
\|\sn {\D}^{-1} E_k \|_{{\mathcal P}^0}\les \|E_k \|_{{\mathcal
P}^0}+(\gc)\|{\D}^{-1} E_k \|_{L_t^b L_x^2}^{q}\| E_k
\|^{1-q}_{L^2},
\end{equation*}
where $4<b<\infty$ and $1/2<q<1$.

Thus it suffices to show for $4<b<\infty$ that
\begin{equation}\label{it0.3}
\|\t1a {\D}^{-1} E_{k}\|_{L_t^b L_x^2}\les (\gc)\left({\mathcal
N}_2({\D}^{-1}P_{k-1})+\|\sn_L {\D}^{-1}P_{k-1}\|_{L_t^b
L_x^2}\right).
\end{equation}
By the construction of $P_k$ and $E_k$, it suffices to show it for
$k=1$. To this end, in view of $E_1=E_1^{G}+E_1^{B}$, we can
complete the proof by using Proposition \ref{er0.35} for $\|\t1a
\D^{-1} E_1^G\|_{L_t^b L_x^2}$ and the estimate
\begin{align*}
\|\t1a {\D}^{-1} E_1^{B}\|_{L_t^b L_x^2}&\les
\|r^{\frac{1}{2}-\frac{1}{b}}E_1^{B}\|_{L_t^b
L_x^{4/3}}\les\|{\D}_1^{-1}\check R\|_{L_t^\infty L_x^4}\|\Dt
F\|_{L_t^b L_x^2}\\
&\les\N_1(\D^{-1} \ckk R)\|\Dt F\|_{L_t^b L_x^2}\les
(\Delta_0^2+{\R}_0)\|\sn_L F\|_{L_t^b L_x^2}.
\end{align*}
which follows from Proposition \ref{P7.3}, H\"older inequality,
(\ref{sob.m}) and (\ref{7.24}).

In order to prove (\ref{lldp}), we first note that
\begin{equation}\label{it0.21}
\|\t1a \sn_L {\D}^{-1} P_k\|_{L_t^b L_x^2}\les\|\t1a [\Dt,
{\D}^{-1}]P_k\|_{L_t^b L_x^2}+\|\t1a {\D}^{-1} \Dt P_k\|_{L_t^b
L_x^2}.
\end{equation}
By using (\ref{come1}), the first term on the right hand side of
(\ref{it0.21}) can be estimated as
\begin{align*}
\|\t1a C_0(P_k)\|_{L_t^b L_x^2}&\les
{\N}_1(r^{-\frac{1}{2}}{\D}^{-1} P_k)\les
{\N}_2(r^{\frac{1}{2}}{\D}^{-1}P_k)\les {\N}_1(P_k),
\end{align*}
while by using (\ref{30000}), (\ref{it0.3})  and (\ref{betae1}), the
second term can be estimated as
\begin{align*}
\|\t1a {\D}^{-1} \Dt P_k\|_{L_t^b L_x^2}&\les \|\t1a
{\D}^{-1}(an\b\c
\D^{-1} P_{k-1}-E_k)\|_{L_t^b L_x^2} \\
&\les \|\t1a {\D}^{-1}(an\b\c \D^{-1}P_{k-1})\|_{L_t^b
L_x^2}+\|\t1a {\D}^{-1} E_k\|_{L_t^b L_x^2}\\
&\les(\gc)({\mathcal N}_2(\D^{-1} P_{k-1})+\|\sn_L
\D^{-1}P_{k-1}\|_{L_t^b L_x^2}).
\end{align*}
Therefore (\ref{lldp}) is proved.
\end{proof}

\section{\bf Main estimates}\label{me}
\setcounter{equation}{0}

\begin{lemma}
Let  $F=\sn \tr\chi, \mu, A\c \Ab, r^{-1} \Ab$,  there holds
\begin{equation}\label{d0m}
\|\sn\D^{-1}(an F)\|_{\P^0}\les \gc+\|F\|_{\P^0}
\end{equation}
where $\D^{-1}$ is one of the operators $\D_1^{-1},\,\D_2^{-1},\, {}^\star \D_1^{-1}$.
\end{lemma}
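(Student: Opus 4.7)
The plan is to apply the Hodge-elliptic $\P^0$-estimate of Theorem~\ref{in} with $\sigma = 0$ to $\xi = \D^{-1}(anF)$, so that $\D\xi = anF$, obtaining
\begin{equation*}
\|\sn\D^{-1}(anF)\|_{\P^0} \les \|anF\|_{\P^0} + \Delta_0\,\|\D^{-1}(anF)\|_{L_t^b L_x^2}^{q}\,\|anF\|_{L^2}^{1-q}
\end{equation*}
with $\alpha_0 \le q < 1$ and $b>4$, and then to verify that the first term on the right is bounded by $\|F\|_{\P^0}+\gc$ and the correction term by $\gc$.

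For the first term I would invoke the product inequality (\ref{A93}) with $f=an$, $H=F$:
\begin{equation*}
\|anF\|_{\P^0} \les \|an\|_{L^\infty}\,\|F\|_{\P^0} + \|F\|_{L^2(\H)}\,\|r^{1/2}\sn(an)\|_{L_t^\infty L_x^4}.
\end{equation*}
The factor $\|an\|_{L^\infty}\les 1$ comes from (\ref{cp}). Writing $\sn(an) = an(\zeta+\zb)$ and using (\ref{sob.m}) together with $\N_1(\zeta+\zb)\les \gc$ from Proposition~\ref{smry1} gives $\|r^{1/2}\sn(an)\|_{L_t^\infty L_x^4}\les \gc$. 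The bound $\|F\|_{L^2(\H)} \les \gc$ is already in hand in each of the four cases: Proposition~\ref{MN1} for $F=\sn\tr\chi,\mu$; (\ref{aab}) for $F=A\c\Ab$; Proposition~\ref{propab} for $F=r^{-1}\Ab$. Thus $\|anF\|_{\P^0}\les \|F\|_{\P^0}+\gc$.

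For the correction term, $\|anF\|_{L^2}\les\|F\|_{L^2(\H)}\les\gc$ is immediate, so it remains to show $\|\D^{-1}(anF)\|_{L_t^b L_x^2}\les \gc$ for some $b>4$. Proposition~\ref{P4.1}(i) combined with $r,an\les 1$ yields
\begin{equation*}
\|\D^{-1}(anF)\|_{L^2(S_t)}\les \|r\cdot anF\|_{L^2(S_t)}\les\|rF\|_{L^2(S_t)},
\end{equation*}
whence $\|\D^{-1}(anF)\|_{L_t^b L_x^2}\les \|rF\|_{L_t^b L_x^2}$. Interpolation in $t$ between $L_t^2 L_x^2$ and $L_t^\infty L_x^2$ gives
\begin{equation*}
\|rF\|_{L_t^b L_x^2} \le \|rF\|_{L^2(\H)}^{2/b}\,\|rF\|_{L_t^\infty L_x^2}^{1-2/b};
\end{equation*}
the first factor is $\les\gc$, while for the second I would use the pointwise ordering $\|h\|_{L_t^\infty L_x^2}\le \|h\|_{L_x^2 L_t^\infty}$ and the factorization $rF = r^{1/2}\cdot(r^{1/2}F)$: for $F=\sn\tr\chi,\mu$, Proposition~\ref{MN1} gives $\|r^{1/2}F\|_{L_x^2 L_t^\infty}\les\gc$; for $F = r^{-1}\Ab, A\c\Ab$ the same control follows from (\ref{sob.in}), Proposition~\ref{propab}, Proposition~\ref{smry1}, and H\"older's inequality in $L_x$. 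Hence the correction term is $\les \Delta_0(\gc)^q(\gc)^{1-q}\les\gc$.

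Putting the two bounds into the inequality from Theorem~\ref{in} gives the claim. The main technical point is the $L_t^b L_x^2$ control of $\D^{-1}(anF)$: it has to be carried out case by case, but in each case the required $L^2(\H)$ and $L_x^2 L_t^\infty$ (or $\N_1$) bounds are already available, so the remaining work is essentially bookkeeping to absorb the factors $r$ and $an$.
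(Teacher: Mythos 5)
Your proposal is correct and follows essentially the same route as the paper: both apply the Hodge-elliptic $\P^0$-estimate of Theorem \ref{in} to $\xi=\D^{-1}(anF)$, control $\|anF\|_{\P^0}$ via the product estimate (\ref{A93}) together with $\|r^{1/2}\sn(an)\|_{L_t^\infty L_x^4}\les\gc$, and bound the correction term through $\|\D^{-1}(anF)\|_{L_t^b L_x^2}\les\|rF\|_{L_t^b L_x^2}\les\gc$. The only difference is that you spell out the $L_t^b$ interpolation and the case-by-case verification of $\|rF\|_{L_t^b L_x^2}\les\gc$, which the paper compresses into a citation of Proposition \ref{P7.3} and (\ref{smry1}).
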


\begin{proof}
By Proposition \ref{P7.3} and (\ref{smry1}), we have
$$
\|{\D}^{-1} (an F)\|_{L_t^b L_x^2}\les\|an r F\|_{L_t^b
L_x^2}\les\|r F\|_{L_t^b L_x^2}\les \gc.
$$
By (\ref{smry1}), $\|F\|_{L^2(\H)}\les\gc$. We can infer from
Theorem \ref{in} and (\ref{A93}) that
\begin{equation*}
\|\sn{\D}^{-1}(anF)\|_{\P^0}\les \|anF\|_{\P^0}+\gc\les
\|F\|_{\P^0}+\gc.
\end{equation*}
as desired.
\end{proof}

Now we improve BA1 with the help of Theorem \ref{T1.1}.

\subsection{Estimates for $\nu$ and $|a-1|$}

\begin{proposition}\label{trace2}
\begin{equation*}
\|\nu\|_{L_\omega^\infty L_t^2}\les \Delta_0^2+\RR, \quad
|a-1|\le\frac{1}{4}.
\end{equation*}
\end{proposition}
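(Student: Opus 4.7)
The strategy is to apply the sharp trace theorem (Theorem \ref{T1.1}) to $F=\nu$. The decomposition $\sn(an\,\nu)=\Dt P+E$ required by that theorem is handed to us essentially for free by equation (\ref{lloga}): multiplying (\ref{lloga}) by $an$ and using the identity $\Dt=an\,\sn_L$ converts the $\sn_L(\sn a)$ term into $\Dt(\sn a)$. This is precisely the structural miracle alluded to in the introduction (the ``$\sn\nu=\sn_L P+E$'' decomposition needed because $\nu$ does not appear in the geodesic-foliation case).

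\textbf{Step 1: The decomposition.} From (\ref{lloga}), multiplying by $an$ and using $\Dt=an\,\sn_L$ and $\sn(an\,\nu)=an\,\sn\nu+\sn(an)\,\nu$, I obtain
\begin{equation*}
\sn(an\,\nu)=-\Dt(\sn a)+E,\qquad P:=-\sn a,
\end{equation*}
where
\begin{equation*}
E=\sn(an)\,\nu-\tfrac12\, an\,\tr\chi\,\sn a-an\,\chih\cdot\sn a-an(\zb+\zeta)\,\nu.
\end{equation*}
Using (\ref{c4}), $\sn a=a(\zeta-\ep)$ which is an $A$-type quantity (after absorbing the factor $a$ as in Remark \ref{abb}), and $\sn(an)=an(\zeta+\zb)$ is likewise $an\cdot A$-type.

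\textbf{Step 2: Estimates on $P$ and $E$.} Since $\sn a\sim a\cdot A$, the bound $\N_1(P)=\N_1(\sn a)\les \gc$ is immediate from Proposition \ref{smry1} (i.e.\ (\ref{smry1})) and Remark \ref{abb}. The $\P^0$ bound on $E$ follows termwise from the product estimates (\ref{cor2})--(\ref{cor3}): the first and fourth terms are of type $an\,A\cdot A$, hence bounded by $(\gc)\,\N_1(r^{1/2}A)\les(\gc)^2$; the second is of type $an\,\tr\chi\cdot A$, hence bounded by $\N_1(A)\les\gc$; the third is again of type $an\,A\cdot A$. The initial conditions of Theorem \ref{T1.1}, namely $\lim_{t\to0}\|\nu\|_{L_x^\infty}<\infty$ and $\lim_{t\to0} r|\sn\nu|<\infty$, follow from Lemma \ref{inii} and local analysis at the vertex (using that $\nu=-\sn_N\log n+\delta-\lambda$ with $\sl\pi$ smooth near $p$). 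Finally $\N_1(\nu)\les\RR$ is contained in the assumption (\ref{n1pi}) since $\nu$ is a combination of elements of $\sl\pi$. Applying Theorem \ref{T1.1} now yields
\begin{equation*}
\|\nu\|_{L_\omega^\infty L_t^2}\les\N_1(\nu)+\N_1(\sn a)+\|E\|_{\P^0}\les\Delta_0^2+\RR.
\end{equation*}

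\textbf{Step 3: Improving $|a-1|$.} Since $L(a)=-\nu$ and $a(p)=1$, integrating along any null geodesic $\Gamma_\omega$ initiating at $p$, using (\ref{st}) to convert $ds$ to $na\,dt$, and Cauchy--Schwarz,
\begin{equation*}
|a(t)-1|=\left|\int_0^t \nu\, na\,dt'\right|\le \left(\int_0^1 |\nu|^2\, na\,dt'\right)^{1/2}\left(\int_0^1 na\,dt'\right)^{1/2}\les \|\nu\|_{L_\omega^\infty L_t^2}\les \Delta_0^2+\RR,
\end{equation*}
using $na\le C$ from the main hypothesis and BA1. Choosing $\RR$ sufficiently small yields $|a-1|\le 1/4$, improving BA1. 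The main technical point is the structural Step 1; the rest is an assembly of estimates already established in Sections \ref{presob}--\ref{err.e}.
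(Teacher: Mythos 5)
Your proof is correct and follows essentially the same route as the paper: the same rearrangement of (\ref{lloga}) into $\sn(an\,\nu)=\Dt(-\sn a)+E$ with $P=-\sn a$, the same identification of $E$ as terms of type $an\,A\cdot\Ab$ and $an\,\tr\chi\cdot\Ab$ controlled in $\P^0$ by (\ref{cor3}), the same application of Theorem \ref{T1.1} together with $\N_1(\nu),\N_1(\sn a)\les\gc$ from (\ref{smry1}), and the same integration of $L(a)=-\nu$ along $\Gamma_\omega$ to improve $|a-1|$. No gaps.
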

\begin{proof}
We rewrite (\ref{lloga}) as follows
\begin{align*}
-\sn \nu&=\sn_L (\sn  a)+\er_2, \mbox{ and }
\er_2=\frac{1}{2}\tr\chi \sn  a +\chih \c \sn a +A\c\nu.
\end{align*}
 Let us
denote symbolically $ \er_2=\tr\chi\c \Ab+A\c \Ab,$ hence
\begin{equation*}
-\sn(an\nu)=\Dt\sn a+an \wt\er_2
\end{equation*}
with $\wt \er_2=-\sn\log(an) \nu+\er_2=A\c \Ab+r^{-1} \Ab.$ By
(\ref{cor3}) we can obtain
\begin{equation*}
\|an\wt\er_2\|_{\P^0}\les \Delta_0^2+\RR.
\end{equation*}
Applying Theorem \ref{T1.1} to $P= \sn a$ and $E=an\c\wt\er_2$,
 we have
$$
 \|\nu\|_{L_\omega^\infty L_t^2}\les \N_1(\nu)+\N_1(P)+\|an\wt \er_2\|_{\P^0}\les\gc,
$$
where in view of (\ref{smry1}), $ \N_1(\nu)+\N_1(P)\les\gc$.

In view of $\nu:=-\frac{d}{ds} a$ and $a(p)=1$,
$$
|a-1|\le \int_0^t |\nu|  na dt'\les \|\nu\|_{L_\omega^\infty L_t^2}\les \gc.
$$
With $\gc$ being sufficiently small, $|a-1|\le \frac{1}{4}$ can be achieved.
Then Proposition \ref{trace2} follows.
\end{proof}

\subsection{Estimate for $\zb$}

\begin{proposition}\label{trace1}
\begin{equation*}
\|\zb\|_{L_\omega^\infty L_t^2}\les \Delta_0^2+\RR.
\end{equation*}
\end{proposition}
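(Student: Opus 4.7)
The plan is to apply the sharp trace theorem, Theorem~\ref{T1.1}, with $F=\zb$; the initial regularity hypotheses follow from Lemma~\ref{inii} and local analysis, and $\N_1(\zb)\lesssim \Delta_0^2+\RR$ is part of Proposition~\ref{smry1}. It therefore suffices to construct a decomposition $\sn(an\zb)=\Dt P+E$ with $\N_1(P)+\|E\|_{\P^0}\lesssim \Delta_0^2+\RR$ and $\lim_{t\to 0}r\|P\|_{L^\infty}=0$. The starting point is the representation formula \eqref{nabzb} obtained in Subsection~\ref{l2estimates} from the refined Hodge system \eqref{hdg3},
$$
an\zb = \D_1^{-1}\Dt(a\delta+2a\lambda) - \D_1^{-1}\bigl(an(\rhoc,\sigc)\bigr) + \D_1^{-1}\er_1,
$$
and applying $\sn$ splits $\sn(an\zb)$ into three contributions $T_1$, $T_2$, $T_3$.

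For $T_1=\sn\D_1^{-1}\Dt(a\delta+2a\lambda)$, I use $\sn\D_1^{-1}\Dt=\Dt\sn\D_1^{-1}-[\Dt,\sn\D_1^{-1}]$ and invoke the second decomposition of Proposition~\ref{de:er}, namely $[\Dt,\sn\D_1^{-1}](a\delta+2a\lambda)=\Dt P'+E'$, with the required $\N_1(P')+\|E'\|_{\P^0}$ bound. The $P$-contribution from $T_1$ is $\sn\D_1^{-1}(a\delta+2a\lambda)-P'$, whose $\N_1$ norm is controlled via Proposition~\ref{add1} applied to $\F=a\delta+2a\lambda$ together with Proposition~\ref{de:er}. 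For $T_3=\sn\D_1^{-1}\er_1$, Lemma~\ref{p0er1} provides $\|T_3\|_{\P^0}\lesssim\gc$, and $T_3$ is absorbed into $E$.

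The decisive term is $T_2=-\sn\D_1^{-1}\bigl(an(\rhoc,\sigc)\bigr)$, which carries no manifest $\Dt$-structure. I first move the $an$-factor outside $\D_1^{-1}$ via the commutator identity
$$
\D_1^{-1}(an\ckk R)=an\D_1^{-1}\ckk R-\D_1^{-1}\bigl(\sn(an)\cdot\D_1^{-1}\ckk R\bigr),
$$
which produces error terms of the form $A\cdot\D_1^{-1}\ckk R$ and $\sn\D_1^{-1}(A\cdot\D_1^{-1}\ckk R)$, both $\P^0$-bounded by Proposition~\ref{errortype} and Lemma~\ref{l7.1} together with $\N_1(\D_1^{-1}\ckk R)\lesssim\gc$ from Proposition~\ref{add1}. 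The remaining principal piece $-an\sn\D_1^{-1}\ckk R$ is cast as $\Dt P_2+E_2$ by combining the algebraic identity $C_1(\ckk R)+C_2(\ckk R)+C_3(\ckk R)=[\Dt,\sn\D^{-2}]\ckk R$ with Bianchi \eqref{7.8} and Codazzi \eqref{0.7} to rewrite $\sn\D^{-2}\Dt\ckk R$ as $\sn\D_2^{-1}(an\beta)$ modulo lower order; Theorem~\ref{it0.1}(iii) then supplies the decomposition $\sn\D_2^{-1}(an\beta)=\Dt\bar P+\bar E$, and the bundled conclusion $C(\ckk R)=\Dt P+E$ of Proposition~\ref{de:er} closes out the commutator contributions.

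The hardest step will be this last construction: threading Bianchi through the nested commutators while tracking the $an$-factors introduced by the time foliation, which is precisely what Section~\ref{err.e} is built for. Once $\sn(an\zb)=\Dt P+E$ is established with the stated bounds, Theorem~\ref{T1.1} immediately yields $\|\zb\|_{L_\omega^\infty L_t^2}\lesssim\N_1(\zb)+\N_1(P)+\|E\|_{\P^0}\lesssim\Delta_0^2+\RR$.
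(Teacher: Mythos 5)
Your overall architecture is the paper's: you start from the refined Hodge representation (\ref{nabzb}), split $\sn(an\zb)$ into three pieces, treat the $\Dt(a\delta+2a\lambda)$ piece by commuting $\Dt$ past $\sn\D_1^{-1}$ with Proposition \ref{de:er} and (\ref{Coro1}), absorb $\sn\D_1^{-1}\er_1$ into $E$ via Lemma \ref{p0er1}, and close with Theorem \ref{T1.1}. Those parts are correct and coincide with the paper's proof.

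The gap is in your treatment of $T_2=\sn\D_1^{-1}\bigl(an(\rho,\sigma)\bigr)$. You propose to extract its $\Dt$-structure from the Bianchi equation (\ref{7.8}) for $(\rhoc,-\sigc)$ together with Codazzi (\ref{0.7}) and the commutators $C(\ckk R)$. That cannot work: in (\ref{7.8}) the pair $(\rhoc,-\sigc)$ sits under the time derivative on the left, so that equation only yields $an\b=\Dt\D_1^{-1}\ckk R+\cdots$ (this is Theorem \ref{it0.1}(i)); it provides no representation of $(\rho,\sigma)$ itself as $\Dt(\cdot)+(\mbox{good})$, and your chain from $\sn\D^{-2}\Dt\ckk R$ to $\sn\D_2^{-1}(an\b)$ to $\Dt\bar P+\bar E$ only reproduces decompositions of objects at the level of $\D^{-2}\ckk R$, one full derivative better than the curvature-level quantity $T_2$. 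The missing ingredient is the dual Bianchi equation (\ref{7.7})/(\ref{Bia4}) for $\udb$, in which $(\rho,\sigma)$ appears as ${}^\star\D_1(\rho,\sigma)$ on the right-hand side; inverting it is exactly the content of $\wt{Err}$ in (\ref{errp1}), namely ${}^\star\D_1^{-1}\Dt\udb=an(\rho,\sigma)+\wt{Err}$. This gives
\[
\sn\D_1^{-1}\bigl(an(\rho,\sigma)\bigr)=\Dt\,\sn\D_1^{-1}{}^\star\D_1^{-1}\udb+C(\ckk R)-\sn\D_1^{-1}\wt{Err},\qquad \ckk R=\udb,
\]
and then Proposition \ref{de:er} applied with $\ckk R=\udb$, together with (\ref{Coro1}) for $\N_1(\sn\D^{-2}\udb)$ and (\ref{itF1}) for $\|\sn\D^{-1}\ff\|_{\P^0}$, produces the required $\Dt\ti P+\ti E$. (The same device, $p'={}^\star\D_1^{-1}\udb$, is what makes (\ref{8.17.10}) work in the $\zeta$-estimate.) Without switching to the $\udb$ equation, the decomposition of $T_2$ does not close, and the $an$-commutation identity you insert beforehand does not help with this.
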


\begin{proof}
By (\ref{errp1})
\begin{align*}
 \sn \D_1^{-1}\left(an(\rho, \sig)\right)&=\sn\D_1^{-1}{}^\star \D_1^{-1}\Dt \udb-\sn \D_1^{-1} \wt Err\\
 &=\Dt \sn \D^{-2}\ckk R+C(\ckk R)+\sn \D^{-1}\ff.
 \end{align*}
By Proposition \ref{de:er}, there exists $P$
and $E$ such that $C(\ckk R)=\Dt P+E$.

Let $\ti P= P+\sn \D^{-2}\ckk R$ and $\ti E=\sn \D^{-1} \ff+E$.
Then by (\ref{Coro1}) and (\ref{itF1})
\begin{equation*}
\sn \D_1^{-1}\left(an(\rho, \sig)\right)=\D_t \ti P+\ti E,\quad
\N_1(\ti P)+\|\ti E\|_{\P^0}\les \Delta_0^2+\RR.
\end{equation*}
In view of (\ref{nabzb}),
\begin{align}
\sn (an\zb)&= \sn \D_1^{-1}(\Dt(a\delta+2a\lambda))+\sn
\D_1^{-1}(an(\rho,\sig))+\sn \D_1^{-1}
\er_1,\nn\\
&=\Dt\sn \D_1^{-1}(a \delta+2a\lambda)+[\sn \D_1^{-1},
\Dt](a\delta+2a\lambda)+\sn \D_1^{-1} \er_1+\Dt \ti P+ \ti
E.\label{nzb2}
\end{align}
In view of Proposition \ref{de:er}, there exists $P'$ an $E'$ such
that
$$
[\sn \D_1^{-1}, \Dt](a\delta+2a\lambda)=\Dt P'+E', \with
\N_1(P')+\|E'\|_{\P^0}\les \gc.
$$
Let $P''=P'+\sn \D_1^{-1}(a\delta+2a\lambda)$, we conclude that
\begin{equation*}
\Dt\sn \D_1^{-1}(a \delta+2a\lambda)+[\sn \D_1^{-1},
\Dt](a\delta+2a\lambda)=\Dt P''+E'.
\end{equation*}
By (\ref{Coro1}), $\N_1(P'')\les \gc$. Hence, we obtain
\begin{equation*}
\sn (an \zb)=\Dt P_3+E_3,
\end{equation*}
where $E_3=E'+\sn \D_1^{-1}\er_1+\ti E$ and $P_3=P''+\ti P$. Also
using Lemma \ref{p0er1} for $\|\sn \D_1^{-1} \er_1\|_{\P^0}$, we can
conclude that
$$
\|E_3\|_{\P^0}\les \gc,\quad \quad
\N_1(P_3)\les\gc.
$$
Proposition \ref{trace1} then follows by  using
Theorem \ref{T1.1} and $\N_1(\zb)\les\gc$.
\end{proof}

\subsection{Estimate for $\chih$}

\begin{proposition}
\begin{equation*}
\|\chih\|_{L_\omega^\infty L_t^2}\les \gc
\end{equation*}
\end{proposition}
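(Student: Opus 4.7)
Following the pattern used for $\nu$ and $\zb$, the aim is to apply the sharp trace theorem (Theorem \ref{T1.1}) with $F=\chih$ and therefore to exhibit a decomposition $\sn(an\chih)=\Dt P+E$ with $\N_1(P)+\|E\|_{\P^0}\les\gc$; the remaining ingredient $\N_1(\chih)\les\gc$ has already been supplied by Proposition \ref{MN1}. The natural Hodge identity to start from is the Codazzi equation (\ref{0.7}). Multiplying it by $an$ and using Leibniz for $\D_2$ together with $\sn(an)=an(\zeta+\zb)$ yields
\[
\D_2(an\chih)=\tfrac12 an\sn\tr\chi+\tfrac12 an\tr\chi\,\zeta+an\chih\c\zb-an\beta.
\]
Since $an\chih$ is symmetric traceless, inverting $\D_2$ and taking $\sn$ gives
\[
\sn(an\chih)=\sn\D_2^{-1}\bigl(\tfrac12 an\sn\tr\chi+\tfrac12 an\tr\chi\,\zeta+an\chih\c\zb-an\beta\bigr).
\]

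The principal contribution $-\sn\D_2^{-1}(an\beta)$ is handled by the error machinery already in place. Substituting $an\beta=\Dt\D_1^{-1}\ckk R+Err+C_0(\ckk R)$ from (\ref{errp1}) and commuting $\sn\D_2^{-1}$ past $\Dt$ produces the identity
\[
\sn\D_2^{-1}\Dt\D_1^{-1}\ckk R=\Dt\sn\D^{-2}\ckk R-C_2(\ckk R)-C_3(\ckk R),
\]
in terms of the commutators defined in (\ref{7.40}). The leading piece $\Dt\sn\D^{-2}\ckk R$ is absorbed into $P$ with $\N_1$-bound $\gc$ by (\ref{Coro1}), while Proposition \ref{de:er} furnishes $\Dt$-plus-$\P^0$-error decompositions of $C_2(\ckk R)$ and $C_3(\ckk R)$ with the required estimates; the residual terms $\sn\D_2^{-1}(Err+C_0(\ckk R))$ are placed into $E$ via the Hodge-elliptic $\P^0$ inequality (Theorem \ref{in}) applied together with the bounds $\|Err\|_{\P^0},\|C_0(\ckk R)\|_{\P^0}\les\gc$ coming from (\ref{errp}) and (\ref{7.21}).

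For the remaining three lower-order terms, one places them entirely in $E$: $\sn\D_2^{-1}(an\chih\c\zb)$ and $\sn\D_2^{-1}(an\tr\chi\,\zeta)$ are bounded in $\P^0$ by combining the product inequalities (\ref{cor3}) and (\ref{7.3})---using via Proposition \ref{n1tr} and Remark \ref{n1kp} that $\tr\chi\,\zeta$ splits into $r^{-1}A$ and $A\c A$ type pieces---with Theorem \ref{in}. Feeding the entire decomposition into Theorem \ref{T1.1} then delivers the claim. The main technical point is the remaining term $\sn\D_2^{-1}(an\sn\tr\chi)$: the most direct route is (\ref{d0m}), which only gives $\|\sn\D_2^{-1}(an\sn\tr\chi)\|_{\P^0}\les\gc+\|\sn\tr\chi\|_{\P^0}$, so one must furnish $\|\sn\tr\chi\|_{\P^0}\les\gc$ in parallel. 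This bound belongs to (\ref{mt3}), and consequently the $L_\omega^\infty L_t^2$-estimate for $\chih$ and the $\P^0$-estimate on $M=\sn\tr\chi$ have to be closed simultaneously inside the overall bootstrap argument.
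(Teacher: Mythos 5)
Your plan reproduces the paper's strategy almost exactly: start from Codazzi, multiply by $an$, substitute $an\beta=\Dt\D_1^{-1}\ckk R+Err+C_0(\ckk R)$ from (\ref{errp1}), absorb $\Dt\sn\D^{-2}\ckk R$ into $P$ via (\ref{Coro1}), decompose the commutators with Proposition \ref{de:er}, put $\sn\D_2^{-1}(Err+C_0(\ckk R))$ and the quadratic terms into $E$ using (\ref{itF1}), (\ref{d0m}) and (\ref{cor3}), and then feed everything into Theorem \ref{T1.1}. You also correctly isolate the one term that does not close, namely $\sn\D_2^{-1}(an\sn\tr\chi)$, whose $\P^0$ norm is only controlled modulo $\|\sn\tr\chi\|_{\P^0}$.

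The gap is that you stop there and defer $\|\sn\tr\chi\|_{\P^0}\les\gc$ to ``the overall bootstrap argument.'' This cannot be left unexplained: the bootstrap assumption BA1 contains no $\P^0$ bound on $\sn\tr\chi$, and (\ref{mt3}) is a conclusion of the theorem, not an input, so invoking it here is circular unless you supply the mechanism. The paper closes this loop \emph{inside the same proof}: it inserts the partial decomposition $\sn(an\chih)=\Dt P+\sn\D^{-1}(anM)+E$ into the transport equation (\ref{s4}) for $M=\sn\tr\chi$ (whose right-hand side contains $\chih\c\sn\chih$), integrates $\Dt(r^3M)$ along null geodesics from the vertex using $\lim_{t\to 0}r\sn\tr\chi=0$, and applies Proposition \ref{main:lem} --- estimate (\ref{5.32}) is what handles the otherwise uncontrollable term $r^{-3}\int_0^t{r'}^3\chih\c\Dt P$, and (\ref{5.33}) the remaining products. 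This yields $\|M\|_{\P^0}\les\Delta_0\left(\|M\|_{\P^0}+\N_1(P)+\|E\|_{\P^0}+\gc\right)+\gc$, and the $\Delta_0\|M\|_{\P^0}$ term is absorbed by smallness of $\Delta_0$, giving (\ref{mp3}). Only then is $\tilde E=E+\sn\D^{-1}(anM)$ admissible in Theorem \ref{T1.1}. Without this transport-plus-absorption step (or an equivalent substitute) your argument does not close.
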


\begin{proof}
 First,
by (\ref{0.7}),
\begin{align*}
\div(an\chih)&=\f12 an\sn\tr\chi+\f12 an\tr\chi
\zeta-an\b+an\zb\chih\\
&=anM+ an A\c A+r^{-1}an\zeta-an\b
\end{align*}
from (\ref{errp1}),
$$
an\beta={\mathcal D}_1^{-1}\Dt(\check \rho, -\check \sigma)-\ff
\quad \mbox{with} \quad \ff=Err.
$$
Hence
\begin{align*}
\div(an\chih)&=an M+an A\c A+r^{-1}an\zeta-\D_1^{-1}\Dt (\ckk\rho,
-\sigc)+\ff
\end{align*}
This gives
\begin{equation*}
an\chih=-{\D}_2^{-1}{\D}_1^{-1}\Dt (\check
\rho,-\check\sig)+{\mathcal D}_2^{-1} (\ff+an M +anA\cdot
A+r^{-1}an\zeta).
\end{equation*}
Set ${\mathcal D}^{-2}={\mathcal D}_2^{-1}{\mathcal D}_1^{-1}$ and
${\D}^{-1}={\D}_2^{-1}$, we obtain after taking covariant
derivatives
\begin{equation}\label{8.3}
\sn(an\chih)=-\sn{\mathcal D}^{-2}\Dt\check R+F+\sn {\D}^{-1}(anM),
\end{equation}
where $F=\sn{\mathcal D}^{-1}(\ff+an A\cdot A+r^{-1}an\zeta)$ and
$M=\sn\tr\chi$.

By (\ref{itF1}) and (\ref{d0m}),
\begin{equation}\label{f}
\|F\|_{{\mathcal P}^0}\lesssim \Delta_0^2+\RR.
\end{equation}

Consider the first term on the right of (\ref{8.3}). By using the
notations in (\ref{7.40}), we can write
\begin{equation*}
\sn{\mathcal D}^{-2}\Dt(\check R)=\Dt(\sn {\mathcal D}^{-2}\check
R)+C(\check R).
\end{equation*}
where, by Proposition \ref{de:er}, there exist tensors $P'$ and $E'$
so that $C(\check R)=\Dt P'+ E'$ and
\begin{equation}\label{crr}
{\mathcal N}_1(P')+\|E'\|_{{\mathcal P}^0}\lesssim \gc,
\quad\,\lim_{t\rightarrow 0} r\|P'\|_{L_x^\infty}=0.
\end{equation}
 Thus (\ref{8.3})
becomes
\begin{equation}\label{main0.2}
\sn(an\chih)=\Dt P+\sn{\mathcal D}^{-1} (anM)+E
\end{equation}
where $P=\sn{\mathcal D}^{-2}\check R+P'$ and $E=F+E'$. By using
(\ref{Coro1}),(\ref{f}) and (\ref{crr})
\begin{equation}\label{b4}
{\N}_1(P)+\|E\|_{{\mathcal P}^0}\les \Delta_0^2+\R_0,\quad\,
\lim_{t\rightarrow 0} r\|P\|_{L_x^\infty}=0
\end{equation}
By combining (\ref{main0.2}) with
(\ref{s4}) we obtain
\begin{align*}
\frac{d}{ds} M+\frac{3}{2}\tr\chi M&=-\chih\c M-2(an)^{-1} \chih(\Dt
P+E+\sn \D^{-1}(an M))-\f12 (\tr\chi)^2(\zeta+\zb),
\end{align*}
regarding $\iota$  as an element of $A$, symbolically,
\begin{align*}
\sn_L M+\frac{3}{2}\tr\chi M=A\c M&+(an)^{-1}\chih\cdot(\Dt
P+E+\sn{\mathcal D}^{-1} (anM))\\&+(r^{-1} A+A\c A)\c A+r^{-2} A.
\end{align*}
Then
\begin{align*}
\Dt (r^3 M)=-\frac{3}{2}r^3 an \kp M&+r^3an\{A\c M+(an)^{-1}\chih
(\Dt
P+E+\sn \D^{-1} (an M))\}\\
&+r^3 an A(A\c A+r^{-1} A)+r an A.
\end{align*}
Let us pair $\sn \tr\chi$ with vector fields $X_i$ in Lemma
\ref{correct4}, which is still denoted by $M$. Regarding $\kp$ also
as  an element of $A$,   integrating in $t$, in view of
$\lim_{s\rightarrow 0} r\sn \tr\chi=0$,
\begin{align*}
M&=r^{-3}\int_0^t {r'}^3 an A\c M + {r'}^3\chih\c \Dt P
\\&+r^{-3}\int_0^t {r'}^3A\c (E+\sn \D^{-1}(an M)+an A\c
A+{r'}^{-1}an  A)+an r' A dt'.
\end{align*}
Using Lemma \ref{correct4}, (\ref{5.33}) and (\ref{5.32}) in view of
$\lim_{t\rightarrow 0} \|\chih\|_{L_x^\infty}<\infty$, we can obtain
\begin{align*}
\| r^{-3}&\int_0^t {r'}^3  A\c\left( an(M+ A\c A+r^{-1 } A )+E+\sn
\D^{-1}(an M)\right)+ {r'}^3 \chih\c \Dt P\|_{\B^0}\\&\les
(\N_1(A)+\|A\|_{L_\omega^\infty L_t^2})(\N_1(P)+\|E\|_{\P^0}+\|\sn
\D^{-1}(an M)\|_{\P^0}\\&+\|an M\|_{\P^0}+\|r^{-1} an
A\|_{\P^0}+\|an A\c A\|_{\P^0}).
\end{align*}
By Proposition \ref{eq1.2}, (\ref{hlm}), (\ref{cor3}) and
(\ref{smry1}), we obtain
\begin{equation*}
\|r^{-3} \int_0^t an r' A\|_{\P^0}\les \|r^{-1}an A\|_{\P^0}\les
\N_1(A)\les\gc.
\end{equation*}
Hence, in view of (\ref{A93}),  BA1, (\ref{cor3}) and (\ref{d0m}),
we can obtain
\begin{align}
\|M\|_{\P^0} &\les \left({\mathcal N}_1(P)+\|M\|_{{\mathcal P}^0}
+\|E\|_{{\mathcal P}^0}+\gc\right)\left({\mathcal
N}_1(A)+\|A\|_{L_\omega^\infty L_t^2}\right)+\gc \nn\\&\les \Delta_0
\left(\|M\|_{{\mathcal
P}^0}+\N_1(P)+\|E\|_{\P^0}+\Delta_0^2+{\R}_0\right)+\gc.\label{8.13.1}
\end{align}
Since $0<\Delta_0<1/2$ can be chosen to be sufficiently small such
that the first term of (\ref{8.13.1}) can be absorbed,  in view of
(\ref{b4}) we then obtain that
\begin{equation}\label{mp3}
\|\sn \tr\chi\|_{{\mathcal P}^0}\les \Delta_0^2+{\R}_0.
\end{equation}
Thus, by setting $\tilde E=E+\sn{\D}^{-1}(anM)$ we obtain from
(\ref{main0.2}), (\ref{d0m}) and (\ref{mp3}) the decomposition
\begin{equation}\label{main0.3}
\sn(an\chih)=\D_t P+\tilde E \quad \mbox{and} \quad
{\N}_1(P)+\|\tilde E\|_{{\mathcal P}^0}\les \Delta_0^2+{\mathcal
R}_0.
\end{equation}
By Theorem \ref{T1.1} and (\ref{smry1}), we conclude
$$
\|\chih\|_{L_\omega^\infty L_t^2}\les \N_1(\chih)+\N_1(P)+\|\ti
E\|_{\P^0}\les \gc.
$$
as expected.

\end{proof}
Similar to the derivation of (\ref{mp3}), we  can get
\begin{equation}\label{8.14.1}
\|r^\f12 \sn \tr\chi\|_{{\mathcal B}^0}\les \Delta_0^2+{\R}_0.
\end{equation}

\subsection{Estimate for $\zeta$}

\begin{proposition}
\begin{equation*}
\|\zeta\|_{L_\omega^\infty L_t^2}\les \gc.
\end{equation*}
\end{proposition}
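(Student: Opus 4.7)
The plan is to apply Theorem~\ref{T1.1} to $F = \zeta$, so the task reduces to producing a decomposition $\sn(an\zeta) = \Dt P + E$ with $\mathcal N_1(P) + \|E\|_{\P^0} \lesssim \Delta_0^2 + \R_0$ and $\lim_{t \to 0} r\|P\|_{L^\infty} = 0$. The estimate $\mathcal N_1(\zeta) \lesssim \Delta_0^2 + \R_0$ is already in (\ref{smry1}), and the two initial-value hypotheses in Theorem~\ref{T1.1} follow from local analysis exactly as for $\chih$, $\zb$, $\nu$.

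Following the template used for $\chih$, I start from the Hodge system (\ref{hdg1})-(\ref{hdg2}), which in the notation $\ckk R = (\ckk\rho, -\ckk\sigma)$ reads $\D_1\zeta = -(\mu,0) - \ckk R + (E_\zeta,0)$ with $E_\zeta = -|\zeta|^2 + \tfrac{1}{2}a\delta\tr\chi + a\lambda\tr\chi$ of type $A\cdot A + \tr\chi\Ab$. Multiplying by $an$ via $\D_1(an\zeta) = an\D_1\zeta + \sn(an)\cdot\zeta$, inverting $\D_1$, and taking $\sn$ produces
$$
\sn(an\zeta) = -\sn\D_1^{-1}(an\mu,0) - \sn\D_1^{-1}(an\ckk R) + F,
$$
where $F := \sn\D_1^{-1}[(anE_\zeta,0) + \sn(an)\cdot\zeta]$ is bounded in $\P^0$ by $\Delta_0^2 + \R_0$ via Theorem~\ref{in} together with (\ref{A93}) and (\ref{cor3}), exactly as in the analogous step of the $\chih$ proof.

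The crucial step is to convert $-\sn\D_1^{-1}(an\ckk R)$ into a $\Dt$-structure plus $\P^0$-good error. I proceed by combining (\ref{errp1}) in its $\udb$-form $^\star\D_1^{-1}\Dt\udb = an(\rho,\sigma) + \widetilde{Err}$ with the elementary relations $\ckk\rho = \rho - \tfrac{1}{2}\chih\cdot\chibh$ and $\ckk\sigma = \sigma - \tfrac{1}{2}\chih\wedge\chibh$, and with the identity $\ckk\sigma = -\curl\zb$ extracted from (\ref{0.8}) modulo $A\cdot\Ab$. The sign discrepancy between $(\ckk\rho,-\ckk\sigma)$ and $(\rho,\sigma)$ is handled by the Hodge-decomposition identity $\D_1^{-1}(X_1,-X_2) = \D_1^{-1}(X_1,X_2) - 2\D_1^{-1}(0,X_2)$, and the resulting auxiliary term $-2\D_1^{-1}(0,an\sigma)$ reduces via $\ckk\sigma = -\curl\zb$ and the Hodge-inverse identity $\D_1^{-1}(0,\curl(anH)) = anH + (\text{good})$ to $2an\zb$ plus $A\cdot\Ab$-type errors. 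Applying $\sn$, the outcome is
$$
-\sn\D_1^{-1}(an\ckk R) = \Dt(\sn\D^{-2}\udb) + C(\udb) + 2\sn(an\zb) + (\text{lower order}),
$$
with $\D^{-2} = \D_1^{-1}{}^\star\D_1^{-1}$ in the $\ckk R = \udb$ convention. Proposition~\ref{de:er} writes $C(\udb) = \Dt P' + E'$, the decomposition $\sn(an\zb) = \Dt P_3 + E_3$ is already produced inside the proof of Proposition~\ref{trace1}, and the lower order pieces (living in $\sn\D_1^{-1}\widetilde{Err}$ and $an A\cdot\Ab$) are controlled in $\P^0$ by (\ref{itF1}) and Proposition~\ref{errortype}. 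Collecting everything yields
$$
\sn(an\zeta) = \Dt P_1 + \widetilde E + \sn\D_1^{-1}(an\mu,0),
$$
with $\mathcal N_1(P_1) + \|\widetilde E\|_{\P^0} \lesssim \Delta_0^2 + \R_0$ and the required vanishing of $r\|P_1\|_{L^\infty}$ at the vertex provided by (\ref{Coro1}) and the analogue of (\ref{ini3}). The remaining term $\sn\D_1^{-1}(an\mu,0)$ is absorbed into $\widetilde E$ by the same iterative $\P^0$ loop that controlled $\sn\D^{-1}(anM)$ in the $\chih$ proof: combine Theorem~\ref{in}, (\ref{d0m}), and $\|\mu\|_{\P^0} \lesssim \Delta_0^2 + \R_0$ (from Propositions~\ref{MN1} and \ref{smr} together with the argument leading to (\ref{mp3})). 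Theorem~\ref{T1.1} then delivers $\|\zeta\|_{L_\omega^\infty L_t^2} \lesssim \Delta_0^2 + \R_0$.

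The main obstacle is precisely the treatment of $-\sn\D_1^{-1}(an\ckk R)$. In the $\chih$ case, Codazzi supplies a factor of $-\beta$ that (\ref{errp1}) converts directly into $\D_1^{-1}\Dt\ckk R$, and in the $\zb$ case the refined Hodge system (\ref{hdg3}) already carries the term $\Dt(a\delta+2a\lambda)$; in the $\zeta$ case the curvature enters as $\ckk R$ itself, with the ``wrong'' sign on the $\ckk\sigma$-component relative to $(\rho,\sigma)$ in (\ref{errp1}). Producing the $\Dt$-structure therefore requires the sign-flipping Hodge-decomposition step above, which has the feature of feeding part of the decomposition back into the already-established quantity $\sn(an\zb)$; the bookkeeping of this feedback and the accompanying $A\cdot\Ab$, $\widetilde{Err}$, and $\sn\D_1^{-1}\widetilde{Err}$ corrections is where the technical work concentrates.
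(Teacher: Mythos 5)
Your overall strategy (reduce to Theorem \ref{T1.1} via a decomposition $\sn(an\zeta)=\Dt P+E$, starting from the Hodge system (\ref{hdg1})--(\ref{hdg2})) matches the paper, and you correctly identify the sign mismatch between $(\ckk\rho,-\ckk\sigma)$ and the $(\rho,\sigma)$ appearing in $\widetilde{Err}$ as the crux. But your device for resolving it does not work as stated. The paper simply inserts the involution $J:(\rho,\sigma)\mapsto(-\rho,\sigma)$ and treats $\D^{-2}=\D_1^{-1}\c J\c{}^\star\D_1^{-1}$ as one more composite Hodge operator, so that $-\sn\D_1^{-1}(an(\rhoc,-\sigc))=\Dt\sn\D^{-2}\udb+C(\ckk R)+(\mbox{good})$ falls directly under Proposition \ref{de:er} with no feedback into $\zb$. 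Your substitute identity $\D_1^{-1}(0,\curl(anH))=anH+(\mbox{good})$ is false: $\D_1^{-1}(0,\curl G)$ is the divergence-free part of $G$, i.e.\ $G-\sn\sD^{-1}\div G$, and for $G=an\zb$ the discarded gradient part contains, via (\ref{ldelta}), both $\sn^2\sD^{-1}(an L(a\delta+2a\lambda))$ and $\sn^2\sD^{-1}(an\ckk\rho)$. Neither is ``good'' in $\P^0$; each requires the full $\Dt P+E$ decomposition machinery again (commutators with $\sD^{-1}$, Bianchi for $\ckk\rho$), so your reduction to $2\sn(an\zb)$ plus $\P^0$-bounded errors has a genuine hole, and the detour through Proposition \ref{trace1} is unnecessary once the involution is used.

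The second gap is the $\mu$ term. You propose to absorb $\sn\D_1^{-1}(an\mu,0)$ by citing $\|\mu\|_{\P^0}\les\Delta_0^2+\RR$ ``from Propositions \ref{MN1} and \ref{smr}''; those propositions only give $\|\mu\|_{L^2}$ and $\|r^{1/2}\mu\|_{L_x^2L_t^\infty}$, not a $\P^0$ bound. The $\P^0$ estimate (\ref{8.25.1}) for $\mu$ is \emph{proved inside} this proposition's argument: one first obtains the decomposition $\sn(an\zeta)=\Dt P+\sn\D^{-1}(anM)+E$ with $M=(\mu,0)$, substitutes it into the transport equation (\ref{mumu}), integrates along null geodesics, and closes a smallness bootstrap in $\P^0$ using Proposition \ref{main:lem}. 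Your appeal to ``the argument leading to (\ref{mp3})'' points in the right direction, but the $\mu$ case is not a verbatim repeat of the $\sn\tr\chi$ case: the source term $\tr\chi\,\rhoc$ in (\ref{mumu}) has no analogue there and requires the separate decomposition $an(\ckk\rho,\ckk\sigma)=\Dt p'+e'$ from (\ref{7.7}) together with the estimates (\ref{b2})--(\ref{b3}) (and (\ref{m9})) to control $\|r^{-2}\int_0^t{r'}^2\tr\chi\,\rhoc\|_{\P^0}$. This is a substantive piece of the proof that your proposal omits.
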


\begin{proof}
By using (\ref{hdg1}) and (\ref{hdg2}),
\begin{align*}
\div(an\zeta) &=an\zb\c \zeta-an\mu-an\rhoc+\f12 a^2n\delta \tr\chi, \\
\curl(an\zeta) &=an\sigc+an(\zeta+\zb)\wedge\zeta.
\end{align*}
Symbolically, $ \D_1(an\zeta)=an A\c A-an(\mu,0)-an(\rhoc, -\sigc)+
an(a\delta\tr\chi, 0). $ Hence
\begin{align*}
an\zeta&=-\D_1^{-1}(an(\rhoc, -\sigc))+\D_1^{-1}(an A\c
\Ab)-\D_1^{-1}(an(\mu, 0))+\D_1^{-1}(an(r^{-1}\Ab, 0)).
\end{align*}
Let $J$ be the involution $(\rho, \sigma)\rightarrow (-\rho,
\sigma),$ $\ff=\widetilde{Err}$ is given by (\ref{errp1}),
\begin{align*}
\sn (an\zeta)=&\sn{\mathcal D}_1^{-1}\cdot J\cdot {}^\star {\mathcal
D}_1^{-1}\Dt \bb+\sn{\D}_1^{-1}\c J\cdot \ff-\sn{\mathcal
D}_1^{-1}(an\mu, 0) \\&+\sn{\mathcal D}_1^{-1}(an\c A\cdot
A)+\sn\D_1^{-1}(r^{-1}an\Ab, 0).
\end{align*}
Set ${\mathcal D}^{-2}={\mathcal D}_1^{-1}\cdot J\cdot{}^\star {\mathcal
D}_1^{-1}$ and ${\mathcal D}^{-1}={\mathcal D}_1^{-1}$. By using
(\ref{7.40}), we get
\begin{eqnarray*}
\sn(an\zeta)=\Dt\sn{\mathcal D}^{-2}\bb+C(\check R)+\sn{\mathcal
D}^{-1}(anM) +F
\end{eqnarray*}
where $M=(\mu,0)$ and $F=\sn{\D}^{-1} \big(\ff+an (A\cdot
\Ab+r^{-1}\Ab)\big)$.

By (\ref{itF1}) and  (\ref{d0m}), we derive $\|F\|_{{\mathcal
P}^0}\les \Delta_0^2+{\R}_0$.

In view of Proposition \ref{de:er}, for some tensors $\ti P$ and
$\ti E$ such that  $C(\ckk R)=\Dt \ti P+\ti E$. With $ E=\ti E+F$, $
P=\ti P+\sn\D^{-2} \bb$,  we can write $$\sn(an \zeta)=\Dt
P+\sn{\mathcal D}^{-1}(anM)+ E,$$ and
 \begin{equation}\label{0.12}
 {\mathcal N}_1(P)+\|E\|_{\P^0}\le \Delta_0^2+\R_0, \quad\, \lim_{t\rightarrow 0}r\|P\|_{L_x^\infty}=0
\end{equation}

Let $M=(\mu,0)$, (\ref{mumu}) can be written as
\begin{align*}
\frac{d}{ds}M+\tr\chi M&=2(an)^{-1}\chih \left(\Dt P+ E+\sn \D^{-1}(an
M) \right)-2\chih\c \zb\c \zeta+(\zeta-2\zb) \tr\chi
\zeta\\&+\sn\tr\chi(\zeta-\zb)+\tr\chi \rhoc+\left(\frac{1}{4} a^2
\tr\chi+\Ab \right)|\chih|^2-\f12 a\nu (\tr\chi)^2.
\end{align*}
Symbolically,
\begin{align}
\frac{d}{ds} M+\tr\chi M=&(an)^{-1}\chih\cdot \left(\Dt
P+\sn{\mathcal D}^{-1}(anM)+E\right) +\tr\chi\check \rho\nn\\& +A\c
\left(A\c \underline{A}+r^{-1} A+\sn\tr\chi \right)
+r^{-1}a\tr\chi A.\label{mu2}
\end{align}
In view of  $\lim_{t\rightarrow 0} r^2 \mu=0$ in Lemma \ref{inii},
we deduce
\begin{equation}\label{m5}
M= r^{-2}\int_0^t  {r'}^2\left( an\kp\c M+\chih\c \D_t P+A\c \ti F+an
\tr\chi \rhoc+{r'}^{-1}a^2n \tr\chi A \right) dt'
\end{equation}
with $\ti F=an (A\c \Ab+\sn \tr\chi+r^{-1} A)+E+\sn \D^{-1}(an M).$

In view of Proposition \ref{main:lem}, regarding $\kp$ as an element
of $A$,
\begin{align}
\Big\|r^{-2}\int_0^t{ r'} ^2 & \left(an\kp\c M+\chih\c \D_t P+A\c \ti F\right)
dt'\Big\|_{\B^0}\label{b1}\\&\les \left(\N_1(A)+\|A\|_{L_\omega^\infty
L_t^2}\right) \left(\|anM\|_{\P^0}+\N_1(P)+\|\ti F\|_{\P^0}\right)\nn.
\end{align}
Note that by (\ref{cor3}), (\ref{A93}), (\ref{d0m}) and (\ref{As}),
we deduce
\begin{align}
\|\ti F\|_{\P^0}&\les \|an(A\c
\Ab+r^{-1}A)\|_{\P^0}+\|an\sn\tr\chi\|_{\P^0}+\|
E\|_{\P^0}+\|\sn \D^{-1}(an M)\|_{\P^0}\nn\\
&\les\|\sn\tr\chi\|_{\P^0}+\|E\|_{\P^0}+\|M\|_{\P^0}+\gc\nn.
\end{align}
By (\ref{mp3}) and  (\ref{0.12})
\begin{equation*}
\|\ti F\|_{\P^0}\les \gc+\|M\|_{\P^0}.
\end{equation*}
Hence, by (\ref{As}), BA1, (\ref{A93}) and (\ref{0.12})
\begin{equation*}
(\ref{b1})\les \Delta_0 (\|M\|_{\P^0}+\gc).
\end{equation*}

Assuming the following estimate for $\P^0$ norm of the last two
terms in (\ref{m5})
\begin{equation}\label{b5}
\Big\|r^{-2} \int_0^t {r'}^2 \left( an \tr\chi \rhoc+{r'}^{-1}a^2n \tr\chi A \right)
dt' \Big\|_{\P^0}\les \gc,
\end{equation}
since $0<\Delta_0<1/2$ can be chosen sufficiently small, we conclude
that
\begin{equation}\label{8.25.1}
\|M\|_{{\mathcal P}^0}\les \Delta_0^2+{\mathcal R}_0.
\end{equation}
By (\ref{d0m}) and (\ref{smry1}), $\|\sn \D^{-1} (an M)\|_{\P^0}\les
\gc$.

In view of (\ref{0.12}), we have $\sn (an\zeta)=\D_t P+E'''$, with
$E'''= E+\sn \D^{-1}(an M)$. By Theorem \ref{T1.1},
$$
\|\zeta\|_{L_\omega^\infty L_t^2}\les
\N_1(P)+\|E'''\|_{\P^0}+\N_1(\zeta)\les\gc.
$$

We now  prove (\ref{b5}). With the help of (\ref{7.7}), by letting
$$p':={}^\star\D_1^{-1}{\underline \b}, \quad e'=[{}^\star
\D_1^{-1}, \Dt]\udb-\wt{ Err}+an A\c \Ab,$$ also noting that
(\ref{7.21}) gives $\|[\Dt, {}^\star\D_1^{-1}] \check R\|_{{\mathcal
P}^0}\lesssim \Delta_0^2+{\mathcal R}_0$, combined with
(\ref{errp}), (\ref{cor3}) and (\ref{As}), we can get the following
decomposition
\begin{equation}\label{8.17.10}
an(\check\rho, \check \sigma)=\Dt p'+e' \quad \mbox{with} \quad
{\N}_1(p')+\|e'\|_{{\mathcal P}^0}\les \Delta_0^2+{\R}_0.
\end{equation}

(\ref{b5}) can be derived by establishing the following inequalities
\begin{align}
& \left\|r^{-2} \int_0^t {r'}^2\tr\chi\Dt p' dt'\right\|_{{\mathcal
P}^0}\les \gc,\label{b2} \\
&\left\|r^{-2}\int_0^t {r'}^2 \tr\chi \left(e'+{r'}^{-1}an A\right)
dt'\right\|_{{\mathcal P}^0}\les \Delta_0^2+{\R}_0.\label{b3}
\end{align}
To prove (\ref{b3}), by Proposition \ref{eq1.2}, (\ref{hlm}), also
in view of (\ref{cor3}) and (\ref{8.17.10}), we can obtain
\begin{equation*}
\left\|r^{-2}\int_0^t r'\left(e'+{r'}^{-1}an A\right) dt'\right\|_{{\mathcal
P}^0}\les \|e'\|_{\P^0}+\|r^{-1}an A\|_{\P^0}\les\gc.
\end{equation*}
By (\ref{5.33}) and (\ref{8.17.10}), we get
\begin{align*}
 &\left\|r^{-2}\int_0^t {r'}^2\iota \c e'dt'\right\|_{\B^0}\les
 (\N_1(\iota)+\|\iota\|_{L_\omega^\infty
 L_t^2})\|e'\|_{\P^0}\les\gc,
\end{align*}
and similarly
\begin{equation*}
 \left\|r^{-2}\int_0^t r'\iota \c an A dt'\right\|_{\B^0}\les
 (\N_1(\iota)+\|\iota\|_{L_\omega^\infty
 L_t^2})\|r^{-1} an A\|_{\P^0}\les\gc.
\end{equation*}
The proof of (\ref{b3}) is complete.

Now we prove (\ref{b2}). Recall that $p'={}^\star
\D_1^{-1}{\underline \b}$, then
\begin{equation}\label{mp5}
\lim_{s\rightarrow 0}r p'=0,\,\, \|\tr\chi p'\|_{\P^0}\les
{\N}_1(p')\lesssim {\mathcal R}_0+\Delta_0^2.
\end{equation}
Using  Proposition \ref{eq1.2} and (\ref{mp5}), also in view of
(\ref{s1}), we derive
\begin{align*}
& \left\|\frac{1}{r^2}\int_0^t {r'}^2\tr\chi \Dt p' dt'\right\|_{\P^0} \\
& \qquad \lesssim
\sum_{k>0} \left\| E_k\frac{1}{r}\int_0^t {r'}^2\tr\chi \Dt p' dt'
\right\|_{L_t^2 L_\omega^2} + \left\|\frac{1}{r}\int_0^t {r'}^2\tr\chi
\Dt p' dt' \right\|_{L_t^2 L_\omega^2}\\
&\qquad \lesssim \sum_{k>0} \left(\|E_k( r'\tr\chi p')\|_{L_t^2
L_\omega^2}+\|E_k r^{-1}\int_0^t \Dt({r'}^2\tr\chi) p'
dt'\|_{L_t^2 L_\omega^2}\right) +\|\sn_L p'\|_{L_t^2 L_x^2}\\
&\qquad \lesssim \|\tr\chi p'\|_{\P^0}+\|\tr\chi
p'\|_{L^2(\H)}+\|\sn_L p'\|_{L_t^2 L_x^2}\\
&\qquad +\sum_{k>0} \left(\left\|E_k r^{-1}\int_0^t{ r'}^2 \ovl{an\tr\chi}\tr\chi
p'dt'\right\|_{L_t^2 L_\omega^2} \right.\\
&\qquad \qquad \qquad \qquad\qquad
+ \left. \left\|E_k r^{-1}\int_0^t {r'}^2 an \left((\tr\chi
)^2+|\chih|^2\right) p' dt'\right\|_{L_t^2 L_\omega^2} \right).
\end{align*}
Using  (\ref{5.33}), (\ref{cor3}), BA1 and (\ref{smry1}),
\begin{align*}
\sum_{k>0} \left\|E_k r^{-1}\int_0^t {r'}^2 an |\chih|^2\c p'
dt'\right\|_{L_t^\infty L_\omega^2} &\les \left(
\N_1(\chih)+\|\chih\|_{L_\omega^\infty L_t^2}\right) \|an \chih \c
p'\|_{\P^0}\\&\les (\gc) \N_1(p')\les \gc,
\end{align*}
where for the last inequality, we employed (\ref{8.17.10}).

It is easy to see by (\ref{cor2})
\begin{align*}
&\sum_{k>0} \left\|E_k r^{-1}\int_0^t{ r'}^2 \ovl{an\tr\chi}\tr\chi
p'dt' \right\|_{L_t^2 L_\omega^2}\les \|\tr\chi p'\|_{\P^0}\les\N_1(p').
\end{align*}
and
\begin{align}
\sum_{k>0}& \left\|E_k r^{-1}\int_0^t {r'}^2 an (\tr\chi)^2 p'
dt'\right\|_{L_t^2 L_\omega^2}\nn\\
&\le \sum_{k>0}\left( \left\|E_k r^{-1}\int_0^t {r'}^2 an \kp\tr\chi p'
dt' \right\|_{L_t^2 L_\omega^2}+ \left\|E_k r^{-1}\int_0^t {r'}^2
\ovl{an \tr\chi} \tr\chi p' dt' \right\|_{L_t^2 L_\omega^2}\right)\nn\\
&\les \|an\kp\c r\tr\chi  p'\|_{\P^0}+\|\tr\chi p'\|_{\P^0}\les
(\gc+1)\N_1(p')\label{m8}
\end{align}
where we employed  (\ref{cor2}) and
\begin{equation}\label{m9}
\|an \kp \c \tr\chi p'\|_{\P^0}\les \N_1(p')(\gc).
\end{equation}
To see (\ref{m9}), we first deduce with the help of (\ref{A93}) that
\begin{equation}\label{m7}
 \|an\kp\c r\tr\chi  p'\|_{\P^0}\les \|\kp\c r\tr\chi p'\|_{\P^0}.
\end{equation}
By (\ref{7.1}) with $G=r\tr\chi \kp$
\begin{equation}\label{m6}
\|\kp\c r\tr\chi p'\|_{\P^0}\les \N_1(p') \left( \|r^{\f12} \sn (r \tr\chi
\kp)\|_{L^2(\H)}+\|r^{1-\frac{1}{b}}\tr\chi \kp\|_{L_t^b L_x^2}\right)
\end{equation}
where $b>4$.
 Since $\|r\kp\|_{L^\infty}\les C$ and
\begin{align*}
\|r^{\f12} \sn (r\tr\chi \kp)\|_{L^2(\H)}&\les\|r^{\frac{3}{2}} \sn
\tr\chi \kp\|_{L^2(\H)}+\|r^{\frac{3}{2}}  \tr\chi
\sn \kp\|_{L^2(\H)}\\
&\les \|\sn \tr\chi \|_{L^2(\H)}+\|\sn \kp\|_{L^2(\H)} \\
&\les \gc
\end{align*}
where for the last inequality, we employed (\ref{smry1}) and
Proposition \ref{n1tr}.  And by (\ref{eq.7}), we have
\begin{align*}
\|r^{1-\frac{1}{b}}\tr\chi \kp\|_{L_t^b
L_x^2}&\les\|r^{-\frac{1}{b}}\kp\|_{L_t^b L_x^2} \les\gc.
\end{align*}
Thus in view of (\ref{m6}) and (\ref{m7}), (\ref{m9}) is proved.
In view of (\ref{mp5}),  (\ref{b2}) is proved.
\end{proof}

Similar to (\ref{8.25.1}), we can obtain
$$
\|r^\f12 \mu\|_{\B^0}\les \gc.
$$

\section{\bf Appendix}\label{ap}
\setcounter{equation}{0}

\subsection{Dyadic sobolev inequalities}
We start with  proving Lemma \ref{sl2} and a few useful consequences,
which will be used to prove (\ref{ini4}) in the second subsection.

Let us still regard $\kp$ and $\iota$ as elements of $A$. Using
Propositions \ref{smr}, \ref{n1tr}, Lemma \ref{gauss3} and
(\ref{cond1}), more precisely
\begin{equation*}
\|\K\|_{L_t^2 L_x^2}+\|\b\|_{L_t^2 L_x^2}+\N_1(A)\lesssim
\gc,
\end{equation*}
we can adapt the approach in \cite[Lemma 5.3]{KRs} and \cite[Chapter 9]{WangQ} to derive
\begin{lemma}\label{sl1}
For any smooth $S_t$ tangent tensor fields $F$ and all $q<2$
sufficiently close to $q=2$,
\begin{align}
&\|r^{\frac{1}{2}-\frac{1}{q}}[P_k, \Dt]F\|_{L_t^q
L_x^2}+2^{-k}\|r^{\frac{3}{2}-\frac{1}{q}}\sn[P_k, \Dt]F\|_{L_t^q
L_x^2}\lesssim 2^{-\frac{k}{2}+}\N_1(F) ,\label{cc2}\\
&\|r^{-\f12}[P_k, \Dt]F\|_{L_t^1 L_x^2}+2^{-k}\|r^{\f12}\sn[P_k,
\Dt]F\|_{L_t^1 L_x^2}\lesssim 2^{-k+}\N_1(F).\label{cc1}
\end{align}
\end{lemma}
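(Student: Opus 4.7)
The plan is to adapt the commutator argument of \cite[Lemma~5.3]{KRs} and \cite[Chapter~9]{WangQ} to the present time-foliated setting, using exclusively the estimates proved in Sections~\ref{presob}--\ref{ellp}. First I would represent the commutator through the heat flow: writing $P_k F=\int_0^\infty m_k(\tau) U(\tau)F\,d\tau$ and applying Duhamel to the generator $\Delta_{\gac}=r^2\sD$ of $U(\tau)$, I obtain
\[
[P_k,\Dt]F=\int_0^\infty m_k(\tau)\int_0^\tau U(\tau-\tau')\bigl(r^2[\sD,\Dt]-\Dt(r^2)\sD\bigr)U(\tau')F\,d\tau'\,d\tau.
\]
The pointwise commutator $[\sD,\Dt]$ is supplied by the second part of Proposition~\ref{commu1}; regarding $\kp$ and $\iota$ as elements of $A$ in accordance with Remark~\ref{n1kp}, this decomposes symbolically, exactly as in the proof of Proposition~\ref{commu0}, into
\[
[\sD,\Dt]h=-an\tr\chi\,\sD h+\sn\phi_1(h)+\phi_2(h),
\]
with $\phi_1(h)=anA\cdot\sn h$ and $\phi_2(h)=an(\b+\sn A+A\cdot A+r^{-1}A)\sn h$. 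The extra $-\Dt(r^2)\sD$ piece contributes a harmless multiple of $r^{-1}\cdot r^2\sD$ and can be folded into the $an\tr\chi\,\sD$ term since $|\Dt r|\lesssim 1$ follows from (\ref{cmps1}) and (\ref{comp1}).

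With this reduction the argument splits into three families of terms. The $an\tr\chi\,\sD$ part, after combining with the finite band property (\ref{FBD}) of the half-heat kernel and the bound $\|\tr\chi\|_{L^\infty}\lesssim r^{-1}$ from (\ref{comp1}), contributes at the level $2^{-k}\N_1(F)$ and is therefore better than needed. For $\phi_1$ I would integrate by parts in $\sn$, shifting the derivative onto the outer kernel $U(\tau-\tau')$; the factor $anA$ is then absorbed by $\|A\|_{L_t^\infty L_x^4}\lesssim\gc$ together with (\ref{sob.m}). The remaining pieces $\sn A$, $A\cdot A$ and $r^{-1}A$ in $\phi_2$ are handled by the product estimates of Lemma~\ref{l7.1} together with the bounds $\|\sn A\|_{L^2}+\|A\|_{L_t^\infty L_x^4}\lesssim\gc$ from (\ref{As}). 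The $\tau$-integral against $m_k$ concentrates near $\tau\sim 2^{-2k}$, so one heat-flow gain of $\tau^{1/2}$ produces the stated $2^{-k/2}$ factor (respectively $2^{-k}$ in the $L_t^1$ version), while the $r$-weights on the left match the homogeneity of the Bernstein factor $r^{-1}$. The $\sn[P_k,\Dt]F$ bound follows by one additional application of finite band, which trades a factor of $r$ for $2^k$.

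The main obstacle will be the $an\b\cdot\sn U(\tau')F$ summand inside $\phi_2$, since $\b$ is only controlled in $L^2(\H)$ by the curvature flux (\ref{cond1}). In the geodesic-foliation arguments one disposes of it using pointwise bounds on Ricci coefficients that are no longer available from the weaker assumption (\ref{n1pi}). The workaround is that $\b$ never appears unpaired: after an integration by parts or H\"older step it always multiplies a factor controlled by one of $\N_1(A)$, $\|\K\|_{L^2}\lesssim\gc$ from Lemma~\ref{gauss3}, or the $L_\omega^\infty L_t^2$ trace bounds in BA1. Concretely, I would pair $\b$ in $t$ with exponent pair $(q,2)$ for $q<2$ close to $2$, using a Bernstein-type estimate for $\sn U(\tau')F$ in $L_t^{2q/(2-q)}L_x^p$ with $p>2$ close to $2$, which is available from (\ref{SB}) together with Proposition~\ref{ep1.3}. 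The small loss $2^{k\varepsilon}$ encoded by the symbol $2^{+}$ on the right-hand side is precisely the cost of taking $p$ strictly above $2$ and $q$ strictly below $2$; the corresponding endpoint fails at the level of a single logarithm and cannot be recovered by these methods, which is why the estimate must be stated with an $\varepsilon$-loss from the outset.
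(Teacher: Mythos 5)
Your sketch follows exactly the route the paper intends: the paper offers no proof of Lemma \ref{sl1} beyond citing \cite[Lemma 5.3]{KRs} and \cite[Chapter 9]{WangQ} together with the inputs $\|\K\|_{L^2}+\|\b\|_{L^2}+\N_1(A)\lesssim \Delta_0^2+\RR$, and your heat-flow Duhamel representation with the decomposition into $an\tr\chi\,\sD$ (corrected by $\Dt(r^2)\sD$, leaving the coefficient $an\kp$), $\sn\phi_1$ and $\phi_2$ is precisely the decomposition the paper itself uses for the analogous commutator in the proof of Proposition \ref{commu0}, including the identification of $\b$ as the term requiring the $\varepsilon$-loss. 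The only imprecision is cosmetic: after folding in the $\Dt(r^2)\sD$ correction the surviving coefficient is $an\kp$, which should be treated like the other $A\cdot\sn U(\tau')F$ terms via $\N_1(\kp)\lesssim\Delta_0^2+\RR$ rather than via $\|\tr\chi\|_{L^\infty}\lesssim r^{-1}$, but this does not affect the argument.
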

Now we are ready to prove Lemma \ref{sl2}.

\begin{proof}[Proof of Lemma \ref{sl2}]
The result is trivial when $q=2$. So we only need to
consider the case $q>2$. It is easy to get the following estimate
\begin{equation*}
\|r^{-\frac{1}{2}-\frac{1}{q}}P_k F\|_{L_t^q L_x^2}\lesssim
\|r^{-1}P_k F\|_{L_t^2 L_x^2}^{\frac{2}{q}}\|r^{-\frac{1}{2}}P_k
F\|_{L_t^\infty L_x^2}^{\frac{q-2}{q}}.
\end{equation*}
Moreover we have by integrating along an arbitrary null geodesic,
\begin{align}
\|r^{-\frac{1}{2}}P_k F \|_{L_x^2 L_t^\infty}^2&\lesssim \|P_k(\Dt
 F)\|_{L_t^2 L_x^2}\|r^{-1}P_k F\|_{L_t^2 L_x^2}+\|r^{-1}[P_k,
\Dt ]F\cdot P_k F\|_{L_t^1 L_x^1}\nonumber\\
&\quad  +\|r^{-1}P_k F\|_{L_t^2 L_x^2}^2.\label{f9}
\end{align}
We can obtain the following
estimate with $\frac{1}{q'}+\frac{1}{q}=1$ and $2<q<\infty$
\begin{align}
\|r^{-1} [P_k,\Dt F]\cdot P_k F\|_{L_t^1 L_x^1}&\lesssim
\|r^{\frac{1}{2}-\frac{1}{q}}[P_k,\Dt] F\|_{L_t^{q'}
L_x^2}\|r^{-\frac{1}{q}-\frac{1}{2}}P_k F\|_{L_t^{q} L_x^2}\nn\\
&\lesssim
2^{-\frac{k}{2}+}{\N}_1(F)\|r^{-\frac{1}{q}-\frac{1}{2}}P_k
F\|_{L_t^{q} L_x^2}\label{comm2}
\end{align}
where we employed Lemma \ref{sl1} to derive the last inequality.

Combining the above estimates we obtain
\begin{align}
\|r^{-\frac{1}{2}-\frac{1}{q}} P_k F\|_{L_t^q L_x^2}^{q} &\lesssim
\|r^{-1}P_k F\|_{L_t^2 L_x^2}^2\Big(\|P_k (\Dt F)\|_{L_t^2
L_x^2}\|r^{-1}P_k F\|_{L_t^2 L_x^2}\nonumber\\
&+ 2^{-\frac{k}{2}+}{\N}_1(F)\|r^{-\frac{1}{q}-\frac{1}{2}}P_k
F\|_{L_t^{q} L_x^2}+\|r^{-1}P_k F\|_{L_t^2
L_x^2}^2\Big)^{\frac{q}{2}-1} \label{f6}
\end{align}
  Using (\ref{FB}), we get for any $0\le \a
<\frac{\frac{1}{2}-\frac{1}{q}}{\frac{q}{2}+1}$,
\begin{equation*}
\|r^{-\frac{1}{2}-\frac{1}{q}}P_k F\|_{L_t^q L_x^2}\lesssim
2^{-k(\frac{1}{2}+\frac{1}{q})}(1+2^{-\a k}){\N}_1(F).
\end{equation*}
Combine  (\ref{f9}), (\ref{comm2})  and (\ref{TE}) with $2<q<\infty$, by using (\ref{FB})
we obtain  $$\|r^{-\f12} P_k F\|_{L_t^\infty L_x^2}\les 2^{-\f12 k}\N_1(F).$$

To see (\ref{TEE}), we derive by (\ref{sob.01}) and (\ref{FBB})
\begin{align*}
\|r^{-\frac{1}{q}} F_k\|_{L_t^q L_x^4}
&\les \|r^{\f12-\frac{1}{q}}\sn F_k\|_{L_t^q L_x^2}^{\frac{1}{2}}
\|r^{-\f12-\frac{1}{q}}F_k\|_{L_t^q L_x^2}^{\f12}
+\|r^{-\f12-\frac{1}{q}}F_k\|_{L_t^q L_x^2}\\
&\les (2^{\frac{k}{2}}+1)\|r^{-\frac{1}{2}-\frac{1}{q}}P_k
F\|_{L_t^q L_x^2} .
\end{align*}
Combined with (\ref{TE}), (\ref{TEE}) follows. 
\end{proof}

\begin{lemma}
Let $\D^{-1}$ denote one of the operators $\D_1^{-1}$, $\D_2^{-1}$
and ${}^\star\D_1^{-1}$. There hold the following estimates for
appropriate $S$ tangent tensor $G$,
\begin{align}
\|\D^{-1} P_k^2 G\|_{L_t^\infty L_x^2} &\les r^{\frac{3}{2}}2^{-\frac{3k}{2}} \N_1(G),\quad  k>0,\label{knf3}\\
\|\sn \D^{-1} P_k^2 G\|_{L_t^\infty L_x^4} &\les
\N_1(G)\left(1+2^{-\frac{k}{2}}r^{\f12}\|\K\|_{L_x^2}^{\f12}\right),\quad
k>0.\label{knf11}
\end{align}
\end{lemma}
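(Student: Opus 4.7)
My plan is to prove the two estimates separately, leveraging the Hodge-operator bounds in Lemma \ref{dual} and Proposition \ref{P4.1} together with the dyadic Sobolev bound (\ref{TE}) just established in Lemma \ref{sl2}.

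For (\ref{knf3}), I would write $\D^{-1}P_k^2 G=\D^{-1}P_k(P_k G)$ and chain two ingredients: the first estimate of Lemma \ref{dual} gives $\|\D^{-1}P_k H\|_{L^2_x}\les 2^{-k}r\|H\|_{L^2_x}$ for any $S$-tangent tensor $H$, while (\ref{TE}) at $q=\infty$ yields $\|P_k G\|_{L^\infty_t L^2_x}\les r^{1/2}2^{-k/2}\N_1(G)$. Applied with $H=P_k G$ these combine multiplicatively to give the desired $r^{3/2}2^{-3k/2}\N_1(G)$ bound.

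For (\ref{knf11}), I would apply the per-slice $L^4$-Sobolev inequality (\ref{sob.01}) to the tensor $F:=\sn\D^{-1}P_k^2 G$, which yields
\begin{equation*}
\|F\|_{L^4_x}^2\les \|\sn F\|_{L^2_x}\|F\|_{L^2_x}+r^{-1}\|F\|_{L^2_x}^2.
\end{equation*}
The base $L^2$ norm $\|F\|_{L^2_x}=\|\sn\D^{-1}P_k^2G\|_{L^2_x}$ is controlled by Proposition \ref{P4.1} as $\les \|P_k^2 G\|_{L^2_x}\les \|P_k G\|_{L^2_x}\les r^{1/2}2^{-k/2}\N_1(G)$ via (\ref{TE}); this handles the lower-order term $r^{-1/2}\|F\|_{L^2_x}\les 2^{-k/2}\N_1(G)$ and supplies one factor in the interpolated term. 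The remaining ingredient is $\|\sn F\|_{L^2_x}=\|\sn^2 H\|_{L^2_x}$ with $H:=\D^{-1}P_k^2 G$ (so that $\D H=P_k^2 G$), and it is in this estimate that the curvature factor $\|\K\|_{L^2_x}^{1/2}$ will enter. I would commute $\sn$ past $\D$ using the identity $[\sn,\D]H\sim K\cdot H$ coming from the Gauss equation in 2-D, to get $\D(\sn H)=\sn P_k^2 G-[\sn,\D]H$. The per-slice elliptic estimate underlying Lemma \ref{hdgf} then gives $\|\sn^2 H\|_{L^2_x}\les \|\sn P_k^2 G\|_{L^2_x}+\|K H\|_{L^2_x}$; the first term is $\les r^{-1/2}2^{k/2}\N_1(G)$ by (\ref{FBB}) combined with (\ref{TE}). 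Splitting $K=\K+r^{-2}$, the $r^{-2}H$ contribution is absorbed using part (\ref{knf3}), which provides $\|H\|_{L^2_x}\les r^{3/2}2^{-3k/2}\N_1(G)$, while $\|\K H\|_{L^2_x}$ is bounded by H\"older as $\|\K\|_{L^2_x}\|H\|_{L^\infty_x}$. Substituting everything back into the Sobolev interpolation and taking square roots extracts exactly the factor $2^{-k/2}r^{1/2}\|\K\|_{L^2_x}^{1/2}\N_1(G)$ as required.

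The main obstacle is the pointwise bound $\|H\|_{L^\infty_x}\les r^{1/2}2^{-k/2}\N_1(G)$. Since the sharp Bernstein inequality (\ref{SB}) is only stated for scalars, for the tensor $H$ I intend to pair with the orthonormal vector fields $\{X_i\}$ from Lemma \ref{correct4} to reduce to scalar components and then invoke scalar Bernstein directly; an alternative is to dyadically decompose $H=\sum_j P_j^2 H+P_{\le 0}H$ and estimate each $\|P_j H\|_{L^\infty_x}$ via the weak Bernstein (\ref{WB}) combined with the second estimate of Lemma \ref{dual} and the finite-band identity (\ref{FD}), with frequencies $j\gg k$ summed using the extra $\sD$-gain from (\ref{FD}) applied to $H$ and frequencies $j\ll k$ controlled by Proposition \ref{P4.1}. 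This frequency-localization step is the most delicate part of the argument.
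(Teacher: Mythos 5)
Your treatment of (\ref{knf3}) is exactly the paper's: Lemma \ref{dual} composed with (\ref{TE}) at $q=\infty$. For (\ref{knf11}) your skeleton ((\ref{sob.01}) applied to $\sn\D^{-1}P_k^2G$, Proposition \ref{P4.1} for the base $L^2$ norm, a Weitzenb\"ock/B\"ochner identity for $\|\sn^2\D^{-1}P_k^2G\|_{L^2}$, with the curvature entering through $\K$) is also the paper's. The divergence, and the gap, is in how the term $\|\K\cdot H\|_{L^2_x}$ with $H=\D^{-1}P_k^2G$ is closed. You reduce it to a pointwise bound $\|H\|_{L^\infty_x}\les r^{1/2}2^{-k/2}\N_1(G)$, which you correctly flag as the delicate step, but neither of your two strategies is sound as stated. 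The first (pair with the $X_i$ and ``invoke scalar Bernstein directly'') does not work because $H$ is not frequency-localized: $\D^{-1}$ does not commute with $P_k$, so (\ref{SB}) applied to the full components of $H$ gives nothing, and summing $\|P_j^2 H\|_{L^\infty}$ over all $j$ using only $\|\sn H\|_{L^2}$ diverges --- this is precisely the failure of $H^1(S)\hookrightarrow L^\infty(S)$ that the paper emphasizes elsewhere. The second strategy (dyadic decomposition of $H$ with the $\sD$-gain from (\ref{FD}) for $j\gg k$) can be made to work, but note that $\sD H={}^\star\D P_k^2G\pm(\K+r^{-2})H$ reintroduces $\K H$, so you would need an additional absorption argument; as written this step is a sketch, not a proof.

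The paper sidesteps the $L^\infty$ bound on $H$ entirely: it estimates $\|\K\cdot F\|_{L^2_x}\les\|\K\|_{L^2_x}\|F\|_{L^\infty_x}$ and then uses the per-slice Sobolev embedding $\|F\|_{L^\infty_x}\les r^{1/p}\|\sn^2F\|_{L^2_x}^{1/p}\|F\|_{H^1_x}^{(p-1)/p}+\|F\|_{H^1_x}$, absorbing the factor $\|\sn^2F\|_{L^2_x}^{1/p}$ into the left-hand side of the B\"ochner identity by Young's inequality; this leaves only $\|\K\|_{L^2_x}^{p/(p-1)}r^{1/(p-1)}\|F\|_{H^1_x}$ plus $\|\K\|_{L^2_x}\|\sn F\|_{L^2_x}$, both controlled by $\|P_kG\|_{L^2_x}$ via Proposition \ref{P4.1}. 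I recommend you replace your $L^\infty$ detour with this absorption argument; with that substitution the rest of your computation (whose arithmetic is correct and reproduces the factor $2^{-k/2}r^{1/2}\|\K\|_{L^2_x}^{1/2}$) goes through.
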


\begin{proof}
(\ref{knf3}) can be obtained by using Lemma \ref{dual} and
(\ref{TE}). Now consider (\ref{knf11}). For any $S$ tangent tensor fields $F$,
define
$$\|F\|_{H_x^1}=\|\sn F\|_{L^2(S)}+\|r^{-1}F\|_{L^2(S)}.$$
  Recall by B\"ochner identity contained in  \cite{KR4}, there holds
\begin{equation}\label{Bochner1}
\|\sn^2 F\|_{L^2_x}\les\|\sD F\|_{L_x^2}+\|\K\c
F\|_{L_x^2}+\|\K\|_{L_x^2}^{\f12}\|\sn F\|_{L_x^4}+r^{-1}
\|F\|_{H_x^1},
\end{equation}
and by Sobolev embedding
\begin{equation}\label{sob2}
\|F\|_{L_x^\infty}\les r^{\frac{1}{p}}\|\sn^2
F\|_{L_x^2}^{\frac{1}{p}}\|F\|_{H_x^1}^{\frac{p-1}{p}}+\|
F\|_{H_x^1}, \quad\,2<p<\infty.
\end{equation}
It is easy to observe from \cite[P.38]{KC} that symbolically
\begin{equation}\label{knf2}
\sD={}^\star\D\D\pm(\K+r^{-2}Id).
\end{equation}
Hence with $2<p<\infty$,
\begin{equation}\label{Bochner1}
\|\sn^2 F\|_{L_x^2}\les \|{}^\star\D\D
F\|_{L_x^2}+\|\K\|_{L_x^2}^{\frac{p}{p-1}} r^{\frac{1}{p-1}}
\|F\|_{H_x^1}+\|\K\|_{L_x^2}\|\sn F\|_{L_x^2}+r^{-1} \|F\|_{H_x^1}.
\end{equation}
For $F=\D^{-1} H$, using Proposition \ref{P4.1}
\begin{align*}
\|\sn^2\D^{-1} H\|_{L_x^2}&\les \|{}^\star\D H\|_{L_x^2}
+\|\K\|_{L_x^2}^{\frac{p}{p-1}} r^{\frac{1}{p-1}} \|\D^{-1} H\|_{H_x^1}
+\|\K\|_{L_x^2}\|\sn \D^{-1} H\|_{L_x^2}\\
&\quad \, +r^{-1} \|\D^{-1}H\|_{H_x^1}\\
&\les \|{}^\star \D H\|_{L_x^2}+\|\K\|_{L_x^2}^{\frac{p}{p-1}}
r^{\frac{1}{p-1}} \| H\|_{L_x^2}+ \left(\|\K\|_{L_x^2}+r^{-1}\right)
\|H\|_{L_x^2}
\end{align*}
Let $H=P_k^2 G$,  by (\ref{FBB}), Proposition \ref{P4.1},  Lemma
\ref{dual}, we obtain,
\begin{align*}
\|\sn^2 \D^{-1} P_k^2 G\|_{L_x^2}&\les \|{}^\star\D P_k^2 G
\|_{L_x^2}+(\|\K\|_{L_x^2}+r^{-1})\|P_k^2 G\|_{L_x^2}\\
&\les\left((2^k+1) r^{-1}+\|\K\|_{L_x^2}\right)\|P_k G\|_{L_x^2}.
\end{align*}
By (\ref{sob.01}), Proposition \ref{P4.1}, (\ref{TE})
\begin{align}
\|\sn \D^{-1} P_k^2 G\|_{L_x^4}&\les \|\sn^2 \D^{-1}
P_k^2 G\|_{L_x^2}^{\f12}\|\sn \D^{-1} P_k^2 G\|_{L_x^2}^\f12
+r^{-\f12}\|\sn \D^{-1} P_k^2G\|_{L_x^2}\nn\\
&\les ( 1+2^{-\frac{k}{2}}r^\f12\|\K\|_{L_x^2}^\f12)\N_1(G).\nn
\end{align}
\end{proof}

\subsection{Proof of (\ref{ini4})}

We first prove (\ref{ini4}) by assuming the following results.
\begin{lemma}\label{lem:in1}
Denote by  $\D^{-1} G$ one of  the terms $\D_1^{-1} G$, $\D_2^{-1}G$ and ${}^\star\D_1^{-1}G$ for  appropriate $S$ tangent tensor $G$.
Let $F=\D^{-1} \ckk R\c \D^{-1} G$, we have
\begin{align*}
\|\sn F\|_{B_{2,1}^0}&\les \N_1(G)\left(\gc+c_0r^{\frac{1}{2}}(\|\D^{-1} \ckk
R\|_{L_x^\infty}+\|\ckk R\|_{L_x^2} +r^2\|\D \ckk
R\|_{L_x^\infty})\right)
\end{align*}
where $c_0$ depends on $\|r\K\|_{L_x^\infty}+\K_{\a_0}.$
\end{lemma}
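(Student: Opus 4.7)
The plan is to expand $\sn F$ via the Leibniz rule as
$$\sn F = \sn\D^{-1}\ckk R \cdot \D^{-1}G + \D^{-1}\ckk R \cdot \sn\D^{-1}G$$
and estimate the $B_{2,1}^0(S)$ norm of each summand by a paraproduct decomposition in the $G$ variable, $G = \bar G + \sum_{l>0} P_l^2 G$. I would first dispose of the $\|\sn F\|_{L^2_x}$ piece in the definition of $B_{2,1}^0$ by H\"older together with the Hodge bound in Proposition \ref{P4.1} and the Sobolev estimate (\ref{sob.m}); this yields contributions controlled by $\|\D^{-1}\ckk R\|_{L^\infty_x}\|G\|_{L^2_x}$ and $\|\ckk R\|_{L^2_x}\|\D^{-1}G\|_{L^\infty_x}$, both absorbable by the right-hand side after using $\|\D^{-1}G\|_{L^\infty_x}\lesssim \N_2(\D^{-1}G)\lesssim \N_1(G)$ from Lemma \ref{A.8}.

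For the dyadic sum $\sum_{k>0}\|P_k\sn F\|_{L^2_x}$, I would split the inner $l$-sum into low-high ($k\le l$), diagonal ($k\sim l$), and high-low ($k>l$) frequency interactions. In the low-high and diagonal regimes, the finite band and Bernstein estimates of Proposition \ref{P2}, combined with the sharp dyadic bounds (\ref{knf3}) and (\ref{knf11}) on $\D^{-1}P_l^2 G$ and $\sn\D^{-1}P_l^2 G$, permit geometric summation in $l-k$; the factor $\D^{-1}\ckk R$ (resp.\ $\sn\D^{-1}\ckk R$) is placed in $L^\infty_x$ (resp.\ $L^4_x$) via Proposition \ref{P4.1} and Sobolev, and the resulting bound carries the small constant $\gc$ through $\N_1(\D^{-1}\ckk R)\lesssim \gc$ (Proposition \ref{add1} and (\ref{Coro1})), producing the first term on the right-hand side.

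The delicate regime is high-low, $k>l$, where both outer and inner frequencies concentrate on $\D^{-1}\ckk R$ and one must bound $\|P_k(\sn^2\D^{-1}\ckk R\cdot\D^{-1} G_l)\|_{L^2_x}$ or its Leibniz variant. Here I would use the identity (\ref{knf2}), $\sD = {}^\star\D\D \pm (\K+r^{-2}\mathrm{Id})$, to trade two derivatives of $\D^{-1}\ckk R$ for $\D\ckk R$, $\K\cdot\D^{-1}\ckk R$, and lower-order pieces. Placing $\D\ckk R$ in $L^\infty_x$ yields the $r^2\|\D\ckk R\|_{L^\infty_x}$ contribution, while pairing $\K$ in $L^2_x$ with $\D^{-1}\ckk R$ in $L^\infty_x$ yields the $\|\D^{-1}\ckk R\|_{L^\infty_x}$ contribution; the constant $c_0$ depending on $\|r\K\|_{L^\infty_x}+\K_{\a_0}$ enters through the sharp Bernstein inequality (\ref{SB}) (whose validity relies on Proposition \ref{ep1.3}). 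The extra $r^{1/2}$ weight compensates the mismatch between $L^2_x$-pairing at a fixed dyadic scale and the $L^\infty_x$ placement of $\D^{-1}\ckk R$.

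The main technical obstacle is the near-endpoint summability in the high-low regime, arising because $H^1$ fails to embed into $L^\infty$ on 2-surfaces. This is exactly what forces the appearance of $c_0$: the sharp Bernstein estimate (\ref{SB}) recovers the lost logarithm at the price of a dependence on $\|r\K\|_{L^\infty_x}+\K_{\a_0}$. Keeping this $c_0$-dependence isolated in the ``large'' part of the bound, while the benign portion retains the small multiplicative $\gc$ coming from $\N_1(\D^{-1}\ckk R)\lesssim \gc$, is the key structural feature of the final inequality and should be carefully tracked through the paraproduct bookkeeping above.
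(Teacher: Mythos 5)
Your overall architecture — Leibniz rule, a dyadic decomposition of $G$, the bounds (\ref{knf3}) and (\ref{knf11}) for the low/diagonal interactions, and the identity (\ref{knf2}) to convert second derivatives of $\D^{-1}\ckk R$ into $\D\ckk R$ and $\K\cdot\D^{-1}\ckk R$ — matches the paper's strategy, and you have correctly located where the terms $r^2\|\D\ckk R\|_{L_x^\infty}$ and the curvature-dependent constant must come from. The $L_x^2$ part of the $B_{2,1}^0$ norm is handled exactly as you describe.

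The gap is in the high-low regime $k>m$ (your $k>l$). You assert that the outer frequency then "concentrates on $\D^{-1}\ckk R$" and propose to bound $\|P_k(\sn^2\D^{-1}\ckk R\cdot\D^{-1}G_l)\|_{L_x^2}$, but you never frequency-localize $\D^{-1}\ckk R$, and the product $\sn\D^{-1}\ckk R\cdot\D^{-1}G_m$ is not itself frequency-localized, so $P_k$ applied to it carries no smallness in $k$ a priori; moreover $\sD={}^\star\D\D\pm(\K+r^{-2})$ is an identity for the Laplacian, not for $\sn^2$, so it cannot be inserted pointwise inside $P_k(\cdots)$ the way you suggest. The paper resolves this by inserting a second, intermediate Littlewood--Paley decomposition $\sum_{n}P_n^2$ of the \emph{product} $\D^{-1}\ckk R\cdot\D^{-1}G_m$ and invoking the two-parameter estimate (\ref{knf1}) for $\|P_k\sn P_n^2 F\|_{L_x^2}$, which supplies the gains $2^{-2|n-k|}$, $2^{-(1-\a)\max(k,n)}\K_{\a_0}$, etc., needed to sum over $k$; the quantity $\I_{nm}=\|P_n(\D^{-1}\ckk R\cdot\D^{-1}G_m)\|_{L_x^2}$ is then estimated for $n>m$ via the finite-band representation $P_n=2^{-2n}r^2\tilde P_n\sD$, and it is at this point — applying $\sD$ by Leibniz to the product and using (\ref{knf2}) on the factor $\sD\D^{-1}\ckk R$ — that $\D\ckk R$, $\K\cdot\D^{-1}\ckk R$ and the cross term $\sn\D^{-1}\ckk R\cdot\sn\D^{-1}G_m$ legitimately appear; the case $k>m>n$ is handled directly by (\ref{knf3}). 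Also, the constant $c_0$ enters through (\ref{knf1}) (via $\K_{\a_0}$) and through the B\"ochner-type bounds (via $\|\K\|_{L_x^\infty}$), not through the sharp Bernstein inequality (\ref{SB}), which in this paper is only available for scalars. Without the intermediate projection and (\ref{knf1}) (or an equivalent commutator device), your $k$-sum in the high-low regime does not close.
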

\begin{lemma}\label{lem:in2}
Let $\D^{-1}$ denotes one of the operators $\D_1^{-1}$, $\D_2^{-1}$ and ${}^\star\D_1^{-1}$. For $S$
tangent tensor $H$, there hold
\begin{equation}\label{t1}
\|\sn \D^{-1} H\|_{B_{2,1}^1}\les \|{}^\star\D
H\|_{B_{2,1}^0}+\|H\|_{L_\omega^2}+c_0 r^{\f12} \N_1(H),
\end{equation}
\begin{equation}\label{tipp}
\|\sn \D^{-1} H\|_{L_x^\infty}\le \|{}^\star\D H\|_{B_{2,1}^0}+(c_0
r^\theta+1) (\|\sn H\|_{L_x^2}+\|H\|_{L_\omega^2}+c_0 r^\f12
\N_1(H)).
\end{equation}
where $c_0$ is depending on the quantity
$r\|\K\|_{L_x^\infty}+\K_{\a_0}+r\|\sn \K\|_{L_x^2}$, and $\theta>0$
is very close to $0$.
\end{lemma}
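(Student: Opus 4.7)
The heart of the argument will be the pointwise B\"ochner-type identity (\ref{knf2}): applied to $F=\D^{-1}H$ it reads
\[
\sD F \;=\; \pm\,{}^\star\D H \;\pm\; (\K + r^{-2})\,F,
\]
which bridges one derivative of $H$ (through ${}^\star\D H$) to the Laplacian of $F$ up to curvature corrections. This is the identity that will allow the two-derivative norm $\|\sn F\|_{B_{2,1}^1}$ to be traded for a one-derivative norm $\|{}^\star\D H\|_{B_{2,1}^0}$.

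For (\ref{t1}) the plan is to split $\|\sn F\|_{B_{2,1}^1(S)}=\|r^{-1}\sn F\|_{L_x^2}+\sum_{k>0}2^kr^{-1}\|P_k\sn F\|_{L_x^2}$. The first summand will be estimated directly by Proposition \ref{P4.1} applied to $\D F=H$, contributing $\|H\|_{L_x^2}\approx\|H\|_{L_\omega^2}$ after the area-element conversion $d\mu_\ga\approx r^2\,d\mu_{\Bbb S^2}$. For the Besov tail I would use the finite-band representation of (\ref{FD}), $P_k=2^{-2k}\ti P_k\sD$, to write $P_k\sn F=2^{-2k}\ti P_k(\sn\sD F+[\sD,\sn]F)$, and then substitute the B\"ochner expression for $\sD F$. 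The resulting principal piece $2^{-2k}\ti P_k\sn{}^\star\D H$, after commuting $\ti P_k$ past $\sn$ via (\ref{FBB}), telescopes through the geometric series $\sum 2^{-k}$ into $\|{}^\star\D H\|_{B_{2,1}^0}$. The remaining curvature pieces $\sn\K\cdot F$, $\K\,\sn F$, together with $[\sD,\sn]F$ and $[\ti P_k,\sn]$, will be controlled by H\"older using $r\|\K\|_{L^\infty}+\|r\sn\K\|_{L^2}\les c_0$ and the elliptic bound $\|F\|_{L^\infty}+\|\sn F\|_{L_x^4}\les r^{1/2}\N_1(H)$ that follows from (\ref{Bochner1}) applied to $\D F=H$; these combine into the stated $c_0 r^{1/2}\N_1(H)$ error.

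For (\ref{tipp}) the plan is to feed the $B_{2,1}^1$ bound into the two-dimensional Sobolev embedding (\ref{sob2}) with $G=\sn F$ and $p>2$ chosen so that $1/p=\theta$ is small, giving
\[
\|\sn F\|_{L_x^\infty}\;\les\; r^{\theta}\|\sn^2 F\|_{L_x^2}^{\theta}\,\|\sn F\|_{H_x^1}^{1-\theta}+\|\sn F\|_{H_x^1}.
\]
The $H_x^1$ norm of $\sn F$ will be reduced, via the identity $\D\sn F=\sn H+[\D,\sn]F$ together with Proposition \ref{P4.1} and (\ref{Bochner1}), to $\|\sn H\|_{L_x^2}+\|H\|_{L_\omega^2}+c_0 r^{1/2}\N_1(H)$; the $\|\sn^2 F\|_{L_x^2}$ factor will be absorbed by the bound from (\ref{t1}). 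Interpolating these produces precisely the $(c_0 r^\theta+1)$ prefactor of the lemma.

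The hard part will be the careful $r$-bookkeeping of the curvature commutators. Since $r\K$ is only $L^\infty$-controlled (with constant absorbed into $c_0$) and $r\sn\K$ only $L^2$-controlled, each occurrence of $[\sD,P_k]$ or $[\sD,\sn]$ must be decomposed into pieces whose $r$-weights balance exactly against the finite-band factors $2^{-2k}r^2$ so that the sum over $k$ converges to precisely $c_0 r^{1/2}\N_1(H)$ rather than to something worse. Proposition \ref{ep1.3} (giving $\|\La^{-\a}\K\|_{L_t^\infty L_x^2}\les\gc$) together with Lemma \ref{gauss3} is what will ultimately allow these weights to close.
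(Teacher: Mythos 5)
Your overall architecture for (\ref{t1}) --- trading $\sD\,\D^{-1}H$ for ${}^\star\D H$ via (\ref{knf2}), using the finite band property, and pushing curvature commutators into $c_0 r^{\f12}\N_1(H)$ --- is the paper's, but the summation of the principal term, which is where the lemma's content lies, does not close as you describe. Writing $P_k\sn F=2^{-2k}r^2\,\ti P_k(\sn\sD F+[\sD,\sn]F)$ and substituting (\ref{knf2}) leaves you with $\sum_{k>0}2^{-k}r\,\|\ti P_k\sn\,{}^\star\D H\|_{L_x^2}$, and the only tool you invoke, (\ref{FBB}), gives $\|\ti P_k\sn G\|_{L_x^2}\les 2^k r^{-1}\|G\|_{L_x^2}$; each dyadic term is then bounded only by $\|{}^\star\D H\|_{L_x^2}$ and the sum over $k$ diverges. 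The geometric LP projections are not sharply frequency-localized, so you cannot ``commute $\ti P_k$ past $\sn$'' onto a fattened block of ${}^\star\D H$ as in classical Littlewood--Paley theory. The paper's proof performs a \emph{second} decomposition $\D^{-1}H=\sum_m P_m^2\D^{-1}H+\cdots$, splits into the cases $k>m$ and $k<m$, and sums the double series using the off-diagonal decay estimates (\ref{knf1}) and (\ref{kdin1}) (polynomial decay in $|k-m|$); only then does the principal piece collapse to $\sum_m\|P_m{}^\star\D H\|_{L_x^2}\les\|{}^\star\D H\|_{B_{2,1}^0}$. This double decomposition and the almost-orthogonality input are missing from your proposal.

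For (\ref{tipp}) your route rests on the displayed inequality $\|\sn F\|_{L_x^\infty}\les r^\theta\|\sn^2F\|_{L_x^2}^\theta\|\sn F\|_{H_x^1}^{1-\theta}+\|\sn F\|_{H_x^1}$, whose right-hand side is comparable to $\|\sn F\|_{H_x^1}$; this amounts to the embedding $H^1(S)\hookrightarrow L^\infty(S)$, which fails in two dimensions (the paper stresses exactly this failure in the introduction). If instead you apply (\ref{sob2}) correctly to $G=\sn F$, the interpolated factor is $\|\sn^3\D^{-1}H\|_{L_x^2}$, which requires two derivatives of $H$ and is not available under the hypotheses. The correct ingredient, and the one the paper uses, is the endpoint Besov embedding of \cite[Proposition 3.20 (x)]{KRs}, namely $\|\ti H\|_{L_x^\infty}\les\sum_{k>0}2^kr^{-1}\|P_k\ti H\|_{L_x^2}+c_0 r^\theta(\|\sn\ti H\|_{L_x^2}+\|r^{-1}\ti H\|_{L_x^2})$ for $\ti H=\sn\D^{-1}H$: its first term is precisely what (\ref{t1}) controls, and $\|\sn\ti H\|_{L_x^2}=\|\sn^2\D^{-1}H\|_{L_x^2}$ is then bounded through (\ref{Bochner1}) and Proposition \ref{P4.1}, producing the $\|\sn H\|_{L_x^2}$ term on the right of (\ref{tipp}).
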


Let us set $P=\sum_{i=1}^\infty P_i$ and  $\ti P=\sn\D^{-1}P.$ Since
$\bar P=\ti P+P-P_1$ and in view of (\ref{ini3}) that
$\lim_{t\rightarrow 0} \|P_1\|_{L_\omega^\infty}=0$, (\ref{ini4})
can be proved by establishing
\begin{equation}\label{knf9}
\lim_{t\rightarrow 0} \|P\|_{L_x^\infty}=0, \quad \mbox{ and } \quad
\lim_{t\rightarrow 0}\|\ti P\|_{L_x^\infty}<\infty.
\end{equation}
In view of (\ref{pi}),
we have  by (\ref{sob.m2}), Lemma \ref{A.8} and (\ref{new1000}),
\begin{align*}
\|P_i\|_{L_\omega^\infty}&\le\|\D_1^{-1}\ckk R\|_{L_\omega^\infty}
\|\D^{-1}P_{i-1}\|_{L_\omega^\infty}\les  r^\f12 \|\D_1^{-1} \ckk R\|_{L_\omega^\infty} \N_2(\D^{-1} P_{i-1})\\
&\les r^\f12\N_1(P_{i-1}) \|\D_1^{-1}\ckk R\|_{L_\omega^\infty}\les
r^\f12(C(\gc))^{i-1}\N_2(F)\|\D_1^{-1} \ckk R\|_{L_\omega^\infty}.
\end{align*}
Summing over $i\ge 1$, with $(\gc)$ sufficiently small,
\begin{equation*}
\|P\|_{L_\omega^\infty}\les r^{\f12}\N_2(F)\|\D_1^{-1} \ckk
R\|_{L_\omega^\infty}.
\end{equation*}
Noting that $\lim_{t\rightarrow 0} \|\D_1^{-1} \ckk R\|_{L_\omega^\infty}<\infty$ and $\N_2(F)<\infty$,
$$
\lim_{t\rightarrow 0} \|P\|_{L_x^\infty}=0.
$$

It remains to prove the second part of (\ref{knf9}). By
(\ref{tipp}), there holds
\begin{equation}\label{tip}
\|\ti P\|_{L_x^\infty}\le \|\sn P\|_{B_{2,1}^0}+(c_0 r^\theta+1)
\left(\|\sn P\|_{L_x^2}+\|P\|_{L_\omega^2}+c_0 r^\f12 \N_1(P)\right).
\end{equation}
Recall the definition of $P_i$ in (\ref{pi}). For each
$P_i=\D^{-1} \ckk R\c \D^{-1} P_{i-1}$,
by Lemma \ref{lem:in1}, there holds
\begin{align*}
\|\sn P_i\|_{B_{2,1}^0} &\les
c_0\N_1(P_{i-1})r^{\frac{1}{2}} \left(\|\D^{-1} \ckk
R\|_{L_x^\infty}+\|\ckk R\|_{L_x^2} +r^2\|\D \ckk
R\|_{L_x^\infty}\right)\\
&\quad\, +\N_1(P_{i-1}) (\gc).
\end{align*}
In view of (\ref{new1000}), summing over $i\ge 1$ gives
\begin{align*}
\|\sn P\|_{B_{2,1}^0}&\les \sum_{i\ge 1}\|\sn P_i\|_{B_{2,1}^0}\\
&\les c_0\sum_{i\ge 1} \left(C(\gc)\right)^i\N_2(F)r^{\frac{1}{2}} \left(\|\D^{-1} \ckk
R\|_{L_x^\infty}+\|\ckk R\|_{L_x^2} +r^2\|\D \ckk
R\|_{L_x^\infty} \right)\\
&+\sum_{i\ge 1}(C (\gc))^i\N_2(F)(\gc).
\end{align*}
Hence
\begin{align}
\|\sn P\|_{B_{2,1}^0}&\les c_0\N_2(F)r^{\frac{1}{2}} \left(\|\D^{-1} \ckk
R\|_{L_x^\infty}+\|\ckk R\|_{L_x^2} +r^2\|\D \ckk
R\|_{L_x^\infty}\right) \nn \\
&\quad\, +\N_2(F)(\gc)\label{pbsf1}.
\end{align}
By (\ref{pi}), (\ref{sob.m}), Lemma \ref{A.8}, (\ref{new1000}) and
(\ref{7.24}),
\begin{align*}
\|P_i\|_{L_\omega^2}&\les r^{-1}\|\D_1^{-1} \ckk
R\|_{L_x^4}\|\D^{-1}P_{i-1}\|_{L_x^4}\les \N_1(\D^{-1} \ckk
R)r^{-1}\N_1(\D^{-1} P_{i-1})\\&\les \N_1(P_{i-1})(\gc)\les
(C(\gc))^{i-1}(\gc)\N_2(F).
\end{align*}
Therefore
\begin{equation}\label{pbsf2}
\|P\|_{L_\omega^2}\les (\gc)\N_2(F).
\end{equation}
It is easy to derive from $P=\sum_{i\ge 1} P_i$ and (\ref{new1000})
that
\begin{equation}\label{pbsf3}
\N_1(P)\les (\gc)\N_2(F).
\end{equation}
 Thus in view of (\ref{tip}), the combination of (\ref{pbsf1}), (\ref{pbsf2}) and (\ref{pbsf3}) implies
 $$
 \lim_{t\rightarrow 0} \|\ti P\|_{L_x^\infty}\les \N_2(F)( \gc)<\infty,
 $$
as desired. 

\begin{proof}[Proof of Lemma \ref{lem:in1}]
Let us compute $\|\sn F \|_{L_x^2}$ first.
\begin{align*}
\|\sn F\|_{L_x^2}&=\|\sn( \D^{-1} \ckk R\c \D^{-1} G)\|_{L_x^2}\\
&\les \|\sn \D^{-1} \ckk R\|_{L_x^2}\|\D^{-1}
G\|_{L_x^\infty}+\|\D^{-1} \ckk R\|_{L_x^4}\|\sn D^{-1} G\|_{L_x^4}.
\end{align*}
By (\ref{sob.m2}), (\ref{sob.m}), Proposition \ref{P4.1}, Lemma
\ref{A.8} and (\ref{7.24}),
\begin{align*}
\|\sn F\|_{L_x^2}&\les r^\f12 \|\ckk R\|_{L_x^2} \N_2(\D^{-1}G)+\N_1(\D^{-1} \ckk R)\N_1(\sn \D^{-1} G)\\
&\les \left(r^\f12\|\ckk R\|_{L_x^2}+\N_1(\D^{-1} \ckk R)\right)\N_1(G) \\
& \les \left(r^\f12\|\ckk R\|_{L_x^2}+\gc\right)\N_1(G).
\end{align*}
 Now we prove
\begin{equation}\label{bsf1}
\sum_{k>0} \|P_k \sn (\D^{-1} \ckk R \c \D^{-1} G)\|_{L_x^2}\les
c_0\N_1(G)r^{\f12} \left(\|\D^{-1} \ckk R\|_{L_x^\infty}+\|\ckk
R\|_{L_x^2}+r^2\|\sn \ckk R\|_{L_x^\infty}\right) .
\end{equation}
Indeed, we will employ GLP decompositions to write $$G=\sum_{m>
0}P_m^2 G+P_{\le 0} G+U(\infty)G.$$ For simplicity, we consider the
high frequency terms $ \sum_{m> 0}P_m^2 G$. The other two terms can
be treated similar to {\it Case 2}.

{\it Case 1: $k<m$.} By (\ref{FBB}) and  (\ref{knf3}),
\begin{align*}
\|P_k\sn (\D^{-1} \ckk R\c \D^{-1} G_m)\|_{L_x^2}&\les
2^{k-\frac{3m}{2}}r^\f12\N_1(G) \|\D^{-1} \ckk R\|_{L_x^\infty}.
\end{align*}
Thus we obtain
\begin{equation*}
\sum_{k>0}\sum_{m>k} \|P_k \sn (\D^{-1} \ckk R\c \D^{-1}
G_m)\|_{L_x^2}\les r^\f12 \N_1(G) \|\D^{-1} \ckk R\|_{L_x^\infty}.
\end{equation*}

{\it Case 2: $k>m$.} We decompose further such that
\begin{equation}\label{inbs1}
P_k \sn (\D^{-1} \ckk R\c \D^{-1} G_m)=P_k \sn
(\sum_{n>0}P_n^2+P_{\le 0})(\D^{-1} \ckk R\c \D^{-1} G_m).
\end{equation}
For simplicity we consider the high frequency terms, and the low
frequency terms can be treated similarly. We can adapt the proof for
\cite[Proposition 4.5]{Qwang} to obtain the following inequality for
$S$ tangent tensor field $F$ and $1>\a>\a_0\ge \frac{1}{2}$
\begin{align}
\|P_k \sn P_n^2 F\|_{L_x^2}&\les
\Big(2^{\min(k,n)}2^{-2|n-k|}r^{-1}+2^{\min(k,n)}
2^{-(1-\a)\max(k,n)}\K_{\a_0}r^{-\a}\nn\\&\quad\quad+2^{-|k-n|}\|\K\|_{L_x^2}^\a
\K_{\a_0}\Big)\|P_n F\|_{L_x^2}\label{knf1}.
\end{align}
Let $\I_{nm}=\|P_n(\D^{-1} \ckk R\c \D^{-1} G_m)\|_{L_x^2}$, we have
\begin{align}
\|P_k \sn P_n^2&(\D^{-1} \ckk R\c \D^{-1} G_m)\|_{L_x^2} \nn \\
&\les
\Big(2^{\min(k,n)}2^{-2|n-k|}r^{-1}+2^{\min(k,n)}
2^{-(1-\a)\max(k,n)}\K_{\a_0}r^{-\a}\nn\\&+
2^{-|k-n|}\|\K\|_{L_x^2}^\a \K_{\a_0} \Big)\I_{nm}.\label{knf12}
\end{align}

Now we estimate $\I_{nm}$. Let us first consider the case that
$n>m>0$. By Proposition \ref{P2} (iii), we have
\begin{align*}
\I_{nm}&\les r^2 2^{-2n} \|\ti P_n\sD(\D^{-1} \ckk R\c \D^{-1}
G_m)\|_{L_x^2}\\&\les r^2 2^{-2n} \Big(\|\sD\D^{-1} \ckk R\c
\D^{-1}G_m\|_{L_x^2}+\|\ti P_n(\sn \D^{-1} \ckk R\c \sn \D^{-1}
G_m)\|_{L_x^2}\\
&\quad \,+\|\sD\D^{-1} G_m\c \D^{-1}\ckk R\|_{L_x^2} \Big).
\end{align*}
 By  (\ref{knf3}) and  (\ref{knf2}), we have
 \begin{align*}
 \|\sD\D^{-1} &\ckk R\c \D^{-1}G_m\|_{L_x^2}\\
 &\les \left(\|\sn \ckk R\|_{L_x^\infty}+\|\K\|_{L_x^\infty}\|\D^{-1} \ckk R\|_{L_x^\infty}
 +r^{-2} \|\D^{-1} \ckk R\|_{L_x^\infty}\right) 2^{-\frac{3m}{2}} r^{\frac{3}{2}} \N_1(G).
 \end{align*}
 By (\ref{WB}), Propositions \ref{P4.1} and (\ref{knf11}),
 \begin{align*}
 \|\ti P_n(\sn \D^{-1} \ckk R\c \sn \D^{-1} G_m)\|_{L_x^2}
 &\les 2^{\frac{n}{2}}r^{-\f12} \|\sn \D^{-1} \ckk R\|_{L_x^2}\|\sn \D^{-1} G_m\|_{L_x^4}\\
 &\les 2^{\frac{n}{2}}r^{-\f12}\|\ckk R\|_{L_x^2} \N_1(G)
 \left(1+2^{-\frac{m}{2}}r^\f12 \|\K\|_{L_x^2}\right).
 \end{align*}
 By (\ref{knf2}), (\ref{FBB}) and (\ref{TE}), (\ref{knf3}), we obtain
\begin{align*}
\|\sD\D^{-1} &G_m\c \D^{-1}\ckk R\|_{L_x^2}\\
& \les \|\D^{-1} \ckk R\|_{L_x^\infty}
\left(\|{}^\star\D G_m\|_{L_x^2}+\|\K\|_{L_x^\infty}\|\D^{-1} G_m\|_{L_x^2}
+r^{-2} \|\D^{-1} G_m\|_{L_x^2}\right)\\
&\les \|\D^{-1} \ckk R\|_{L_x^\infty} \left(2^{\frac{m}{2}}
r^{-\frac{1}{2}}+\|\K\|_{L_x^\infty}
2^{-\frac{3m}{2}}r^{\frac{3}{2}}+r^{-\f12} 2^{-\frac{3m}{2}} \right)\N_1(G).
\end{align*}
Hence
\begin{align}
\I_{nm}&\les r^2 2^{-2n}\N_1(G)\Big(2^{\frac{m}{2}} r^{-\f12}
\|\D^{-1} \ckk R\|_{L_x^\infty}+2^{\frac{n}{2}} r^{-\f12} \|\ckk
R\|_{L_x^2} \big(1+2^{-\frac{m}{2}}r^\f12 \|\K\|_{L_x^2}\big)\nn\\
&\quad
\quad+\|\sn \ckk R\|_{L_x^\infty} 2^{-\frac{3m}{2}}r^{\frac{3}{2}}
+\|\K\|_{L_x^\infty} \|\D^{-1} \ckk R\|_{L_x^\infty}
2^{-\frac{3m}{2}} r^{\frac{3}{2}}\Big).\label{inm}
\end{align}
Combined with (\ref{knf12}), summing over $k,n,m>0$ for the cases
$k>n>m$ and $n>k>m$, we summarize the results as follows
\begin{align*}
\sum_{k, n,m>0, k>m, n>m}&\|P_k \sn P_n^2(\D^{-1} \ckk R\c \D^{-1}
G_m)\|_{L_x^2}\\
&\les c_0 \N_1(G) r^{\frac{1}{2}} \left(\|\D^{-1} \ckk
R\|_{L_x^\infty}+\|\ckk R\|_{L_x^2} +r^2\|\sn \ckk R\|_{L_x^\infty}\right)
\end{align*}
and  $c_0$ depends on
$\|r^2\K\|_{L_x^\infty}+\K_{\a_0}+\|\K\|_{L_x^2}$.

It remains to estimate $\I_{nm}$ when $k>m>n$. By (\ref{knf3}),
\begin{equation}\label{inm1}
\I_{nm}\les \|\D^{-1} \ckk R\c \D^{-1} G_m\|_{L_x^2}\les
2^{-\frac{3m}{2}}r^{\frac{3}{2}}\N_1(G)\|\D^{-1} \ckk
R\|_{L_x^\infty}.
\end{equation}
Combined with (\ref{knf1})
\begin{equation}\label{knf4}
\sum_{k, n,m>0, k>m>n}\|P_k \sn P_n^2(\D^{-1} \ckk R\c \D^{-1}
G_m)\|_{L_x^ 2}\le c_0\N_1(G) r^{\frac{1}{2}}\|\D^{-1} \ckk
R\|_{L_x^\infty}
\end{equation}

\end{proof}

Recall the following  expression holds symbolically for any $S$
tangent tensor $F$, (see \cite{KR4},\cite[page 300]{Qwang}),
\begin{equation}\label{com4}
[\sn, \sD]F=\sn(K\c F)+K\c \sn F.
\end{equation}

\begin{proof}[Proof of Lemma \ref{lem:in2}]
Assuming (\ref{t1}), we first prove (\ref{tipp}). For simplicity let
us set $\ti H=\sn \D^{-1}H$. We have by \cite[Proposition 3.20
(x)]{KRs},
\begin{equation}\label{knf10}
\| \ti H\|_{L_x^\infty}\les \sum_{k>0}2^k r^{-1}\| P_k \ti
H\|_{L_x^2}+r^\theta c_0(\|\sn\ti H\|_{L_x^2}+\|r^{-1} \ti
H\|_{L_x^2}),
\end{equation}
where $c_0$ depends on $\|\K\|_{L_x^2},$ and $\theta>0$ is very
close to $0$.

The first term on the right of (\ref{knf10}) can be bounded in view
of (\ref{t1}),
\begin{equation}\label{a95}
\|\ti H\|_{B_{2,1}^1}\les \|{}^\star \D H\|_{B_{2,1}^0}+\|H\|_{L_\omega^2}+
r^{\f12}c_0\N_1(H).
\end{equation}
We then estimate $\|\sn \ti H\|_{L_x^2}$,
by applying (\ref{Bochner1}) to $F= \D^{-1} H$. By using Proposition
\ref{P4.1} and (\ref{sob.m}) we can obtain
\begin{align}
\|\sn \ti H\|_{L_x^2}&\les \|{}^\star\D
H\|_{L_x^2}+r^{-1}\|H\|_{L_x^2}+r^{\f12} c_0\N_1(H)\label{a91},
\end{align}
with $c_0$ depending on $\|\K\|_{L_x^2}$.

By combining (\ref{a91}), (\ref{a95}) and (\ref{knf10}) and
Proposition \ref{P4.1}, (\ref{tipp}) follows.

Now consider (\ref{t1}).
Using  GLP projections, we need to  prove
\begin{align}
&\sum_{k,m>0}2^k r^{-1} \|P_k \sn P_m^2 \D^{-1} H\|_{L_x^2}\les \|{}^\star\D H\|_{B_{2,1}^0}+ c_0 r^{\f12}\N_1(H)\label{lem:in3}\\
&\sum_{k,m>0}2^k r^{-1} \|P_k \sn P_{\le 0} \D^{-1} H\|_{L_x^2}\les
c_0r^{\f12}\N_1(H)\label{lem:in4},
\end{align}
where $c_0$  depends on $\|\K\|_{L_x^2}+r\|\sn
\K\|_{L_x^2}+r\|\K\|_{L_x^\infty}$.

 The proof of (\ref{lem:in4}) is similar to  the following {\it Case 1} of the treatment for
   $$\I_{km}:= 2^k r^{-1} \|P_k \sn P_m^2 \D^{-1} H\|_{L_x^2},$$
 thus  we will give the proof of  (\ref{lem:in3}) only.

{\it Case 1: $k>m$.} By (\ref{FD}),
\begin{equation}\label{ikm1}
\I_{km}\le 2^{-k}r\left(\|P_k \sn \sD P_m^2 \D^{-1}
H\|_{L_x^2}+\|P_k [\sD, \sn]P_m^2 \D^{-1} H\|_{L_x^2}\right).
\end{equation}
Let us denote the two  terms on the right by $\I_{km}^1$ and
$\I_{km}^2$ respectively.  In view of (\ref{knf2}), (\ref{FBB}) and Lemma \ref{dual}
we have
\begin{align}
\I_{km}^1&\les 2^{-k}r\|P_k \sn P_m^2({}^\star \D H+\K \D^{-1} H+r^{-2} \D^{-1} H)\|_{L_x^2}\nn\\
&\les 2^{m-k} \|P_m{}^\star\D H\|_{L_x^2}+ 2^{-k} r
\|H\|_{L_x^2}+2^{-k} r\|P_k\sn P_m^2(\K\c \D^{-1} H)\|_{L_x^2}
\label{ini5}
\end{align}
We only need to employ (\ref{knf1}) to estimate the last term of (\ref{ini5}).
\begin{align}
 2^{-k} r\|P_k\sn P_m^2(\K\c \D^{-1} H)\|_{L_x^2}
 &\les \Big(2^{-3|m-k|}+2^{-|m-k|} 2^{-(1-\a)k}\K_{\a_0}r^{1-\a}\nn\\
&+2^{-k}2^{-|k-m|}r\|\K\|_{L_x^2}^\a \K_{\a_0} \Big)\|P_m (\K \D^{-1}
H)\|_{L_x^2} \label{ini6}.
 \end{align}
For the last two terms, in view of $k>m$, (\ref{sob.m2}) and Lemma \ref{A.8}
\begin{align}
\sum_{k>m} \Big(2^{-|m-k|}
2^{-(1-\a)k}&\K_{\a_0}r^{1-\a}+2^{-k}2^{-|k-m|}r\|\K\|_{L_x^2}^\a
\K_{\a_0} \Big)\|P_m (\K \D^{-1} H)\|_{L_x^2}\nn \\
&\les \K_{\a_0} \left(r^{1-\a}+r\|\K\|_{L_x^2}^\a\right)\|\K\|_{L_x^2}
r^{\f12}\N_1(H)\label{knf6}.
 \end{align}
Let us decompose $\K=\sum_{n} P_n^2 \K+\bar\K$ and consider the high
frequency term for the purpose of simplicity. With the help of
Proposition \ref{P4.1}, the proof contained in
\cite[pages 299--300]{Qwang} implies for $m,n> 0$
\begin{equation}\label{kdin1}
\|P_m (\K_n \D^{-1}H)\|_{L_x^2}\les
2^{-\frac{3}{4}|m-n|}\|P_n\K\|_{L_x^2} \left(\|\D^{-1}
H\|_{L_x^\infty}+\|H\|_{L_x^2}\right).
\end{equation}
Therefore the first term on the right of (\ref{ini6}) can be
estimated as follows,
\begin{align*}
\sum_{k>m} \sum_{n>0} & 2^{-3|m-k|}\|P_m (\K_n\D^{-1}
H)\|_{L_x^2}\\
&\les\sum_{k>m} \sum_{n>0} 2^{-3|m-k|
-\frac{3}{4}|m-n|}\|P_n\K\|_{L_x^2} \left(\|\D^{-1}
H\|_{L_x^\infty}+\|H\|_{L_x^2}\right) \\
&\les \|\K\|_{B_{2,1}^0}
r^{\f12}\N_1(H)
\end{align*}
where we employed (\ref{sob.m2}), Lemma \ref{A.8} and (\ref{sob.m})
to obtain the last inequality. It is easy to check by (\ref{FB})
that $\|\K\|_{B_{2,1}^0}\les \|\K\|_{L_x^2}+r \|\sn \K\|_{L_x^2}$.
Consequently,
\begin{equation*}
\sum_{k>m}\I_{km}^1\les\|{}^\star\D H\|_{B_{2,1}^0}+c_0
r^{\f12}\N_1(H).
\end{equation*}

Now we consider $\I_{km}^2$ with the help of (\ref{com4}) and
(\ref{sob.m2}), also using (\ref{FBB}) and Lemma \ref{dual}
\begin{align}
\I_{km}^2&\les 2^{-k}r\left(\|P_k\sn(K\c P_m^2 \D^{-1} H)\|_{L_x^2}+\|P_k(K\c \sn P_m^2\D^{-1} H)\|_{L_x^2}\right)\nn\\
&\les2^{-k} r\Big(\|\sn \K\|_{L_x^2}\|P_m^2\D^{-1} H\|_{L_x^\infty}+\|\K\|_{L_x^\infty}\|\sn P_m^2\D^{-1} H\|_{L_x^2} \nn\\
&\quad\, +r^{-2}\|P_k(\sn P_m^2 \D^{-1} H)\|_{L_x^2}\Big)\nn\\
&\les 2^{-k}\left( r\|\sn \K\|_{L_x^2}r^\f12
\N_2(\D^{-1}H)+r\|\K\|_{L_x^\infty}\|H\|_{L_x^2}+r^{-1}
\|H\|_{L_x^2}\right).\nn
\end{align}
Also using Lemma \ref{A.8} and (\ref{sob.m})
\begin{align*}
\sum_{k>m>0}&\I_{km}^2\les \left( r\|\sn \K\|_{L_x^2}+r\|\K\|_{L_x^\infty}\right)
r^{\frac{1}{2}}\N_1(H)+r^{-1} \|H\|_{L_x^2}.
\end{align*}

{\it Case 2: $k<m$.} Consider $\I_{km}$ in this case by (\ref{FD})
and (\ref{FBD}).
\begin{align}
\I_{km}&\le 2^{k-2m}r\|P_k\sn \sD P_m^2 \D^{-1} H\|_{L_x^2}\nn\\
&\le 2^{k-2m}r \left(\|P_k \sD \sn P_m^2 \D^{-1} H\|_{L_x^2}+\|P_k[\sn, \sD] P_m^2 \D^{-1}H\|_{L_x^2}\right)\nn\\
&\le 2^{3k-2m}r^{-1} \|P_k\sn P_m^2 \D^{-1}
H\|_{L_x^2}+2^{k-2m}r\|P_k[\sn, \sD] P_m^2
\D^{-1}H\|_{L_x^2}\label{knf7}.
\end{align}
Let us denote by $\I_{km}^1$ the first term in the line of
(\ref{knf7}) and by $\I_{km}^2$ the second term. Consider
$\I_{km}^1$ first. By (\ref{FBB}), (\ref{FD}) and (\ref{knf2})
\begin{align}
\I_{km}^1&\les 2^{4k-2m}r^{-2} \|P_m^2\D^{-1} H\|_{L_x^2}\les 2^{4k-4m} \|P_m^2 \sD \D^{-1}H\|_{L_x^2}\nn\\
&\les 2^{-4|k-m|}\left(\|P_m^2{}^\star\D
H\|_{L_x^2}+\|P_m^2(\K\D^{-1} H)\|_{L_x^2}+r^{-2} \|P_m^2\D^{-1}
H\|_{L_x^2}\right)\nn
\end{align}
 By (\ref{FB}), Proposition \ref{P4.1}, (\ref{sob.m2}) and Lemma \ref{A.8},
\begin{align}
\|P_m^2(\K\D^{-1}H)\|_{L_x^2}
&\les 2^{-m}r \left(\|\sn \K\|_{L_x^2}\|\D^{-1}H\|_{L_x^\infty}+\|\K\|_{L_x^\infty}\|H\|_{L_x^2}\right)\nn\\
&\les 2^{-m} r^{\frac{3}{2}}\N_1(H)\left(\|\sn
\K\|_{L_x^2}+\|\K\|_{L_x^\infty}\right)\label{knf8}.
\end{align}
Also using  Lemma \ref{dual},
\begin{align}
\I_{km}^1&\les 2^{-4|k-m|} \left(\|P_m^2{}^\star\D
H\|_{L_x^2}+r^{-1}2^{-m} \| H\|_{L_x^2}+2^{-m}
r^{\frac{3}{2}}\N_1(H)(\|\sn
\K\|_{L_x^2}+\|\K\|_{L_x^\infty})\right)\nn.
\end{align}
Hence, we obtain
\begin{align}
\sum_{k,m>0, k<m} \I_{km}^1&\les \|{}^\star \D
H\|_{B_{2,1}^0}+r^{-1} \|H\|_{L_x^2}+ r^{\f12}\N_1(H) \left(r\|\sn
\K\|_{L_x^2}+r\|\K\|_{L_x^\infty}\right).
\end{align}
Now consider $\I_{km}^2$ in view of (\ref{com4}).
\begin{align*}
\I_{km}^2&\les 2^{k-2m}r\Big(\|P_k\sn(\K P_m^2 \D^{-1}
H)\|_{L_x^2}+\|P_k(\K\sn P_m^2 \D^{-1} H)\|_{L_x^2}\\
&\quad\, +r^{-2}\|P_k\sn
P_m^2\D^{-1}H\|_{L_x^2}\Big).
\end{align*}
By (\ref{FBB}) and  Lemma \ref{dual}
\begin{align*}
\I_{km}^2&\les
2^{k-2m}r\left(\|\K\|_{L_x^\infty}\|H\|_{L_x^2}+2^{k-m}r^{-2}\|H\|_{L_x^2}\right).
\end{align*}
Also using (\ref{sob.m}),
\begin{equation*}
\sum_{k,m>0, m>k}\I_{km}^2\les r^{-1}\|H\|_{L_x^2}+
r^{\frac{1}{2}}\N_1(H)\c r\|\K\|_{L_x^\infty}.
\end{equation*}
Thus
\begin{equation*}
\sum_{k,m>0, m>k}\I_{km}\les r^{-1}\|H\|_{L_x^2}+ c_0
r^{\frac{1}{2}}\N_1(H)
\end{equation*}
where $c_0$ depends on $r(\|\K\|_{L_x^\infty}+\|\sn \K\|_{L_x^2})$.
\end{proof}






\end{document}